\tikzset{negated/.style={
        decoration={markings,  
            mark= at position 0.5 with {
                \node[transform shape] (tempnode) {$\backslash$};
            }
        },
        postaction={decorate}
    }
}
\theoremstyle{plain}
\newtheorem{theorem}{Theorem}[section]
\newtheorem{proposition}[theorem]{Proposition}
\newtheorem{lemma}[theorem]{Lemma}
\newtheorem{corollary}[theorem]{Corollary}
\newtheorem*{theorem*}{Theorem}
\theoremstyle{definition}
\newtheorem{definition}[theorem]{Definition}
\newtheorem*{definition*}{Definition}
\theoremstyle{remark}
\newtheorem{remark}{Remark}[section]
\newtheorem{question}{Question}
\newtheorem{assumption}{Assumption}
\DeclareMathOperator{\Aut}{Aut}
\DeclareMathOperator{\GL}{\mathbf{GL}}
\DeclareMathOperator{\SL}{\mathbf{SL}}
\DeclareMathOperator{\rank}{rank}
\DeclareMathOperator{\Ad}{Ad}
\DeclareMathOperator{\id}{id}
\DeclareMathOperator{\Fix}{Fix}
\DeclareMathOperator{\Comm}{Comm}
\DeclareMathOperator{\Rad}{Rad}
\DeclareMathOperator{\Stab}{Stab}
\title{The structure of approximate lattices in linear groups}
\author[Simon Machado]{Simon Machado \\
ETHZ}
\email{smachado@ethz.ch}
\date{\today}
\begin{document}

\begin{abstract}
Approximate lattices are aperiodic generalisations of lattices of locally compact groups  that were first studied in seminal work of Yves Meyer. They are defined as those uniformly discrete approximate subgroups (symmetric subsets stable under multiplication up to a finite error) of locally compact groups that have finite co-volume. Meyer showed that approximate lattices of Euclidean spaces (a.k.a. Meyer sets) are related to lattices in higher-dimensional Euclidean spaces via the cut-and-project construction.  A fundamental challenge of the theory of approximate lattices is to extend Meyer's theorem beyond Euclidean spaces.

 Our main result provides a complete structure theorem for approximate lattices valid in all linear algebraic groups over local fields and their finite products, in particular providing the most general extension of Meyer's theorem to date. Our proof relies on an extension of a theorem of Lubotzky--Mozes--Raghunathan to approximate lattices in $S$-adic semi-simple groups, a notion of cohomology tailored to the study of approximate subgroups, a universality statement complementing a recent result of Hrushovski and a generalisation of a result of Burger and Monod about bounded cohomology of lattices.
\end{abstract}

\maketitle
{
  \hypersetup{linkcolor=black}
  \tableofcontents
}

\section{Introduction}
Approximate lattices are approximate subgroups of locally compact groups that are discrete and have finite co-volume. A subset $\Lambda$ of a locally compact group $G$ is an \emph{approximate subgroup} if it is symmetric ($\Lambda = \Lambda^{-1}$), contains the identity and satisfies 
$$\Lambda^2:=\{\lambda_1\lambda_2 \in G: \lambda_1 , \lambda_2 \in \Lambda\} \subset F\Lambda :=\{f\lambda \in G : f \in F, \lambda \in \Lambda\}$$ for some $F \subset G$ finite. It has \emph{finite co-volume} when there is $\mathcal{F} \subset G$ with finite Haar measure such that $\Lambda \mathcal{F} = G$. Meyer was the first to study approximate lattices in his seminal monograph \cite{meyer1972algebraic}. Inspired by a construction due to Bohr, Meyer also introduced the so-called \emph{model sets} which are often understood as the `ideal' approximate lattices. A model set is a subset of a locally compact group $G$ built from a lattice $\Gamma$ in a product  $G \times H$ where $H$ is another locally compact group. It is obtained by first cutting a strip of $\Gamma$ along $G$ and then projecting it to $G$. More precisely, one chooses a relatively compact symmetric neighbourhood of the identity $W_0$ of $H$ - the \emph{window} - and defines the model set
$$ M(G,H,\Gamma,W_0):=p_G\left(\Gamma \cap (G \times W_0)\right)$$ 
where $p_G: G \times H \rightarrow G$ denotes the natural projection. The triple $(G,H,\Gamma)$ is the \emph{cut-and-project scheme} of which $G$ is the \emph{physical space} and $H$ is the \emph{internal space}.

One of the main features of the theory initiated by Meyer is the beautiful classification of approximate lattices in locally compact abelian groups he obtained in \cite{meyer1972algebraic}, see also \cite{moody1997meyer} for a survey. It asserts that every approximate lattice of a locally compact abelian group is contained in a model set.  The main challenge in the theory of approximate lattices consists in extending Meyer's theorem beyond abelian groups \cite[Problem 1]{bjorklund2016approximate}. Our main result fulfils this goal in the generality of linear algebraic groups over characteristic $0$ local fields and their finite products, which we refer to as $S$-\emph{adic linear groups}. An already striking consequence is the following high-rank versus rank-one alternative: 

\begin{theorem}[High rank v rank one]\label{THEOREM: FIRST THEOREM}
Let $\Lambda$ be an approximate lattice in an $S$-adic linear group $G$. If there is no factor $p: G \rightarrow L$ with $L$ an $S$-rank one simple $S$-adic linear group and $p(\Lambda )$ contained in a lattice, then $\Lambda$ is contained in a model set. In contrast, if there is a rank one factor $p: G \rightarrow S$ with $p(\Lambda)$ contained in a lattice, then $\Lambda$ can be twisted into an approximate lattice not contained in a model set.
\end{theorem}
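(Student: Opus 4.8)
The plan is to deduce the dichotomy from the general structure theorem proved in the body of the paper (of which Theorem~\ref{THEOREM: FIRST THEOREM} is a consequence), used in the following shape: to an approximate lattice $\Lambda\subset G$ one attaches a cut-and-project scheme $(G,H,\Gamma_0)$ and a \emph{bounded} $2$-cocycle $c$ on (a finite-index subgroup of) the enveloping group $\Lambda^\infty:=\bigcup_{n\ge 1}\Lambda^n$, valued in a locally compact abelian group, so that $\Lambda$ is commensurable to the model set $M(G,H,\Gamma_0,W_0)$ \emph{twisted} by $c$ (a central-extension-type modification of the cut-and-project data); and $\Lambda$ is contained in an untwisted model set precisely when the class $[c]$ dies after passing to a finite-index subgroup and enlarging the internal space $H$. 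The whole theorem then reduces to locating the obstruction class $[c]$ in bounded cohomology and deciding when it vanishes.

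For the rigidity direction, assume no rank-one factor $p:G\rightarrow L$ has $p(\Lambda)$ contained in a lattice. First I would dispose of the solvable radical: the image of $\Lambda^\infty$ in it is amenable, and since bounded cohomology ignores amenable normal subgroups, $[c]$ is detected on the image of $\Lambda^\infty$ in the semisimple quotient $G/\Rad(G)$; after a good-model reduction this image behaves like an approximate lattice in an $S$-adic semisimple group, a product over its simple factors. On each simple factor of $S$-rank $\ge 2$ the extension of Lubotzky--Mozes--Raghunathan endows the relevant group with the coarse geometry of a higher-rank $S$-arithmetic lattice, and the generalised Burger--Monod vanishing theorem kills the corresponding part of $[c]$. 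On the $S$-rank-one simple factors, the structure theorem identifies the only directions in which $c$ could be nonzero with exactly those factors $L$ on which $p(\Lambda)$ is contained in a lattice --- which the hypothesis excludes. Hence $[c]=0$, and trivialising $c$ yields an honest model set containing $\Lambda$.

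For the flexibility direction, let $p:G\rightarrow S$ be a rank-one factor with $p(\Lambda)$ contained in a lattice $\Gamma\le S$. Rank-one lattices carry an infinite-dimensional supply of \emph{exotic} homogeneous quasimorphisms --- unbounded, with bounded defect, and not at bounded distance from any homomorphism --- equivalently nontrivial classes in the kernel of the comparison map $H^2_b(\Gamma;\mathbb{R})\to H^2(\Gamma;\mathbb{R})$. Fix one, $\phi$, scale it so that its defect is less than $1$, pull it back along $p$ to a quasimorphism on $\Lambda^\infty$, and reduce modulo $\mathbb{Z}$ to get $\bar\phi$ valued in the circle $K=\mathbb{R}/\mathbb{Z}$. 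I would then set, inside the linear group $G\times K$,
$$\Lambda':=\{(\lambda,t):\lambda\in\Lambda,\ t-\bar\phi(\lambda)\in[-\epsilon,\epsilon]\}$$
for small $\epsilon$. Boundedness of the defect of $\phi$ makes $\Lambda'$ symmetric and stable under product up to a finite set; uniform discreteness of $\Lambda$ makes $\Lambda'$ uniformly discrete; and since $K$ is compact, $\Lambda'$ still has finite co-volume --- so $\Lambda'$ is an approximate lattice, a twist of $\Lambda$ (consistently with the first part, $\Lambda'$ again has a rank-one factor on which it collapses to a lattice). It is not contained in any model set: in a model set the induced address map into the internal space is a genuine homomorphism with relatively compact image over each coset, which through the structure theory would force $\phi$ to be at bounded distance from a homomorphism into a locally compact group, contradicting its exoticness; a short argument promotes this to rule out every cut-and-project scheme, not merely the one attached to $\Lambda$.

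The main obstacle is squarely in the rigidity half, and above all in the structure theorem underpinning it: one must produce the cut-and-project scheme and the cocycle $c$ (here the universality complement to Hrushovski's theorem enters), prove that $c$ is \emph{bounded} --- this is what confines the obstruction to bounded cohomology and makes the rank-one versus higher-rank dichotomy relevant --- and then transport the Lubotzky--Mozes--Raghunathan and Burger--Monod machinery from honest $S$-arithmetic lattices to the group $\Lambda^\infty$, which is a priori neither finitely generated nor a lattice, and to the coefficient modules that actually occur. The reduction to the semisimple part also needs care in the $S$-adic setting, since projections of approximate lattices to quotients need not be uniformly discrete and one must first pass to a good model. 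By contrast the flexibility half is comparatively soft once the twist and the notion of being contained in a model set are set up carefully; the only delicate points there are the finite co-volume of $\Lambda'$ and checking that $\Lambda'$ genuinely escapes every cut-and-project scheme.
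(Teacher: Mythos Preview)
Your rigidity sketch captures the high-level strategy --- reduce to a cohomological obstruction and kill it with higher-rank vanishing --- but misreads where the coefficient module lives. The solvable radical is not ``disposed of'' by the amenability trick; rather, it \emph{supplies} the coefficient module. In the paper the structural object is a quasi-cocycle $q \in QC(\Lambda_s; V)$, where $\Lambda_s$ is the projection to the semi-simple quotient and $V$ is the internal space attached to $\Lambda_r^2 \cap Z_{G,1}$ (cf.\ \S\ref{Section: Conjugation-multiplication phenomena} and Proposition~\ref{Proposition: Meyer's theorem in S-adic vector spaces}). Laminarity is then detected by the class of $dq$ in $H^2_b(\Gamma_1; V)$, and the vanishing result (Theorem~\ref{Theorem: Vanishing in higher-rank}) shows $\widetilde{QC}(\Lambda_s; V)=\{0\}$ precisely under your hypothesis. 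So the reduction to the semi-simple part is via the intersection theorems (Theorem~\ref{Theorem: Radical is hereditary}, Lemma~\ref{Lemma: Reduction to abelian radical}, Lemma~\ref{Lemma: End of proof}) and not via a bounded-cohomology inflation--restriction argument through an amenable kernel. Your description of the obstruction as a bounded $2$-cocycle on $\Lambda^\infty$ is not wrong, but the paper works one degree down with the quasi-cocycle itself, which is what makes the $L^p$-induction argument of \S\ref{Section: Induction for star-approximate lattices and Boundedness of quasi-cocycles} possible.

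Your flexibility construction, however, is genuinely broken. As written, $\Lambda' = \{(\lambda,t) : \lambda \in \Lambda,\ t - \bar\phi(\lambda) \in [-\epsilon,\epsilon]\}$ contains the arc $\{e\}\times[-\epsilon,\epsilon]$ and is therefore not uniformly discrete. The trouble is that you have thickened in a \emph{continuous} compact direction $K=\mathbb{R}/\mathbb{Z}$; any such thickening destroys discreteness, while the graph $\{(\lambda,\bar\phi(\lambda))\}$ alone fails the approximate-subgroup condition. Moreover, the theorem asks for a twist of $\Lambda$ inside $G$ (an element of $\Comm_G(\Lambda_s;\Lambda_r)$), not in $G\times K$; and even granting $K\simeq SO_2(\mathbb{R})$ is linear, you have changed the ambient group and the radical. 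The paper's construction (Proposition~\ref{Proposition: A counter-example to a general Meyer-type theorem}) sidesteps all of this: the quasi-cocycle $b$ takes values in the \emph{discrete} subring $\mathbb{Z}[\sqrt{2}]^2 \subset \mathbb{R}^2$, the fibre over each $\gamma$ is $b(\gamma)+\{\xi\in\mathbb{Z}[\sqrt{2}]^2:|\bar\xi|<1\}$, which is a model set and hence discrete, and the ambient group $\mathbf{O}(q,\mathbb{R})\ltimes\mathbb{R}^2$ is genuinely $S$-adic linear. The non-laminarity then follows from the cohomological criterion of Proposition~\ref{Proposition: Equivalence laminarity/non-trivial cohomology class} (itself a consequence of the uniqueness of quasi-models, Corollary~\ref{Corollary: Uniqueness of quasi-models, local version}), not from a soft argument about address maps. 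The essential idea you are missing is that the twist must be by a quasi-cocycle valued in the dense lattice $\Delta$ inside the internal space $V$ of a pre-existing cut-and-project scheme for the radical --- this is what allows the twist to stay discrete and to stay inside $G$.
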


Meyer's theorem therefore generalises to $S$-adic linear groups, unless $\Lambda$ factors to a rank one lattice in which case it never does. Our main result (Theorem \ref{THEOREM: MAIN THEOREM}) nevertheless describes the structure of \emph{all}  approximate lattices in $S$-adic linear groups thus providing a generalisation of Meyer's theorem even when it seems to fail.  The methods are robust and can be generalised to positive characteristic,  Lie groups and beyond \S \ref{Subsection: Remarks and consequences}.

\subsection{Historical background and main result}

Approximate lattices of Euclidean spaces - also known as Meyer sets \cite{moody1997meyer} - are central objects of study in aperiodic order (i.e. the study of aperiodically ordered sets). They describe a number of fascinating aperiodic objects such as Penrose's aperiodic tiling (P3) \cite{deBruijn1981algebraicI, deBruijn1981algebraicII}, the recently discovered `hat' tiling \cite{baake2023dynamics}, certain mathematical models of quasi-crystals \cite{MR3136260, NirOlevskii2015PoissonSummation} and, perhaps more surprisingly, the Pisot--Vijayaraghavan--Salem numbers of a given number field \cite{meyer1972algebraic}. But the theory of aperiodic order naturally extends to non-commutative spaces such as the isometry group of $\mathbb{R}^n$ \cite{baake2023dynamics}, the hyperbolic plane, symmetric spaces and buildings \cite{zbMATH02132027, zbMATH01239670, MR1452434} or much more general homogeneous spaces  \cite{cordes2020foundations,zbMATH00109492, zbMATH02072462}.

With this in mind, Bj\"{o}rklund and Hartnick pondered over the possibility of a structure theory \emph{à la} Meyer in non-commutative spaces. They generalized in \cite{bjorklund2016approximate} Meyer's cut-and-project construction and launched the systematic study of approximate lattices, motivated by the following fundamental question.

\begin{question}[Problem 1, \cite{bjorklund2016approximate}]\label{Question: Structure approximate lattices} 
Let $\Lambda$ be an approximate lattice. Is $\Lambda$ commensurable with a model set? 
\end{question}

Two approximate subgroups $\Lambda_1,\Lambda_2 \subset G$ are \emph{commensurable} if there is $F \subset G$  finite such that $\Lambda_1 \subset F\Lambda_2$ and $\Lambda_2 \subset F\Lambda_1$. Following \cite{hrushovski2020beyond}, we call an approximate lattice \emph{laminar} if it is commensurable with a model set.

Two bodies of work offered particularly optimistic perspectives towards the resolution of Question \ref{Question: Structure approximate lattices}. First of all, the striking developments of the theory of finite approximate subgroups of non-commutative groups \cite{MR3348442, MR2833482} culminating in the Breuillard--Green--Tao theorem \cite{MR3090256}. Results from the finite framework can indeed be adapted in many ways \cite{MR3345797, machado2020apphigherrank}.  In parallel, the fascinating rigidity results enjoyed by irreducible lattices in products of groups (see for instance \cite{MR4036451, zbMATH07180377, BurgerMonod, zbMATH05013729, zbMATH01656531}).  Meyer's intuition indeed sees approximate lattices as originating from a lattice in a product $G \times H$ where we are given no (or only partial) knowledge of $H$. Through that, our approach is particularly influenced by Burger and Monod's work on bounded cohomology \cite{BurgerMonod}.

The first results towards an answer to Question \ref{Question: Structure approximate lattices} were established in \cite{machado2020approximate, machado2019infinite}. The author generalised there Meyer's theorem to nilpotent and solvable Lie groups by adapting Meyer's (and Schreiber's) original approach \cite{meyer1972algebraic, schreiber1973approximations}.  The first  paradigm change appeared in \cite{machado2019goodmodels} where tools from finitary additive combinatorics led to extensions of Meyer's theorem to approximate lattices in amenable groups. This approach also offered new insight even when specialised to approximate lattices in locally compact abelian groups. Meyer's theorem was then further extended to approximate lattices of semi-simple algebraic groups in a breakthrough paper by Hrushovski \cite{hrushovski2020beyond}. The absence of amenability however required drastically new ideas embodied in Hrushovski's \emph{quasi-model theorem}  \cite[Thm 4.2]{hrushovski2020beyond}. His work answered Question \ref{Question: Structure approximate lattices} in a particularly difficult case and, at the same time, generalised Margulis' arithmeticity theorem to the aperiodic framework (we also mention an ergodic-theoretic approach  \cite{machado2020apphigherrank}).  As a complement to these results,  it was shown in \cite{machado2019goodmodels} that approximate lattices of linear groups could be decomposed into an amenable part and a semi-simple part.  At first glance, this hinted at the possibility of an affirmative answer to Question \ref{Question: Structure approximate lattices} in the generality of $S$-adic linear groups.

 Hrushovski however built the first example of non-laminar approximate lattice in a central extension of $\SL_2(\mathbb{R})$ \cite[\S 7.9]{hrushovski2020beyond}.  This indicated that extensions of extremely regular approximate lattices could behave surprisingly and led Hrushovski to the following refinement of Question \ref{Question: Structure approximate lattices}:
 \begin{question}[Question 7.11, \cite{hrushovski2020beyond}]\label{Question: Hrushovski}
 Given $\Lambda$ an approximate lattice of an $S$-adic algebraic group $G$, write $\Gamma_s$ the projection of $\langle \Lambda \rangle$ to $G / \Rad(G)$.  Is it true that all approximate lattices projecting to $\Gamma_s$ are laminar if and only if the second bounded cohomology $H^2_b(\Gamma_s;V)$ of $\Gamma_s$ with coefficients in $V$  vanishes for all finite dimensional unitary representations $V$ of $\Gamma_s$? 
 \end{question}

Going further than the scope of Question \ref{Question: Structure approximate lattices},  our main result (Theorem \ref{THEOREM: MAIN THEOREM}) provides a complete structure theorem for approximate lattices in $S$-adic linear groups even when Question \ref{Question: Structure approximate lattices} admits a negative answer. We also answer Question \ref{Question: Hrushovski} in the negative but show that laminarity can indeed be detected by a different cohomological criterion.  Our main result parametrises \emph{extensions}. 

Namely, consider a Zariski-connected $S$-adic linear group $G$ with radical $\Rad(G)$ and $\Lambda \subset G$ a Zariski-dense approximate lattice. Write $\Lambda_s \subset G/R$ the projection of $\Lambda$ and $\Lambda_r := \Lambda^2 \cap R$ which are approximate lattices in $G/R$ and $R$ respectively (Theorem \ref{Theorem: Radical is hereditary}). Then any approximate lattice whose projection to $G/\Rad(G)$ and intersection with $\Rad(G)$ are commensurable with $\Lambda_s$ and $\Lambda_r$ is an \emph{extension of $\Lambda_s$ by $\Lambda_r$}. We denote by $\Comm_G(\Lambda_s;\Lambda_r)$ the space of such extensions modulo commensurability and conjugacy.

\begin{theorem}[Main Theorem]\label{THEOREM: MAIN THEOREM}
Let $G,\Lambda_s, \Lambda_r$ be as above. Let $P$ be the family of rank one factors $p: G/R \rightarrow S$ of $G/R$ such that $p(\Lambda_s)$ is contained in a lattice of $S$ and let $\pi$ be the diagonal map. Define the subgroup $Z_{G,1}$ of $Z$ made of all the elements centralising $\ker \pi$. Write $\Gamma_1$ the lattice $\pi(\langle \Lambda_s \rangle)$. Then:
\begin{enumerate}
\item \emph{(Physical + internal parametrisation)} there is a \emph{bijective} natural map 
$$ \Phi: \left(H^1(\Gamma_1; Z_{G,1}) \times \widetilde{QC}(\Gamma_1; V) \right)/H^1(\Gamma_1; \Delta) \rightarrow \Comm_G(\Lambda_s;\Lambda_r)$$
where $(Z_{G,1},V, \Delta)$ denotes an explicit cut-and-project scheme related to $\Lambda_r \cap Z_{G,1}$;

\item \emph{(Laminarity)}
$\Phi(\alpha)$ is laminar if and only if the class of $d\alpha$ in $H^2_b(\Gamma_1;V)$ is trivial; 
\item \emph{(Splitting)} $\Phi(\alpha)$ splits - i.e. for all $\Lambda_0 \in \Phi(\alpha)$ such that $\Lambda_0^2 \cap S_0$ is an approximate lattice for some Levi subgroup $S_0$ - if and only if $\alpha$ is trivial. 
\end{enumerate}
\end{theorem}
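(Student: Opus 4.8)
The plan is to reduce the statement to a problem about a single abelian, $\langle\Lambda\rangle$-central layer of the radical $R$, and then to read off the three assertions from a cut-and-project dictionary. Two structural facts are available as input. On the semi-simple side, the $S$-adic approximate-lattice extension of the Lubotzky--Mozes--Raghunathan theorem pins $\langle\Lambda_s\rangle$ down up to commensurability: away from the rank-one factors in $P$ it is commensurable to a lattice, and the diagonal map $\pi$ together with the lattice $\Gamma_1 = \pi(\langle\Lambda_s\rangle)$ records precisely the non-rigid part of $\Lambda_s$ and its cut-and-project presentation. On the radical side, $R$ is solvable, so $\Lambda_r$ is laminar by the amenable case of Meyer's theorem and carries an explicit cut-and-project scheme. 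The first step is a dévissage along a characteristic filtration of $R$: each successive quotient is rigidly determined --- the $\langle\Lambda_s\rangle$-action leaving it no room to deform --- except for an abelian layer centralised by $\langle\Lambda\rangle$, inside which only the subgroup $Z_{G,1}$ centralising $\ker\pi$ can interact with the rank-one factors; here one uses hereditarity of the radical (Theorem \ref{Theorem: Radical is hereditary}). This identifies $\Comm_G(\Lambda_s;\Lambda_r)$ with a space of central extensions of $\Gamma_1$ by the cut-and-project data $(Z_{G,1},V,\Delta)$ attached to $\Lambda_r \cap Z_{G,1}$.

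Next I would construct $\Phi$ and its inverse. Given a pair $(c,q)$, the $1$-cocycle $c$ valued in $Z_{G,1}$ twists the tautological copy of $\Gamma_1$ inside $G$, while the quasi-cocycle $q$ valued in $V$ produces a lattice $\widetilde\Gamma$ in a product $G \times H$, with $H$ the internal space built from $V$; its projected window is an approximate lattice lying in $\Comm_G(\Lambda_s;\Lambda_r)$, and translating $(c,q)$ by a class in $H^1(\Gamma_1;\Delta)$ only reparametrises the cut-and-project scheme --- it changes the section of the resulting model set --- so $\Phi$ descends to the quotient. For the inverse, given $\Lambda_0 \in \Comm_G(\Lambda_s;\Lambda_r)$, apply Hrushovski's quasi-model theorem to $\langle\Lambda_0\rangle$ to extract its canonical internal data, use the LMR extension to identify the semi-simple model and hence $\Gamma_1$, and measure the deviation of a section $\Gamma_1 \to \langle\Lambda_0\rangle$: its ``linear'' component is a genuine cocycle into $Z_{G,1}$ (this is $c$), and its model component is a quasi-cocycle into $V$ (this is $q$). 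That this deviation has \emph{bounded} defect, so that it really is a quasi-cocycle and the construction closes up, is exactly where the approximate-subgroup axiom $\Lambda_0^2 \subset F\Lambda_0$ enters, through the cohomology tailored to approximate subgroups.

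Bijectivity of $\Phi$ is the crux of part (1), and surjectivity --- together with the bounded-cohomology bookkeeping that accompanies it --- is the step I expect to be the main obstacle. It relies on two further inputs. The universality statement complementing Hrushovski's theorem guarantees that the internal space $V$ assembled from $\Lambda_r \cap Z_{G,1}$ and the $P$-factors is \emph{maximal}, so that every extension in $\Comm_G(\Lambda_s;\Lambda_r)$ does embed in the associated cut-and-project scheme and the data $(c,q)$ can always be extracted. The generalisation of Burger--Monod ensures that for the higher-rank factors of $G/R$ the relevant cohomology with finite-dimensional unitary coefficients vanishes, which forces all of the deformation onto $\Gamma_1$ and the $P$-part and rules out exotic quasi-cocycles of higher-rank origin; without it $\Gamma_1$ and $V$ would not be the right objects. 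Granting these, injectivity is a matter of unwinding the equivalence relation on the source: two pairs give commensurable and conjugate extensions exactly when they differ by a coboundary in $Z_{G,1}$ and a $\Delta$-valued cocycle.

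Parts (2) and (3) then follow from the dictionary of part (1). For laminarity, $\Phi(\alpha)$ is commensurable to a model set iff the defining lattice $\widetilde\Gamma$ can be chosen product-compatible, iff the quasi-cocycle component of $\alpha$ agrees with an honest $1$-cocycle up to a \emph{bounded} function; the obstruction to such a correction is exactly the class of the defect $d\alpha$ in $H^2_b(\Gamma_1;V)$. Note that this is $H^2_b$ of $\Gamma_1$, not of the projection of $\langle\Lambda\rangle$ to $G/\Rad(G)$, and with the single module $V$ rather than with all finite-dimensional unitary modules --- which is precisely what makes the criterion correct while Question \ref{Question: Hrushovski} fails. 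For splitting, if $\Lambda_0^2 \cap S_0$ is an approximate lattice in a Levi subgroup $S_0$, then $S_0 \to G/R$ is an isomorphism identifying $\Lambda_0 \cap S_0$ with $\Lambda_s$, which forces both the twist $c$ and the internal deviation $q$ to vanish, i.e. $\alpha = 0$; conversely $\alpha = 0$ places the untwisted $\Gamma_1$ inside a Levi subgroup and yields the splitting. The only point left is that passing to the $H^1(\Gamma_1;\Delta)$-quotient creates no spurious nonzero class mapping to a split extension, which is again read off from the cut-and-project correspondence.
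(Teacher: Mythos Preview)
Your plan captures the broad shape of the argument but misassigns the roles of the main tools, and this leads to a genuine gap in the reduction step.

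First, the Lubotzky--Mozes--Raghunathan extension does \emph{not} ``pin $\langle\Lambda_s\rangle$ down up to commensurability'' or identify $\Gamma_1$. Arithmeticity of $\Lambda_s$ comes from Hrushovski's theorem; the LMR extension (Theorem~\ref{THEOREM: EFFICIENT MESSY GENERATOIN}) proves \emph{efficient generation}, which is a metric statement used solely to show that the induced quasi-cocycle lies in some $L^p$ (Proposition~\ref{Proposition: Integrability of quasi-cocycle}), so that the Burger--Monod style induction in \S\ref{Section: Induction for star-approximate lattices and Boundedness of quasi-cocycles} can run. Likewise, the quasi-model theorem is not used to build the inverse of $\Phi$: the inverse is constructed directly by choosing a section $\lambda_s\mapsto(\lambda_s,a(\lambda_s))\in\Lambda$ and reading off the quasi-cocycle from \eqref{Eq: quasi-cocycle equation}. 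Hrushovski's result (in the form of Corollary~\ref{Corollary: Uniqueness of quasi-models, local version}) enters only in part (2), via Proposition~\ref{Proposition: Equivalence laminarity/non-trivial cohomology class}, to show that laminarity forces the quasi-cocycle to be a bounded perturbation of a cocycle.

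The substantive gap is your d\'evissage. Saying that each layer other than $Z_{G,1}$ is ``rigidly determined'' does not produce, for an arbitrary $\Lambda_0\in\Comm_G(\Lambda_s;\Lambda_r)$, a Zariski-closed $L\subset G$ containing a Levi subgroup with $L\cap\Rad(G)=Z_{G,1}$ and $\Lambda_0^2\cap L$ an approximate lattice in $L$. This is exactly the content of Lemmas~\ref{Lemma: Reduction to abelian radical} and~\ref{Lemma: End of proof}, and it is not a formal filtration argument: one first uses arithmeticity (Proposition~\ref{Proposition: Arithmeticity from action on subgroups}) to descend to abelian radical, then splits off each isotypic piece $U^\Delta$ not in $Z_{G,1}$ by applying the vanishing theorem (Theorem~\ref{Theorem: Vanishing in higher-rank}) together with Proposition~\ref{Proposition: Boundedness implies splitting} or Lemma~\ref{Lemma: Commutativity implies boundedness of quasi-cocycles}, case by case according to whether the relevant factor is higher-rank, a product, or sees the centre. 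Without this, you cannot reduce the computation of the inverse of $\Phi$ to a section valued in $Z_{G,1}$, and you have no mechanism for showing that the class $\alpha_{L,S,\Lambda}$ is independent of the choice of $L$ (which the paper checks via \cite[Prop.~4.18]{mac2023sadic}). Once Lemma~\ref{Lemma: End of proof} is in hand, Proposition~\ref{Proposition: quasi-cocycle takes values in Lambda} gives the decomposition $a=a_1+a_2$ directly, and the identifications $H^1(\langle\Lambda_s\rangle;Z_{G,1})=H^1(\Gamma_1;Z_{G,1})$ and $\widetilde{QC}(\Lambda_s;V)=\widetilde{QC}(\Gamma_1;V)$ (by Margulis and Theorem~\ref{Theorem: Vanishing in higher-rank} respectively) finish part (1). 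Your treatment of (2) and (3) is directionally correct but relies on the same missing reduction.
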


The key takeaway is that the structure of an approximate lattice is naturally determined by a quasi-cocycle unique modulo coboundaries.  Theorem \ref{THEOREM: MAIN THEOREM} has many consequences besides Theorem \ref{THEOREM: FIRST THEOREM}  such as the existence of \emph{uncountably many} (abstract) commensurability classes of non-laminar approximate lattices,  an arithmeticity theorem for higher-rank perfect linear groups (Theorem \ref{Theorem: Arithmeticity for perfect groups}) and existence of approximate subgroups with exotic algebraic and ergodic-theoretic behaviour \cite[Prop. 1.2]{mac2023definitions}.

\subsection{Detail of the paper}
We give below a brief breakdown of Theorem \ref{THEOREM: MAIN THEOREM}. It is the amalgamation of two distinct results: a parametrisation statement and a higher-rank vanishing theorem.

\subsubsection{Parametrisation by quasi-cocycles}
To understand the structure of elements in $\Comm(\Lambda_s, \Lambda_r)$ we proceed at first as with group extensions.  Given an approximate subgroup $\Lambda$ projecting to $\Lambda_s$, we consider a section of the projection $\Lambda \rightarrow \Lambda_s$.  This section satisfies functional identities - see \eqref{Eq: quasi-cocycle equation} - reminiscent of quasi-morphisms. Using the intuition provided by Meyer's theorem and partial arithmeticity results, one can show the section yields a \emph{quasi-cocycle} $q$ of $\Lambda_s$ taking value in a finite dimensional vector space $V$ built as the internal space of $\Lambda^2 \cap Z$ where $Z$ is the centre of $\Rad(G)$ i.e. : 
$$\forall m \geq 0, \sup_{\lambda_1, \lambda_2 \in \Lambda_s^m}|q(\lambda_1\lambda_2) - q(\lambda_1)  - \lambda_1 \cdot q(\lambda_2)| \leq  C(m) < \infty.$$
We obtain that way a parametrisation of extensions in terms of the space $\widetilde{QC}(\Lambda_s,V)$ of quasi-cocycles of $\Lambda_s$ modulo bounded maps.

\subsubsection{Cohomology for approximate subgroups}
These quasi-cocycles are better understood as part of a more general framework.  We introduce below a notion of cohomology tailored to the study of approximate subgroups.  It is defined thanks to a complex made of those group cochains of $\langle \Lambda \rangle$ that are bounded on $\Lambda$ and its powers (but may be unbounded over $\langle \Lambda \rangle$).  Equipped with the  inhomogeneous differential this provides a notion of cohomology that naturally sits between group cohomology and bounded cohomology i.e. 
$$H^*_b(\langle \Lambda \rangle ;V) \rightarrow H^*_b(\Lambda;V) \rightarrow H^*(\langle \Lambda \rangle;V).$$
The relation between approximate subgroups and bounded cohomology was first observed in \cite{hrushovski2020beyond} and we argue here that the image of $H^2_b(\langle \Lambda \rangle;V)$ in $H^2_b(\Lambda;V)$ is the correct object to consider.  This idea enables us to complement Hrushovski's quasi-model theorem with a universality statement (see \S \ref{Section: Cohomology for approximate subgroups} for related results due to Krupiński and Pillay). This yields in turn that laminarity (or lack thereof) is precisely measured by the class of $dq$ in $H_b^2(\Lambda;V)$,  see (2) of Theorem \ref{THEOREM: MAIN THEOREM}.

\subsubsection{Higher-rank vanishing}
As might perhaps be surprising, the space $\widetilde{QC}(\Lambda_s;V)$ does not appear in the statement of Theorem \ref{THEOREM: MAIN THEOREM} - only the quasi-cocycles of rank one factors seem to be relevant.  This is in fact a consequence of the second main result of this paper,  a vanishing theorem for quasi-cocycles under a higher-rank condition:
\begin{theorem}[Vanishing]\label{Theorem: Vanishing in higher-rank}
Let $\Lambda_s$ be an approximate lattice in a semi-simple $S$-adic linear group $L$.  Suppose that there is no rank one factor $p:L \rightarrow S_0$ such that $p(\Lambda_s)$ is contained in a lattice. Then for all finite dimensional vector spaces $B$ over some local field equipped with an action of $\langle \Lambda_s \rangle$ by linear operator such that $\Lambda_s$ acts as a family of uniformly bounded operators, $$\widetilde{QC}(\Lambda_s;B) =\{0\}.$$
\end{theorem}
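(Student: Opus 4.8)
The plan is to reduce the vanishing of $\widetilde{QC}(\Lambda_s;B)$ to the classical vanishing of (bounded cohomology / quasi-cocycles of) the lattice $\langle \Lambda_s\rangle$, using the hypothesis to rule out the only way a quasi-cocycle could survive. First I would invoke the extension of Lubotzky--Mozes--Raghunathan to approximate lattices alluded to in the abstract: after replacing $\Lambda_s$ by a commensurable approximate lattice, the subgroup $\Gamma := \langle \Lambda_s\rangle$ is a lattice in $L$ (or in a suitable $S$-adic completion of $L$), $\Lambda_s$ is a \emph{good model set} inside $\Gamma$ living in a cut-and-project scheme $(\Gamma, H, \widetilde\Gamma)$, and — crucially — the word metric on $\Gamma$ restricted to $\Lambda_s$ is comparable to the ambient metric. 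The point of this last fact (the ``undistortion'' input) is that a map $q\colon\Lambda_s\to B$ satisfying the quasi-cocycle inequality on every power $\Lambda_s^m$ with a constant $C(m)$ depending only on $m$ extends to an honest quasi-cocycle $\tilde q\colon\Gamma\to B$: one defines $\tilde q$ by the cocycle formula along words in $\Lambda_s$, and undistortion plus the $C(m)$-estimates bound the defect uniformly over all of $\Gamma$. Here I use that $\Lambda_s$ acts by uniformly bounded operators on $B$, which is exactly what is needed to make $\lambda_1\cdot q(\lambda_2)$ controllable as $\lambda_1$ ranges over $\Lambda_s^m$ and to keep the defect of $\tilde q$ finite.

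The second step is purely cohomological. The class of $\tilde q$ lives in $\widetilde{QC}(\Gamma;B)\cong \ker\big(H^2_b(\Gamma;B)\to H^2(\Gamma;B)\big)$ (the standard identification of quasi-cocycles modulo bounded maps with the exact part of bounded cohomology). Since $B$ is a finite-dimensional representation on which $\Lambda_s$ — hence, by the Zariski density and boundedness, a finite-index-like portion of $\Gamma$ — acts by uniformly bounded operators, $B$ carries an equivalent $\Gamma$-invariant inner product, i.e. it is (virtually) unitary. Now I apply the Burger--Monod-type vanishing theorem, in the generalised form the paper advertises, to the lattice $\Gamma$ in the semisimple $S$-adic group $L$: for a lattice in a higher-rank group, or more precisely for an irreducible lattice in a product none of whose rank-one factors receives $\Lambda_s$ (equivalently $\Gamma$) into a lattice, the second bounded cohomology with separable unitary (here finite-dimensional unitary) coefficients vanishes, and in fact $\widetilde{QC}(\Gamma;B)=0$. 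The hypothesis of the theorem — no rank-one factor $p\colon L\to S_0$ with $p(\Lambda_s)$ in a lattice — is precisely the condition that eliminates the rank-one contributions to $H^2_b$, so the Burger--Monod machinery (amenable ceiling of the minimal parabolic, Mautner phenomenon, and the product/irreducibility argument) goes through and forces $\tilde q$ to be bounded.

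Finally I descend: $\tilde q$ bounded on $\Gamma$ implies $q=\tilde q|_{\Lambda_s}$ is bounded, so its class in $\widetilde{QC}(\Lambda_s;B)$ is zero; as $q$ was arbitrary, $\widetilde{QC}(\Lambda_s;B)=\{0\}$. I expect the genuinely delicate step to be the extension $q\rightsquigarrow\tilde q$ with control over the defect: one must verify that the quasi-cocycle estimates, which are only assumed on the powers $\Lambda_s^m$ of the approximate subgroup with $m$-dependent constants, together with LMR-type undistortion, really do assemble into a \emph{uniform} bound over all of $\Gamma$, and that the construction does not depend on choices of words modulo a bounded error. A secondary subtlety is making sure the Burger--Monod vanishing is available in the exact $S$-adic generality needed here — this is where one leans on the generalisation of Burger--Monod stated as an ingredient in the abstract, rather than on the original theorem verbatim. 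The case of a rank-one factor into which $\Lambda_s$ does \emph{not} embed in a lattice must be handled separately within the product argument, since there $H^2_b$ need not vanish but the relevant class still does by the irreducibility/Mautner input; this is the point where the precise hypothesis is used most essentially.
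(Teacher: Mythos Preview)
Your proposal has a genuine gap at precisely the step you flag as delicate, and the paper's argument is structurally different. First, in this paper a quasi-cocycle $q \in QC(\Lambda_s;B)$ is already a map on all of $\Gamma = \langle \Lambda_s \rangle$; the content of ``quasi'' is that $dq$ is bounded on each $\Lambda_s^m \times \Lambda_s^m$ with an $m$-dependent constant $C(m)$. There is no mechanism that upgrades these to a uniform bound on $\Gamma \times \Gamma$: the paper's LMR-type theorem is \emph{efficient generation} (Definition \ref{Definition: Efficient generation}), which controls certain paths inside $\Lambda_s$ against $\log\|\lambda\|_S$, not word length in $\Gamma$, and the paper notes explicitly that approximate lattices need not be undistorted. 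Your ``define $\tilde q$ along words'' construction would need the accumulated defects around relations in $\Gamma$ to be uniformly bounded, which is essentially the conclusion you are after. Second, only $\Lambda_s$ is assumed to act by uniformly bounded operators; the $\Gamma$-action on $B$ is in general unbounded, so $B$ carries no $\Gamma$-invariant inner product and the Burger--Monod hypotheses fail for $\Gamma$. Third, $\Gamma$ is not a lattice in $L$: by arithmeticity it is a lattice in a genuinely larger product $G \times H$, and the factor $H$ is the whole engine of the proof, not a technicality.

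The paper's route (Proposition \ref{Proposition: Quasi-cocycle is bounded}) never promotes $q$ to a uniform quasi-cocycle of $\Gamma$. It realises $\Gamma$ as an arithmetic lattice in $G \times H$, invokes Margulis superrigidity to show the $\Gamma$-action on $B$ extends to $G \times H$ with $G$ in the kernel, and then \emph{induces} $q$ to a measurable function $\hat b(g,\cdot)$ on $(G\times H)/\Gamma$ via the return cocycle $\alpha_s$ of a Borel fundamental domain. Efficient generation enters only to give a polynomial bound $\|q(\lambda)\| \preceq \|\lambda\|_S$ (Proposition \ref{Proposition: Bounds on quasi-cocycles from efficient generation}), which combined with an exponentially integrable fundamental domain obtained from spectral gap (Lemma \ref{Lemma: Integrable Borel section}, Proposition \ref{proposition: Spectral gap}) places $\hat b(g,\cdot)$ in $L^p$ for some small $p>0$, possibly $p<1$. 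Uniform $L^p$-boundedness of $\hat b(g,\cdot)$ in $g\in G$ is then forced by exploiting the near-commutation of the $G$- and $H$-translations: when $\pi(H)$ is unbounded one uses operator-norm expansion along a split torus (Lemma \ref{Lemma: Spectral gap through norms}, working with $p<1$ and subadditivity of $x\mapsto x^p$); when $\pi(H)$ is compact nontrivial one uses spectral gap of the $H$-action on $L^2_0$; when $\pi(H)$ is trivial one splits into the constant and mean-zero parts and applies the classical Burger--Monod bound to the former. A return-time argument on the cross-section (Lemma \ref{Lemma: Return times are large}, Proposition \ref{Proposition: Boundedness of induced quasi-cocycle implies boundedness of quasi-cocycle}) then transfers the $L^p$-bound back to boundedness of $q(\Lambda_s)$. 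The hypothesis on rank-one factors is used to guarantee that the relevant $H$ is noncompact (so the commutation argument has content), not to kill $H^2_b$ of $\Gamma$ directly.
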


The finite dimensionality assumption can be removed when $L$ has rank at least $2$ but is necessary for $L$'s of rank $1$ as $\widetilde{QC}(\Lambda_s, V)$ may not vanish otherwise.  Theorem \ref{Theorem: Vanishing in higher-rank} is inspired by work of Burger and Monod on bounded cohomology of irreducible lattices \cite{BurgerMonod}. A number of difficulties are however specific to our set-up.  We have to work with $L^p$-induction over non-transitive spaces  and for $p < 1$.  The former is dealt with by relying on the  methods of \cite{machado2020apphigherrank, BjorklundHartnickKarasik},  while we surprisingly exploit operator norm variations to solve the latter.  

\subsubsection{Distortion of approximate lattices}
A crucial ingredient at play in the proof of Theorem \ref{Theorem: Vanishing in higher-rank} is an extension of the Lubotzky--Mozes--Raghunathan theorem \cite{LubotzkyMozesRaghunathan} we obtain below (\S \ref{Section: Distortion of approximate lattices in semi-simple groups}). It implies that approximate lattices satisfy an integrability condition necessary to define an $L^p$-induction scheme for non-uniform approximate lattices.  We do not show however that approximate lattices are undistorted in their ambient group, as this is false for approximate lattices in rank one.

\subsubsection{Rank one examples}\label{Subsubection: Rank one examples}
Let us describe in detail an instance of the above construction. Consider the $\mathbb{Q}[\sqrt{2}]$-quadratic form 
$$q(x,y):=x - \sqrt{2}y.$$
We will build an example in the semi-direct product $\mathbb{O}(q,\mathbb{R}) \ltimes \mathbb{R}^2$. Let $\overline{\cdot}$ denote the Galois conjugation of $\mathbb{Q}[\sqrt{2}]$ i.e. $\overline{\sqrt{2}}=-\sqrt{2}$. Then the isometry groups $\mathbf{O}(q,\mathbb{Q}[\sqrt{2}]), \mathbf{O}(\bar{q},\mathbb{Q}[\sqrt{2}]) \subset \SL_2(\mathbb{Q}[\sqrt{2}])$ are Galois conjugates. The subgroup $\Gamma:=\mathbf{O}(q,\mathbb{Z}[\sqrt{2}])$ embeds as a uniform lattice in $\mathbf{O}(q,\mathbb{R})$ and, via $\overline{\cdot}$, as a dense subgroup of the compact group $\mathbf{O}(\overline{q},\mathbb{R})$. 
Given any class $\alpha \in EH^2_b(\Gamma, \mathbb{R}^2)$ - which is infinite dimensional by \cite{BestvinaBrombergFujiwara2016Bounded} - where $\Gamma$ acts on $\mathbb{R}^2$ through $\mathbf{O}(\overline{q},\mathbb{R})$, there is a quasi-cocycle $b$ of $\Gamma$ taking values in $\mathbb{Z}[\sqrt{2}]^2\subset \mathbb{R}^2$ such that $$\gamma_1,\gamma_2 \mapsto b(\gamma_1\gamma_2) - b(\gamma_1) - \gamma_1 \cdot b(\gamma_2) \in \alpha.$$ This is because $\mathbb{Z}[\sqrt{2}]^2$ is dense in $\mathbb{R}^2$ and stable under the action of $\Gamma$. 

\begin{proposition}[Non-Meyer approximate lattices]\label{Proposition: A counter-example to a general Meyer-type theorem}
The subset
$$\Lambda_{\alpha}= \{(\gamma, b(\gamma) + \xi) \in \mathbf{O}(q,\mathbb{R}) \ltimes \mathbb{R}^2 : \gamma \in \mathbf{O}(q,\mathbb{Z}[\sqrt{2}]), \xi \in \mathbb{Z}[\sqrt{2}]^2 \text{ with } |\overline{\xi}| < 1\}$$
is an approximate lattice.  $\Lambda_{\alpha}$ is not laminar if and only if $\alpha$ is not zero.
\end{proposition}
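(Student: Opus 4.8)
The plan is to realise $\Lambda_\alpha$ as a special case of the parametrisation of Theorem~\ref{THEOREM: MAIN THEOREM} and then read off both assertions. First I would lay out the data for $G=\mathbf{O}(q,\mathbb{R})\ltimes\mathbb{R}^2$: its radical is $R=\mathbb{R}^2$, which is abelian, so $Z=R$, and $G/R=\mathbf{O}(q,\mathbb{R})$ is $S$-rank one and simple. The projection $\Lambda_s$ of $\Lambda_\alpha$ to $G/R$ is the \emph{genuine} cocompact lattice $\Gamma=\mathbf{O}(q,\mathbb{Z}[\sqrt 2])$, so the family $P$ reduces to the identity $\mathbf{O}(q,\mathbb{R})\to\mathbf{O}(q,\mathbb{R})$; hence $\pi$ has trivial kernel, $Z_{G,1}=Z=\mathbb{R}^2$ and $\Gamma_1=\Gamma$ --- in particular the hypothesis of the second alternative of Theorem~\ref{THEOREM: FIRST THEOREM} holds and we are in the rank one regime. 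A short computation shows that $\Lambda_\alpha^2\cap R$ is commensurable with the Meyer set $\Lambda_r:=\{\xi\in\mathbb{Z}[\sqrt 2]^2:|\overline{\xi}|<1\}$, whose cut-and-project scheme has internal space $V=\mathbb{R}^2$ on which $\Gamma$ acts through $\mathbf{O}(\overline{q},\mathbb{R})$ and lattice $\Delta=\{(\xi,\overline{\xi}):\xi\in\mathbb{Z}[\sqrt 2]^2\}\subset\mathbb{R}^2\times\mathbb{R}^2$; this identifies all of $(Z_{G,1},V,\Delta)$ in Theorem~\ref{THEOREM: MAIN THEOREM}.

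Next I would check that $\Lambda_\alpha$ is an approximate lattice lying in $\Comm_G(\Lambda_s;\Lambda_r)$. Uniform discreteness and relative density are elementary: the $\mathbf{O}(q,\mathbb{R})$-component of $\Lambda_\alpha$ lies in the discrete cocompact $\Gamma$, while for each fixed $\gamma$ the admissible second coordinates form a translate of the Meyer set $\{\xi\in\mathbb{Z}[\sqrt 2]^2:|\overline{\xi}|<1\}$, which is uniformly discrete and relatively dense in $\mathbb{R}^2$ since $\xi\mapsto(\xi,\overline{\xi})$ realises $\mathbb{Z}[\sqrt 2]^2$ as a lattice in $\mathbb{R}^2\times\mathbb{R}^2$; relative density then gives finite co-volume. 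The approximate-subgroup property is where the quasi-cocycle identity for $b$ enters: multiplying two elements of $\Lambda_\alpha$ produces a second coordinate $b(\gamma_1\gamma_2)+\eta$ with $\eta\in\mathbb{Z}[\sqrt 2]^2$ whose image in the internal space $V$ is confined to a fixed bounded set --- this uses that $b$ is chosen (via density and $\Gamma$-stability of $\mathbb{Z}[\sqrt 2]^2$ at both real places) so that both $b$ and its Galois conjugate are quasi-cocycles, so that the defect is controlled at the internal place --- and the Meyer property of $\Lambda_r$ then absorbs $\eta$ into a finite set $F$. More efficiently, one notes that the displayed formula for $\Lambda_\alpha$ is exactly the one building the map $\Phi$ of Theorem~\ref{THEOREM: MAIN THEOREM}(1) on the class $[(0,[b])]\in\bigl(H^1(\Gamma;Z_{G,1})\times\widetilde{QC}(\Gamma;V)\bigr)/H^1(\Gamma;\Delta)$, so that $\Lambda_\alpha$ is commensurable with $\Phi([(0,[b])])$ and hence is an approximate lattice in $\Comm_G(\Lambda_s;\Lambda_r)$.

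With this identification the laminarity statement is immediate from Theorem~\ref{THEOREM: MAIN THEOREM}(2): $\Lambda_\alpha=\Phi([(0,[b])])$ is laminar precisely when the image of $d[(0,[b])]$ in $H^2_b(\Gamma;V)$ vanishes, and by construction of $b$ this class is represented by the cocycle $\gamma_1,\gamma_2\mapsto b(\gamma_1\gamma_2)-b(\gamma_1)-\gamma_1\cdot b(\gamma_2)$, i.e.\ the relevant class in $H^2_b(\Gamma;V)$ is $\alpha\in EH^2_b(\Gamma;\mathbb{R}^2)\subseteq H^2_b(\Gamma;\mathbb{R}^2)$; thus $\Lambda_\alpha$ is laminar if and only if $\alpha=0$. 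For a concrete check of the direction $\alpha=0\Rightarrow$ laminar one may take $b=0$ and recognise $\Lambda_0=\{(\gamma,\xi):\gamma\in\Gamma,\ \xi\in\mathbb{Z}[\sqrt 2]^2,\ |\overline{\xi}|<1\}$ as the model set $M(G,\overline{G},\widetilde{\Gamma},W_0)$ with $\overline{G}=\mathbf{O}(\overline{q},\mathbb{R})\ltimes\mathbb{R}^2$, $\widetilde{\Gamma}=(\mathbf{O}(q)\ltimes\mathbb{G}_a^2)(\mathbb{Z}[\sqrt 2])$ a cocompact lattice in $G\times\overline{G}$ by restriction of scalars (using that $q$ is anisotropic over $\mathbb{Q}[\sqrt 2]$ because $\overline{q}$ is definite) and window $W_0=\mathbf{O}(\overline{q},\mathbb{R})\times\{v:|v|<1\}$.

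The hard part, and essentially the only content beyond bookkeeping, is the identification carried out in the first two paragraphs: verifying that the cut-and-project scheme the general theory attaches to $\Lambda_r\cap Z_{G,1}$ is the one described, and --- more delicately --- that the quasi-cocycle recovered from a section of $\Lambda_\alpha\to\Lambda_s$ by the procedure underlying $\Phi^{-1}$ is cohomologous to the prescribed $b$, so that the class produced by Theorem~\ref{THEOREM: MAIN THEOREM}(2) is genuinely $\alpha$. Concretely this amounts to tracking how such a section interacts with the internal-space construction of $\Lambda^2\cap Z$ and confirming that the two real places of $\mathbb{Z}[\sqrt 2]$ play, respectively, the roles of the physical and the internal space; once this is granted, both assertions follow.
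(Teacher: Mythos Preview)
Your approach is correct and amounts to the same computation as the paper's, but you route through heavier machinery than necessary. The paper's proof is a single line: ``This is direct from Proposition~\ref{Proposition: Equivalence laminarity/non-trivial cohomology class}.'' That proposition is precisely the cohomological laminarity criterion for the special case $G=S\ltimes A$ with $S$ semi-simple, $A$ abelian, and the projection of $\Lambda$ to $S$ a genuine lattice --- exactly the situation of $\Lambda_\alpha$. You instead invoke the full Theorem~\ref{THEOREM: MAIN THEOREM}, but the laminarity clause (2) of that theorem is itself proved by reducing (via Lemmas~\ref{Lemma: Reduction to abelian radical} and~\ref{Lemma: End of proof}) to the abelian-radical case and then applying Proposition~\ref{Proposition: Equivalence laminarity/non-trivial cohomology class}. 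So your detour through the main theorem just reconstructs the reduction that is trivial here (the radical is already abelian, $Z_{G,1}=R$, $\Gamma_1=\Gamma$) before landing on the same proposition. What you correctly flag as ``the hard part'' --- identifying the good model $\tau$ with Galois conjugation and checking that the structural quasi-cocycle recovered from a section of $\Lambda_\alpha$ is the prescribed $b$ --- is exactly the content of Proposition~\ref{Proposition: Equivalence laminarity/non-trivial cohomology class} specialised to this example, and is needed in both routes. The paper's path is shorter because it cites that proposition directly rather than passing through the general parametrisation first.
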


\subsection{Outline of the paper}
We introduce in \S \ref{Subsection: Notations} the preliminaries needed, ranging from elementary results concerning approximate subgroups, good models and approximate lattices to $S$-adic linear groups and PVS numbers.  We investigate in \S \ref{Section: Cohomology for approximate subgroups} the uniqueness of quasi-models via bounded cohomology for approximate subgroups.  We study in \S \ref{Section: Distortion of approximate lattices in semi-simple groups} the geometric properties of arithmetic approximate lattices with the generalisation of the Lubotzky--Mozes--Raghunathan theorem as our goal.  In \S \ref{Section: Conjugation-multiplication phenomena} we unveil the link between approximate lattices in $S$-adic linear groups with radical and quasi-cocycles. We provide a first glance at the endgame and exploit the characterisation of laminarity through cohomology. We then explain in \S \ref{Section: Induction for star-approximate lattices and Boundedness of quasi-cocycles} how to induce quasi-cocycles to the ambient group and enforce the strategy due to Burger and Monod \cite{BurgerMonod}, proving Theorem \ref{Theorem: Vanishing in higher-rank}. We finally conclude the proof of Theorem \ref{THEOREM: MAIN THEOREM} in \S \ref{Section: Proof of the main theorems}. 

\subsection{Acknowledgements}

I am deeply grateful to Emmanuel Breuillard for inspiring discussions and constant support. I owe an intellectual debt to Michael Bj\"{o}rklund, Tobias Hartnick and Ehud Hrushovski who were kind enough to share some of their expert knowledge and understanding of the topic on multiple occasions. I wish to thank Marc Burger, Jack Button, Alexis Chevalier, Alex Gamburd, Sebastian Hurtado, Yifan Jing, Constantin Kogler,  Krzysztof Krupiński, Thibaut Mazuir and Peter Sarnak for interesting and insightful discussions at various stages of this project. This material is based upon work supported by the National Science Foundation under Grant No. DMS-1926686.

\section{Preliminaries}\label{Subsection: Notations}
Given subsets $X$ and $Y$ of $G$ define $XY:=\{xy : x \in X, y \in Y\}$, $X^0:=\{e\}$, $X^1:=X$ and $X^{n+1}=X^nX$ for all $n \geq 0$. Write also $\langle X \rangle$ the subgroup generated by $X$. Note that when $X=X^{-1}$, $\langle X \rangle = \bigcup_{n \geq 0} X^n$. We also define $X^y:=y^{-1}Xy$ and $^{y}X:=yXy^{-1}$. 
\subsection{Approximate subgroups and commensurability}
An element $g \in G$ \emph{commensurates} $X$ if $gXg^{-1}$ and $X$ are commensurable. Then $\Comm_G(X) \subset G$ denotes the subgroup of elements commensurating $X$.  Similarly,  $\alpha \in \Aut(G)$ \emph{commensurates} $X$ if $\alpha(X)$ and $X$ are commensurable. Commensurability is an equivalence relation and it is useful to think of the condition $X \subset FY \cap YF$ as a conjunction of $X \subset FY$ and $X \subset YF$.  Two symmetric subsets $X$ and $Y$ (e.g.  approximate subgroups) are commensurable if and only if there is $F \subset G$ finite such that $X \subset FY$ and $Y \subset FX$.  We present well-known and useful results, see e.g. \cite[\S 2.1]{machado2019goodmodels} and references therein for detail.

\begin{lemma}[Intersections and commensurability]\label{Lemma: Intersection of commensurable sets}
Take $X,Y_1,\ldots Y_n$ subsets of a group $G$. Assume that there exist $F_1, \ldots, F_n \subset G$ finite such that $X \subset F_iY_i$ for all $i \in \{1,\dots, n\}$. Then there is $F' \subset X$ with $|F'| \leq |F_1|\cdots |F_n|$ such that $$X \subset F'\left( Y_1^{-1}Y_1 \cap \cdots \cap Y_n^{-1}Y_n\right).$$ 
\end{lemma}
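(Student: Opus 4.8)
The plan is to prove this by induction on $n$, using the classical $n=1$ case as the base step and a standard pigeonhole-type argument.

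\textbf{Base case $n=1$.} Suppose $X \subset F_1 Y_1$ with $F_1 = \{f_1, \dots, f_k\}$, $k \le |F_1|$. For each $x \in X$ choose an index $i(x)$ with $x \in f_{i(x)} Y_1$. Partition $X$ according to the value of $i(x)$; in each nonempty class pick a representative $x_0$. If $x$ lies in the same class as $x_0$, then $x = f_{i(x_0)} y$ and $x_0 = f_{i(x_0)} y'$ for some $y, y' \in Y_1$, so $x_0^{-1} x = y'^{-1} y \in Y_1^{-1} Y_1$, i.e. $x \in x_0 (Y_1^{-1} Y_1)$. Taking $F'$ to be the (at most $k \le |F_1|$) chosen representatives, which lie in $X$, gives $X \subset F'(Y_1^{-1}Y_1)$, which is the claim for $n=1$.

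\textbf{Inductive step.} Assume the statement holds for $n-1$. Given $X \subset F_iY_i$ for $i=1,\dots,n$, apply the base case to $X \subset F_n Y_n$ to obtain $F'_n \subset X$ with $|F'_n| \le |F_n|$ and $X \subset F'_n(Y_n^{-1}Y_n)$. Now I want to run the inductive hypothesis on $X$ together with $Y_1, \dots, Y_{n-1}$ \emph{and} the new set $Z := Y_n^{-1}Y_n$; however $Z$ is not of the form $Y^{-1}Y$ with a simple containment $X \subset FZ$ — rather we already have $X \subset F'_n Z$. The cleanest route is to instead keep $X \subset F_iY_i$ for $i \le n-1$ and $X \subset F'_n Z$, and prove the slightly more general statement where one of the sets in the intersection may already be symmetric of the form $Y_n^{-1}Y_n$ entered with an $X$-valued finite set: intersecting $F'_n Z$ with the output of the $(n-1)$-fold version. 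Concretely, by induction applied to the $n-1$ containments $X\subset F_iY_i$ there is $F'' \subset X$, $|F''|\le |F_1|\cdots|F_{n-1}|$, with $X \subset F''(Y_1^{-1}Y_1 \cap \cdots \cap Y_{n-1}^{-1}Y_{n-1})$. Combine the two containments $X \subset F'_n Z$ and $X \subset F'' W$ (writing $W$ for the $(n-1)$-fold intersection, which is already symmetric) by the same pigeonhole argument as in the base case, but now splitting $X$ simultaneously according to which element of $F'_n$ and which element of $F''$ realises each $x$: for $x, x_0$ in the same class, $x_0^{-1}x$ lies in $Z^{-1}Z \cap W^{-1}W = Z \cap W$ since $Z, W$ are symmetric and $Z^2 \supset Z$, $W^2 \supset W$ — wait, more carefully, $x = f z = f'' w$ and $x_0 = f z_0 = f'' w_0$ give $x_0^{-1}x = z_0^{-1}z \in Z^{-1}Z$ and $= w_0^{-1}w \in W^{-1}W$; so I should phrase the lemma's conclusion with $Y_i^{-1}Y_i$ throughout and note that when a set $Y$ is already symmetric, $Y^{-1}Y \subset Y^2$, but we genuinely need the $Y_i^{-1}Y_i$ form, so I will simply apply the base-case pigeonhole to the two products $F'_n(Y_n^{-1}Y_n)$ and $F''(\text{the rest})$ to land in $(Y_n^{-1}Y_n)^{-1}(Y_n^{-1}Y_n) \cap \cdots$; since $(Y_n^{-1}Y_n)$ is symmetric this refines to the desired intersection only up to replacing $Y_n^{-1}Y_n$ by its square.

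To avoid that loss, the better bookkeeping is: do \emph{all} the pigeonholing at once. For each $x \in X$ and each $i$, choose $f_i(x) \in F_i$ with $x \in f_i(x) Y_i$. This assigns to $x$ a tuple $(f_1(x), \dots, f_n(x)) \in F_1 \times \cdots \times F_n$, a set of size $\le |F_1|\cdots|F_n|$. Pick one representative $x_0 \in X$ in each nonempty fibre of this assignment, and let $F'$ be the set of representatives, so $F' \subset X$ and $|F'| \le |F_1|\cdots|F_n|$. For any $x \in X$ with representative $x_0$, we have for every $i$ that $x = f_i(x_0) y_i$ and $x_0 = f_i(x_0) y_i'$ with $y_i, y_i' \in Y_i$, hence $x_0^{-1} x = (y_i')^{-1} y_i \in Y_i^{-1} Y_i$. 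As this holds for all $i$ simultaneously, $x_0^{-1} x \in \bigcap_{i=1}^n Y_i^{-1} Y_i$, i.e. $x \in F'\bigl(Y_1^{-1}Y_1 \cap \cdots \cap Y_n^{-1}Y_n\bigr)$, completing the proof. This one-shot argument is in fact cleaner than the induction and I would present it directly. There is no real obstacle here; the only thing to be careful about is that the representatives are chosen inside $X$ (so $F' \subset X$) and that the index-tuple bound $|F_1|\cdots|F_n|$ is respected, both of which the simultaneous-pigeonhole formulation handles automatically.
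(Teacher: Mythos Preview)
The paper does not actually prove this lemma: it is stated as a well-known fact with a pointer to \cite[\S 2.1]{machado2019goodmodels}. Your final ``one-shot'' argument---assigning to each $x\in X$ the tuple $(f_1(x),\dots,f_n(x))\in F_1\times\cdots\times F_n$, choosing one representative per nonempty fibre, and observing that $x_0^{-1}x\in\bigcap_i Y_i^{-1}Y_i$ whenever $x$ and $x_0$ share a fibre---is exactly the standard proof, and it is correct as written.

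The preceding inductive discussion is not wrong in spirit but, as you yourself notice, it lands in $(Y_n^{-1}Y_n)^2$ rather than $Y_n^{-1}Y_n$; you then abandon it in favour of the direct argument. For a clean write-up I would delete the inductive paragraph entirely and present only the simultaneous-pigeonhole proof.
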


We will often use Lemma \ref{Lemma: Intersection of commensurable sets} in the context of an action: 

\begin{corollary}\label{Corollary: Intersection with stabiliser}
Let $X$ be a subset of a group $G$. Suppose that $G$ acts on a set $S$ and that there is $s \in S$ such that $X \cdot s$ is finite. Then $X$ is covered by finitely many translates of $X^{-1}X \cap \Stab(s)$. 
\end{corollary}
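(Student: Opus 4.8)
The plan is to derive Corollary \ref{Corollary: Intersection with stabiliser} as a direct consequence of Lemma \ref{Lemma: Intersection of commensurable sets} applied with $n=1$. The one input we need is a single finite set $F$ and a single subset $Y$ with $X \subset FY$ and $Y^{-1}Y \subset \Stab(s)$, and then the conclusion $X \subset F' (Y^{-1}Y)$ with $F' \subset X$ finite will immediately give $X \subset F'\Stab(s)$.

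First I would fix the point $s \in S$ and consider the orbit map $g \mapsto g \cdot s$ restricted to $X$; by hypothesis the image $X \cdot s = \{x_1 \cdot s, \dots, x_k \cdot s\}$ is a finite subset of $S$, say with representatives $x_1, \dots, x_k \in X$ chosen so that every $x \in X$ satisfies $x \cdot s = x_i \cdot s$ for some $i$. Set $F := \{x_1, \dots, x_k\}$, a finite subset of $X \subset G$, and let $Y := \Stab(s)$. For any $x \in X$, pick $i$ with $x \cdot s = x_i \cdot s$; then $x_i^{-1} x \cdot s = s$, so $x_i^{-1} x \in \Stab(s) = Y$, whence $x \in x_i Y \subset FY$. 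Thus $X \subset FY$ with $|F| \leq k < \infty$.

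Now apply Lemma \ref{Lemma: Intersection of commensurable sets} with $n = 1$, $Y_1 := Y = \Stab(s)$: there is $F' \subset X$ with $|F'| \leq |F|$ such that $X \subset F'(Y^{-1}Y)$. Since $\Stab(s)$ is a subgroup, $Y^{-1}Y = \Stab(s)^{-1}\Stab(s) = \Stab(s)$. To match the statement as phrased (intersection with the stabiliser), note that $X^{-1}X \cap \Stab(s)$ is what we want on the right: one could instead invoke the lemma with the two sets $Y_1 := X$ (using $X \subset X$ trivially, with $F_1 = \{e\}$) and $Y_2 := \Stab(s)$ (using $X \subset F\,\Stab(s)$ from the previous paragraph), obtaining $F' \subset X$ with $|F'| \leq |F|$ and $X \subset F'\bigl(X^{-1}X \cap \Stab(s)^{-1}\Stab(s)\bigr) = F'\bigl(X^{-1}X \cap \Stab(s)\bigr)$. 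This is exactly the claimed conclusion.

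There is essentially no obstacle here: the only thing to be careful about is the bookkeeping of which ``direction'' of commensurability is used — Lemma \ref{Lemma: Intersection of commensurable sets} is a one-sided covering statement, so we only need the one-sided containment $X \subset F\,\Stab(s)$, which the finiteness of the orbit $X \cdot s$ supplies. A minor subtlety is that $\Stab(s)$ need not be symmetric-friendly in any special way, but since it is a genuine subgroup we have $\Stab(s)^{-1}\Stab(s) = \Stab(s)$, so the $Y^{-1}Y$ appearing in the lemma collapses to $\Stab(s)$ and the statement comes out clean.
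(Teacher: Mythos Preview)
Your proposal is correct and is exactly the argument the paper intends: the corollary is stated immediately after Lemma \ref{Lemma: Intersection of commensurable sets} without proof, as a direct specialisation to the orbit map $x \mapsto x\cdot s$, and your application of the lemma with $Y_1 = X$, $Y_2 = \Stab(s)$ recovers precisely the conclusion $X \subset F'(X^{-1}X \cap \Stab(s))$.
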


Lemma \ref{Lemma: Intersection of commensurable sets} also yields information on intersections of approximate groups. 

\begin{lemma}\label{Lemma: Intersection of approximate subgroups}
Let $\Lambda_1, \ldots, \Lambda_n$ be $K_1,\ldots,K_n$-approximate subgroups of some group. We have: 
\begin{enumerate}

\item if $k_1, \ldots, k_n \geq 2$, then $\Lambda_1^{k_1} \cap \cdots \cap \Lambda_n^{k_n}$ is a $K_1^{2k_1-1}\cdots K_n^{2k_n-1}$-approximate subgroup;
\item if $\Lambda_1', \ldots, \Lambda_n'$ is a family of approximate subgroups such that $\Lambda_i'$ is commensurable with $\Lambda_i$ for all $1 \leq i \leq n$, then $\Lambda_1'^2 \cap \cdots \cap \Lambda_n'^2$ is commensurable with $\Lambda_1^2\cap \cdots \cap \Lambda_n^2$.
\end{enumerate}

\end{lemma}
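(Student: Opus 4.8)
The plan is to deduce both items directly from Lemma~\ref{Lemma: Intersection of commensurable sets} together with the defining covering property of approximate subgroups. For item (1), I would first recall the standard fact that if $\Lambda$ is a $K$-approximate subgroup, then $\Lambda^k$ is covered by $K^{k-1}$ translates of $\Lambda$ for every $k \geq 1$; in particular $\Lambda^{2k-1} \subset F\Lambda$ with $|F| \leq K^{2k-2}$. Writing $X := \Lambda_1^{k_1} \cap \cdots \cap \Lambda_n^{k_n}$, each $\Lambda_i^{k_i}$ is symmetric, so $X^2 \subset \Lambda_i^{2k_i}$ for every $i$; I then want to see that $\Lambda_i^{2k_i}$ is covered by few translates of $\Lambda_i^{k_i}$ in a way that lands back inside all the other powers too. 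The clean route is: $X^2 \subset \bigcap_i \Lambda_i^{2k_i}$, and $\Lambda_i^{2k_i} \subset F_i \Lambda_i^{k_i}$ with $|F_i| \leq K_i^{k_i}$ (again by the covering property). Apply Lemma~\ref{Lemma: Intersection of commensurable sets} to $X^2$ with the sets $Y_i = \Lambda_i^{k_i}$: since $\Lambda_i^{k_i}$ is symmetric, $Y_i^{-1}Y_i = \Lambda_i^{2k_i}$, which is not quite what we want --- so instead I apply it with $Y_i = \Lambda_i^{k_i}$ noting $X^2 \subset F_i' \Lambda_i^{k_i}$ where $F_i' $ comes from $X^2 \subset \Lambda_i^{2k_i} \subset F_i \Lambda_i^{k_i}$, and conclude $X^2 \subset F'\bigcap_i \Lambda_i^{2k_i} = F' X^2$-type containment. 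Being slightly more careful: I get $X^2 \subset F' \bigl(\bigcap_i (\Lambda_i^{k_i})^{-1}\Lambda_i^{k_i}\bigr) = F'\bigcap_i \Lambda_i^{2k_i}$, and then bound each $\Lambda_i^{2k_i}$ by translates of $\Lambda_i^{k_i}$ once more to land in $F'' X$. Tracking the constant through the two covering steps and the bound $|F'| \leq \prod_i |F_i|$ from Lemma~\ref{Lemma: Intersection of commensurable sets} gives a finite set of size at most $\prod_i K_i^{2k_i - 1}$, which is the claimed approximation constant (after re-symmetrising $X$, which is automatic since each $\Lambda_i^{k_i}$ is symmetric).

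For item (2), the argument is shorter. Since $\Lambda_i'$ is commensurable with $\Lambda_i$, fix finite $F_i$ with $\Lambda_i' \subset F_i \Lambda_i$ and $\Lambda_i \subset F_i \Lambda_i'$; enlarging $F_i$ we may take $F_i = F_i^{-1} \ni e$. Set $X := \Lambda_1'^2 \cap \cdots \cap \Lambda_n'^2$. For each $i$, $X \subset \Lambda_i'^2 \subset F_i \Lambda_i F_i \Lambda_i \subset F_i' \Lambda_i^{2k}$ for a suitable $k$ and finite $F_i'$ (using that $\Lambda_i$ is an approximate subgroup to absorb the middle $F_i$-translate into a bounded power of $\Lambda_i$, i.e. $\Lambda_i F_i \subset F_i'' \Lambda_i^{m}$ for some fixed $m$ depending only on $|F_i|$ and $K_i$ --- this is the standard fact that a finite set is commensurated by any approximate subgroup containing it up to translation). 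Then Lemma~\ref{Lemma: Intersection of commensurable sets} applied to $X$ with $Y_i = \Lambda_i^{2k}$ (symmetric) gives $X \subset F' \bigcap_i \Lambda_i^{4k}$, and by item (1) the latter is commensurable with $\bigcap_i \Lambda_i^2$ (both being approximate subgroups comparable to $\bigcap_i \Lambda_i^2$ via bounded powers). The reverse inclusion $\Lambda_1^2 \cap \cdots \cap \Lambda_n^2 \subset F \cdot (\Lambda_1'^2 \cap \cdots \cap \Lambda_n'^2)$ follows symmetrically by exchanging the roles of $\Lambda_i$ and $\Lambda_i'$.

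The main obstacle is bookkeeping rather than conceptual: one must be careful that intersecting the covering relations via Lemma~\ref{Lemma: Intersection of commensurable sets} only directly produces a cover by translates of $\bigcap_i \Lambda_i^{2k_i}$ (the ``doubled'' powers), not of $\bigcap_i \Lambda_i^{k_i}$ itself, so an extra covering step is needed to descend to the right power, and this is exactly where the exponent $2k_i - 1$ (rather than something smaller) enters. For item (2) the subtlety is that commensurability of the $\Lambda_i'$ with $\Lambda_i$ introduces finite translates \emph{inside} the products $\Lambda_i'^2$, which must be reabsorbed into powers of $\Lambda_i$ using the approximate-subgroup property; doing this cleanly and uniformly for all $i$ simultaneously is the only place where a small amount of care is required, but no new idea beyond Lemma~\ref{Lemma: Intersection of commensurable sets} is needed.
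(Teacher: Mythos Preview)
The paper does not actually prove this lemma: it is listed among ``well-known and useful results'' with a citation to \cite[\S 2.1]{machado2019goodmodels}, so there is no in-paper proof to compare against. Your overall strategy---deduce both parts from Lemma~\ref{Lemma: Intersection of commensurable sets}---is the standard one and is correct in spirit, but your execution of (1) has a genuine slip that spoils the constant.

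For (1), you keep applying Lemma~\ref{Lemma: Intersection of commensurable sets} with $Y_i = \Lambda_i^{k_i}$, which lands you in $\bigcap_i \Lambda_i^{2k_i}$, and then you propose a second covering step to descend to $\bigcap_i \Lambda_i^{k_i}$. But that second step is just the same problem again (covering $\bigcap_i \Lambda_i^{2k_i}$ by translates of $\bigcap_i \Lambda_i^{k_i}$), and if you resolve it by another application of the intersection lemma you pick up an extra factor; the product of the two steps gives $\prod_i K_i^{3k_i-1}$, not $\prod_i K_i^{2k_i-1}$. The fix is to use $Y_i = \Lambda_i$ rather than $Y_i = \Lambda_i^{k_i}$: from $X^2 \subset \Lambda_i^{2k_i} \subset F_i \Lambda_i$ with $|F_i| \le K_i^{2k_i-1}$, Lemma~\ref{Lemma: Intersection of commensurable sets} gives directly $X^2 \subset F' \bigcap_i \Lambda_i^2$ with $|F'| \le \prod_i K_i^{2k_i-1}$, and since $k_i \ge 2$ we have $\bigcap_i \Lambda_i^2 \subset X$. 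One step, correct constant.

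For (2), your argument works but is more convoluted than necessary. You do not need to ``absorb the middle $F_i$-translate'' (a step which, as you phrase it, would require $F_i \subset \langle \Lambda_i \rangle$, which is not given). Instead, note that commensurability of $\Lambda_i'$ with $\Lambda_i$ gives $\Lambda_i'^2 \subset F_i \Lambda_i$ directly (since $\Lambda_i'^2 \subset F_i'\Lambda_i'$ and $\Lambda_i' \subset F_i''\Lambda_i$). Then one application of Lemma~\ref{Lemma: Intersection of commensurable sets} to $X' = \bigcap_i \Lambda_i'^2$ with $Y_i = \Lambda_i$ yields $X' \subset F' \bigcap_i \Lambda_i^2$ immediately; the reverse inclusion is symmetric.
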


 \subsection{Good models}\label{Subsubsection: Good models}
 An effective way to build approximate subgroups goes through the construction of \emph{good models}. Let $\Lambda$ be an approximate subgroup of a group $\Gamma$ that commensurates it. A group homomorphism $f: \Gamma \rightarrow H$ with target a locally compact group $H$ is called a \emph{good model (of $(\Lambda, \Gamma)$)} if:
\begin{enumerate}
\item $f(\Lambda)$ is relatively compact;
 \item there is $U \subset H$ a neighbourhood of the identity such that $f^{-1}(U) \subset \Lambda$.
\end{enumerate}
Any approximate subgroup commensurable with an approximate subgroup that admits a good model is said \emph{laminar}. The introduction of good models is justified by:
\begin{proposition}[\S 3.5, \cite{machado2019goodmodels}]\label{Proposition: Equivalence good models and model sets}
An approximate lattice $\Lambda$ in some locally compact group is laminar if and only if it is commensurable with a model set.
\end{proposition}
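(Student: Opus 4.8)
The plan is to prove both directions by exploiting the cut-and-project construction directly. For the "only if" direction, suppose $\Lambda$ is laminar, so after passing to a commensurable approximate subgroup $\Lambda'$ we may assume $\Lambda'$ itself admits a good model $f\colon \Gamma \to H$ where $\Gamma = \langle \Lambda' \rangle$ commensurates $\Lambda'$. The idea is to form the closure $H_0 := \overline{f(\Gamma)}$ inside $H$ — which is locally compact — and consider the subgroup $\Gamma_0 := \{(\gamma, f(\gamma)) : \gamma \in \Gamma\}$ of $G \times H_0$. One shows $\Gamma_0$ is discrete (using that $f^{-1}(U) \subset \Lambda'$ is uniformly discrete near the identity, since $\Lambda'$ is an approximate lattice) and that it has finite co-volume in $G \times H_0$ — here one needs that $\Lambda'$ has finite co-volume in $G$ together with the fact that $\overline{f(\Lambda')}$ has finite Haar measure in $H_0$ because $f(\Lambda')$ is relatively compact. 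Then $\Gamma_0$ is a lattice in $G \times H_0$, and taking the window $W_0$ to be (a suitable relatively compact symmetric neighbourhood containing) $\overline{f(\Lambda')}$, the model set $M(G, H_0, \Gamma_0, W_0) = p_G(\Gamma_0 \cap (G \times W_0))$ contains $\Lambda'$; property (2) of a good model gives the reverse containment up to finitely many translates, so $M(G,H_0,\Gamma_0,W_0)$ is commensurable with $\Lambda'$, hence with $\Lambda$.

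For the "if" direction, suppose $\Lambda$ is commensurable with a model set $M = M(G, H, \Gamma, W_0)$. Replacing $\Lambda$ by $M$ (commensurability is an equivalence relation and laminarity is defined up to it), I would take $\Gamma_0 := \langle M \rangle$, which is contained in $p_G(\Gamma)$, and define $f := p_H \circ (p_G|_\Gamma)^{-1}$ on $\Gamma_0$ — this makes sense because $p_G|_\Gamma$ is injective when the lattice $\Gamma$ projects injectively to $G$, which one arranges by first quotienting $H$ by the (compact-by-discrete) obstruction, a standard reduction for model sets. Then $f(M) \subseteq W_0$ is relatively compact, giving property (1), and $f^{-1}(\mathrm{int}(W_0)) \subseteq M$ gives property (2); moreover $p_G(\Gamma) \subseteq \Gamma_0$ commensurates $M$ since $\Gamma$ normalises the window-defining data up to finite index. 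Hence $f$ is a good model of $(M, \langle M \rangle)$, so $M$ — and therefore $\Lambda$ — is laminar.

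The main obstacle is the "only if" direction: verifying that $\Gamma_0 = \{(\gamma, f(\gamma))\}$ is genuinely a lattice in $G \times H_0$, and in particular that it has finite co-volume rather than merely being discrete. Discreteness follows fairly directly from the uniform discreteness of $\Lambda'$ (an approximate lattice) combined with property (2) of the good model. Finite co-volume is subtler: one must show that a fundamental domain for $\Gamma_0$ can be built from a finite-measure fundamental domain $\mathcal{F}$ for $\Lambda'$ in $G$ together with $H_0$ itself (which need not be compact), and that the resulting set has finite measure — this uses that $\langle \Lambda' \rangle$ has finite co-volume in $G$ (not just $\Lambda'$, which is where the hypothesis that $\Lambda'$ is an approximate \emph{lattice}, not merely an approximate subgroup admitting a good model, is essential) and a Fubini-type argument across the $G$ and $H_0$ directions. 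I expect this to be handled by the machinery already developed in \cite[\S 3.5]{machado2019goodmodels}, so the proof here can proceed by citing the relevant lemmas and assembling them, rather than redoing the measure-theoretic bookkeeping in full.
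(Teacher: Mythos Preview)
The paper does not prove this proposition; it is quoted from \cite[\S 3.5]{machado2019goodmodels} without argument. Your sketch follows the standard proof from that reference: the star map $p_H \circ (p_G|_\Gamma)^{-1}$ is a good model for a model set, and conversely the graph $\{(\gamma, f(\gamma))\}$ of a good model sits as a lattice in $G \times \overline{f(\Gamma)}$, from which a model set commensurable with $\Lambda'$ is read off.

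Two corrections to your write-up. In the ``if'' direction, the clause ``$p_G(\Gamma) \subseteq \Gamma_0$ commensurates $M$ since $\Gamma$ normalises the window-defining data'' is garbled --- the inclusion goes the other way, and commensuration of $M$ by $\langle M \rangle$ is automatic for any approximate subgroup. In the ``only if'' direction your diagnosis of the obstacle is right but the proposed fix is not: $\langle \Lambda' \rangle$ is typically \emph{dense} in $G$ (this is the generic situation for a genuine model set), so it has no meaningful co-volume and no Fubini argument over $\langle \Lambda' \rangle \backslash G$ is available. The argument in the reference instead shows directly that a set of the form $\mathcal{F} \times W_0$ meets every $\Gamma_0$-orbit, where $W_0 \subset H_0$ is a compact neighbourhood of the identity and $\mathcal{F} \subset G$ has finite measure with $f^{-1}(V)\,\mathcal{F} = G$ for a small neighbourhood $V \subset W_0$; density of $f(\Gamma)$ in $H_0$ lets one translate any $h \in H_0$ into $W_0$ by some $\gamma$ lying in a fixed translate of the approximate lattice $f^{-1}(V)$, which is what closes the argument.
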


In \cite{MR2833482}, \cite{MR3090256} and \cite{MR3345797} a more detailed definition is written. This other definition takes into account potential extra structure. In what follows we will only need a few instances of good models enjoying extra structure - and the fact that they do is automatic.

\begin{lemma}[Bohr-type compactification,  \S 4.4, \cite{machado2019goodmodels}]\label{Lemma: Bohr compactification and abstract automorphisms}
Let $\Lambda$ be an approximate subgroup of a group $\Gamma$ such that $Comm_{\Gamma}(\Lambda)=\Gamma$. There is a group homomorphism $f_0: \Gamma \rightarrow H_0$ with $H_0$ locally compact and $\overline{f_0(\Lambda)}$ compact such that for all other group homomorphisms $f: \Gamma \rightarrow H$ with $H$ locally compact and $f(\Lambda)$ relatively compact, there is a continuous group homomorphism $\phi: H_0 \rightarrow H$ such that $f=\phi \circ f_0$.

In particular, for any group endomorphism $a$ of $\Gamma$ such that $a(\Lambda)$ is commensurable with $\Lambda$ there is a unique continuous group endomorphism $\alpha$ of $H_0$ such that the following diagram commutes
\[\begin{tikzcd}
\Gamma \arrow{r}{f} \arrow[swap]{d}{a} & H_0 \arrow{d}{\alpha} \\
\Gamma \arrow{r}{f} & H_0
\end{tikzcd}
\]

Moreover, if $\Lambda$ has a good model, then $f_0$ is a good model of $\Lambda$. 
\end{lemma}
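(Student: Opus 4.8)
The plan is to construct $H_0$ as a "relatively compact completion" of $\Gamma$ with respect to the family of all homomorphisms to locally compact groups that send $\Lambda$ to a relatively compact set, and then verify the universal property by a standard limiting argument. Concretely, first I would let $\mathcal{S}$ be the class of pairs $(H, f)$ where $H$ is a locally compact (Hausdorff) group, $f : \Gamma \to H$ a homomorphism, and $\overline{f(\Lambda)}$ compact; a routine cardinality/set-theoretic reduction (each such $f$ factors through $\Gamma/\ker f$, and we may take $H = \overline{f(\Gamma)}$, which is second countable when $\Gamma$ is countable, or otherwise bounded by a fixed cardinal) lets us assume $\mathcal{S}$ is a set. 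Then I would form the product $P := \prod_{(H,f) \in \mathcal{S}} H$ with the product topology, let $\iota : \Gamma \to P$ be the diagonal map, and define $H_0 := \overline{\iota(\Gamma)}$, with $f_0 := \iota$. The key point is that $H_0$ is locally compact: $\overline{\iota(\Lambda)}$ is contained in $\prod \overline{f(\Lambda)}$, a compact subset of $P$ by Tychonoff, so $\overline{f_0(\Lambda)}$ is compact; and since $\Lambda$ generates $\Gamma$ (or rather $\langle\Lambda\rangle$; here one uses $\Comm_\Gamma(\Lambda) = \Gamma$ together with the standing convention that $\Gamma$ is generated by $\Lambda$ and the commensurating elements — more carefully, $H_0$ is the increasing union of the relatively compact sets $\overline{f_0(\Lambda^n F^n)}$ for a suitable finite $F$, hence is locally compact as an open subgroup of a locally compact group, or directly because a topological group that is a countable union of compacta with nonempty interior somewhere is locally compact). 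The universal property is then immediate: given $f : \Gamma \to H$ with $f(\Lambda)$ relatively compact, the pair $(\overline{f(\Gamma)}, f)$ lies in $\mathcal{S}$, so the coordinate projection $\mathrm{pr}_{(H,f)} : P \to H$ restricts to a continuous homomorphism $\phi : H_0 \to \overline{f(\Gamma)} \subset H$ with $\phi \circ f_0 = f$; uniqueness of $\phi$ holds because $f_0(\Gamma)$ is dense in $H_0$.

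For the statement about endomorphisms, suppose $a : \Gamma \to \Gamma$ satisfies $a(\Lambda)$ commensurable with $\Lambda$. Then $f_0 \circ a : \Gamma \to H_0$ sends $\Lambda$ into a finite union of translates of $\overline{f_0(\Lambda)}$, hence $\overline{(f_0 \circ a)(\Lambda)}$ is compact, so $(H_0, f_0 \circ a) \in \mathcal{S}$ and the universal property furnishes a unique continuous endomorphism $\alpha : H_0 \to H_0$ with $\alpha \circ f_0 = f_0 \circ a$; uniqueness again follows from density of $f_0(\Gamma)$. This gives exactly the commuting square in the statement.

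Finally, suppose $\Lambda$ admits a good model $g : \Gamma \to K$, so $g(\Lambda)$ is relatively compact and $g^{-1}(U) \subset \Lambda$ for some identity neighbourhood $U \subset K$. By the universal property there is continuous $\phi : H_0 \to K$ with $g = \phi \circ f_0$. Then $f_0^{-1}\big(\phi^{-1}(U)\big) = g^{-1}(U) \subset \Lambda$; so if I can show $\phi^{-1}(U)$ contains an identity neighbourhood of $H_0$, condition (2) for $f_0$ follows (condition (1), relative compactness of $f_0(\Lambda)$, is already established). The obstacle is exactly this: in general $\phi$ need not be open, so $\phi^{-1}(U)$ need not be a neighbourhood. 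I would resolve this by observing that $f_0$ is a good model iff it is, up to the standard normalisation, the "smallest" such completion, and more efficiently by a direct argument: shrink $U$ so that $\overline{U}$ is compact and $\overline{U} \cap g(\Lambda^{-1}\Lambda \setminus \Lambda) $ avoids... — cleaner is to invoke the characterisation (Proposition \ref{Proposition: Equivalence good models and model sets} and the cited \S 4.4 of \cite{machado2019goodmodels}) that having a good model is a commensurability invariant and is equivalent to a separation property of $f_0$ that can be read off the compact group $\overline{f_0(\Lambda^2)}$: namely one checks that $f_0^{-1}\big(W\big) \subset \Lambda$ for some identity neighbourhood $W$ of $H_0$ by pulling back, through the factoring $\phi$, a separating open set witnessing the good model property of $g$, using that $\Lambda$ is the same on both sides. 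I expect this last step — upgrading the abstract factorisation to the genuine good-model property for $f_0$ — to be the only real subtlety, and it is handled precisely by the cited results, so in the write-up I would keep it brief and refer to \cite[\S 4.4]{machado2019goodmodels}.
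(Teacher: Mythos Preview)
The paper does not include a proof of this lemma; it is quoted from \cite[\S 4.4]{machado2019goodmodels}, so there is no in-paper argument to compare against. I will therefore evaluate your proposal on its own terms.

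Your product-and-diagonal construction is the natural first attempt, and once $H_0$ is known to be locally compact, the universal property and the endomorphism statement follow exactly as you say. The gap is precisely at local compactness. An arbitrary product $\prod_{(H,f)} H$ of locally compact groups is locally compact only when all but finitely many factors are compact, which will essentially never hold here, and a closed subgroup of a non-locally-compact group need not be locally compact. You correctly note that $\overline{f_0(\Lambda)} \subset \prod \overline{f(\Lambda)}$ is compact by Tychonoff, but this does not make $H_0$ locally compact unless that compact set has nonempty interior in $H_0$, and none of your parenthetical arguments establish this: the phrase ``a countable union of compacta with nonempty interior somewhere'' assumes exactly what is to be proved, and a Baire-category route is unavailable because $H_0$ is not \emph{a priori} Baire. (Nor is $\Gamma$ assumed countable or countably generated, so even $\sigma$-compactness of $H_0$ is unclear.) This is the substantive content of the construction and needs a real argument; see the cited reference for one way to carry it out.

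Separately, you have overcomplicated the final ``moreover'' clause. If $g:\Gamma \to K$ is a good model with $g^{-1}(U) \subset \Lambda$ for some open neighbourhood $U$ of $e$, and $\phi: H_0 \to K$ is the continuous factorisation with $g = \phi \circ f_0$, then $V := \phi^{-1}(U)$ is open in $H_0$ simply by continuity of $\phi$ --- preimages of open sets under continuous maps are open, and no openness of $\phi$ is required --- and $f_0^{-1}(V) = g^{-1}(U) \subset \Lambda$. So once the universal property is in hand, $f_0$ is a good model immediately; your worry about $\phi$ not being open is a red herring.
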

 And a counterpart for the `smallest' good model: 
 
\begin{proposition}[Prop 3.6, \cite{machado2019goodmodels}]\label{Proposition: Minimal commensurable approximate subgroup}
Let $\Lambda$ be an approximate subgroup of some group. Suppose that $\Lambda$ is laminar. Then there is an approximate subgroup $\Lambda'$ commensurable with $\Lambda$ and a good model $f: \langle \Lambda' \rangle \rightarrow H$ with target a connected Lie group and dense image. Moreover: 
\begin{enumerate}
\item if $\Lambda'' \subset \Lambda'$ is any approximate subgroup commensurable with $\Lambda'$, $\langle \Lambda'' \rangle$ has finite index in $\langle \Lambda' \rangle$;
\item we can choose $H$ without compact normal subgroup and such an $H$ is unique. 
\end{enumerate}
\end{proposition}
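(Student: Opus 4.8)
The plan is to start from an arbitrary good model of an approximate subgroup commensurable with $\Lambda$ and to simplify its target in stages, always passing to a commensurable approximate subgroup via the covering lemmas, until the target is a connected Lie group without compact normal subgroup; properties (1) and (2) are then read off from this essentially canonical model. By laminarity fix $\Lambda_0$ commensurable with $\Lambda$ and a good model $g\colon\langle\Lambda_0\rangle\to H_0$. Replacing $H_0$ by $\overline{g(\langle\Lambda_0\rangle)}$ gives dense image while keeping the good-model property (intersect the witnessing neighbourhood with this closed, hence locally compact, subgroup). Now $H_0$ is compactly generated by the relatively compact symmetric set $\overline{g(\Lambda_0)}$, so by the Gleason--Yamabe theorem it has an open subgroup $H_1$ with arbitrarily small compact normal subgroups whose quotient is a Lie group. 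Since $g(\Lambda_0)$ is relatively compact it meets only finitely many of the open cosets of $H_1$, so Corollary~\ref{Corollary: Intersection with stabiliser} applied to the action of $\langle\Lambda_0\rangle$ on $\langle\Lambda_0\rangle/g^{-1}(H_1)$ lets us replace $\Lambda_0$ by the commensurable approximate subgroup $\Lambda_0^2\cap g^{-1}(H_1)$ (Lemma~\ref{Lemma: Intersection of approximate subgroups}), reducing to $g(\langle\Lambda_0\rangle)\subseteq H_1$. Choosing a compact normal $K\trianglelefteq H_1$ with $H_1/K$ Lie and quotienting by $K\cap\overline{g(\langle\Lambda_0\rangle)}$, a routine application of Lemma~\ref{Lemma: Intersection of commensurable sets} (cover the relevant compact set by finitely many translates of a small symmetric neighbourhood and pull back) shows the preimage downstairs of a small neighbourhood is again a commensurable approximate subgroup, while the new target embeds as a closed subgroup of the Lie group $H_1/K$, hence is Lie. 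A final application of the coset argument passes to the identity component, and a final closure restores dense image; this produces $\Lambda'$ commensurable with $\Lambda$ and a good model $f\colon\langle\Lambda'\rangle\to H$ with $H$ connected Lie and $f$ of dense image.

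For (2), a connected Lie group has a unique maximal compact normal subgroup $K_{\max}$ (the identity component of any compact normal subgroup is a central torus, and the family of compact normal subgroups is bounded); quotienting by $K_{\max}$ is one more instance of the reduction above and makes the target connected Lie without compact normal subgroup. Uniqueness of such an $H$ follows from the universal property of the Bohr-type model of Lemma~\ref{Lemma: Bohr compactification and abstract automorphisms}: given two such models of commensurable approximate subgroups, restrict both to the group generated by a common commensurable approximate subgroup --- a good model restricts there to a good model of a suitable power --- so that both factor through one universal locally compact group $\widehat H$; each factor map $\widehat H\to H_i$ is surjective (because $\overline{f_i(\Lambda_i)}$ already contains a neighbourhood of the identity and $H_i$ is connected), and one identifies its kernel with the smallest closed normal subgroup of $\widehat H$ admitting such a quotient, whence $H_1\cong H_2$ compatibly with the models.

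Property (1) is the main obstacle. Let $\Lambda''\subseteq\Lambda'$ be an approximate subgroup commensurable with $\Lambda'$; set $\Gamma'=\langle\Lambda'\rangle$, $\Gamma''=\langle\Lambda''\rangle$, and fix finite $F\subseteq\Gamma'$ with $\Lambda'\subseteq F\Lambda''$. Since $\ker f$ is normal in $\Gamma'$ one has the clean factorisation $[\Gamma':\Gamma'']=[f(\Gamma'):f(\Gamma'')]\cdot[\ker f:\ker f\cap\Gamma'']$. The second factor is $\le|F|$: indeed $\ker f\subseteq f^{-1}(U)\subseteq\Lambda'\subseteq F\Gamma''$, so the subgroup $\ker f$ meets at most $|F|$ left cosets of $\Gamma''$. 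For the first factor one must use the good model: since $U\cap f(\langle\Lambda'\rangle)$ is dense in $U$ and contained in $f(F)f(\Lambda'')$, one gets $\overline U\subseteq f(F)\overline{f(\Lambda'')}$, so by Baire category $\overline{f(\Lambda''^2)}$ has nonempty interior, hence --- being the square of the symmetric compact set $\overline{f(\Lambda'')}$ --- is a neighbourhood of the identity in the connected group $H$; consequently the compact set $\overline{f(\Lambda')}$ lies in a bounded power of it, and combining this with $f(\Gamma')=\langle f(\Lambda')\rangle$ and the density of $f(\Gamma'')$ should confine $f(\Gamma')$ to finitely many cosets of $f(\Gamma'')$. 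I expect this last implication --- that relative density of an approximate subgroup inside a connected Lie group, together with the good-model property, forces the generated subgroups to be commensurable as abstract groups --- to be the hard point; if the direct argument resists, the fallback is to build $\Lambda'$ so that in addition $\langle\Lambda'\rangle$ is minimal up to finite index among all $\langle\Lambda''\rangle$ with $\Lambda''$ in the commensurability class (the family being downward directed via $(\Lambda_1'',\Lambda_2'')\mapsto\Lambda_1''\cap\Lambda_2''^2$, and the good model ruling out infinite strictly descending chains of indices), after which (1) is immediate.
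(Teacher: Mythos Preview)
Your construction of $\Lambda'$, $f$, and $H$ via Gleason--Yamabe and successive reductions is more elaborate than the paper, which simply cites \cite{machado2019goodmodels} for the existence and for part~(2); your outline there is broadly sound, if sketchy in places (e.g.\ the uniqueness argument via the Bohr-type model needs the kernels of the two factor maps $\widehat H\to H_i$ to coincide, not merely to be comparable).

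The genuine gap is in part~(1). Your factorisation $[\Gamma':\Gamma'']=[f(\Gamma'):f(\Gamma'')]\cdot[\ker f:\ker f\cap\Gamma'']$ is correct, and the bound on the second factor is fine, but the first factor is a red herring: density of $f(\Gamma'')$ in $H$ says nothing about its index in $f(\Gamma')$ as an abstract group (think of $\mathbb{Z}[\sqrt2]\subset\mathbb{Z}[\sqrt2,\sqrt3]$, both dense in $\mathbb{R}$). You correctly sense this is ``the hard point'' and propose a fallback, but in fact no hard point exists---the factorisation is what creates the difficulty. The paper's argument bypasses it entirely: once you know $f(\Gamma'')$ is dense in $H$ (which you have, via Baire and connectedness), take any $\gamma\in\Gamma'$ and choose $\gamma''\in\Gamma''$ with $f(\gamma'')\in W^{-1}f(\gamma)$; then $\gamma\gamma''^{-1}\in f^{-1}(W)\subset\Lambda'\subset F\Lambda''\subset F\Gamma''$, so $\gamma\in F\Gamma''$. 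This gives $\Gamma'\subset F\Gamma''$ directly, hence $[\Gamma':\Gamma'']\le|F|$. The point is to pull the density statement back through the good-model inclusion $f^{-1}(W)\subset\Lambda'$ \emph{before} passing to generated subgroups, rather than first pushing forward to $f(\Gamma')$ and losing the leverage.
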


\begin{proof}
(2) is proved in \cite{machado2019goodmodels}.  Let $\Lambda''$ and $\Lambda'$ be as in (1). Choose $F \subset \langle \Lambda' \rangle$ such that $\Lambda'\subset F\Lambda''$. Let $W \subset H$ be a neighbourhood of the identity such that $f^{-1}(W) \subset \Lambda'$. We know that $\overline{f(\Lambda'')}$ has non-empty interior by the Baire category theorem. So $f(\Lambda'')$ generates a dense subgroup of $H$. Therefore, $$\langle \Lambda' \rangle \subset f^{-1}(W)\langle \Lambda'' \rangle \subset F\langle \Lambda''\rangle.$$

\end{proof}

Laminar approximate subgroups are moreover stable under many operations. For instance, both homomorphic images and inverse images of laminar sets are laminar \cite[Lem. 3.3]{machado2019goodmodels}.

 \subsection{Quasi-homomorphisms as models}\label{Subsubsection: Quasi-homomorphisms as models}
 Unfortunately, not all approximate subgroups have good models. Counter-examples were found in \cite{hrushovski2019amenability}, \cite{machado2019goodmodels} and \cite{hrushovski2020beyond} and all involved the notion of \emph{quasi-homomorphism} in an essential way.

 \begin{definition}\label{Definition: Hrushovski's quasi-morphisms}
 Let $f: \Gamma \rightarrow H$ be a map between a group and a locally compact group. We say that $f$ is a \emph{quasi-homomorphism} if the defect $D:=\{f(\gamma_1\gamma_2)f(\gamma_2)^{-1}f(\gamma_1)^{-1} : \gamma_1, \gamma_2\in \Gamma\}$ of $f$ is contained in a compact subset $K$ of $H$ that is \emph{normal} in $H$ i.e. $ \forall h \in H, K^h=K$. 
 \end{definition}

Quasi-homomorphisms are studied in \cite{hrushovski2020beyond} by way of considering the subgroups $A$ topologically generated by normal compact subsets. These subgroups are coined \emph{rigid subgroups} in \cite[App. C]{hrushovski2020beyond} and we will encounter them in this work as well.  Quasi-homomorphisms are related to approximate groups in the same way group homomorphisms are related to groups:
 
 \begin{lemma}\label{Lemma: Graphs of quasi-homomorphisms, continuous case}
 Let $f: \Gamma  \rightarrow H$ be a symmetric quasi-homomorphism with defect $D$ contained in $K$ a  neighbourhood of the identity in $A:=\overline{\langle D \rangle}$ that is normal in $H$. Then $\Gamma_{f,K}:=\{(\gamma,f(\gamma)k) : \gamma \in \Gamma, k\in K\}$ is an approximate subgroup, $f(\Gamma)K$ is an approximate subgroup and for all relatively compact symmetric neighbourhoods of the identity $W_1, W_2 \in H$, the subsets $f^{-1}(W_1K^{-1})$ and $f^{-1}(W_2K^{-1})$ are commensurable approximate subgroups.
 \end{lemma}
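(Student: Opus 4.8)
The plan is to prove the three assertions in turn, deducing $(2)$ from $(1)$ and isolating in $(3)$ a single ``change of window'' statement that carries all the weight.

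Throughout one may assume $K=K^{-1}$: replacing $K$ by $K\cup K^{-1}$ keeps it a compact, $H$-normal neighbourhood of the identity in $A$ that contains $D$, and $D$ is symmetric because $\left(f(\gamma_1\gamma_2)f(\gamma_2)^{-1}f(\gamma_1)^{-1}\right)^{-1}=f(\gamma_2^{-1}\gamma_1^{-1})f(\gamma_1^{-1})^{-1}f(\gamma_2^{-1})^{-1}$ is again in $D$, $f$ being symmetric. The elementary fact used repeatedly is that normality of $K$ in $H$ lets one commute a factor in $K$ past any value $f(\gamma)$, replacing it by another factor in $K$. For $(1)$: symmetry of $\Gamma_{f,K}$ and $e\in\Gamma_{f,K}$ follow from this together with $f(e)=e$ and $f(\gamma^{-1})=f(\gamma)^{-1}$; and, writing $f(\gamma_1\gamma_2)=d\,f(\gamma_1)f(\gamma_2)$ with $d\in D\subseteq K$ and commuting every $K$-factor to the right, one gets $\Gamma_{f,K}^2\subseteq\{(\gamma,f(\gamma)k):\gamma\in\Gamma,\ k\in K^3\}$. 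Since $K^3$ is compact and $K$ is a neighbourhood of the identity in $A$, finitely many right translates $Kc_1,\dots,Kc_m$ with $c_i\in K^3$ cover $K^3$, so the latter set lies in $\Gamma_{f,K}\cdot\{(e,c_1),\dots,(e,c_m)\}$; taking inverses yields $\Gamma_{f,K}^2\subseteq F\,\Gamma_{f,K}$ with $F$ finite. For $(2)$: the set $f(\Gamma)K$ is the image of $\Gamma_{f,K}$ under the continuous projection $\Gamma\times H\to H$, hence an approximate subgroup as the homomorphic image of one (or directly: $(f(\Gamma)K)^2\subseteq f(\Gamma)f(\Gamma)K^2\subseteq f(\Gamma)K^3$, and $K^3$ is covered by finitely many translates of $K$).

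For $(3)$, observe first that $f^{-1}(WK^{-1})$ is the projection to $\Gamma$ of $\Gamma_{f,K}\cap(\Gamma\times W)$ --- a cut-and-project set built from the approximate subgroup $\Gamma_{f,K}$ --- since $f(\gamma)\in WK^{-1}$ precisely when $f(\gamma)k\in W$ for some $k\in K$; symmetry and $e\in f^{-1}(WK^{-1})$ are immediate from $K=K^{-1}$ and normality. Everything reduces to the change-of-window statement: for relatively compact symmetric neighbourhoods of the identity $W_1\subseteq W_2$, one has $f^{-1}(W_2K^{-1})\subseteq F\,f^{-1}(W_1K^{-1})$ for some finite $F\subseteq\Gamma$. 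To prove it, let $S:=f(\Gamma)\cap W_2K^{-1}$, which is relatively compact, and pick a neighbourhood $U$ of the identity with $U\subseteq W_1$; by compactness $S$ is covered by finitely many translates $f(\gamma_1)U,\dots,f(\gamma_m)U$ with centres chosen in $S\subseteq f(\Gamma)$ --- the key being that the centres are then actual values of $f$, so that no density hypothesis on $f(\Gamma)$ is needed. If $\gamma\in f^{-1}(W_2K^{-1})$ then $f(\gamma)\in S$, so $f(\gamma_j)^{-1}f(\gamma)\in W_1$ for some $j$; writing $f(\gamma_j^{-1}\gamma)=d\,f(\gamma_j)^{-1}f(\gamma)$ with $d\in D\subseteq K^{-1}$ and absorbing $d$ through normality of $K$ gives $f(\gamma_j^{-1}\gamma)\in W_1K^{-1}$, i.e. $\gamma_j^{-1}\gamma\in f^{-1}(W_1K^{-1})$. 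Applying this with $W_1\cap W_2$ as an intermediate window and combining with the obvious inclusions shows that $f^{-1}(W_1K^{-1})$ and $f^{-1}(W_2K^{-1})$ are commensurable for arbitrary $W_1,W_2$. Finally, each $f^{-1}(WK^{-1})$ is an approximate subgroup: the defect bound and normality give $\bigl(f^{-1}(WK^{-1})\bigr)^2\subseteq f^{-1}(W^2K^{-3})\subseteq f^{-1}\bigl((W^2K^2)K^{-1}\bigr)$, the set $W^2K^2$ is again a relatively compact symmetric neighbourhood of the identity, and the change-of-window statement converts this into $\bigl(f^{-1}(WK^{-1})\bigr)^2\subseteq F\,f^{-1}(WK^{-1})$.

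The only delicate point is the change-of-window step, and the obstacle it hides is that the naive approach --- covering $W_2K^{-1}$ by translates of a $K$-sized neighbourhood --- produces an uncontrolled accumulation of powers of $K$ and never closes up. The remedy is to cover only the relatively compact trace $f(\Gamma)\cap W_2K^{-1}$, and to cover it by translates of $W_1$ itself centred at points of $f(\Gamma)$, so that the single error term is one defect element, which is exactly what $K$ was built to absorb.
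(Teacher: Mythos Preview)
Your proof is correct. The paper itself does not prove this lemma directly but defers to results in \cite[\S 5.11]{hrushovski2020beyond}, so there is no proof in the paper to compare against; your argument is a self-contained elementary substitute.

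A couple of minor remarks. First, when you cover $S=f(\Gamma)\cap W_2K^{-1}$ by finitely many $f(\gamma_j)U$, the set $S$ is only relatively compact, not compact; but the cover $\{sU:s\in S\}$ extends to an open cover of $\overline S$ (density of $S$ in $\overline S$ suffices), so the finite-subcover step goes through. Second, your observation that the centres must be chosen in $f(\Gamma)$ rather than in $H$ is exactly the point that makes the change-of-window work without any density assumption on $f(\Gamma)$, and is worth keeping in the write-up. The rest---normality letting you commute $K$-factors past values of $f$, and the cut-and-project interpretation of $f^{-1}(WK^{-1})$---is clean and matches how these objects are used later in the paper (e.g.\ in the proof of Proposition~\ref{Proposition: Crux uniqueness quasi-models}).
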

 This can be deduced from results in \cite[\S 5.11]{hrushovski2020beyond}.
 A deep result of Hrushovski asserts that this relationship can be reversed.
 
 \begin{theorem}[Quasi-model theorem, Thm 4.1 \cite{hrushovski2020beyond}]\label{Theorem: Hrushovski's quasi-models}
 Let $\Lambda$ be an approximate subgroup of some group $\Gamma$ that commensurates it. There is a quasi-homomorphism $f:\Gamma  \rightarrow H$ with defect contained in a normal compact subset $K \subset H$ such that for all relatively compact neighbourhoods of the identity $W \subset H$, $\Lambda$ is commensurable with $f^{-1}(WK)$.

 Moreover, by restricting $f$ to a finite index subgroup $\Gamma_0 \subset \Gamma$ we may assume that $A:=\overline{\langle K \rangle}$ is isomorphic to a Euclidean space on which $G$ acts by isometries  and $K \subset A$ is a Euclidean ball about the identity, $f(\Gamma_0)$ is relatively dense in $H$ and $\overline{f(\Gamma_0)K}$ projects surjectively to $H/A$.We will call such a map a \emph{quasi-model (of $(\Lambda,\Gamma)$)}.
 \end{theorem}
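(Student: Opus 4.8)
The statement is due to Hrushovski; I sketch the model-theoretic argument one would reconstruct. The plan is to realise $\Lambda$ in a definable context, extract a type-definable subgroup that is infinitesimally close to $\Lambda$, and pass to the quotient topologised by the logic topology. Concretely, fix a non-principal ultrafilter, form the ultrapower $\Gamma^{*}$ of $\Gamma$, and work inside a highly saturated, strongly homogeneous elementary extension in which $\Lambda$ and the relation $\Comm_{\Gamma}(\Lambda)=\Gamma$ have been named, so that $\Lambda^{*}\subseteq\Gamma^{*}$ is a definable approximate subgroup that $\Gamma$ still commensurates. The target $H$ will be $\langle\Lambda^{*}\rangle/\mathbf{N}$ for a suitable type-definable normal subgroup $\mathbf{N}$, the map $f$ will be induced by $\langle\Lambda\rangle\hookrightarrow\langle\Lambda^{*}\rangle$ followed by the quotient (then extended to $\Gamma$ using commensuration), and $K$ will be the image of a slightly larger type-definable subgroup.

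The crux --- and the step I expect to be the main obstacle --- is producing the right type-definable subgroup \emph{without any amenability assumption}. When $\Lambda$ is definably amenable (e.g.\ inside an amenable ambient group) the Massicot--Wagner stabiliser argument gives a type-definable $\mathbf{H}\subseteq(\Lambda^{*})^{4}$ with $\Lambda^{*}$ covered by boundedly many cosets of $\mathbf{H}$, and $\mathbf{H}$ is canonical enough to be normalised by everything commensurating $\Lambda^{*}$, so one gets a genuine good model and no defect appears. In general one instead runs Hrushovski's combinatorial stabiliser theorem: one manufactures an invariant ideal of `small' sets out of the multiplicative combinatorics of $\Lambda^{*}$ alone and uses it to produce a type-definable $\mathbf{H}$ commensurable with the powers of $\Lambda^{*}$. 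The cost is that $\mathbf{H}$ is no longer canonical, so its conjugates ${}^{g}\mathbf{H}$ for $g\in\langle\Lambda^{*}\rangle$ are only mutually commensurable rather than equal; setting $\mathbf{N}:=\bigcap_{g\in\langle\Lambda^{*}\rangle}{}^{g}\mathbf{H}$, a chain/boundedness argument shows $\mathbf{N}$ still has bounded index in $\mathbf{H}$. Then $\mathbf{N}\trianglelefteq\langle\Lambda^{*}\rangle$, the logic topology makes $H:=\langle\Lambda^{*}\rangle/\mathbf{N}$ locally compact with $\Lambda^{*}\mathbf{N}/\mathbf{N}$ a compact identity neighbourhood, and $K:=\mathbf{H}\mathbf{N}/\mathbf{N}$ is a compact set that is conjugation-invariant precisely because the ${}^{g}\mathbf{H}$ are mutually commensurable, and it contains the defect of $f$. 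That $\Lambda$ is commensurable with $f^{-1}(WK)$ for every relatively compact identity neighbourhood $W$ then follows by chasing the bounded-index bookkeeping back through the ultrapower.

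It remains to normalise $H$ and $K$. By Gleason--Yamabe, $H$ has an open subgroup that is an inverse limit of Lie groups; restricting $f$ to a finite-index subgroup $\Gamma_{0}\leq\Gamma$ and quotienting $H$ by a small compact normal subgroup, we may take $H$ to be a connected Lie group with no compact normal subgroup (replacing $K$ by its image). Put $A:=\overline{\langle K\rangle}\trianglelefteq H$. Here one uses that $K$ is not merely conjugation-invariant but arises as the image of an infinitesimal type-definable subgroup, so that its iterated commutators and powers remain controlled; combined with the absence of compact normal subgroups this forces $A$ to be a vector group $\mathbb{R}^{n}$. The conjugation action of $H$ on $A$ then preserves the compact set $K$, which after replacing $K$ by a suitable power has nonempty interior in $A$, so the image of $H$ in $\GL(A)$ is relatively compact; averaging an inner product over its closure makes the action isometric, and enlarging $K$ to a Euclidean ball about the origin still containing the defect finishes this step. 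Finally, density of $f(\langle\Lambda\rangle)$ in $H$ is immediate from the construction, and together with $\Lambda^{*}\mathbf{N}/\mathbf{N}$ being a compact identity neighbourhood it gives relative density of $f(\Gamma_{0})$ and surjectivity of $\overline{f(\Gamma_{0})K}$ onto $H/A$, so $f$ is a quasi-model.
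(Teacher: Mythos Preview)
The paper does not prove this theorem: it is quoted as Theorem~4.1 of \cite{hrushovski2020beyond}, and the `moreover' clause is explicitly attributed to a combination of \cite{hrushovski2020beyond} and \cite[Lem.~2.8]{mac2023definitions}. There is therefore no in-paper proof to compare against; what you have written is an attempted reconstruction of Hrushovski's argument.

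Your outline has the right ingredients (saturated extension, stabiliser-type argument, logic topology, Gleason--Yamabe cleanup), but there is a genuine gap at the heart of it. You take $\mathbf{N}:=\bigcap_{g\in\langle\Lambda^{*}\rangle}{}^{g}\mathbf{H}$ and assert that ``a chain/boundedness argument shows $\mathbf{N}$ still has bounded index in $\mathbf{H}$''. If that held, then $f:\langle\Lambda\rangle\hookrightarrow\langle\Lambda^{*}\rangle\to\langle\Lambda^{*}\rangle/\mathbf{N}$ would be a \emph{genuine} group homomorphism (since $\mathbf{N}$ is normal), and your $f$ would be a good model, not merely a quasi-model. Taking $\Gamma=\langle\Lambda\rangle$ this would prove that every approximate subgroup is laminar, contradicting the non-laminar examples the paper itself discusses (e.g.\ Proposition~\ref{Proposition: A counter-example to a general Meyer-type theorem}). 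The point of Hrushovski's theorem is precisely that in the absence of amenability one \emph{cannot} pass to a normal type-definable subgroup of bounded index; the defect is unavoidable, and the construction of $H$ and of the normal compact $K$ containing it is not a quotient by a normal $\mathbf{N}$ of this kind. Relatedly, your claim that $K:=\mathbf{H}\mathbf{N}/\mathbf{N}$ is conjugation-invariant ``because the ${}^{g}\mathbf{H}$ are mutually commensurable'' does not follow: commensurability gives ${}^{g}K\subset K^{m}$ for some $m$, not ${}^{g}K=K$, whereas the paper's Definition~\ref{Definition: Hrushovski's quasi-morphisms} demands $K^{h}=K$ for every $h\in H$.

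The reduction of $A=\overline{\langle K\rangle}$ to a Euclidean space with isometric $H$-action is also only gestured at (``iterated commutators and powers remain controlled'' is the statement to be proved, not its proof); the paper flags this as separate work by citing \cite[Lem.~2.8]{mac2023definitions} for that clause.
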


 The moreover part is obtained as a combination of \cite{hrushovski2020beyond} and \cite[Lem. 2.8]{mac2023definitions}.  Since the defect of $f$ is contained in $A$, the subset $f(\Gamma_0)A$ is a subgroup.  This can be used to derive a number of properties of $f$.  Our point-of-view will however be to study the approximate subgroups $\Gamma_{f,K}$ and $f(\Gamma)K$ as we are able to draw more precise information from their structure.

 \subsection{Approximate subgroups of automorphism groups}
 We look now at the interaction of two approximate subgroups of topological groups: one acting upon the other through continuous automorphisms.  We equip the group of automorphisms with a natural topology called the \emph{Braconnier topology}. A neighbourhood basis of the Braconnier topology is defined as follows: for all $K \subset G$ compact and all $U \subset G$ open, 
 $$\mathcal{U}(K,U):=\{\alpha \in \Aut(G): \forall k \in K, \alpha(k)k^{-1} \in U \wedge\alpha^{-1}(k)k^{-1} \in U\}.$$
Approximate subgroups that are pre-compact in the Braconnier topology are close to being relatively compact.

\begin{lemma}[Lem.  A.8,  \cite{mac2023sadic}]\label{Lemma: Bounded approximate subgroups of of automorphisms are relatively compact}
Let $G$ be a locally compact group and $W$ be a neighbourhood of the identity. Let $\mathcal{A}$ be an approximate subgroup of the group $\Aut(G)$ of automorphisms of $G$ such that $\mathcal{A}\cdot W$ is relatively compact and $\overline{\langle \mathcal{A} \rangle}$ (in the Braconnier topology) contains the inner automorphisms. Then there is $C$ a compact normal subgroup stable under $\mathcal{A}$ such that the image of $\mathcal{A}$ in $\Aut(G/C)$ is relatively compact.
\end{lemma}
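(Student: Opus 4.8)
The plan is to isolate the directions in $G$ along which $\mathcal{A}$ fails to be equicontinuous at the identity, to bundle them into a compact normal subgroup $C$, and then to verify that the induced action on $G/C$ is relatively compact via the standard description of relatively compact subsets of $\Aut(H)$ for the Braconnier topology: a symmetric family $\mathcal{B}\subseteq\Aut(H)$ has relatively compact closure if and only if $\mathcal{B}$ is equicontinuous at the identity of $H$ and $\mathcal{B}\cdot K$ is relatively compact for every compact $K\subseteq H$.

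\emph{Bootstrapping boundedness.} First shrink $W$ to a relatively compact, open, symmetric neighbourhood of $e$. Choosing $F\subseteq\Aut(G)$ finite with $\mathcal{A}^2\subseteq F\mathcal{A}$, one has $\mathcal{A}^2\cdot W\subseteq\bigcup_{f\in F}f(\mathcal{A}\cdot W)$, a finite union of homeomorphic images of the relatively compact set $\mathcal{A}\cdot W$; hence $\mathcal{A}^2\cdot W$, and inductively $\mathcal{A}^n\cdot W$, is relatively compact for all $n$, and more generally $\mathcal{A}\cdot W^m\subseteq(\mathcal{A}\cdot W)^m=\overline{(\mathcal{A}\cdot W)^m}{}^{\,\circ}$-bounded is relatively compact. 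Consequently $\Omega:=\langle\mathcal{A}\rangle\cdot W=\bigcup_n\mathcal{A}^n\cdot W$ is an open symmetric $\sigma$-compact $\langle\mathcal{A}\rangle$-invariant subset, so $N:=\langle\Omega\rangle$ is an open $\sigma$-compact $\langle\mathcal{A}\rangle$-invariant subgroup. Since the Braconnier action $\Aut(G)\times G\to G$ is continuous and $\Inn(G)\subseteq\overline{\langle\mathcal{A}\rangle}$, every closed $\langle\mathcal{A}\rangle$-invariant subset of $G$ is also $\Inn(G)$-invariant; applied to $N$ this gives $N\trianglelefteq G$, and in general it reduces ``normal in $G$'' to ``closed and $\langle\mathcal{A}\rangle$-invariant''.

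\emph{Construction of $C$.} Put $C_0:=\bigcap_{U}\overline{\mathcal{A}\cdot U}$, the intersection ranging over open neighbourhoods $U\subseteq W$ of $e$: a filtered intersection of compact sets containing $e$, hence $C_0$ is compact. It records exactly the failure of equicontinuity of $\mathcal{A}$ at $e$: for an open $U'\ni e$ there is an open $V\ni e$ with $\mathcal{A}\cdot V\subseteq U'$ if and only if $C_0\subseteq U'$, the nontrivial direction following from compactness of the $\overline{\mathcal{A}\cdot V}$. Since filtered intersections of the $\overline{\mathcal{A}\cdot U}$ commute with the finite union coming from $\mathcal{A}^2\subseteq F\mathcal{A}$, and since $C_0C_0\subseteq\overline{(\mathcal{A}U)(\mathcal{A}U)}\subseteq\overline{\mathcal{A}^{k}\cdot U'}$ for a suitable $k$ and $U'\to e$ as $U\to e$, the set $C_0$ is a subgroup and is $\mathcal{A}$-stable \emph{up to a fixed finite error}; I then let $C$ be the closed subgroup generated by $\bigcup_{\gamma\in\langle\mathcal{A}\rangle}\gamma(C_0)$. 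That $C$ is still compact is shown by combining the structure theory of locally compact groups (a compact normal $K\trianglelefteq N$ with $N/K$ a Lie group carrying a bounded, hence relatively compact, $\mathcal{A}$-action) with the uniform containments $\gamma(C_0)\subseteq\overline{\mathcal{A}^2\cdot W}$ for $\gamma\in\mathcal{A}$ and $\gamma(C_0)\subseteq\overline{\Omega}$ for $\gamma\in\langle\mathcal{A}\rangle$ produced in the bootstrapping step, which confine all generators of $C$ to a fixed compact piece modulo $K$. By construction $C$ is closed and $\langle\mathcal{A}\rangle$-invariant, hence normal in $G$ and $\mathcal{A}$-stable, and $C\supseteq C_0$.

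\emph{Relative compactness on $G/C$.} Write $\pi\colon G\to G/C$ and $\bar{\mathcal{A}}=\pi_*\mathcal{A}$. For equicontinuity of $\bar{\mathcal{A}}=\bar{\mathcal{A}}^{-1}$ at $\bar e$: a neighbourhood of $\bar e$ lifts to a $C$-saturated open $U\supseteq C$, and as $C$ is $\mathcal{A}$-stable one has $\mathcal{A}\cdot(VC)=(\mathcal{A}\cdot V)C$, so by the characterisation above it suffices that $C_0\subseteq C\subseteq U$, which holds. For relative compactness of orbits of compact sets: a compact $\bar K\subseteq\pi(N)$ is covered by finitely many $\bar g_i\bar W$ with $\bar g_i\in\langle\bar W\rangle$, hence lifts into $\langle W\rangle$, where $\mathcal{A}\cdot(g_iW)\subseteq(\mathcal{A}\cdot g_i)(\mathcal{A}\cdot W)\subseteq\overline{(\mathcal{A}\cdot W)^{m_i}}\cdot\overline{\mathcal{A}\cdot W}$ is relatively compact by the bootstrapping step; the residual contribution of the discrete quotient $G/N$ is handled using the hypotheses on $G$ (and is vacuous when $G$ is connected, the case relevant to the quasi-model theorem), and since $\pi$ is proper this yields that $\bar{\mathcal{A}}\cdot\bar K$ is relatively compact for every compact $\bar K\subseteq G/C$. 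The Braconnier criterion then gives that $\bar{\mathcal{A}}$ is relatively compact in $\Aut(G/C)$.

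\emph{Main obstacle.} The genuinely delicate point is the construction of $C$. The defect $F$ in $\mathcal{A}^2\subseteq F\mathcal{A}$ makes the natural candidate $C_0$ a subgroup and $\mathcal{A}$-stable only \emph{up to finitely many corrections}, and closing it up — both under those corrections and under conjugation by $G$, both of which are needed to obtain an honest normal $\mathcal{A}$-stable subgroup — threatens to leave the relatively compact regime. Retaining compactness is exactly where one must marry the uniform bounds of the bootstrapping step (all relevant conjugates stay in the fixed compact sets $\overline{\mathcal{A}^n\cdot W}$, respectively in $\overline{\Omega}$) with the Gleason–Yamabe reduction to Lie groups, where $\Aut$ is itself a Lie group and ``bounded and closed'' means ``compact''. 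The remainder is routine manipulation of approximate-subgroup inclusions (Lemma \ref{Lemma: Intersection of commensurable sets}, Corollary \ref{Corollary: Intersection with stabiliser}) and of the Braconnier uniformity.
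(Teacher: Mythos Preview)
The paper does not prove this lemma; it is merely quoted from \cite[Lem.~A.8]{mac2023sadic}, so there is no in-paper argument against which to compare yours.

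On its own merits, your outline has the right architecture --- bootstrap $\mathcal{A}^n\cdot W$ to relatively compact, isolate the obstruction to equicontinuity into a compact normal $\mathcal{A}$-stable $C$, then check the Braconnier criterion on $G/C$ --- and the first and third steps are essentially fine. The middle step, however, is not carried out. Your claimed inclusion ``$C_0C_0\subseteq\overline{(\mathcal{A}U)(\mathcal{A}U)}\subseteq\overline{\mathcal{A}^k\cdot U'}$ with $U'\to e$ as $U\to e$'' fails: one has $(\mathcal{A}\cdot U)(\mathcal{A}\cdot U)\subseteq\mathcal{A}\cdot\bigl(U\cdot(\mathcal{A}^2\cdot U)\bigr)$, and $\mathcal{A}^2\cdot U$ does \emph{not} shrink to $\{e\}$ as $U$ does --- its intersection over $U$ is precisely (the $\mathcal{A}^2$-analogue of) $C_0$ again. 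So you get at best that $C_0$ is an approximate subgroup commensurable with $\bigcap_{U}\overline{\mathcal{A}^2\cdot U}$, not that it is a subgroup. More seriously, the compactness of the closed subgroup $C$ generated by the $\langle\mathcal{A}\rangle$-orbit of $C_0$ is only asserted: the $K$ supplied by Gleason--Yamabe is not a priori $\mathcal{A}$-stable, you do not say how to pass from ``bounded'' to ``relatively compact'' for the $\mathcal{A}$-action on the possibly disconnected Lie quotient $N/K$, and you do not explain why the $\langle\mathcal{A}\rangle$-orbit of $C_0$ stays inside a fixed compact \emph{subgroup} rather than merely a fixed compact \emph{subset}. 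Your ``Main obstacle'' paragraph correctly identifies all of this as the crux, but it describes what must be done rather than doing it; as written, the construction of $C$ is a gap, not a proof.
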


This point-of-view will be useful when considering intersections of approximate subgroups with normal subgroups. The following simple corollary provides a converse to that observation. 

\begin{corollary}[Cor. A.9, \cite{mac2023sadic}]\label{Corollary: Bounded approximate subgroups of of automorphisms are relatively compact}
Let $\Lambda$ be an approximate subgroup generating a group $\Gamma$. Let $\Lambda_N$ be another approximate subgroup of $\Gamma$ that has a good model. Suppose that $\Lambda_N^{\Lambda}=\bigcup_{\lambda \in \Lambda}\lambda\Lambda_N\lambda^{-1}$ is commensurable with $\Lambda_N$. Then there is $\Lambda'$ commensurable with $\Lambda$ that normalises a subgroup $N$ containing $\langle \Lambda_N \rangle$ as a finite index subgroup. 
\end{corollary}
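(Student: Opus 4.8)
I would deduce the statement from Lemma \ref{Lemma: Bounded approximate subgroups of of automorphisms are relatively compact}, applied to the conjugation action of $\Lambda$ on a good model of $\Lambda_N$. First, $\Gamma$ commensurates $\Lambda_N$: for $\lambda\in\Lambda$ both $\lambda\Lambda_N\lambda^{-1}$ and $\lambda^{-1}\Lambda_N\lambda$ lie in $\Lambda_N^{\Lambda}$ (as $\Lambda=\Lambda^{-1}$), which is commensurable with $\Lambda_N$, so $\lambda\in\Comm_{\Gamma}(\Lambda_N)$; since $\Lambda$ generates $\Gamma$ and $\Comm_{\Gamma}(\Lambda_N)$ is a subgroup, $\Comm_{\Gamma}(\Lambda_N)=\Gamma$. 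As $\Lambda_N$ has a good model, Lemma \ref{Lemma: Bohr compactification and abstract automorphisms} then yields a good model $f_0\colon\Gamma\to H_0$ of $(\Lambda_N,\Gamma)$, which, after replacing $H_0$ by $\overline{f_0(\Gamma)}$, we may take to have dense image.

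\textbf{Applying the automorphism lemma.} Since $f_0$ is a homomorphism, $\gamma\mapsto\mathrm{Inn}_{f_0(\gamma)}$ is a homomorphism $\Gamma\to\Aut(H_0)$ for the Braconnier topology, and the image $\mathcal A$ of $\Lambda$ is an approximate subgroup of $\Aut(H_0)$ with $\langle\mathcal A\rangle=\mathrm{Inn}(f_0(\Gamma))$. I would check the two hypotheses of Lemma \ref{Lemma: Bounded approximate subgroups of of automorphisms are relatively compact}: for a relatively compact symmetric identity-neighbourhood $W$ one has $\mathcal A\cdot W=f_0(\Lambda)Wf_0(\Lambda)^{-1}\subseteq\overline{f_0(\Lambda)}\,\overline{W}\,\overline{f_0(\Lambda)}^{-1}$, which is relatively compact; and $\overline{\langle\mathcal A\rangle}=\overline{\mathrm{Inn}(f_0(\Gamma))}\supseteq\mathrm{Inn}(\overline{f_0(\Gamma)})=\mathrm{Inn}(H_0)$, so it contains the inner automorphisms, by continuity of $\mathrm{Inn}$ and density of $f_0(\Gamma)$. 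Lemma \ref{Lemma: Bounded approximate subgroups of of automorphisms are relatively compact} then provides a compact normal $C\leq H_0$ such that, writing $g_0:=\pi_C\circ f_0$, the image $\mathrm{Inn}(g_0(\Lambda))$ of $\mathcal A$ in $\Aut(H_0/C)$ is relatively compact.

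\textbf{Reduction and endgame.} Because $C$ is compact and $f_0(\Gamma)$ dense, a covering argument (approximating points of $C$ by $f_0(\Gamma)$) shows $f_0^{-1}(C)$ is covered by finitely many translates of $\Lambda_N$; using that good models persist under a bounded commensurable enlargement (cf. Proposition \ref{Proposition: Minimal commensurable approximate subgroup}), one may replace $\Lambda_N$ by a commensurable approximate subgroup whose generated group contains $\langle\Lambda_N\rangle$ with finite index, and $f_0$ by $g_0$, so as to assume $C=\{e\}$, i.e. that $\mathcal A=\mathrm{Inn}(f_0(\Lambda))$ is relatively compact in $\Aut(H_0)$. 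Put $A:=\overline{f_0(\langle\Lambda_N\rangle)}$, a closed subgroup with $f_0^{-1}(A)=\langle\Lambda_N\rangle$ by the good-model property, so that $f_0^{-1}({}^{f_0(\lambda)}A)={}^{\lambda}\langle\Lambda_N\rangle$ for each $\lambda\in\Lambda$. As $\mathrm{Inn}_{f_0(\lambda)}$ runs over the relatively compact set $\mathcal A$, the subgroups ${}^{f_0(\lambda)}A$ form a relatively compact family in the Chabauty topology, all commensurable with $A$ with index bounded by the commensurability constant of $\Lambda_N^{\Lambda}$ with $\Lambda_N$. The crux is to upgrade this to: the family $\{{}^{f_0(\lambda)}A:\lambda\in\Lambda\}$ is \emph{finite}. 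Granting this, $\{{}^{\lambda}\langle\Lambda_N\rangle:\lambda\in\Lambda\}$ is finite, so Corollary \ref{Corollary: Intersection with stabiliser}, applied to the conjugation action of $\Gamma$ on its subgroups with $s=\langle\Lambda_N\rangle$, shows that $\Lambda$ is covered by finitely many translates of $\Lambda':=\Lambda^2\cap N_{\Gamma}(\langle\Lambda_N\rangle)$. Then $\Lambda'$ is commensurable with $\Lambda$, $\langle\Lambda'\rangle$ normalises $N:=\langle\Lambda_N\rangle$, and — undoing the reduction — this $N$ contains the original $\langle\Lambda_N\rangle$ with finite index, which is exactly the assertion (and explains why one gets ``finite index'' rather than equality).

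\textbf{Main obstacle.} The delicate point is this last finiteness: turning \emph{$\Lambda$ commensurates $\langle\Lambda_N\rangle$} into \emph{$\langle\Lambda_N\rangle$ has finitely many $\Lambda$-conjugates}. This is precisely where the relative compactness secured by Lemma \ref{Lemma: Bounded approximate subgroups of of automorphisms are relatively compact} is indispensable, and I would argue it using the $S$-adic Lie structure of $H_0$: passing to identity components forces $\mathcal A$ to fix $A^{0}$ (two connected subgroups commensurable with bounded index coincide), after which only finitely many bounded-index extensions of $A^{0}$ can occur in the compact Chabauty family. Setting up the reduction $C=\{e\}$ so that it does not spoil the final conclusion is a second point requiring some care.
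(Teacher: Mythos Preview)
The paper does not prove this corollary; it is quoted from \cite{mac2023sadic}. So there is no ``paper's own proof'' to compare against, and I will only evaluate your argument on its merits.

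There is a genuine gap in the step where you verify the hypothesis of Lemma~\ref{Lemma: Bounded approximate subgroups of of automorphisms are relatively compact}. You write that $\mathcal A\cdot W=f_0(\Lambda)Wf_0(\Lambda)^{-1}\subseteq\overline{f_0(\Lambda)}\,\overline{W}\,\overline{f_0(\Lambda)}^{-1}$ is relatively compact. But $f_0$ is a good model of $\Lambda_N$, not of $\Lambda$: there is no reason whatsoever for $f_0(\Lambda)$ to be relatively compact in $H_0$. (Take for instance $\Gamma$ abelian, $\Lambda_N=\{e\}$, and $\Lambda$ infinite: the Bohr-type compactification $H_0$ of $(\{e\},\Gamma)$ is $\Gamma$ with the discrete topology, and $f_0(\Lambda)=\Lambda$ is not compact.) Your argument at this point does not use the hypothesis $\Lambda_N^{\Lambda}\sim\Lambda_N$ at all, which is a warning sign: that hypothesis is precisely what should make $\mathcal A\cdot W$ bounded. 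The fix is to choose $W$ more carefully. Since $f_0(\Gamma)$ is dense and $f_0$ is a good model, $\overline{f_0(\Lambda_N^2)}$ has non-empty interior (as in the proof of Proposition~\ref{Proposition: Minimal commensurable approximate subgroup}); pick a neighbourhood $W\subset\overline{f_0(\Lambda_N^2)}$ of the identity. Then
\[
\mathcal A\cdot W\subset\bigcup_{\lambda\in\Lambda}f_0(\lambda)\,\overline{f_0(\Lambda_N^2)}\,f_0(\lambda)^{-1}\subset\overline{f_0\bigl((\Lambda_N^{\Lambda})^2\bigr)},
\]
which is relatively compact because $\Lambda_N^{\Lambda}$ is commensurable with $\Lambda_N$.

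A second issue: in your ``main obstacle'' you appeal to ``the $S$-adic Lie structure of $H_0$'' to pass from bounded-index commensurability to finiteness of the family of conjugates. But $H_0$ is only a locally compact group produced by Lemma~\ref{Lemma: Bohr compactification and abstract automorphisms}; nothing in the hypotheses forces it to be $S$-adic or even Lie. One can arrange a Lie target by invoking Proposition~\ref{Proposition: Minimal commensurable approximate subgroup} (the ``smallest'' good model lands in a connected Lie group), but you should do so explicitly and check that the equivariance and the inner-automorphism density survive that replacement. As written, both the reduction and the finiteness step are only sketched and rest on structure you have not secured.
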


We conclude this section going further in this direction with a result that enables us to perform many reductions in the proof of (2) of Theorem \ref{THEOREM: MAIN THEOREM}.

\begin{lemma}\label{Lemma: Laminarity from laminarity in decompositions}
Let $\Lambda$ be an approximate lattice in a locally compact group $G$. Suppose that $G=HL$ where $H$ is a closed subgroup and $L$ is a closed normal subgroup. Suppose moreover that $H \cap \Lambda^2$ and $L \cap \Lambda^2$ are both approximate lattices in $H$ and $L$ respectively. Then $\Lambda$ is laminar if and only if $H \cap \Lambda^2$ and $L \cap \Lambda^2$ are laminar. 
\end{lemma}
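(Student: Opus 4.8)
The plan is to prove the two implications separately; the forward one is soft, while the converse carries the content.

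\emph{If $\Lambda$ is laminar, so are $\Lambda^2\cap H$ and $\Lambda^2\cap L$.} Since $\Lambda^2$ is commensurable with $\Lambda$ it is laminar, and $\Lambda^2\cap H$ (resp.\ $\Lambda^2\cap L$) is the preimage of $\Lambda^2$ under the inclusion $H\hookrightarrow G$ (resp.\ $L\hookrightarrow G$); as preimages of laminar approximate subgroups under group homomorphisms are laminar \cite[Lem. 3.3]{machado2019goodmodels}, we are done. (That $\langle\Lambda^2\rangle\cap H$ commensurates $\Lambda^2\cap H$ follows from $\langle\Lambda\rangle$ commensurating $\Lambda$ together with Lemma \ref{Lemma: Intersection of approximate subgroups}(2), since conjugation by an element of $H$ fixes $H$.)

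\emph{If $\Lambda^2\cap H$ and $\Lambda^2\cap L$ are laminar, so is $\Lambda$.} Put $\Lambda_L:=\Lambda^2\cap L$, $\Lambda_H:=\Lambda^2\cap H$, $\Gamma:=\langle\Lambda\rangle$. I would build a good model of a commensurable copy of $\Lambda$ by assembling good models of the ``internal'' part $\Lambda_L$ and the ``base'' part $\bar\Lambda$, the image of $\Lambda$ in $G/L$, mimicking the construction of a locally compact group from an extension of groups. First, since $L\triangleleft G$ and $\Gamma$ commensurates $\Lambda$, each conjugate $\gamma\Lambda_L\gamma^{-1}$ ($\gamma\in\Gamma$) is commensurable with $\Lambda_L$ by Lemma \ref{Lemma: Intersection of approximate subgroups}(2); as $\Lambda_L$ is laminar it has a good model, so Corollary \ref{Corollary: Bounded approximate subgroups of of automorphisms are relatively compact} lets us replace $\Lambda$ by a commensurable approximate subgroup so that $\Gamma$ normalises a subgroup $N\le L$ in which a commensurable copy $\Lambda_L'\subseteq\Lambda_L$ sits with a good model $f_L\colon N\to\mathcal L$, $\mathcal L$ a connected Lie group with dense image and no compact normal subgroup (Proposition \ref{Proposition: Minimal commensurable approximate subgroup}, extended to $N$ via Lemma \ref{Lemma: Bohr compactification and abstract automorphisms} after a further finite-index reduction if needed). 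Conjugation makes $\Gamma$ act on $N$, hence, by the universal property of Lemma \ref{Lemma: Bohr compactification and abstract automorphisms}, on $\mathcal L$ by continuous automorphisms $\rho\colon\Gamma\to\Aut(\mathcal L)$ with $f_L(\gamma x\gamma^{-1})=\rho(\gamma)f_L(x)$, so that $\rho(x)=\operatorname{Inn}(f_L(x))$ for $x\in N$. On the base side, because $G=HL$ the restriction $H\to G/L$ is surjective, and one checks $\bar\Lambda$ is an approximate lattice commensurable with the (laminar) image of $\Lambda_H$; hence $\bar\Lambda$ is laminar and admits a good model $f_H$, and $\rho$ descends, after the usual adjustments, to a continuous action of the target of $f_H$ on $\mathcal L$ modulo inner automorphisms.

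Finally, one assembles $\mathcal L$, the action $\rho$ and $f_H$ into a single locally compact group $\mathcal G$ — concretely a suitable quotient of the semidirect product $\Gamma\ltimes_\rho\mathcal L$ by the graph of $f_L$ over $N$, further collapsed along $f_H$ — together with a natural group homomorphism $\Phi\colon\Gamma\to\mathcal G$. It then remains to check that $\Phi$ is a good model of a commensurable copy of $\Lambda$: $\Phi(\Lambda)$ is relatively compact (its components along $\mathcal L$ and along the target of $f_H$ are controlled by $f_L(\Lambda_L')$ and $f_H(\bar\Lambda)$, both relatively compact), and, for a small neighbourhood $U$ of the identity, $\Phi^{-1}(U)$ lies in a bounded power of $\Lambda$ — obtained by intersecting the compression properties of $f_L$ and $f_H$ via Lemma \ref{Lemma: Intersection of commensurable sets} and Lemma \ref{Lemma: Intersection of approximate subgroups}(2), using the commensurabilities $\Lambda^k\cap L\sim\Lambda_L$ and $\overline{\Lambda^k}\sim\bar\Lambda$. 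By Proposition \ref{Proposition: Equivalence good models and model sets} this shows $\Lambda$ is laminar.

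\emph{Expected main obstacle.} The delicate steps are on the base side and in the assembly: (i) passing from the group-theoretic conjugation action of $\Gamma$ on the internal model $\mathcal L$ to an action of the base model carried by $f_H$ — equivalently, controlling the discrepancy between $\langle\Lambda\rangle\cap L$ and the subgroup $N$ on which $f_L$ is defined, and absorbing the fact that $N$ acts on $\mathcal L$ by (nontrivial) inner automorphisms; and (ii) verifying the second good-model axiom for $\Phi$, i.e.\ that a neighbourhood of the identity of $\mathcal G$ pulls back into a bounded power of $\Lambda$ rather than merely into $\langle\Lambda\rangle$. The hypothesis $G=HL$ (rather than $L$ merely normal) is precisely what is needed here: it provides the section that forces $\bar\Lambda$ to be laminar through $\Lambda_H$, and it is what prevents the bounded-cohomological obstruction behind Hrushovski's non-laminar example \cite{hrushovski2020beyond} — in effect the extension splits over the base.
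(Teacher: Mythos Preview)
Your forward direction matches the paper's. For the converse you have correctly identified the ingredients --- the good model of $\Lambda_L$, the conjugation action of $\Gamma$ lifted to automorphisms of the model, and the relative compactness of this action on bounded sets --- but your assembly of a single target group $\mathcal G$ is exactly where the argument would bog down, and the paper takes a different route that avoids it. To build $\mathcal G$ as a locally compact extension of the base model by $\mathcal L$ you need $\rho$ to descend from $\Gamma$ to a continuous action of the target of $f_H$ on $\mathcal L$; ``modulo inner automorphisms'' is not enough, since lifting from $\mathrm{Out}(\mathcal L)$ back to $\Aut(\mathcal L)$ introduces a $2$-cocycle that your outline gives no mechanism to kill, and the quotient ``$\Gamma\ltimes_\rho\mathcal L$ collapsed along $f_H$'' is not a well-defined locally compact group without precisely that descent.

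The paper sidesteps this by not building a target group at all. It uses the nested-sequence characterisation of laminarity \cite[Prop.~1.2]{machado2019goodmodels}: it suffices to produce approximate subgroups $\Lambda_n$ commensurable with $\Lambda$ with $\Lambda_n^2\subset\Lambda_{n-1}$. One takes $\Lambda_n=\Lambda_{H,n}\Lambda_{L,n}\cup(\Lambda_{H,n}\Lambda_{L,n})^{-1}$, where $(\Lambda_{H,n})$ is such a sequence for $\Lambda^2\cap H$ (from its laminarity) and $\Lambda_{L,n}=f^{-1}(W_n)$ for a Bohr-type good model $f\colon\langle\Lambda\rangle\cap L\to H_0$ of $\Lambda^2\cap L$. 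The relative compactness of the conjugation action --- obtained, after quotienting $H_0$ by a compact normal subgroup, from Lemma \ref{Lemma: Bounded approximate subgroups of of automorphisms are relatively compact}, exactly the lemma you invoke --- is used only to choose neighbourhoods $W_n\subset H_0$ with $(\bar c(\Lambda_{H,0})(W_n))^2\subset W_{n-1}$, which guarantees $\Lambda_n^2\subset\Lambda_{n-1}$. The remaining work is a commensurability check inside $G$: since $G=HL$, the projections of $\Lambda$ and of $\Lambda_{H,n}$ to $G/L\simeq H/(H\cap L)$ are commensurable approximate lattices (Proposition \ref{Proposition: Intersection and projections approximate lattices w/ closed subgroups} and \cite[Lem.~A.4]{hrushovski2020beyond}), whence $\Lambda$ is covered by finitely many translates of $\Lambda_{H,n}(\Lambda F^{-1}\Lambda_{H,n}\cap L)$ for some finite $F$, and the parenthesised set is commensurable with $\Lambda_{L,n}$. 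No extension group, no descent of $\rho$, no cocycle.
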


\begin{proof}
Suppose first that $\Lambda$ is Meyer. Then $\Lambda$ is commensurable with $\Lambda'$ that has a good model. So $\Lambda'^2 \cap H$ and $\Lambda'^2 \cap L$ are approximate subgroups that have good models (\cite[Lem. 3.3]{machado2019goodmodels}). According to Lemma \ref{Lemma: Intersection of approximate subgroups},  $\Lambda^2 \cap H$ and $\Lambda^2 \cap L$ are laminar. 

Conversely, suppose that $\Lambda^2 \cap H$ and $\Lambda^2 \cap L$ are laminar. We have
$$ \bigcup_{\lambda \in \Lambda} \left(\Lambda^2 \cap L\right)^\lambda \subset \Lambda^4 \cap L$$
which is commensurable with $\Lambda^2 \cap L$ (Lemma \ref{Lemma: Intersection of approximate subgroups}). Let $f: \langle \Lambda \rangle \cap L \rightarrow H_0$ be a good model of an approximate subgroup commensurable with $\Lambda^2 \cap L$ and suppose that $f$ is a Bohr compactification (Lemma \ref{Lemma: Bohr compactification and abstract automorphisms}). For every group automorphism $\alpha$ of $\langle \Lambda \rangle \cap L$ commensurating $\Lambda^2 \cap L$, there is a unique continuous group automorphism $\overline{\alpha}$ of $H_0$ such that $f \circ \alpha = \overline{\alpha} \circ f$. Given any $\gamma \in \langle \Lambda \rangle$, let $c(\gamma) \in \Aut(\langle \Lambda \rangle \cap L)$ be the action by conjugation by $\gamma$ restricted to $\langle \Lambda \rangle \cap L$. Then $ \bar{c}: \langle \Lambda \rangle \rightarrow \Aut(H_0) $ defined by $\gamma \mapsto \overline{c(\gamma)}$ is a group homomorphism. And $\bar{c}(\Lambda)\left(\overline{f\left(\Lambda^2 \cap L\right)}\right) \subset \overline{f\left(\Lambda^4 \cap L\right)}$ which is compact by Lemma \ref{Lemma: Intersection of approximate subgroups}. According to Lemma \ref{Lemma: Bounded approximate subgroups of of automorphisms are relatively compact} there is a normal compact subgroup $C \subset H_0$ invariant under $\bar{c}(\langle \Lambda \rangle)$ such that the projection of $\bar{c}(\Lambda)$ to $\Aut(H_0/C)$ is relatively compact. Upon quotienting out $C$, we may assume that $C=\{e\}$ and $\bar{c}(\Lambda)$ is relatively compact. Choose a sequence $(\Lambda_{H,n})_{n \geq 0}$ such that $\Lambda_{H,n}^2 \subset \Lambda_{H,n-1}$ and $\Lambda_{H,n-1}$ commensurable with $\Lambda^2 \cap H$ for all $n \geq 1$, which exists because $\Lambda^2 \cap H$ is laminar e.g. \cite[Prop. 1.2]{machado2019goodmodels}. Fix $W$ a symmetric compact neighbourhood of the identity in $H_0$. There are symmetric neighbourhoods of the identity $(W_n)_{n \geq 0}$ with $(\bar{c}(\Lambda_{H,0})(W_n))^2 \subset W_{n-1}$ for all integers $n \geq 1$ and $W_0=W$. Define $\Lambda_L := f^{-1}(W_0)$ and $\Lambda_{L,n}:=f^{-1}(W_n)$ for all $n \geq 0$. Set now $\Lambda_n := \Lambda_{H,n}\Lambda_{L,n} \cup \left(\Lambda_{H,n}\Lambda_{L,n}\right)^{-1}$. Our choice of $\Lambda_{H,n}$ and $\Lambda_{L,n}$ implies that $\Lambda_n^2 \subset \Lambda_{n-1}$ for all $n \geq 1$.

If we can show that $\Lambda_n$ is commensurable with $\Lambda$ for all $n \geq 0$, then $\Lambda$ is laminar by a folklore characterisation of good models (e.g. \cite[Prop. 1.2]{machado2019goodmodels}). According to Proposition \ref{Proposition: Intersection and projections approximate lattices w/ closed subgroups}, because $\Lambda^2 \cap L$ is an approximate lattice, the projection $\Lambda'$ of $\Lambda$ to $G/L$ is an approximate lattice. So the projection  $\Lambda''$ of $\Lambda_{H,n}$ to $H/H \cap L$ is uniformly discrete. Hence, it is an approximate lattice (Proposition \ref{Proposition: Intersection and projections approximate lattices w/ closed subgroups} again). Identifying $H/H\cap L$ and $G/L$, we find that $\Lambda'$ and $\Lambda''$ are commensurable by \cite[Lem.  A.4]{hrushovski2020beyond}. So there is a finite subset $F \subset \langle \Lambda \rangle$ such that for every $\lambda \in \Lambda$ we have $\lambda f^{-1}\lambda'^{-1} \in L$ for some $\lambda' \in \Lambda_{H,n}$ and some $f \in F$. So $\Lambda$ is covered by finitely many translates of $\Lambda_{H,n}\left(\Lambda F^{-1}\Lambda_{H,n} \cap L\right)$. But $\Lambda F^{-1}\Lambda_{H,n} \cap L$ is commensurable with $\Lambda^2 \cap L$ and, thus, $\Lambda_{L,n}$ (Lemma \ref{Lemma: Intersection of approximate subgroups}). So $\Lambda$ is covered by finitely many translates of $\Lambda_{H,n}\Lambda_{L,n}$.\end{proof}
\subsection{$S$-adic linear groups}
Let $S \subset S_{\mathbb{Q}}$ be a finite subset of inequivalent places of $\mathbb{Q}$. An $S$-adic linear group, or $S$-adic group, is any group $G$ such that there is a family of linear algebraic $(\mathbb{Q}_v)_{v \in S}$-groups $(\mathbf{G}_{v})_{v \in S}$ such that $G = \prod_{v \in S} \mathbf{G}_{v}(\mathbb{Q}_v)$ equipped with the obvious Hausdorff topology. Given a $\mathbb{Q}$-linear group $\mathbf{G}$, the group of adelic points $\mathbf{G}(\mathbb{A}_S)$ is a typical example of an $S$-adic group. If $X \subset G$ is any subset, we define its \emph{Zariski-closure} as the product of the Zariski-closures of the $p_v(X)$ where $p_v: G \rightarrow \mathbf{G}_{v}(\mathbb{Q}_v)$ denotes the natural projection. All topological properties (closed, open, connected, etc) are understood in the latter topology unless they are preceded by the prefix 'Zariski'. In addition, $\overline{X}$ denotes the closure in the Hausdorff topology unless stated otherwise. If P is a property of algebraic groups, we will say that $G$ has P if and only if all $\mathbf{G}_v$ have P. For instance, we will say that $G$ is semi-simple (potentially with non-trivial centre) if and only if $\mathbf{G}_{v}$ is a semi-simple $k_v$-group for all $v \in S$. We write $C_G(X)$ the centraliser of $X$ in $G$. 
Define $\Rad(G)$ as the maximal Zariski-connected soluble normal subgroup of $G$. We call $\Rad(G)$ the \emph{radical} of $G$. It is equal to $\prod_{v \in S} \Rad(\mathbf{G}_{v}(\mathbb{Q}_v))$. When $G$ is semi-simple, we define the $S$-rank of $G$ as 
$ \rank_S(G) :=\sum_{v \in S} \rank_{K_v} (\mathbf{G}_{v}).$ For a general introduction to such groups see \cite{MR1090825}.

According to the Levi decomposition theorem \cite[VIII Theorem 4.3]{MR620024}, if $G$ is Zariski-connected we have that $G=R \ltimes U$ where $R$ is reductive and $U$ is unipotent. Moreover, there are $S \subset R$ semi-simple and $T \subset R$ a product of tori over the fields $\mathbb{Q}_v$ that centralise one another and such that the map $S \times T \rightarrow R$ has finite kernel (\cite[\S 22.o]{MR3729270}). We will call a \emph{reductive} (resp. \emph{semi-simple}) \emph{Levi subgroup} of $G$ any Zariski-closed subgroup that projects isomorphically to $R$ (resp. $S$). Levi subgroups are characterised as the maximal reductive (resp. semi-simple) subgroups of $G$. Moreover, any two reductive (resp. semi-simple) Levi subgroups are conjugate to one another via an element of $U$. When we write `Levi subgroup' without precision, we mean semi-simple Levi subgroup.

For every $v \in S$ take $\mathbf{H}_v \subset \mathbf{G}_v$ an algebraic $\mathbb{Q}_v$-subgroup. If $H$ denotes $\prod_{v \in S} \mathbf{H}_v(\mathbb{Q}_v)$, then the natural map $G/H \rightarrow \prod_{v \in S} (\mathbf{G}_v/\mathbf{H}_v)(\mathbb{Q}_v)$ is injective and has finite index image \cite{BorelSerre}. When $G$ and $H$ are assumed unipotent, it becomes a continuous isomorphism. This can be seen through the equivalence between unipotent $S$-adic groups and their Lie algebras that we use a number of times, see \cite[IV.2.4]{zbMATH03670601} for references and \cite[II]{raghunathan1972discrete} for the case of real groups. We refer to \cite{MR3729270, springer2010linear} for background on algebraic groups, to \cite{raghunathan1972discrete} for background on Lie groups and their lattices, to \cite{MR1090825, MR3092475} for background on $S$-adic groups and \cite{https://doi.org/10.48550/arxiv.2204.01496, mac2023sadic} for background on approximate lattices in $S$-adic groups.

  \subsection{Pisot--Vijayaraghavan--Salem numbers of a number field}\label{Subsubsection: Pisot--Vijayaraghavan--Salem numbers of a number field}
 
 Let $K$ be a number field. Write $S_K$ the set of all equivalence classes of places of $K$. For any $v \in S_K$ let $|\cdot|_v$ denote an absolute value arising from $v$. The completion of $K$ with respect to $|\cdot|_v$ will be denoted $K_v$. Note that the space obtained is independent of the choice of $|\cdot |_v$. When $v$ is non-Archimedean let us denote by $O_v$ the valuation ring of $K_v$. We write $\mathbb{A}_K = \prod_{v \in S_K}'K_v$ the ring of adeles with the usual topology. Here, $\prod'$ denotes the restricted product with respect to the valuation rings $O_v$. If $S$ denotes a subset of $S_K$, then we write moreover $\mathbb{A}_{K,S}=\prod_{v \in S}'K_v$ and $\mathbb{A}_K^S = \prod_{v \notin S}'K_v$. In particular, $\mathbb{A}_K= \mathbb{A}_{K,S} \times \mathbb{A}_K^S$.  See \cite{NeukirchAlgebraicNumberTheory} for this and more.

We also define the set $\mathcal{O}_{K,S}$ as the subset of those elements $x$ of $K$ such that $|x|_v \leq 1$ for all $v \notin S$. Again, $\mathcal{O}_{K,S}$ does not depend on the choice of absolute values $(| \cdot |_v)_{v \in S}$. When $S$ contains all the Archimedean places of $S_K$, $\mathcal{O}_{K,S}$ is the \emph{ring of $S$-integers}. If $S$ is equal to the set of all Archimedean places of $K$, $\mathcal{O}_{K,S}=:\mathcal{O}_K$ is the ring of algebraic integers of $K$.  Finally, when $S$ consists of a single real valuation $v$, $\mathcal{O}_{K,S}$ is the \emph{Pisot--Vijayaraghavan--Salem numbers} of $K \subset K_v\simeq \mathbb{R}$ - they admit a fascinating sum-product characterisation due to Meyer \cite[\S II.13]{meyer1972algebraic}. 
When $K = \mathbb{Q}$, we simply write $\mathbb{A}_{S}=\mathbb{A}_{K,S}$, $\mathbb{A}^{S}=\mathbb{A}_{K}^{S}$ and $\mathcal{O}_{K,S}=\mathbb{Z}_S$.

Although not a ring in general, $\mathcal{O}_{K,S}$ is an approximate subgroup stable under products (an \emph{approximate ring} \cite{krupinski2023locally}). This is reflected in:
\begin{lemma}[Lem. 2.2, \cite{mac2023sadic}]\label{Lemma: Pisot approximate rings and polynomials}
Let $K, S$ be as above and let $P \in K[X]$. Then:
\begin{enumerate}
\item $P(\mathcal{O}_{K,S})$ is contained in finitely many translates of $\mathcal{O}_{K,S}$;
\item if, in addition, $P(0) =0$, then there is $\Lambda$ contained in and commensurable with $\mathcal{O}_{K,S}$ such that $P(\Lambda) \subset \mathcal{O}_{K,S}$. Moreover, $\Lambda$ is a model set associated to $(K, \mathbb{A}_{K,S}, \mathbb{A}_K^S)$.
\end{enumerate} 
\end{lemma}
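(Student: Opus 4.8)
The plan is to reformulate $\mathcal{O}_{K,S}$ as a model set and then obtain both assertions by \emph{enlarging}, respectively \emph{shrinking}, the window and estimating $|P(x)|_v$ one place at a time. Embed $K$ diagonally as a cocompact lattice $\Delta(K)$ in $\mathbb{A}_K=\mathbb{A}_{K,S}\times\mathbb{A}_K^S$; since $S\neq\emptyset$ the diagonal map $K\to\mathbb{A}_{K,S}$ is injective. Call a family $c=(c_v)_{v\notin S}$ of positive reals \emph{admissible} if $c_v=1$ for all but finitely many $v$, and associate to it the symmetric compact neighbourhood of $0$
\[
 W_c:=\prod_{v\notin S}\{x\in K_v:|x|_v\le c_v\}\subseteq\mathbb{A}_K^S
\]
together with $\Lambda_c:=\{x\in K:|x|_v\le c_v\text{ for all }v\notin S\}$. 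Because $W_c$ is bounded and $\Delta(K)$ is discrete, the projection $p_{\mathbb{A}_{K,S}}$ identifies $\Delta(K)\cap(\mathbb{A}_{K,S}\times W_c)$ with $\Lambda_c$, so $\Lambda_c=M(\mathbb{A}_{K,S},\mathbb{A}_K^S,K,W_c)$ is a model set associated to $(K,\mathbb{A}_{K,S},\mathbb{A}_K^S)$, and $\Lambda_1=\mathcal{O}_{K,S}$.

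The structural input I would establish first is that \emph{for admissible $c$ and $c'$ the model sets $\Lambda_c$ and $\Lambda_{c'}$ are commensurable}. This is a compactness-and-approximation argument: pick a symmetric neighbourhood $W'$ of $0$ in $\mathbb{A}_K^S$ with $W'+W'\subseteq W_{c'}$, cover the compact set $W_c$ by finitely many translates $g+W'$ with $g$ ranging over a finite set $G$, and use that $K$ is dense in $\mathbb{A}_K^S$ (strong approximation, where $S\neq\emptyset$ is used) to choose $k_g\in K$ with $(k_g)_v-g\in W'$ for all $v\notin S$; then every $x\in\Lambda_c$ satisfies $x-k_g\in\Lambda_{c'}$ for the appropriate $g$, so $\Lambda_c\subseteq\{k_g:g\in G\}+\Lambda_{c'}$, and symmetrically.

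For (1), write $P=\sum_{i=0}^{n}a_iX^i$ with $a_i\in K$, not all zero. For $x\in\mathcal{O}_{K,S}$ and $v\notin S$ one has $|x^i|_v\le1$, hence $|P(x)|_v\le c_v$, where $c_v:=\max_i|a_i|_v$ at non-Archimedean $v$ and $c_v:=\sum_i|a_i|_v$ at the finitely many Archimedean $v$. Each nonzero $a_i$ is a $v$-adic unit for all but finitely many $v$, so $c$ is admissible; thus $P(\mathcal{O}_{K,S})\subseteq\Lambda_c$, and $\Lambda_c$ lies in finitely many translates of $\Lambda_1=\mathcal{O}_{K,S}$ by the commensurability statement.

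For (2), $a_0=0$, so $P(x)=\sum_{i\ge1}a_ix^i$, and I would instead shrink the window: set $c_v:=1$ at the cofinitely many non-Archimedean $v\notin S$ with $|a_i|_v\le1$ for every $i$, and $c_v:=(1+\sum_{i\ge1}|a_i|_v)^{-1}\le1$ at each of the remaining finitely many places $v\notin S$. Then $c$ is admissible with $c_v\le1$ everywhere, so $\Lambda:=\Lambda_c$ is a model set associated to $(K,\mathbb{A}_{K,S},\mathbb{A}_K^S)$ contained in $\mathcal{O}_{K,S}=\Lambda_1$ and commensurable with it. For $x\in\Lambda$ and $v\notin S$ we have $|x|_v\le c_v\le1$, so $|x^i|_v\le|x|_v$ for $i\ge1$; at the ``good'' non-Archimedean places this yields $|P(x)|_v\le\max_{i\ge1}|a_i|_v\le1$, and at the remaining places $|P(x)|_v\le(\sum_{i\ge1}|a_i|_v)\,c_v\le1$ by the choice of $c_v$. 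Hence $P(\Lambda)\subseteq\mathcal{O}_{K,S}$, completing the plan. The only genuinely delicate step is the commensurability of $\Lambda_c$ with $\Lambda_{c'}$, where one upgrades a topological covering of the window to a covering of $\Lambda_c$ by $K$-translates and strong approximation enters; the remaining subtlety — responsible for having to shrink the window at \emph{all} Archimedean places outside $S$ in (2), not only where the coefficients are large — is that the ultrametric inequality fails at Archimedean places, so a sum of unit-ball elements need not lie in the unit ball.
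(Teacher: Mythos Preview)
The paper does not prove this lemma; it is quoted verbatim from \cite{mac2023sadic} and used as a black box. So there is no ``paper's own proof'' to compare against here.

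That said, your argument is correct and is essentially the expected one. Realising $\mathcal{O}_{K,S}$ as the model set $\Lambda_1$ for the cut-and-project scheme $(\mathbb{A}_{K,S},\mathbb{A}_K^S,K)$, enlarging the window for (1), and shrinking it for (2), is exactly the natural route; the placewise estimates on $|P(x)|_v$ are straightforward and you handle the Archimedean/non-Archimedean dichotomy cleanly. The only point worth a remark is the commensurability step: your covering-plus-strong-approximation argument is fine, but note that it is also a general feature of model sets that two windows $W_c,W_{c'}$ which are both compact neighbourhoods of $0$ give commensurable model sets (one can cover $W_c$ by finitely many $\Gamma$-translates of any open set meeting the projection of $\Gamma$ densely), so you could have invoked this directly rather than re-deriving it. Either way the proof stands.
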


The subsets $\mathcal{O}_{K,S}$ furthermore allow us to build a rich family of uniformly discrete approximate subgroups and model sets of $S$-adic algebraic groups. We follow here the same approach as \cite{https://doi.org/10.48550/arxiv.2204.01496}.

\begin{definition}[\emph{$\mathcal{O}_{K,S}$ points of a linear algebraic group}]
With $K,S$ as above. If $\mathbf{G} \subset \GL_n$ is a $K$-subgroup, then define
$$\mathbf{G}(\mathcal{O}_{K,S}):=\{g \in \mathbf{G}(K) : g - \id ,g^{-1} - \id \in \mathbf{M}_{n\times n}\left(\mathcal{O}_{K,S}\right) \},$$
where $\mathbf{M}_{n\times n}\left(\mathcal{O}_{K,S}\right)$ denotes the set of $n \times n$ matrices with entries in $\mathcal{O}_{K,S}$. 
\end{definition}

 When $S=S_1 \sqcup S_2,$ $\mathbf{G}(\mathbb{A}_{K,S})$ is commensurable with $\mathbf{G}(\mathbb{A}_{K,S_1}) \times \mathbf{G}(\mathbb{A}_{K,S_2}).$
 If $S_{\infty}$ denotes the set of infinite places of $K$, then the diagonal embedding 
 $$\mathbf{G}(\mathcal{O}_{K,S \cup S_{\infty}}) \subset \mathbf{G}(\mathbb{A}_{K,S \cup S_{\infty}})$$
makes $\mathbf{G}(\mathcal{O}_{K,S \cup S_{\infty}})$ into a discrete subgroup. When $\mathbf{G}$ is moreover known to have no $K$-characters, $\mathbf{G}(\mathcal{O}_{K,S \cup S_{\infty}})$ is a lattice in $\mathbf{G}(\mathbb{A}_{K,S \cup S^{\infty}})$ by the Borel--Harish-Chandra theorem \cite{BorelHarish-Chandra}. By applying the general cut-and-project construction and noticing that $\Lambda:=\mathbf{G}(\mathcal{O}_{K,S})$ is a model set associated with the cut-and-project scheme $(\mathbf{G}(\mathbb{A}_{K,S}), \mathbf{G}(\mathbb{A}_{K,S_{\infty}\setminus S}), \mathbf{G}(\mathcal{O}_{K,S \cup S_{\infty}}))$ we have: 

\begin{proposition}[Borel--Harish-Chandra for approximate lattices, Prop. 2.4, \cite{mac2023sadic}]\label{Proposition: BHC}
Let $K$ be a number field and $S$ a set of inequivalent places. Let $\mathbf{G} \subset \GL_n$ be a $K$-subgroup. Then $\mathbf{G}(\mathcal{O}_{K,S}) \subset \mathbf{G}(\mathbb{A}_{K,S})$ is an approximate lattice if and only if $\mathbf{G}$ has no non-trivial $K$-characters. Moreover, if $\mathbf{G}$ is $K$-anisotropic, then $\mathbf{G}(\mathcal{O}_{K,S})$ is uniform. 

The map $\mathbf{G}(\mathcal{O}_{K,S \cup S_{\infty}}) \rightarrow \mathbf{G}(\mathbb{A}_{K,S_{\infty}})$ is a good model of $\mathbf{G}(\mathcal{O}_{K,S})$, $\mathbf{G}(\mathbb{A}_{K,S_{\infty}})$ has finitely many connected components and any approximate subgroup $\Lambda$ commensurable with $\mathbf{G}(\mathcal{O}_{K,S})$ contained in $\langle \mathbf{G}(\mathcal{O}_{K,S}) \rangle$ generates a finite index subgroup of $\mathbf{G}(\mathcal{O}_{K,S \cup S'})$. 
\end{proposition}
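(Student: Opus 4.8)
The plan is to exhibit $\mathbf{G}(\mathcal{O}_{K,S})$ explicitly as a model set and then import the classical Borel--Harish-Chandra theorem through the cut-and-project construction. Write $T:=S_\infty\setminus S$ for the set of Archimedean places of $K$ outside $S$. As $S$, $S_\infty$ and $T$ are finite, $\mathbb{A}_{K,S\cup S_\infty}$ is the honest product $\mathbb{A}_{K,S}\times\mathbb{A}_{K,T}$, so $\mathbf{G}(\mathbb{A}_{K,S\cup S_\infty})=G_0\times H$ with $G_0:=\mathbf{G}(\mathbb{A}_{K,S})$ and $H:=\mathbf{G}(\mathbb{A}_{K,T})$, and I put $\Gamma:=\mathbf{G}(\mathcal{O}_{K,S\cup S_\infty})$, embedded diagonally. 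Since $S\cup S_\infty\supseteq S_\infty$, $\mathcal{O}_{K,S\cup S_\infty}$ is the ring of $(S\cup S_\infty)$-integers, so $\Gamma$ is a genuine discrete subgroup of $G_0\times H$ and the classical Borel--Harish-Chandra theorem \cite{BorelHarish-Chandra} tells us that $\Gamma$ is a lattice exactly when $\mathbf{G}$ has no non-trivial $K$-character, and is cocompact when $\mathbf{G}$ is $K$-anisotropic. As window I would take $W_0\subset H$ to be the set of $h$ such that the entries of $h-\id$ and of $h^{-1}-\id$ have absolute value $\leq 1$ at every place of $T$; being cut out by Archimedean conditions, $W_0$ is a compact symmetric neighbourhood of the identity. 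Now the defining conditions intersect place by place: $\gamma\in\mathbf{G}(K)$ lies in $\Gamma\cap(G_0\times W_0)$ iff the entries of $\gamma-\id$ and $\gamma^{-1}-\id$ have absolute value $\leq 1$ at every place outside $S\cup S_\infty$ (from $\Gamma$) and at every place of $T$ (from $W_0$), i.e.\ at every place outside $S$ --- which is precisely the condition defining $\mathbf{G}(\mathcal{O}_{K,S})$. Hence $p_{G_0}\!\left(\Gamma\cap(G_0\times W_0)\right)=\mathbf{G}(\mathcal{O}_{K,S})$ inside $G_0$, so $\mathbf{G}(\mathcal{O}_{K,S})$ is the model set $M(G_0,H,\Gamma,W_0)$.

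Next I would check the two structural hypotheses of the cut-and-project scheme and conclude. Any $\gamma\in\Gamma$ trivial on all of $G_0$ (resp.\ all of $H$) is trivial at some single place of $S$ (resp.\ of $T$), hence equals $\id$ because $\mathbf{G}(K)$ injects into $\mathbf{G}(K_v)$ for each place $v$; thus $p_{G_0}|_\Gamma$ and $p_H|_\Gamma$ are injective. Replacing $H$ by $\overline{p_H(\Gamma)}$ if needed --- which changes neither $\Gamma\cap(G_0\times W_0)$ nor, by a standard argument, the fact that $\Gamma$ remains a (cocompact, if it was) lattice --- we may assume $p_H(\Gamma)$ is dense in $H$. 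Then the general theory of model sets (\cite{bjorklund2016approximate}, see also Proposition \ref{Proposition: Equivalence good models and model sets}) shows that whenever $\Gamma$ is a lattice, $\mathbf{G}(\mathcal{O}_{K,S})=M(G_0,H,\Gamma,W_0)$ is a uniformly discrete, laminar approximate lattice, and uniform when $\Gamma$ is cocompact; this settles the ``if'' direction and, with the second half of Borel--Harish-Chandra, the anisotropic statement. For the converse: a non-trivial $K$-character $\chi$ of $\mathbf{G}$ carries $\mathbf{G}(\mathcal{O}_{K,S})$, by Lemma \ref{Lemma: Pisot approximate rings and polynomials}(1) applied to $\chi$ and $\chi^{-1}$ (both regular on $\mathbf{G}\subset\GL_n$), into a set commensurable with the group $\mathcal{O}_{K,S}^{\times}$ of $S$-units; the latter is uniformly discrete but has infinite co-volume in $\mathbb{G}_m(\mathbb{A}_{K,S})$ by Dirichlet's $S$-unit theorem, so pushing $\mathbf{G}(\mathcal{O}_{K,S})$ forward through $\chi$ and invoking Proposition \ref{Proposition: Intersection and projections approximate lattices w/ closed subgroups} shows $\mathbf{G}(\mathcal{O}_{K,S})$ cannot have finite co-volume --- exactly as for $S$-arithmetic lattices.

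It remains to record the three ``moreover'' assertions. The internal-space projection $f\colon\Gamma\to H$, $f(\gamma):=p_H(\gamma)$ --- well defined once $\Gamma$ is identified with $p_{G_0}(\Gamma)$ --- has $f(\mathbf{G}(\mathcal{O}_{K,S}))\subset W_0$, which is relatively compact, and for a small enough neighbourhood $U$ of the identity in $H$ any $\gamma\in f^{-1}(U)$ has $\gamma-\id,\gamma^{-1}-\id$ of absolute value $\leq 1$ at every place of $T$ and, lying in $\Gamma$, also at every place outside $S\cup S_\infty$, hence at every place outside $S$; thus $f^{-1}(U)\subset\mathbf{G}(\mathcal{O}_{K,S})$ and $f$ is a good model in the sense of \S\ref{Subsubsection: Good models}. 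Since the real (resp.\ complex) points of a linear algebraic group have finitely many connected components, the finite product $\mathbf{G}(\mathbb{A}_{K,S_\infty})=\prod_{v\in S_\infty}\mathbf{G}(K_v)$ does too \cite{MR1090825}. Finally, $\mathbf{G}(\mathcal{O}_{K,S})$ generates a finite-index subgroup of the $S$-arithmetic group $\mathbf{G}(\mathcal{O}_{K,S\cup S_\infty})$ (a standard finiteness property, visible also from the good model structure), so by Proposition \ref{Proposition: Minimal commensurable approximate subgroup}(1) any approximate subgroup commensurable with $\mathbf{G}(\mathcal{O}_{K,S})$ and contained in $\langle\mathbf{G}(\mathcal{O}_{K,S})\rangle$ generates a finite-index subgroup of $\mathbf{G}(\mathcal{O}_{K,S\cup S_\infty})$.

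The substance of the argument is the first step --- the place-by-place matching that identifies $\mathbf{G}(\mathcal{O}_{K,S})$ on the nose with $M(G_0,H,\Gamma,W_0)$ --- together with the appeal to the classical Borel--Harish-Chandra theorem, which supplies all the hard content; everything downstream is a routine transfer through the cut-and-project machinery. The one point genuinely requiring care is that the model set remains an approximate lattice (and not merely uniformly discrete) when $\Gamma$ is a non-uniform lattice, which is where one leans on the density of $p_H(\Gamma)$ and on the general theory of model sets over non-uniform lattices.
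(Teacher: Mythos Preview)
Your approach is essentially the same as the paper's: the paragraph preceding the proposition already identifies $\mathbf{G}(\mathcal{O}_{K,S})$ as the model set attached to the cut-and-project scheme $(\mathbf{G}(\mathbb{A}_{K,S}),\mathbf{G}(\mathbb{A}_{K,S_\infty\setminus S}),\mathbf{G}(\mathcal{O}_{K,S\cup S_\infty}))$ and appeals to the classical Borel--Harish-Chandra theorem, with the detailed proof deferred to \cite{mac2023sadic}. Your write-up fleshes out exactly this sketch --- the place-by-place matching, the injectivity of the two projections, the good model verification, and the finite-index claim via Proposition~\ref{Proposition: Minimal commensurable approximate subgroup} --- so there is nothing materially different to compare.
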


Considering in particular the additive group $\mathbf{G}=\mathbf{G}_a$,  Proposition \ref{Proposition: BHC} recovers the fact known to Meyer that $\mathcal{O}_{K,S}$ is a model set in $\mathbb{A}_{K,S}$ \cite{meyer1972algebraic}.
 
 \subsection{Other arithmetic subsets}\label{Subsection: S-adic algebraic groups}  Compared with the PVS construction  (\S \ref{Subsubsection: Pisot--Vijayaraghavan--Salem numbers of a number field}) we give here a more flexible arithmetic construction. 

\begin{definition}[Generalized arithmetic approximate subgroups]\label{Definition: GAAS}
 If $\mathbf{G}$ is a Zariski-connected linear group defined over $\mathbb{Q}$ and $G,H$ are two $S$-adic linear subgroups such that $G \times H \subset \mathbf{G}(\mathbb{A}_S)$, we define a \emph{generalized arithmetic approximate subgroup} (GAAS) as any approximate subgroup $\Lambda$ of $G$ commensurable with $p_{G}\left( \mathbf{G}(\mathbb{Z}_S) \cap G \times W_0\right)$ where $p_{G}: G \times H \rightarrow G$ is the natural projection, $W_0 \subset H$ is a symmetric relatively compact neighbourhood of the identity and $\mathbf{G}(\mathbb{Z}_S)$ is embedded diagonally in $\mathbf{G}(\mathbb{A}_S)$.
 \end{definition}

Generalized arithmetic approximate subgroups are uniformly discrete and extremely regular. They are laminar and share many properties with the arithmetic approximate lattices defined in \S \ref{Subsubsection: Pisot--Vijayaraghavan--Salem numbers of a number field} while being more general.  They are easily seen to occur via product-conjugation phenomena:

\begin{proposition}[Prop. 4.15, \cite{mac2023sadic}]\label{Proposition: Arithmeticity from action on subgroups}
Let $\Lambda$ be an approximate lattice in an $S$-adic linear group $G$. Suppose that $\Lambda$ is Zariski-dense and let $N$ be a normal Zariski-closed Zariski-connected nilpotent $S$-adic linear subgroup such that $\Lambda^2 \cap N$ is an approximate lattice in $N$. Then the projection of $\Lambda$ to $G/C_G(N)$ is a GAAS approximate lattice. 
\end{proposition}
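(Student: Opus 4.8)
The plan is to reduce the statement to the structure theory of approximate lattices in nilpotent $S$-adic groups, and then to promote the resulting model‑set structure — together with the conjugation action of $\langle\Lambda\rangle$ — into an ambient arithmetic structure. Write $\Delta := \Lambda^2 \cap N$. For every $\lambda \in \Lambda$ one has $\lambda\Delta\lambda^{-1} \subset \Lambda^4 \cap N$, which is commensurable with $\Delta$ by Lemma \ref{Lemma: Intersection of approximate subgroups}; hence every element of $\langle\Lambda\rangle$ commensurates $\Delta$, and conjugation on $N$ defines a homomorphism $c \colon \langle\Lambda\rangle \to \Aut(N)$. Since $N$ is Zariski‑connected nilpotent, $c$ is the restriction of the algebraic morphism $G \to \Aut(N)$ with kernel $C_G(N)$, whose image is the $S$-adic linear group $G/C_G(N)$; moreover $c(\Lambda)$ is exactly the projection of $\Lambda$ to $G/C_G(N)$. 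It therefore suffices to prove that $c(\Lambda)$ is a GAAS — it is then automatically an approximate lattice, since uniform discreteness is built into the GAAS structure and finite co-volume descends from $\Lambda$ along the continuous surjection $G \to G/C_G(N)$ — and for this we may freely replace $\Lambda$ by a commensurable approximate subgroup.

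First I would record the nilpotent picture. As $\Delta$ is an approximate lattice in the nilpotent group $N$, it is laminar (the $S$-adic analogue of \cite{machado2020approximate}, see also \cite{mac2023sadic}): taking a minimal good model as in Proposition \ref{Proposition: Minimal commensurable approximate subgroup} we get $f \colon \langle\Delta\rangle \to H$ onto a nilpotent group $H$ such that the graph $\Gamma_\Delta := \{(\delta,f(\delta)) : \delta \in \langle\Delta\rangle\}$ is a lattice in the nilpotent group $N \times H$, and $\Delta$ is commensurable with $p_N\bigl(\Gamma_\Delta \cap (N \times W_0)\bigr)$ for a relatively compact window $W_0 \subset H$. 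Mal'cev rigidity and arithmeticity of lattices in nilpotent groups then supply a nilpotent $\mathbb{Q}$-group $\mathbf{M}$ and a finite set of places $S' \supset S$ such that $N \times H$ is identified, up to finite index, with $\mathbf{M}(\mathbb{A}_{S'})$, with $N$ corresponding to the $S$-factor $\mathbf{M}(\mathbb{A}_S)$ and $\Gamma_\Delta$ commensurable with $\mathbf{M}(\mathbb{Z}_{S'})$.

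Next I would bring in $\langle\Lambda\rangle$. Since $\langle\Lambda\rangle$ commensurates $\Delta$ and $\Delta$ is laminar, Corollary \ref{Corollary: Bounded approximate subgroups of of automorphisms are relatively compact} allows us to assume, after replacing $\Lambda$ by a commensurable approximate subgroup, that $\langle\Lambda\rangle$ normalises a finite-index overgroup of $\langle\Delta\rangle$, which (enlarging $\Gamma_\Delta$) we may take to be $\Gamma_\Delta$ itself. The conjugation action of $\langle\Lambda\rangle$ on $\langle\Delta\rangle$ commensurates $\Delta$, so by Lemma \ref{Lemma: Bohr compactification and abstract automorphisms} it extends uniquely to a continuous action $\bar c$ on $H$ with $f\circ c(\gamma) = \bar c(\gamma)\circ f$ for all $\gamma$. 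Hence $\langle\Lambda\rangle$ acts on $N \times H$ by the diagonal automorphisms $(n,h) \mapsto (c(\gamma)n, \bar c(\gamma)h)$; this action preserves $\Gamma_\Delta$, hence the $\mathbb{Q}$-structure $\mathbf{M}$, and thus realises $c(\langle\Lambda\rangle)$ inside $\Aut(\mathbf{M})(\mathbb{Q})$, commensurating $\Aut(\mathbf{M})(\mathbb{Z}_{S'})$, and identifies $G/C_G(N)$ with an $S$-adic linear subgroup of $\Aut(\mathbf{M})(\mathbb{A}_S)$ via restriction to the $N$-factor. Finally, for each $\lambda \in \Lambda$ one has $\bar c(\lambda)\bigl(\overline{f(\Delta)}\bigr) \subset \overline{f(\Lambda^4 \cap N)}$, a fixed compact set, so Lemma \ref{Lemma: Bounded approximate subgroups of of automorphisms are relatively compact} shows that $\bar c(\Lambda)$ is relatively compact after quotienting out a compact normal subgroup; translated into $\Aut(\mathbf{M})(\mathbb{A}_{S'})$ this says the projection of $c(\Lambda)$ to the internal places $S'\setminus S$ is relatively compact. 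Consequently $c(\Lambda)$ is commensurable with $p_{G/C_G(N)}\bigl(\Aut(\mathbf{M})(\mathbb{Z}_{S'}) \cap (G/C_G(N) \times W_1)\bigr)$ for a suitable relatively compact window $W_1$, i.e. a generalized arithmetic approximate subgroup, which is what we wanted.

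The step I expect to be the main obstacle is the third paragraph: checking that the diagonal action of $\langle\Lambda\rangle$ on $N \times H$ genuinely preserves the rational structure $\mathbf{M}$ — i.e. that the Bohr-type extension $\bar c$ produced by Lemma \ref{Lemma: Bohr compactification and abstract automorphisms} is compatible with Mal'cev rigidity, so that one lands in $\Aut(\mathbf{M})(\mathbb{Q})$ rather than merely in $\Aut(N \times H)$ — and matching the physical-by-internal decomposition of the resulting GAAS with the honest quotient $G/C_G(N)$ rather than with the kernel of the $\langle\Lambda\rangle$-action alone. The input from the nilpotent structure theory in the second paragraph, which is where the hypothesis that $\Lambda^2 \cap N$ is an approximate lattice is essentially used, and the book-keeping of the auxiliary places $S'$, should both be routine.
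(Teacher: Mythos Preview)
The paper does not prove this proposition; it is imported from the companion paper \cite{mac2023sadic} (where it appears as Proposition~4.15) and used here as a black box, so there is no in-paper argument to compare your proposal against.

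That said, your outline is a plausible route and is almost certainly the intended one: realise $\Delta=\Lambda^2\cap N$ as a model set via the nilpotent structure theory, lift the conjugation action of $\langle\Lambda\rangle$ to the cut-and-project scheme using the Bohr-type uniqueness of Lemma~\ref{Lemma: Bohr compactification and abstract automorphisms}, and then read off a GAAS structure on $c(\Lambda)\subset\Aut(\mathbf{M})$. You have also located the genuine crux correctly: the step ``preserving $\Gamma_\Delta$ implies preserving the $\mathbb{Q}$-structure $\mathbf{M}$'' is exactly Mal'cev rigidity for lattices in unipotent groups (a lattice is Zariski-dense, so an automorphism of it extends uniquely to a $\mathbb{Q}$-automorphism of the ambient unipotent $\mathbb{Q}$-group), and the compatibility of this extension with the Bohr-type extension~$\bar c$ follows from uniqueness of continuous extensions on the Lie algebra level.

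Two technical points to tighten. First, your introduction of auxiliary places $S'\supsetneq S$ is unnecessary and does not match Definition~\ref{Definition: GAAS}, which places both the physical and internal factors inside $\mathbf{G}(\mathbb{A}_S)$ for the \emph{same} $S$: in the abelian case Proposition~\ref{Proposition: Meyer's theorem in S-adic vector spaces} already gives $W\otimes\mathbb{A}_S\simeq N\times B$ with $B$ an $S$-adic space, and the general nilpotent case reduces to this via the Lie algebra. Second, ``finite co-volume descends along $G\to G/C_G(N)$'' is true but not because of Proposition~\ref{Proposition: Intersection and projections approximate lattices w/ closed subgroups} (which would require $\Lambda^2\cap C_G(N)$, not $\Lambda^2\cap N$, to be an approximate lattice); rather, you should argue directly that the image of a set of finite co-volume has finite co-volume, and combine this with the uniform discreteness coming from the GAAS structure, exactly as you indicate.
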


This will be a key tool in the reduction of the proof of Theorem \ref{THEOREM: MAIN THEOREM} to the case of groups with abelian radical.

\subsection{Intersection theorems for approximate lattices}\label{Section: Intersection theorems for approximate lattices}

The following result relating projections of approximate lattices to intersections with kernels will be used repeatedly.

\begin{proposition}[Prop. 6.2, \cite{machado2019goodmodels}]\label{Proposition: Intersection and projections approximate lattices w/ closed subgroups}
Let be  a locally compact group $G$. Let $N$ be a closed normal subgroup and let $p:G \rightarrow G/N$ denote the natural projection.  If $\Lambda \subset G$ is an approximate lattice,  then the following are equivalent: 
\begin{enumerate}
\item $p(\Lambda)$ is an approximate lattice in $G/N$;
\item $p(\Lambda)$ is uniformly discrete;
\item $\Lambda^2 \cap N$ is an approximate lattice in $N$. 
\end{enumerate}
Conversely,  if $\Lambda$ is an approximate subgroup in $G$,  and both $\Lambda^2 \cap N$ and $p(\Lambda)$ are approximate lattices in $N$ and $G/N$ respectively, then $\Lambda$ is an approximate lattice. 
\end{proposition}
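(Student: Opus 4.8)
The plan is to express each of the three conditions through invariant measures on hulls. Recall that a uniformly discrete approximate subgroup $\Gamma$ of a locally compact group $H$ is an approximate lattice precisely when its hull $\Omega_\Gamma$ --- the closure, in the Chabauty--Fell space of closed subsets of $H$ with the left $H$-action, of the set $\{h\Gamma : h\in H\}$ --- carries an $H$-invariant probability measure charging the transversal $\Omega_\Gamma^\times := \{P\in\Omega_\Gamma : e\in P\}$ \cite{bjorklund2016approximate}. So I would first dispatch the algebraic and topological preliminaries: $\Lambda^2\cap N$ is an approximate subgroup by Lemma~\ref{Lemma: Intersection of approximate subgroups} (taking $N$ as a $1$-approximate subgroup) and $p(\Lambda)$ is one as a continuous homomorphic image; since $\Lambda^2\subset F\Lambda$ for a finite $F$, the set $\Lambda^2$ is a finite union of uniformly discrete sets, hence uniformly discrete, so $\Lambda^2\cap N$ is uniformly discrete in $N$; and, via $\Lambda^{2k}\cap N\subset F'(\Lambda^2\cap N)$ with $F'\subset N$ finite (Lemma~\ref{Lemma: Intersection of commensurable sets}), all the sets $\Lambda^{2k}\cap N$ are uniformly discrete and mutually commensurable. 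The implication $(1)\Rightarrow(2)$ is then immediate from the definition of approximate lattice.

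Next comes the hull correspondence, which is the heart of the matter. Whenever $p(\Lambda)$ is uniformly discrete, every $P\in\Omega_\Lambda$ has $p(P)$ uniformly discrete too --- its set of differences lies in $\overline{p(\Lambda^2)}$, which is discrete near $\bar e$ --- so the assignment $\pi\colon P\mapsto\overline{p(P)}$ is a continuous $G$-equivariant surjection $\Omega_\Lambda\twoheadrightarrow\Omega_{p(\Lambda)}$ (with $G$ acting through $G/N$), and conversely the existence of such a factor map forces $p(\Lambda)$ to be uniformly discrete. Under $(2)$ I would then push an invariant probability measure $m$ on $\Omega_\Lambda$ forward to one on $\Omega_{p(\Lambda)}$, giving $p(\Lambda)$ finite co-volume and hence $(1)$, so that $(1)\Leftrightarrow(2)$; and I would disintegrate $m$ over $\pi$: the fibre over $\omega_0 := p(\Lambda)$ is $N$-invariant (as $N$ stabilises $\omega_0$) and admits a natural $N$-equivariant map $P\mapsto P\cap N$ to $\Omega_{\Lambda^2\cap N}$ (each $P$ in the fibre meets $N$ in a translate of a sub-pattern of $\Lambda^2\cap N$), whence the conditional $N$-invariant measure pushes to an invariant probability measure on $\Omega_{\Lambda^2\cap N}$, yielding $(3)$. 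For $(3)\Rightarrow(2)$ I would argue that if $p(\Lambda)$ fails to be uniformly discrete then infinitely many elements of $p(\Lambda^2)$ accumulate near $\bar e$; lifting them through a continuous local section of $p$ produces an infinite uniformly discrete subset of $\Lambda^4$ lying over a fixed compact neighbourhood of $\bar e$, and this contradicts the finite co-volumes of $\Lambda$ in $G$ and of $\Lambda^2\cap N$ in $N$. For the converse statement, uniform discreteness of $\Lambda$ is elementary --- a sequence of distinct elements of $\Lambda^2$ tending to $e$ would eventually project to $\bar e$ (since $p(\Lambda)$, hence $p(\Lambda^2)$, is discrete near $\bar e$), hence eventually lie in $\Lambda^2\cap N$, contradicting its discreteness --- and finite co-volume of $\Lambda$ follows by running the correspondence backwards: $p(\Lambda)$ uniformly discrete again produces $\pi$, and one assembles a $G$-invariant probability measure on $\Omega_\Lambda$ by Mackey-type induction of measures, fibering the invariant measure on $\Omega_{p(\Lambda)}$ over an invariant measure on the fibre of $\pi$ obtained from the invariant measure on $\Omega_{\Lambda^2\cap N}$.

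The main obstacle I anticipate is this fibre analysis of $\pi$. That $\pi$ is a continuous $G$-factor map exactly when $p(\Lambda)$ is uniformly discrete is routine; what is delicate is that the fibre over the base point is genuinely governed by $\Omega_{\Lambda^2\cap N}$ --- equivalently, that the conditional measures both descend to and can be reconstructed from invariant measures on $\Omega_{\Lambda^2\cap N}$ --- since a priori a pattern in the fibre records global information about the whole coset structure above $p(\Lambda)$, not merely its slice in $N$. Pinning this down requires the commensurability bookkeeping relating $\Lambda^2\cap N$ to the sets $\Lambda^{2k}\cap N$ that appear in Chabauty limits, together with care about the non-compactness of $N$: a naive push-forward of Haar measure along $G\to G/N$ is not locally finite, so one must work with the compact hulls throughout. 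An alternative, more hands-on route --- choosing Borel fundamental sets and invoking Weil's integration formula along $G\to G/N$ --- is transparent when $\Lambda$ is a \emph{uniform} approximate lattice but collides with exactly this non-compactness difficulty in general, which is why I would favour the hull formulation.
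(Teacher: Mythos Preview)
This proposition is not proved in the paper: it appears in the preliminaries with the attribution ``Prop.~6.2, \cite{machado2019goodmodels}'' and is simply quoted from that reference without argument. There is therefore no in-paper proof to compare your proposal against.

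A brief comment on your approach nonetheless. The hull-and-measure framework you invoke is a natural setting for this kind of result, but two points deserve care. First, you take as a starting point that ``approximate lattice'' is characterised by the existence of a $G$-invariant probability measure on $\Omega_\Lambda$ charging the transversal; in the present paper the working definition is via finite co-volume (existence of a Borel $\mathcal F$ of finite Haar measure with $\Lambda\mathcal F=G$), and the equivalence of the two notions is itself a theorem (cf.\ \cite{bjorklund2016approximate, mac2023definitions}) rather than a tautology, so it must be invoked explicitly. Second, your sketch of $(3)\Rightarrow(2)$ does not close: producing infinitely many elements of $\Lambda^4$ whose \emph{projections} lie in a fixed compact neighbourhood of $\bar e$ is not a contradiction, since those elements may spread out unboundedly in the $N$-direction --- uniform discreteness of $\Lambda^4$ bounds the number of points in a compact subset of $G$, not in a tube of the form $UN$ with $U$ compact. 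The genuine argument for $(3)\Rightarrow(2)$ (and for the converse statement) is the measure-theoretic step you yourself flag as the ``main obstacle'': one must actually carry out the disintegration over $\pi$ and the reassembly (or, equivalently, combine fundamental sets through the Weil integration formula for $G\to G/N$), and your outline gestures at this without resolving it.
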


We use it in combinations with the fact that uniform approximate lattices intersect centralisers into approximate lattices.  The same goes for radicals.

\begin{lemma}[Lem. 4.4, \cite{mac2023sadic}]\label{Lemma: Intersection with centraliser}
Let $\Lambda$ be a uniform approximate lattice in a locally compact group $G$. Let $\alpha \in \Aut(G)$ be any element commensurating $\Lambda$. Write $E(\alpha)$ the closed subgroup defined by $\{g \in G: \alpha(g)=g\}$. Then $E(\alpha) \cap \Lambda^2$ is a uniform approximate lattice in $E(\alpha)$. 
\end{lemma}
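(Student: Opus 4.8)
The plan is to reduce the statement to the known intersection results for approximate lattices (Proposition \ref{Proposition: Intersection and projections approximate lattices w/ closed subgroups}) by exhibiting $E(\alpha)$ as a stabiliser and using the compactness coming from uniformity. First I would recall that since $\Lambda$ is a \emph{uniform} approximate lattice, there is a compact set $\mathcal{F} \subset G$ with $\Lambda \mathcal{F} = G$, and the approximate subgroup $\Lambda$ is relatively dense and uniformly discrete. The element $\alpha$ commensurates $\Lambda$, so there is a finite $F \subset G$ with $\alpha(\Lambda) \subset F\Lambda$ and $\Lambda \subset F\alpha(\Lambda)$. The key geometric observation is that the ``displacement'' map $g \mapsto \alpha(g)g^{-1}$ sends $\Lambda$ into a set that, modulo the uniform discreteness and commensurability, lands in a bounded (relatively compact) region; combined with uniform discreteness this forces $\{\alpha(g)g^{-1} : g \in \Lambda^2\}$ to meet any neighbourhood of the identity in a controlled way.

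The main steps, in order, would be: (1) Show $\Lambda^2 \cap E(\alpha)$ is an approximate subgroup: this is immediate from Lemma \ref{Lemma: Intersection of approximate subgroups}(1) once we know $E(\alpha)$ is a closed subgroup, which it is by continuity of $\alpha$. (2) Show it is uniformly discrete: this is inherited from $\Lambda^2$ being uniformly discrete in $G$ (a uniform approximate lattice has uniformly discrete powers, since $\Lambda$ relatively dense plus $\Lambda^2 \subset F\Lambda$ keeps powers uniformly discrete), and uniform discreteness passes to subsets and to the subspace $E(\alpha)$. (3) The heart of the matter: show $\Lambda^2 \cap E(\alpha)$ is relatively dense in $E(\alpha)$, i.e.\ there is a compact $\mathcal{F}_0 \subset E(\alpha)$ with $(\Lambda^2 \cap E(\alpha))\mathcal{F}_0 = E(\alpha)$. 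For this, fix $h \in E(\alpha)$. Using $\Lambda \mathcal{F} = G$ with $\mathcal{F}$ compact, write $h = \lambda f$ with $\lambda \in \Lambda$, $f \in \mathcal{F}$. Then $\alpha(h) = h$ gives $\alpha(\lambda)\alpha(f) = \lambda f$, so $\lambda^{-1}\alpha(\lambda) = f\alpha(f)^{-1} \in \mathcal{F}\alpha(\mathcal{F})^{-1} =: Q$, a fixed compact set independent of $h$. Thus $\alpha(\lambda)$ lies in $\lambda Q \cap F\Lambda$. I would then apply Corollary \ref{Corollary: Intersection with stabiliser} / Lemma \ref{Lemma: Intersection of commensurable sets}: consider the action of $\langle \Lambda \rangle$ (or the relevant subgroup) on an appropriate quotient or the action where $E(\alpha)$ arises as a stabiliser, so that the set of $\lambda \in \Lambda$ whose displacement lies in $Q$ is covered by finitely many translates of $\Lambda^{-1}\Lambda \cap E(\alpha)$ times something compact. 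Concretely, cover the compact set $Q$ by finitely many sets of the form $\lambda_0^{-1}\alpha(\lambda_0)$ (using uniform discreteness of $\Lambda$ to see only finitely many values of the displacement fall in a bounded set up to bounded error), so that for each $h$ we can correct $\lambda$ by one of finitely many elements to land inside $E(\alpha)$, yielding $h \in (\Lambda^2 \cap E(\alpha)) F' \mathcal{F}$ for a finite $F'$ and compact $\mathcal{F}$.

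I expect step (3), the relative density, to be the main obstacle — specifically, the passage from ``$\lambda^{-1}\alpha(\lambda)$ lies in a fixed compact set'' to ``$\lambda$ is within bounded distance of an element actually fixed by $\alpha$''. This is where uniform discreteness is essential and where one must be careful: a priori infinitely many $\lambda \in \Lambda$ could have displacement in $Q$, but since $\Lambda$ is uniformly discrete and (being a uniform approximate lattice) $\Lambda^2$ is as well, the set of \emph{displacement values} $\{\lambda^{-1}\alpha(\lambda) : \lambda \in \Lambda, \ \lambda^{-1}\alpha(\lambda) \in Q\}$ is itself uniformly discrete intersected with a compact set, hence finite; pick coset representatives and absorb them into a finite set $F'$, then project the correction into $E(\alpha)$ using Lemma \ref{Lemma: Intersection of commensurable sets} with the $Y_i$ taken to be translates witnessing fixed points. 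Once relative density, uniform discreteness, and the approximate-subgroup property are all in hand, $\Lambda^2 \cap E(\alpha)$ is a uniform approximate lattice in $E(\alpha)$ by definition (or by the converse direction of Proposition \ref{Proposition: Intersection and projections approximate lattices w/ closed subgroups} applied inside $E(\alpha)$), completing the proof.
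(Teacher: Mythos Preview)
Your approach is correct and is essentially the standard argument for this result (the paper itself does not give a proof but cites it from \cite{mac2023sadic}). The key steps are exactly as you identify: write $h \in E(\alpha)$ as a product of a compact piece and $\lambda \in \Lambda$; deduce that the twisted displacement of $\lambda$ lies in a fixed compact set; use commensurability of $\alpha(\Lambda)$ and $\Lambda$ together with uniform discreteness of $\Lambda^2$ to see that only finitely many displacement values occur; then invoke Corollary~\ref{Corollary: Intersection with stabiliser} for the twisted action $g \cdot x = \alpha(g) x g^{-1}$, under which $E(\alpha) = \Stab(e)$.

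Two small points to tighten. First, with your decomposition $h = \lambda f$ you obtain $\lambda^{-1}\alpha(\lambda) \in Q$, which matches the \emph{right} twisted action $g \cdot x = g^{-1} x \alpha(g)$; to use the left action above, write $h = f\lambda$ instead (using $\mathcal{F}^{-1}\Lambda = G$) so that $\alpha(\lambda)\lambda^{-1} \in \alpha(\mathcal{F})^{-1}\mathcal{F}$. Either way works. Second, the displacement set lies in $\alpha(\Lambda)\Lambda \subset F\Lambda^2$, which is a finite union of translates of the uniformly discrete set $\Lambda^2$; such a union is locally finite but not literally uniformly discrete --- this still yields finiteness of its intersection with the compact set $Q$, which is all you need. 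Finally, the algebraic identity underlying your ``correction by one of finitely many elements'' is worth making explicit: if $\lambda^{-1}\alpha(\lambda) = \mu^{-1}\alpha(\mu)$ then $\alpha(\lambda\mu^{-1}) = \lambda\mu^{-1}$, so $\lambda\mu^{-1} \in E(\alpha) \cap \Lambda^2$; picking one representative $\mu_i$ per displacement value gives $\lambda \in (E(\alpha)\cap\Lambda^2)\{\mu_1,\dots,\mu_k\}$, and hence $E(\alpha) \subset (E(\alpha)\cap\Lambda^2) K$ for a compact $K$, whose intersection with $E(\alpha)$ furnishes the required compact fundamental set.
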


\begin{theorem}[Prop 1.2, \cite{mac2023sadic}]\label{Theorem: Radical is hereditary}
Let $\Lambda$ be an approximate lattice in an $S$-adic linear group $G$ that generates a Zariski-dense subgroup. Then:
\begin{enumerate}
\item \emph{(Radical)} $\Rad(G) \cap \Lambda^2$ is a uniform approximate lattice in $\Rad(G)$;
\item \emph{(Nilpotent radical)} write $N$ the largest Zariski-connected normal nilpotent subgroup of $G$,  $N \cap \Lambda^2$ is a uniform approximate lattice in $N$;
\item \emph{(Unipotent commutators)} $[G,\Rad(G)] \cap \Lambda^2$ is a uniform approximate lattice in $[G,\Rad(G)]$.
\end{enumerate}
\end{theorem}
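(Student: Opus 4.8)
The plan is to prove the three statements essentially in the order (1), (3), (2), deducing each from general intersection principles together with the structure of $S$-adic linear groups. The starting point is Proposition \ref{Proposition: Intersection and projections approximate lattices w/ closed subgroups}: to show $\Rad(G) \cap \Lambda^2$ is an approximate lattice in $R := \Rad(G)$ it suffices to show the projection $p(\Lambda)$ is uniformly discrete in $G/R$, which is semi-simple. So the heart of (1) is: \emph{the image of a Zariski-dense approximate lattice in an $S$-adic linear group under the projection to its semi-simple quotient is again an approximate lattice}. For this I would first reduce, using the Levi decomposition $G = R \ltimes U$ and the description of $G/R$ as a semi-simple $S$-adic group, to controlling the image of $\Lambda$ in the adjoint group; the point is that $\Lambda$ acts on the Lie algebra $\mathfrak{u}$ of the unipotent radical (and on $R$) by $\operatorname{Ad}$, and Zariski-density forces this action to be ``large''. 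I expect to invoke a Borel density-type argument: the Zariski closure of $p(\langle\Lambda\rangle)$ is all of $G/R$, so no proper subvariety is stabilised, and combine this with the finite-covolume hypothesis to rule out the image accumulating at the identity — concretely, if $p(\Lambda)$ were not uniformly discrete one produces, via the commutator/Margulis-type trick, unipotent elements in the Zariski closure contradicting semi-simplicity or producing a proper normal subgroup absorbing part of $\Lambda^2$.

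For (3), once (1) is known, observe that $[G,R]$ is a Zariski-closed Zariski-connected normal \emph{unipotent} subgroup (the commutator of a connected group with its solvable radical lands in the unipotent radical), so it is in particular nilpotent; moreover $G/[G,R]$ has the feature that $R/[G,R]$ is central, so $\Lambda$ projects there with its radical-part now central. The strategy is to apply Proposition \ref{Proposition: Intersection and projections approximate lattices w/ closed subgroups} again: show the projection $\bar\Lambda$ of $\Lambda$ to $G/[G,R]$ is uniformly discrete. Here I would exploit that, modulo $[G,R]$, the radical becomes a \emph{central} $S$-adic vector/torus-type group on which $\langle\Lambda\rangle$ acts trivially, so the classical abelian theory (Meyer, or rather the discreteness criteria recalled in \S\ref{Section: Intersection theorems for approximate lattices}) applies — uniform discreteness of the central projection reduces to uniform discreteness in the abelian quotient, which follows from the finite-covolume hypothesis together with already knowing $p(\Lambda)$ discrete in $G/R$. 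Uniformity of the approximate lattice $[G,R]\cap\Lambda^2$ then comes from Lemma \ref{Lemma: Intersection with centraliser}-style arguments, or directly: a unipotent group is a union of one-parameter subgroups and an approximate lattice in it, being a Zariski-dense (by construction of the intersection) discrete finite-covolume subset, is automatically uniform since unipotent $S$-adic groups have no noncompact characters obstructing cocompactness.

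For (2), with $N$ the largest Zariski-connected normal nilpotent subgroup, note the chain $[G,R] \subset N \subset R$. Knowing $[G,R]\cap\Lambda^2$ and $R\cap\Lambda^2$ are (uniform) approximate lattices, I would run an induction ``up the lower central series of $R/[G,R]$'' — but since $R/[G,R]$ is abelian this collapses: $N/[G,R]$ is exactly the part of the abelian group $R/[G,R]$ on which $\langle\Lambda\rangle$ acts by finite-order (equivalently, the subgroup where the $\operatorname{Ad}$-action is unipotent modulo the semisimple part), and one checks $N\cap\Lambda^2$ sits between $[G,R]\cap\Lambda^2$ and $R\cap\Lambda^2$ as the preimage of an approximate lattice in $N/[G,R]$. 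Concretely: $\Lambda^2\cap N$ projects to $(\Lambda^2\cap R)$ modulo $([G,R]\cap\Lambda^2)$ intersected with $N/[G,R]$, both of which are approximate lattices, so Proposition \ref{Proposition: Intersection and projections approximate lattices w/ closed subgroups} gives the conclusion; uniformity propagates because it does for both ends of the chain.

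The \textbf{main obstacle} I anticipate is establishing uniform discreteness of $p(\Lambda)$ in the semi-simple quotient $G/R$ — statement (1) — without circularity. The subtlety is that an approximate lattice, unlike a lattice, is only ``discrete up to finite error'', and projections of discrete sets need not be discrete; the hypothesis that rescues this must be finite covolume combined with Zariski-density, and extracting a genuine uniform-discreteness bound will require a quantitative Borel-density / Margulis-lemma argument adapted to the approximate-subgroup setting (controlling $\Lambda^2 \cap (\text{small neighbourhood})$ via the commutator map $\mathfrak{u}\times\mathfrak{u}\to\mathfrak u$ and iterating to force either a proper normal subgroup containing a piece of $\Lambda$, contradicting minimality of $R$, or uniform discreteness). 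This is precisely where one needs the $S$-adic linear structure in an essential way rather than a soft topological argument.
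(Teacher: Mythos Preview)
The paper does not prove this statement: it is quoted verbatim from the companion paper \cite{mac2023sadic} (as Prop.~1.2 there) and used as a black box in the preliminaries. There is therefore no proof in the present paper to compare your proposal against.

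On the substance of your proposal itself: the overall architecture is the right one --- reduce each item via Proposition~\ref{Proposition: Intersection and projections approximate lattices w/ closed subgroups} to a uniform-discreteness statement about a projection --- and you correctly isolate the real difficulty as step~(1), i.e.\ showing that the projection of $\Lambda$ to the semi-simple quotient $G/\Rad(G)$ is uniformly discrete. However, your argument for that step is not yet an argument: ``Borel density-type'' and ``commutator/Margulis-type trick'' are labels, not mechanisms, and nothing you wrote actually produces a lower bound on the distance from $e$ to $p(\Lambda^2)\setminus\{e\}$. In particular, Borel density gives Zariski-density of the image, which is orthogonal to uniform discreteness; and the Margulis lemma controls small elements in a \emph{group}, whereas $p(\Lambda)$ is only an approximate subgroup whose failure of closure under products is precisely the difficulty. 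The proof in \cite{mac2023sadic} uses substantially more input than your sketch suggests.

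Your deduction of (3) also has a gap. You assert that uniform discreteness of the projection of $\Lambda$ to $G/[G,R]$ ``follows from the finite-covolume hypothesis together with already knowing $p(\Lambda)$ discrete in $G/R$'', but $G/[G,R]\to G/R$ has the nontrivial (central, abelian) kernel $R/[G,R]$, so discreteness downstairs does not by itself lift. You would need, at minimum, to show that $\Lambda^2\cap R$ projects uniformly discretely to $R/[G,R]$; the abelian/Meyer theory you allude to applies to approximate lattices \emph{of} abelian groups, not to arbitrary projections \emph{into} abelian quotients, so this step needs a genuine argument as well. Your scheme for (2) from (1) and (3) via the chain $[G,R]\subset N\subset R$ is reasonable once those are in hand, though your description of $N/[G,R]$ is not quite right (since $R/[G,R]$ is already central in $G/[G,R]$, the action is trivial, not ``finite-order'').
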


Considering inner automorphisms, Lemma \ref{Lemma: Intersection with centraliser} and Theorem \ref{Theorem: Radical is hereditary} yield: 

\begin{lemma}[Corollary 6.4, \cite{machado2019goodmodels}]\label{Lemma: Intersection with centre}
Let $\Lambda$ be an approximate lattice in an $S$-adic linear group $G$ and suppose that $\langle \Lambda \rangle$ is Zariski-dense. Then $Z_G \cap \Lambda^2$ is a uniform approximate lattice in $Z_G$. 
\end{lemma}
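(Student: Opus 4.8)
The plan is to deduce Lemma \ref{Lemma: Intersection with centre} as a direct application of Lemma \ref{Lemma: Intersection with centraliser} and Theorem \ref{Theorem: Radical is hereditary}. The centre $Z_G$ of $G$ is a Zariski-closed abelian normal subgroup, hence contained in $\Rad(G)$, so the first reduction is to pass to the radical. By part (1) of Theorem \ref{Theorem: Radical is hereditary}, $\Lambda_r := \Rad(G) \cap \Lambda^2$ is a \emph{uniform} approximate lattice in $R := \Rad(G)$. Since $Z_G \subset R$, it suffices to identify $Z_G \cap \Lambda^2$ with an intersection of $\Lambda_r$ (up to a bounded adjustment of powers) with a fixed-point subgroup of automorphisms of $R$, and then invoke Lemma \ref{Lemma: Intersection with centraliser}.

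Concretely, first I would observe that $Z_G$ is not the centre of $R$, so a little care is needed: $Z_G = \bigcap_{g \in G} E(c_g)$ where $c_g \in \Aut(G)$ is conjugation by $g$, but we only get to intersect with one fixed-point subgroup at a time. The right move is to use that $\langle \Lambda \rangle$ is Zariski-dense, so $Z_G = C_G(\langle \Lambda \rangle)$, and moreover $Z_G$ is the fixed-point set of the finitely many conjugations $c_{\lambda_1}, \dots, c_{\lambda_k}$ by a generating set $\lambda_1, \dots, \lambda_k$ of a Zariski-dense subgroup of $G$ (such a finite set exists since $G$, being $S$-adic linear, has a finitely generated Zariski-dense subgroup one can take inside $\langle \Lambda \rangle$; alternatively one intersects over all of $\Lambda$, which still has the commensurability behaviour we want). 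Each $c_{\lambda_i}$ restricts to an automorphism of $R$ that commensurates $\Lambda_r$: indeed $c_{\lambda_i}(\Lambda_r) = \lambda_i(\Rad(G)\cap \Lambda^2)\lambda_i^{-1} \subset \Rad(G) \cap \Lambda^4$, which is commensurable with $\Lambda_r$ by Lemma \ref{Lemma: Intersection of approximate subgroups}(1)–(2) applied to $\Rad(G)$ as a normal subgroup (so that $\Lambda^4 \cap R$ is a $K$-approximate subgroup commensurable with $(\Lambda^2 \cap R)$).

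Now apply Lemma \ref{Lemma: Intersection with centraliser} to $\Lambda_r$ in $R$ with the automorphism $c_{\lambda_1}$: we get that $E(c_{\lambda_1}|_R) \cap \Lambda_r^2$ is a uniform approximate lattice in $E(c_{\lambda_1}|_R)$. Iterating — at each stage the current approximate lattice is still uniform and still commensurated by the next $c_{\lambda_i}$ (which still lies in $\Aut$ of the current fixed-point subgroup), since fixed-point subgroups are Zariski-closed and all the powers involved stay commensurable by Lemma \ref{Lemma: Intersection of approximate subgroups} — we conclude after $k$ steps that $\big(\bigcap_{i=1}^{k} E(c_{\lambda_i}|_R)\big) \cap \Lambda_r^{2k}$ is a uniform approximate lattice in $\bigcap_{i=1}^{k} E(c_{\lambda_i}|_R) = C_R(\langle \Lambda \rangle)$. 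By Zariski-density of $\langle \Lambda \rangle$, $C_R(\langle \Lambda \rangle) = C_G(\langle \Lambda \rangle) \cap R = Z_G$. Finally $\Lambda_r^{2k} \subset \Lambda^{4k}$ and $Z_G \cap \Lambda^{4k}$ is commensurable with $Z_G \cap \Lambda^2$ by Lemma \ref{Lemma: Intersection of approximate subgroups} (both are approximate subgroups, with the larger covered by finitely many translates of the smaller since $\Lambda^{4k}$ is covered by finitely many translates of $\Lambda^2$ and intersecting with the subgroup $Z_G$ preserves this), so $Z_G \cap \Lambda^2$ is a uniform approximate lattice in $Z_G$.

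The main obstacle is bookkeeping rather than conceptual: one must check at each iteration that the hypotheses of Lemma \ref{Lemma: Intersection with centraliser} genuinely persist — namely that the intersected approximate lattice remains uniform (immediate, as uniformity is inherited by intersections with closed subgroups in this setting) and that the next conjugation automorphism still commensurates it. The latter is where Lemma \ref{Lemma: Intersection of approximate subgroups} does the work: repeatedly intersecting commensurable approximate subgroups and comparing powers $\Lambda^{2}, \Lambda^4, \dots, \Lambda^{4k}$ all land in a single commensurability class once intersected with the fixed normal subgroup $R$. A cleaner alternative, which sidesteps the finite generating set, is to intersect directly over the whole of $\Lambda$: use Corollary \ref{Corollary: Intersection with stabiliser} with $G$ acting on itself by conjugation to see that $\Lambda$ is covered by finitely many translates of $\Lambda^{-1}\Lambda \cap C_G(\lambda)$ for each $\lambda$, then feed this into Lemma \ref{Lemma: Intersection with centraliser}; but the finite-generation route is the most transparent.
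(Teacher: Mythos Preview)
Your strategy --- pass to the radical via Theorem~\ref{Theorem: Radical is hereditary} and then cut down to $Z_G$ by repeated applications of Lemma~\ref{Lemma: Intersection with centraliser} to inner automorphisms --- is exactly the route the paper's one-line sketch indicates. The gap is in the iteration, and it is more than bookkeeping. To invoke Lemma~\ref{Lemma: Intersection with centraliser} at stage $i{+}1$ with ambient group $H_i := \bigcap_{j\le i} C_R(\lambda_j)$ you need $c_{\lambda_{i+1}}$ to restrict to an automorphism of $H_i$, i.e.\ $\lambda_{i+1} \in N_G(H_i)$. You assert this parenthetically but give no argument, and for an arbitrary Zariski-generating set it is simply false: conjugation by $\lambda_{i+1}$ carries $C_R(\lambda_j)$ to $C_R(\lambda_{i+1}\lambda_j\lambda_{i+1}^{-1})$. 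For a concrete failure take $G=\SL_2\ltimes\mathfrak{sl}_2$ with the adjoint action: for $\lambda_1$ with regular semisimple $\SL_2$-part, $H_1=C_R(\lambda_1)$ is a one-dimensional Cartan line, and a generic $\lambda_2\in\langle\Lambda\rangle$ does not normalise it. Neither Zariski-closedness of $H_i$ nor Lemma~\ref{Lemma: Intersection of approximate subgroups} speaks to whether $c_{\lambda_{i+1}}(H_i)=H_i$. Your alternative via Corollary~\ref{Corollary: Intersection with stabiliser} also does not go through as written: for the conjugation action the orbit $\{\lambda'\lambda\lambda'^{-1}:\lambda'\in\Lambda\}\subset\Lambda^3$ is typically infinite, so the finiteness hypothesis there is not met.

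The repair is short once identified. Either choose the $\lambda_i$ inductively inside $N_G(H_{i-1})\cap\langle\Lambda\rangle$ and argue separately that while $H_{i-1}\supsetneq Z_G$ one can still strictly shrink the centraliser, or --- more cleanly --- establish the finite-family form of Lemma~\ref{Lemma: Intersection with centraliser} directly: if $\Xi$ is a uniform approximate lattice in $H$ and $\alpha_1,\dots,\alpha_k\in\Aut(H)$ each commensurate $\Xi$, then $\bigcap_j E(\alpha_j)\cap\Xi^2$ is a uniform approximate lattice in $\bigcap_j E(\alpha_j)$. With that in hand, a single application to $\alpha_j=c_{\lambda_j}|_R$ for a finite Zariski-generating set $\lambda_1,\dots,\lambda_k\in\langle\Lambda\rangle$ finishes the proof with no iteration; this is how the cited Corollary~6.4 of \cite{machado2019goodmodels} is organised.
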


A similar proof also establishes the following intersection result which is particularly relevant to Theorem \ref{THEOREM: MAIN THEOREM}.

\begin{corollary}\label{Lemma: Intersection with centraliser of co-rank 1 subgroups}
With $\Lambda \subset G$ as above and $Z_{G,1}$ as in Theorem \ref{THEOREM: MAIN THEOREM}. Then $\Lambda^2 \cap Z_{G,1}$ is a uniform approximate lattice in $Z_{G,1}$.
\end{corollary}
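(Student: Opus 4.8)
The plan is to mimic the proof of Lemma~\ref{Lemma: Intersection with centre}, with $Z_{G,1}$ playing the role of $Z_G$. Write $R=\Rad(G)$ and let $Z$ denote the centre of $R$ (the group $Z$ of Theorem~\ref{THEOREM: MAIN THEOREM}). First one records that $\Lambda^2\cap Z$ is a uniform approximate lattice in $Z$: this is a minor variant of Lemma~\ref{Lemma: Intersection with centre}, since by Theorem~\ref{Theorem: Radical is hereditary} the intersections $\Lambda^2\cap R$ and $\Lambda^2\cap N$ (with the nilpotent radical $N$) are uniform approximate lattices, the identity component $Z^\circ$ is a Zariski-connected subgroup normal in $G$ and contained in $N$, and one passes to $Z^\circ$ (and then to $Z$, of finite index) via Proposition~\ref{Proposition: Intersection and projections approximate lattices w/ closed subgroups} and the normality of $Z^\circ$, exactly as for the centre. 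Because $R$ centralises $Z$, the conjugation action of $G$ on $Z$ factors through $G/R$, and by definition $Z_{G,1}$ is the subgroup of elements of $Z$ fixed by $\ker\pi$ for this action. Finally, each $\gamma\in\langle\Lambda\rangle$ commensurates $\Lambda^2\cap Z$, because $\bigcup_{\lambda\in\Lambda}\lambda(\Lambda^2\cap Z)\lambda^{-1}\subset\Lambda^6\cap Z$ is commensurable with $\Lambda^2\cap Z$ by Lemma~\ref{Lemma: Intersection of approximate subgroups}; hence conjugation by $\gamma$ restricts to an automorphism of $Z$ commensurating the uniform approximate lattice $\Lambda^2\cap Z$.

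The crux is then to produce finitely many $\gamma_1,\dots,\gamma_k\in\langle\Lambda\rangle$ whose conjugations $c_{\gamma_1},\dots,c_{\gamma_k}$ act on $Z$ with common fixed subgroup exactly $Z_{G,1}$. Granting this, applying Lemma~\ref{Lemma: Intersection with centraliser} to the family $c_{\gamma_1},\dots,c_{\gamma_k}$ of automorphisms of $Z$ — iterated exactly as Lemma~\ref{Lemma: Intersection with centre} iterates it for the centre — produces a uniform approximate lattice of the form $\Lambda^m\cap Z_{G,1}$ in $Z_{G,1}$ for some $m$; since $\Lambda^m\cap Z_{G,1}$ is commensurable with $\Lambda^2\cap Z_{G,1}$ (Lemma~\ref{Lemma: Intersection of approximate subgroups}, viewing $Z_{G,1}$ as a trivial approximate subgroup), the latter is a uniform approximate lattice in $Z_{G,1}$ as well. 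The subgroups $Z\cap\Fix(c_\gamma)$ are Zariski-closed, so by Noetherianity of the Zariski topology their intersection over all $\gamma\in\langle\Lambda\rangle$ — and hence over a finite subfamily — is attained; what remains is to arrange that this intersection coincides with $Z_{G,1}$ and not with a strictly larger subgroup.

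This last identification is the main obstacle. On the simple factors of $\ker\pi$ to which $\langle\Lambda\rangle$ projects Zariski-densely there is nothing to check: the action of $G/R$ on $Z$ is algebraic, and Zariski density forces the common fixed subgroup to be the one fixed by those factors. The difficulty is that $\langle\Lambda\rangle$ may project to a subgroup of some factor of $\ker\pi$ — an anisotropic, hence compact, factor, or a factor ``hidden behind a cut-and-project'' as in the construction of Proposition~\ref{Proposition: A counter-example to a general Meyer-type theorem} — which is not Zariski-dense, possibly even trivial. To handle this one uses that $\pi(\Lambda_s)$, being contained in a lattice, is uniformly discrete, so that $\Lambda^2\cap\widetilde{\ker\pi}$ is an approximate lattice in the preimage $\widetilde{\ker\pi}$ of $\ker\pi$ (Proposition~\ref{Proposition: Intersection and projections approximate lattices w/ closed subgroups}); a Borel-density statement for approximate lattices then pins down the Zariski closure of the relevant projections, and on the missing (compact) directions the subgroup $Z_{G,1}$ is already met cocompactly by $\Lambda^2\cap Z$, so restricting to it there costs nothing. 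Everything else is, as indicated, a rerun of the proof of Lemma~\ref{Lemma: Intersection with centre}.
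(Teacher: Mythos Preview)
Your proposal is correct and follows the approach the paper intends—indeed the paper provides no proof beyond the remark preceding the corollary that ``a similar proof also establishes'' it, referring back to Lemma~\ref{Lemma: Intersection with centre}. You have spelled out the natural argument: pass to the uniform approximate lattice $\Lambda^2\cap Z$ in $Z$ (via Theorem~\ref{Theorem: Radical is hereditary} and the argument of Lemma~\ref{Lemma: Intersection with centre}), realise $Z_{G,1}=Z^{\ker\pi}$ as a finite intersection of fixed-point sets $E(c_{\gamma_i})\cap Z$ for $\gamma_i\in\langle\Lambda\rangle$ projecting into $\ker\pi$, and iterate Lemma~\ref{Lemma: Intersection with centraliser}. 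The one substantive step you correctly isolate is producing such $\gamma_i$'s whose images are Zariski-dense in $\ker\pi$; your resolution—$\pi(\Lambda_s)$ is discrete, so $\Lambda_s^2\cap\ker\pi$ is an approximate lattice in $\ker\pi$ by Proposition~\ref{Proposition: Intersection and projections approximate lattices w/ closed subgroups}, hence Zariski-dense by Borel density for approximate lattices—is exactly the right mechanism. Your worry that a projection to some factor might be ``possibly even trivial'' is overcautious, since the standing hypothesis that $\langle\Lambda\rangle$ is Zariski-dense in $G$ already forces Zariski-density of every factor projection; but your Borel-density argument is what is needed anyway, as Zariski-density of $\langle\Lambda_s\rangle$ in $G/R$ does not by itself force Zariski-density of $\langle\Lambda_s\rangle\cap\ker\pi$ in $\ker\pi$. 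The closing sentence on anisotropic factors is hand-wavy, but this is a minor wrinkle the paper is equally silent on.
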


The same techniques lead to other intersection theorems in $S$-adic groups.  We mention here intersections with certain isotypic components in $S$-adic linear groups with abelian radical and intersections of approximate lattices with Levi subgroups, see \cite{mac2023sadic}. These technical results  play a key role.

\section{Cohomology  and uniqueness of quasi-models}\label{Section: Cohomology for approximate subgroups}

The purpose of this section is to prove uniqueness and universality statements for the quasi-models introduced in \cite{hrushovski2020beyond}.  The most general one being:

  \begin{theorem}[Quasi-models are uniquely determined]\label{Theorem: Uniqueness of quasi-models without cohomology}
 Let $\Lambda$ be an approximate subgroup contained in a group $\Gamma$ that commensurates it. Suppose that we are given $f_i: \Gamma \rightarrow H_i$ with defect $K_i$ generating a Euclidean space $A_i$ as in the second part of Theorem \ref{Theorem: Hrushovski's quasi-models} for $i=1,2$. Then there are a quasi-homomorphism $\phi: H_1 \rightarrow H_2$ with defect contained in $A_2$ and bounded maps $k_i: \Gamma \rightarrow A_i$ for $i=1,2$ such that $$\phi \circ (f_1k_1) = f_2 k_2.$$
 Here, $f_i k_i$ is defined from $f_i$ and $k_i$ by pointwise multiplication.  
 \end{theorem}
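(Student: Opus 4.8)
The plan is to build the map $\phi$ directly from the two quasi-models by "composing one through the other", much as one builds a morphism of group extensions, but keeping track of the fact that only $\Lambda$ and its powers (not all of $\Gamma$) are controlled. First I would record the structural input from Theorem \ref{Theorem: Hrushovski's quasi-models}: for $i=1,2$ the image $f_i(\Gamma)A_i$ is a subgroup $\widetilde{H}_i$ of $H_i$ with $\widetilde{H}_i/A_i \cong \Gamma/\ker$ (a quotient of $\Gamma$), and the approximate subgroups $\Gamma_{f_i,K_i}=\{(\gamma,f_i(\gamma)k):\gamma\in\Gamma,k\in K_i\}$ inside $\Gamma\times H_i$ are commensurable, as graphs, with $\Lambda$ via the first projection and with $f_i(\Gamma)K_i$ via the second. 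The key point is that $f_1$ being a quasi-model for $(\Lambda,\Gamma)$ means $f_1^{-1}(W K_1)$ is commensurable with $\Lambda$ for all relatively compact neighbourhoods $W$, and likewise for $f_2$; so on the level of the ambient groups, $\widetilde{H}_1$ and $\widetilde{H}_2$ are two central-ish (Euclidean-kernel) extensions of the same group by Euclidean spaces, equipped with compatible "admissible" subsets coming from $\Lambda$.

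The central construction is as follows. Consider the approximate subgroup $\Lambda^\sharp := \{(\gamma,f_1(\gamma)k_1,f_2(\gamma)k_2):\gamma\in\Gamma,k_1\in K_1,k_2\in K_2\}$ in $\Gamma\times H_1\times H_2$, or rather the subgroup $\Gamma^\sharp$ it generates; this is a subgroup of $\widetilde H_1\times_\Gamma \widetilde H_2$, the fibre product over $\Gamma$ (or over the relevant quotient), which is an extension of $\Gamma$ by $A_1\times A_2$. Projecting to $\Gamma\times H_2$ recovers $\Gamma_{f_2,K_2}$, hence $\Lambda$; projecting to $H_1\times H_2$ gives an approximate subgroup whose first-coordinate projection is $f_1(\Gamma)K_1$. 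Now the first-coordinate projection map $\widetilde H_1\times_\Gamma \widetilde H_2 \to \widetilde H_1$ has kernel $\{e\}\times A_2$ which is normal and \emph{Euclidean}; I want a set-theoretic section of this projection over $\widetilde H_1$ that is a quasi-homomorphism with defect in $A_2$, and that is bounded on $f_1(\Gamma)K_1$ (modulo $A_1$). Such a section is exactly the data of $\phi:H_1\to H_2$ (extend off $\widetilde H_1$ arbitrarily, or restrict to $\widetilde H_1$ and note $f_1 k_1$ lands there): the quasi-homomorphism identity for $\phi$ with defect in $A_2$ is the statement that the two lifts $(\gamma_1\gamma_2)\mapsto$ and $(\gamma_1,\gamma_2)\mapsto \phi(\cdot)\phi(\cdot)$ differ by an element of $A_2$, which holds because $\Gamma^\sharp$ surjects onto $\widetilde H_1$ with kernel in $A_2$. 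The bounded maps $k_1,k_2$ then arise because the graph of $\gamma\mapsto f_i(\gamma)$ inside $\Gamma^\sharp$ differs from the actual section by something valued in $K_i$-translates, i.e. bounded in $A_i$; concretely $f_2 k_2 = \phi\circ(f_1 k_1)$ is forced by choosing $k_1$ so that $f_1 k_1$ is the chosen section restricted along $\Gamma\hookrightarrow\Gamma^\sharp$ and then reading off $k_2$.

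The main obstacle I expect is \emph{producing the section of $\widetilde H_1\times_\Gamma\widetilde H_2\to\widetilde H_1$ as an honest quasi-homomorphism with defect in $A_2$}, rather than merely a section with bounded defect in the abstract. This is where the hypothesis that $A_2$ is a Euclidean space on which the ambient group acts by isometries and $K_2$ is a ball is essential: one first picks any Borel/section-theoretic lift, obtains a $2$-cochain valued in $A_2$ measuring its defect, observes this cochain is bounded on the image of $\Lambda$ (because everything in sight is commensurable with $\Lambda$ and $f_i(\Lambda)$ is relatively compact), and then one must average/kill the part of it that is a coboundary of an $A_2$-valued $1$-cochain while preserving boundedness on $\Lambda$ — this is precisely the bounded-cohomology-for-approximate-subgroups mechanism advertised in \S\ref{Section: Cohomology for approximate subgroups}, so I would invoke (or reprove in this special case) the relevant vanishing/averaging lemma there. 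A secondary but manageable point is bookkeeping the passage to a finite-index subgroup $\Gamma_0\subset\Gamma$ (allowed by Theorem \ref{Theorem: Hrushovski's quasi-models}) so that $f_i(\Gamma_0)A_i$ really is a group and the fibre product is well-behaved; since commensurability is insensitive to finite index this costs nothing. Finally, checking $\phi$ is a quasi-homomorphism with \emph{normal} compact defect set in the sense of Definition \ref{Definition: Hrushovski's quasi-morphisms} reduces to noting the defect lies in $K_2^{\pm}$-bounded subsets of $A_2$ and $A_2\trianglelefteq H_2$ acts by isometries, so a ball about $e$ is conjugation-invariant up to enlarging by a bounded factor.
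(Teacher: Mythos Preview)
Your overall strategy---to form a graph-like approximate subgroup $\Lambda^\sharp$ in $\Gamma \times H_1 \times H_2$ (or its projection to $H_1 \times H_2$) and extract $\phi$ from it---is close in spirit to the paper's approach, which indeed builds $\Xi = \overline{\{(f_1(\gamma), f_2(\gamma)):\gamma\in\Gamma\}}K \subset H_1 \times H_2$ and analyses it via a Goursat-type argument (Proposition~\ref{Proposition: Crux uniqueness quasi-models}). However, there are two genuine gaps.

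First, you assume implicitly that the quotients $\widetilde H_1/A_1$ and $\widetilde H_2/A_2$ coincide, or at least that the kernel of $\widetilde H_1 \times_\Gamma \widetilde H_2 \to \widetilde H_1$ is exactly $\{e\} \times A_2$. This is not given: a priori the two group homomorphisms $\bar f_i:\Gamma \to H_i/A_i$ may have different kernels and different targets. The paper handles this by passing through the Bohr-type compactification $f_0:\Gamma \to H_0$ (Lemma~\ref{lemma: Reduction uniqueness of quasi-models}), proving that each $H_i/A_i$ arises as $H_0$ modulo a \emph{rigid} normal subgroup, and then reducing both quasi-models to share a common base $L$ with $\bar f_1=\bar f_2$. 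Without this reduction your fibre product is not well-formed in the way you need, and the kernel of the projection you want to section may be larger than $A_2$.

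Second, the step you correctly flag as the main obstacle---producing a section that is a quasi-homomorphism with defect in $A_2$---is exactly where the content lies, and ``average/kill the coboundary part'' does not do it. No vanishing or averaging lemma of that type is available here: the action on $A_2$ is by isometries, but there is no compact group to average over, and $H^2_b(\Lambda;A_2)$ need not vanish (indeed its non-vanishing is the source of the non-laminar examples in the paper). What the paper does instead is analyse $\Xi^2 \cap W^2(A_1 \times A_2)$ via Schreiber's lemma to obtain a vector subspace $V \subset A_1 \times A_2$, and then show by a Goursat argument (projections to both factors are surjective, intersection with $A_2$ is trivial by the quasi-model property) that $V$ is the graph of an equivariant surjective linear map $\psi:A_2 \to A_1$. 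This linear map is the essential new datum from which $\phi$ is built; your sketch contains no mechanism to produce it.
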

 
 A similar universality result was obtained independently by Krupiński and Pillay in \cite{KrupinskiPillay}. Their method is completely different and is the consequence of a beautiful new proof of the main result of \cite{hrushovski2020beyond}. This enables them to accommodate for potential additional structure. We indicate however that they do not prove a local version nor the cohomological interpretation that we exploit below (Corollary \ref{Corollary: Uniqueness of quasi-models, local version}). 
 
 Our method of proof follows the philosophy of the so-called Goursat lemma.  We obtain below an approximate subgroup that projects (essentially) bijectively to both factors - forcing it to be (up to an error) the graph of the quasi-homomorphism denoted $\phi$ above.  For various reasons however, this is achieved in multiple steps and exploits a novel notion of cohomology.
 
 \subsection{Cohomology for approximate subgroups}
The cohomology defined below requires a notion of boundedness provided by bornologies. A \emph{bornology} $\mathcal{B}$ of a set $X$ is a subset of  the set $\mathcal{P}(X)$ of subsets of $X$ that is stable under inclusions, finite unions and covers the whole of $X$. Given a bornology $\mathcal{B}$ and a subset $Y \subset X$, we say that $Y$ is \emph{bounded} if $Y \in \mathcal{B}$. Given any metric space $(X,d)$, a natural bornology is provided by the set of subsets of finite diameter. Given an approximate subgroup $\Lambda$ of a group $\Gamma$ that commensurates it, the bornology $\mathcal{B}_\Lambda$ made of all the subsets of $\Gamma$ that are covered by finitely many translates of $\Lambda$ is a bornology of $\Gamma$. It is moreover stable under the action of $\Gamma$ by left \emph{and} right multiplication.

Given an approximate subgroup $\Lambda$ of some group $\Gamma$ that commensurates it, an abelian subgroup $V$ equipped with a bornology $\mathcal{B}$ and an action of $\Gamma$ leaving $\mathcal{B}$ invariant (e.g. $\mathcal{B}_d$ for a left-invariant distance, or $\mathcal{B}_{\Lambda'}$ for $\Lambda'$ an approximate subgroup commensurated by $\Lambda$), we consider the sets of \emph{bounded} cochains 
$$C^n_b(\Lambda, V):= \{c: \Gamma^n \rightarrow V : \forall X \in \mathcal{B}_{\Lambda}, c(X, \ldots, X) \text{ bounded}\}$$
with the usual coboundary homomorphism \begin{align*}
\left(d^{n+1}c \right)(g_{1},\ldots ,g_{n+1})=g_{1}c (g_{2},&\dots ,g_{n+1})\\
&+ \sum _{i=1}^{n}(-1)^{i}c \left(g_{1},\ldots ,g_{i-1},g_{i}g_{i+1},\ldots ,g_{n+1}\right) \\
&\  \  \  \  \  \ \ \ \ \ \ \ \ +(-1)^{n+1}c (g_{1},\ldots ,g_{n}).
\end{align*}
We denote by $Z_b^n(\Lambda; V)$ the kernel of $d^{n+1}$, $B_b^n(\Lambda;V)$ the image of $d^n$ and $H_b^n(\Lambda;V)$ the resulting cohomology.

 Let us explain how Theorem \ref{Theorem: Uniqueness of quasi-models without cohomology} relates to cohomology following \cite[\S 5.24]{hrushovski2020beyond}. Given a quasi-model $f:\Gamma \rightarrow H$ as in the second part of Theorem \ref{Theorem: Hrushovski's quasi-models} define $\alpha$ for all $\gamma_1,\gamma_2 \in \Gamma$ by $$\alpha(\gamma_1,\gamma_2):= f(\gamma_1\gamma_2)f(\gamma_2)^{-1}f(\gamma_1)^{-1}.$$ Then  $\alpha \in C^2_b(\Gamma;A)$ (note here the use of $\Gamma$ instead of $\Lambda$). The class $\overline{\alpha}\in H^2_b(\Lambda;A)$ of $\alpha $ can also be recovered in the following way. Let $L$ denote $H/A$ and say a map between locally compact spaces is \emph{locally bounded} if it sends relatively compact subsets to relatively compact subsets. By choosing a locally bounded Borel section $s$ of the natural projection $H \rightarrow L$, we can identify $H$ with $H=:L \ltimes_{\beta} A$ where $\beta(l_1, l_2):= s(l_1l_2)s(l_2)^{-1}s(l_1)^{-1}$ and $L \ltimes_{\beta} A$ denotes the set $L \times A$ equipped with the multiplication
 $$(l_1, a_1)\cdot (l_2, a_2) \mapsto (l_1l_2, a_1 + l_1\cdot a_2 + \beta(l_1,l_2)).$$
 Write $\bar{f}$ the composition of $f$ and the natural projection $H \rightarrow L$.  We have that $\alpha$ and $\bar{f}^*\beta$ have the same class in $H^2_b(\Lambda;A)$. In what follows, when we write that some locally compact group is equal to a group of the form $L \ltimes_{\beta} A$ we implicitly assume that, as above, this is done thanks to a locally bounded section. In particular, a subset of $H \ltimes_{\beta} A$ is relatively compact if and only if its projections to $H$ and $A$ are relatively compact.

  We can rephrase Theorem \ref{Theorem: Uniqueness of quasi-models without cohomology} in that language as well. 
 
 \begin{theorem}[Quasi-models are uniquely determined, technical version]\label{Theorem: Uniqueness of quasi-models}
 Let $\Lambda$ be an approximate subgroup contained in a group $\Gamma$ that commensurates it. Suppose that we are given quasi-models $f_i: \Gamma \rightarrow H_i$ with defect generating a Euclidean space $A_i$ as in the second part of Theorem \ref{Theorem: Hrushovski's quasi-models} for $i=1,2$. If $\alpha_i$ is the $2$-cocycle defined by $\alpha_i(\gamma_1, \gamma_2):= f_i(\gamma_1\gamma_2)f_i(\gamma_2)^{-1}f_i(\gamma_1)^{-1}$, then there are: 
 \begin{enumerate}
 \item normal vector subspaces $A_i' \subset A_i$ normal in $H_i$;
 \item a linear isomorphism $\phi:A_1' \rightarrow A_2'$ equivariant for the respective $\Gamma$-actions on $A_1'$ and $A_2'$ that arise from $f_1$  and $f_2$ respectively;
 \item $2$-cocycles $\alpha_i' \in C^2_b(\Gamma; A_i')$;
 \item approximate subgroups $\Lambda_i^*:=f^{-1}(W_iA_i)$ for some neighbourhood of the identity $W_i \subset H_i$;
 \end{enumerate}
 such that $\alpha_i'$ and $\alpha_i$ have the same class in $H^2_b(\Lambda_i^*;A_i)$ and $\alpha_2$ and $\phi(\alpha_1)$ have the same class in $H^2_b(\Lambda_1^* \cap \Lambda_2^*; A_2')$. 
 \end{theorem}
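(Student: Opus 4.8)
\emph{Proof strategy.} The plan is to carry out the Goursat-type argument announced above for the diagonal quasi-homomorphism $f := (f_1, f_2)\colon \Gamma \to H := H_1 \times H_2$, whose defect is $\Delta := \{\, (\alpha_1(\gamma,\gamma'), \alpha_2(\gamma,\gamma')) : \gamma, \gamma' \in \Gamma \,\} \subset K := K_1 \times K_2$, a normal compact subset of $A := A_1 \times A_2$. After replacing $\Gamma$ by a common finite-index subgroup on which the second part of Theorem \ref{Theorem: Hrushovski's quasi-models} holds for both $f_1$ and $f_2$ — harmless, since the commensurability classes of the $\Lambda_i^*$ and the relevant restriction maps on $H^2_b$ are unaffected — Lemma \ref{Lemma: Graphs of quasi-homomorphisms, continuous case} shows that $f^{-1}(WA) = f_1^{-1}(W_1A_1) \cap f_2^{-1}(W_2A_2)$ is an approximate subgroup for every relatively compact symmetric neighbourhood $W = W_1 \times W_2$ of the identity; as each $\Lambda_i^* := f_i^{-1}(W_iA_i)$ is commensurable with $\Lambda$ by Theorem \ref{Theorem: Hrushovski's quasi-models}, Lemma \ref{Lemma: Intersection of approximate subgroups} gives that $\Lambda^* := \Lambda_1^* \cap \Lambda_2^*$ is too. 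I would then pass to the closed subgroup $M := \overline{f(\Gamma)A} \le H$ (a genuine subgroup since $\Delta \subset A$ which is normal), its image $\overline{M}$ in $L := L_1 \times L_2$ with $L_i := H_i/A_i$, and the closed subgroup $\tilde A := M \cap A \le A$. By the quasi-model axioms $M$ surjects onto each $H_i$ and $\overline{M}$ onto each $L_i$, so $\overline{M}$ is the closure of the graph of the homomorphism $(\bar f_1, \bar f_2)\colon \Gamma \to L$.

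\emph{Extracting the subspaces.} The heart of the proof is to turn $M$ into a graph, and here I would use crucially that $f_1$ and $f_2$ are quasi-models of the \emph{same} $\Lambda$. The approximate subgroups $\bar f_1^{-1}(\bar W_1)$ and $\bar f_2^{-1}(\bar W_2)$ are commensurable with $\Lambda_1^*$ and $\Lambda_2^*$ respectively, hence with each other, and feeding this commensurability into a translation-counting argument forces the off-diagonal pieces $\overline{M} \cap (L_1 \times \{e\})$ and $\overline{M} \cap (\{e\} \times L_2)$ to be relatively compact in $L_1$, resp. $L_2$. An analogous argument upstairs controls $\tilde A \cap (A_1 \times \{0\})$ and $\tilde A \cap (\{0\} \times A_2)$, identifying them with directions in which $\alpha_1$, resp. $\alpha_2$, can be trivialised over $\Lambda_1^*$, resp. $\Lambda_2^*$. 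I would then absorb these directions, together with the residual compact fibres from the previous point — which act on the Euclidean spaces $A_i$ by isometries and so admit invariant complements — into $\Gamma$-invariant direct sum decompositions $A_i = A_i' \oplus A_i''$. With these choices $\tilde A^0$, read modulo $A_1'' \times A_2''$, is the graph of a $\Gamma$-equivariant linear isomorphism $\phi\colon A_1' \to A_2'$; the subspaces $A_i'$ are $\Gamma$-invariant and closed, hence normal in $H_i$ because $\overline{\bar f_i(\Gamma)} = L_i$.

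\emph{Back to cohomology.} It then remains to translate this picture into the asserted equalities of classes. Since $A_i''$ is, by construction, the part absorbed into a compact fibre and into coboundary directions, a locally bounded section of the appropriate quotient of $H_i$ produces a $1$-cochain — bounded on the powers of $\Lambda_i^*$ precisely because sections are chosen locally bounded and neighbourhoods relatively compact, hence lying in $C^1_b(\Lambda_i^*; A_i)$ — witnessing that $\alpha_i$ is cohomologous over $\Lambda_i^*$ to a cocycle $\alpha_i' \in C^2_b(\Gamma; A_i')$. On the other hand, the graph description of $\tilde A^0$ modulo $A_1'' \times A_2''$ says exactly that $\phi \circ f_1$ and $f_2$ have the same defect modulo $A_i''$-valued cochains that are bounded over $\Lambda^*$; hence $\phi(\alpha_1')$ and $\alpha_2'$, and therefore $\phi(\alpha_1)$ and $\alpha_2$, agree in $H^2_b(\Lambda_1^* \cap \Lambda_2^*; A_2')$.

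\emph{Main obstacle.} The crux is the extraction step: proving that the off-diagonal Goursat pieces are relatively compact — i.e.\ that $\bar f_1$ and $\bar f_2$ are, in a precise sense, mutually proper — and then producing honest $\Gamma$-invariant subspaces and an honest equivariant isomorphism rather than a mere commensuration. The inputs I expect to need there, beyond the commensurability of $\Lambda_1^*$ and $\Lambda_2^*$, are Lemma \ref{Lemma: Bounded approximate subgroups of of automorphisms are relatively compact}, Corollary \ref{Corollary: Bounded approximate subgroups of of automorphisms are relatively compact} and the uniqueness of the compact-free model from Proposition \ref{Proposition: Minimal commensurable approximate subgroup}. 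A persistent but routine nuisance throughout is checking that every auxiliary cochain produced along the way indeed lies in $C^2_b$, that is, is bounded on all powers of $\Lambda^*$.
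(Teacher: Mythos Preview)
Your outline has a genuine gap at the step you yourself flag as the crux. You assert that each $\Lambda_i^* := f_i^{-1}(W_iA_i)$ is commensurable with $\Lambda$ ``by Theorem~\ref{Theorem: Hrushovski's quasi-models}'', and then chain $\Lambda_1^* \sim \Lambda \sim \Lambda_2^*$ to feed the off-diagonal compactness argument. But Theorem~\ref{Theorem: Hrushovski's quasi-models} only says that $\Lambda$ is commensurable with $f_i^{-1}(W_iK_i)$ for the \emph{compact} defect ball $K_i$, not with $f_i^{-1}(W_iA_i)$ where $A_i$ is an unbounded Euclidean space. When $A_i \neq \{0\}$ the approximate subgroup $\Lambda_i^*$ is strictly larger than (and not commensurable with) $\Lambda$: it is the approximate subgroup for which the \emph{good model} $\bar f_i\colon \Gamma \to L_i$ is adapted, whereas $\Lambda$ only has $f_i$ as a quasi-model. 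So your chain breaks, and with it the justification that $\overline{M}\cap(L_1\times\{e\})$ and $\overline{M}\cap(\{e\}\times L_2)$ are relatively compact. The tools you list for the repair --- Lemma~\ref{Lemma: Bounded approximate subgroups of of automorphisms are relatively compact}, Corollary~\ref{Corollary: Bounded approximate subgroups of of automorphisms are relatively compact}, Proposition~\ref{Proposition: Minimal commensurable approximate subgroup} --- concern automorphism actions and minimal good models, and do not by themselves produce the needed mutual properness of $\bar f_1$ and $\bar f_2$.

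The paper avoids this issue by a different route: rather than a direct Goursat on $H_1\times H_2$, it first invokes the Bohr-type compactification $f_0\colon \Gamma\to H_0$ of Lemma~\ref{Lemma: Bohr compactification and abstract automorphisms}, through which each $\bar f_i$ factors as $\phi_i\circ f_0$. Lemma~\ref{lemma: Reduction uniqueness of quasi-models} then does the real work, showing that $\ker\phi_i$ is a rigid (hence, after adjustment, Euclidean) normal subgroup; this is a nontrivial two-step argument establishing compact generation and rigidity separately. With this in hand, each $f_i$ is compared to $f_0$ via Proposition~\ref{Proposition: Crux uniqueness quasi-models} --- the Goursat argument run in the favourable situation where both maps share the \emph{same} $L$-part --- producing the subspaces $A_i'$ and reduced quasi-models $\tilde f_i$ with common base $L$. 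A final application of Proposition~\ref{Proposition: Crux uniqueness quasi-models} to $\tilde f_1$ against $\tilde f_2$ yields the isomorphism $\phi$. In effect, the commensurability of $\Lambda_1^*$ and $\Lambda_2^*$ that you need as \emph{input} emerges in the paper as a \emph{consequence} of routing everything through $H_0$.
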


It is often convenient to pass to commensurable approximate subgroups supported on finite index subgroups. Laminarity of approximate subgroups remains unchanged under this operation. So does cohomology. 
 
 \begin{lemma}\label{Lemma: Invariance cohomology finite index subgroups}
 Let $\Lambda \subset \Gamma$ be an approximate subgroup of a group it is commensurated by. Let $\Gamma_0 \subset \Gamma$ be a finite index subgroup, $\Lambda_0 :=\Lambda^2 
 \cap \Gamma_0$ and $A$ a divisible abelian group equipped with a bornology and an action of $\Gamma$ by locally bounded group automorphisms. Then the restriction map $H^*_b(\Lambda; A) \rightarrow H^*_b(\Lambda_0;A)$ is injective. 
 \end{lemma}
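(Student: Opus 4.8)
The plan is to run the classical restriction–corestriction (transfer) argument of group cohomology, adapted to the bornological cochain complexes involved, with divisibility of $A$ supplying the final division step. I begin with the basic compatibilities. Applying Corollary \ref{Corollary: Intersection with stabiliser} to the action of $\Gamma$ on the finite set $\Gamma/\Gamma_0$ and the point $s=\Gamma_0$, the set $\Lambda$ is covered by finitely many translates of $\Lambda^{-1}\Lambda\cap\Stab(s)=\Lambda^2\cap\Gamma_0=\Lambda_0$; since conversely $\Lambda_0\subseteq\Lambda^2\subseteq F\Lambda$ for some finite $F$, the approximate subgroups $\Lambda$ and $\Lambda_0$ are commensurable in $\Gamma$. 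Hence $\Lambda_0$ is an approximate subgroup (Lemma \ref{Lemma: Intersection of approximate subgroups}), it is commensurated by $\Gamma_0$ (for $\gamma\in\Gamma_0$, $\gamma\Lambda_0\gamma^{-1}=\gamma\Lambda^2\gamma^{-1}\cap\Gamma_0$ is commensurable with $\Lambda_0$ by Lemma \ref{Lemma: Intersection of approximate subgroups}), and a subset of $\Gamma_0$ is $\mathcal B_{\Lambda_0}$-bounded exactly when it is $\mathcal B_\Lambda$-bounded. Consequently the restriction of cochains $c\mapsto c|_{\Gamma_0^k}$ is a well-defined chain map $C^*_b(\Lambda;A)\to C^*_b(\Lambda_0;A)$ (it manifestly commutes with the inhomogeneous differential and sends cochains bounded on $\mathcal B_\Lambda$-sets to cochains bounded on $\mathcal B_{\Lambda_0}$-sets), and it induces the map of the statement.

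Next I construct a corestriction $\mathrm{cor}\colon C^*_b(\Lambda_0;A)\to C^*_b(\Lambda;A)$. Fix left coset representatives $\Gamma=\bigsqcup_{i=1}^{n}t_i\Gamma_0$, $n=[\Gamma:\Gamma_0]$, $t_1=e$, and record the permutation action on $\Gamma/\Gamma_0$ through relations $g t_i=t_{\sigma_g(i)}d_i(g)$ with $d_i(g)\in\Gamma_0$; let $\mathrm{cor}(\psi)$ be the standard inhomogeneous transfer of $\psi$ built from these data (a purely formal construction from the cohomology of groups). One checks in the usual way that $\mathrm{cor}$ is a chain map and that $\mathrm{cor}\circ\mathrm{res}=[\Gamma:\Gamma_0]\cdot\mathrm{id}$ on cohomology; neither point involves the bornology.

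The only step requiring care — and the one I expect to be the main obstacle — is that $\mathrm{cor}$ preserves the bornological boundedness condition, i.e.\ sends $C^*_b(\Lambda_0;A)$ into $C^*_b(\Lambda;A)$ rather than merely into the cochains of $\langle\Lambda\rangle$. The reason is structural: in the transfer formula every argument fed to $\psi$ lies in $T^{-1}(\text{a word in the }g_i\text{'s})T\cap\Gamma_0$ for the fixed finite set $T=\{t_1,\dots,t_n\}$, so as $(g_1,\dots,g_k)$ ranges over $X^k$ with $X\in\mathcal B_\Lambda$ these arguments range inside a fixed $\mathcal B_{\Lambda_0}$-bounded subset of $\Gamma_0$ (using that $\mathcal B_\Lambda$ is stable under left and right translation and that, as in the first paragraph, $\mathcal B_{\Lambda_0}$ agrees with $\mathcal B_\Lambda$ on subsets of $\Gamma_0$); hence $\psi$ carries them into a bounded subset $B\subseteq A$, and $\mathrm{cor}(\psi)(g_1,\dots,g_k)$ is a sum of $n$ translates of elements of $B$ by the fixed elements $t_i$. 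Since $\Gamma$ acts on $A$ by bornological automorphisms and $\mathcal B_A$ is stable under finite unions, this remains bounded, so $\mathrm{cor}(\psi)\in C^*_b(\Lambda;A)$. It is precisely to keep only finitely many translations by a fixed finite set (and a single evaluation of a bounded cochain) that one writes the transfer via coset representatives rather than by any averaging over $\Gamma$.

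Finally, suppose $x\in H^k_b(\Lambda;A)$ has $\mathrm{res}(x)=0$. Then $[\Gamma:\Gamma_0]\cdot x=\mathrm{cor}(\mathrm{res}(x))=0$. Since $A$ is divisible, multiplication by $[\Gamma:\Gamma_0]$ is a surjective $\Gamma$-equivariant endomorphism of $A$; it is also injective on the coefficient modules considered here (which are torsion-free, being vector spaces over local fields), hence an automorphism of $A$, and therefore induces an automorphism of $H^k_b(\Lambda;A)$. Thus $x=0$, and $\mathrm{res}\colon H^*_b(\Lambda;A)\to H^*_b(\Lambda_0;A)$ is injective.
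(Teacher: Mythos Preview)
Your argument is correct and is precisely the approach the paper intends: the paper's proof is a one-line pointer to the transfer argument in \cite[\S 8.6]{MR1840942}, and you have carried that out in the bornological setting, the key (and correctly handled) point being that corestriction preserves $\mathcal{B}_\Lambda$-boundedness because each argument fed to $\psi$ lies in $T^{-1}XT\cap\Gamma_0$ for the fixed finite $T$. Your closing caveat is also right---divisibility alone does not make multiplication by $[\Gamma:\Gamma_0]$ injective on cohomology (take $\Gamma=\mathbb{Z}/2$, $\Gamma_0=\{e\}$, $\Lambda=\Gamma$, $A=\mathbb{Q}/\mathbb{Z}$ with the maximal bornology), so the hypothesis should really be read as ``uniquely divisible'', which is the case in every application of the lemma in the paper.
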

 
 The proof can be adapted from the proof of \cite[\S 8.6]{MR1840942} with the help of a transfer operator. Note here that all the groups considered are discrete, which simplifies proofs considerably. Lemma \ref{Lemma: Invariance cohomology finite index subgroups} and the second part of Theorem \ref{Theorem: Hrushovski's quasi-models} thus enable us to pass from $\Gamma$ to $\Gamma_0$ without losing much information. We therefore make the following assumption:
 
 \begin{assumption}\label{Assumption: Quasi-model}
 We consider a quasi-model $f: \Gamma \rightarrow H$ whose defect is contained in a Euclidean ball $K$ of a Euclidean space $A$ contained and normal in $H$, on which $H$ acts by isometries. We moreover assume that the map $\bar{f}$ defined as the composition of $f$ and the natural projection $H \rightarrow H/A$ is such that $\bar{f}(\Lambda)$ is dense in a subset with non-empty interior. 
 \end{assumption}

\subsection{Key proposition}
 
The following is the crux of the proof of \ref{Theorem: Uniqueness of quasi-models}: it gives the desired conclusion under slightly restrictive assumptions on the quasi-models considered. 

\begin{proposition}\label{Proposition: Crux uniqueness quasi-models}
Let $\Lambda$ and $\Gamma$ be as above. Let $L$ be a locally compact group, $A_1,A_2$ be Euclidean spaces with $L$-action by isometries, $\beta_1, \beta_2$ two locally bounded Borel $2$-cocycles and $f: \Gamma \rightarrow L \ltimes_{\beta_1} A_1$, $g:\Gamma \rightarrow L \ltimes_{\beta_2} A_2$ be two quasi-homomorphisms with relatively dense range, $f(\Lambda)$ relatively compact and $g$ a quasi-model satisfying Assumption \ref{Assumption: Quasi-model}. Suppose moreover that the defect of $f$ and $g$ are contained in $A_1$ and $A_2$ respectively and that the group homomorphisms $\Gamma \rightarrow L$ induced by $f$ and $g$ are equal. Then there is an $L$-equivariant surjective linear map $\psi: A_2 \rightarrow A_1$ and $b \in C^1_b(\Lambda; A_1)$ such that 
$$ \alpha_1 = db + \psi(\alpha_2)$$
where $\alpha_1$ and $\alpha_2$ are the bounded $2$-cocycles of $\Gamma$ defined by $f(\gamma_1\gamma_2)f(\gamma_2)^{-1}f(\gamma_1)^{-1}$ and $g(\gamma_1\gamma_2)g(\gamma_2)^{-1}g(\gamma_1)^{-1}$ respectively. 

Moreover, $b$ is the sum of a map $b_1: \Gamma \rightarrow A_2$ factoring through a locally bounded map $b_1': L \rightarrow A_2$ and a uniformly bounded map $b_2: \Gamma \rightarrow A_2$.
\end{proposition}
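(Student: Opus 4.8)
The plan is to exploit the strategy evoked just before the statement: build, inside the product $H_1 \times H_2 = (L \ltimes_{\beta_1} A_1) \times (L \ltimes_{\beta_2} A_2)$, an approximate subgroup that projects essentially bijectively to each factor. Since $f$ and $g$ induce the same homomorphism $\bar f = \bar g =: q \colon \Gamma \to L$, the diagonal map $\gamma \mapsto (f(\gamma), g(\gamma))$ is a quasi-homomorphism $F \colon \Gamma \to L \ltimes (A_1 \times A_2)$ whose defect lies in $A_1 \times A_2$, acting through the diagonal $L$-action. First I would form the approximate subgroup $\Gamma_{F,K}$ (the graph of $F$ thickened by a Euclidean ball $K$ in $A_1 \times A_2$) as in Lemma \ref{Lemma: Graphs of quasi-homomorphisms, continuous case}. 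The projection of $\Gamma_{F,K}$ to $H_2$ is, up to commensurability, the approximate subgroup $g^{-1}(W_2 A_2)$ appearing in Hrushovski's quasi-model theorem, because $g$ is a genuine quasi-model satisfying Assumption \ref{Assumption: Quasi-model}; meanwhile its projection to $H_1$ is relatively compact when restricted to the $f(\Lambda)$-part, since $f(\Lambda)$ is relatively compact. The key point is then to understand the kernel of the projection to $H_2$: it is supported on $A_1$, and I want to show it is bounded, i.e. that the $A_1$-valued "discrepancy" between $f$ and $g$ along the fibers of $q$ is controlled.

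The heart of the argument is the following. Let $W_2 \subset H_2$ be a neighbourhood of the identity with $g^{-1}(W_2 A_2) \subset$ (a fixed power of) $\Lambda$-bounded set; set $\Lambda^* := g^{-1}(W_2 A_2)$, commensurable with $\Lambda$ by Theorem \ref{Theorem: Hrushovski's quasi-models}. For $\gamma$ ranging over $\Lambda^*$, the $L$-component $q(\gamma)$ stays near the identity modulo $A_2$, but the $A_1$-component of $f(\gamma)$ need not be bounded. I would use the density of $\bar g(\Lambda)$ in a set with non-empty interior (Assumption \ref{Assumption: Quasi-model}) together with the rigid-subgroup structure of $A_2$ inside $H_2$ to produce, for each $l \in L$ near the identity, enough elements of $\Gamma$ with $q(\gamma) = l$ to pin down the behaviour of the $A_1$-component as an (almost) affine function of the $A_2$-component. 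Concretely, on the subgroup $f(\Gamma)A_1 \cap g(\Gamma)A_2$ (a subgroup because both defects lie in the respective $A_i$), the first and second coordinates give two sections of $L$ over the same group, so their difference defines a map which, by the quasi-cocycle identities and the fact that $A_1, A_2$ are the rigid parts, is an $L$-equivariant affine-linear gadget. This forces the existence of an $L$-equivariant linear $\psi \colon A_2 \to A_1$ (surjective because $f$ has relatively dense range and $A_1$ is generated by the defect of $f$) such that $F' \colon \gamma \mapsto (f(\gamma) - \psi(g(\gamma)), g(\gamma)) \in A_1 \oplus H_2$ has its first coordinate $b \colon \Gamma \to A_1$ satisfying $db = \alpha_1 - \psi(\alpha_2)$; this is just the statement that $\gamma \mapsto f(\gamma)\,(\psi g(\gamma))^{-1}$ is a quasi-homomorphism into $A_1$ with the prescribed defect.

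It then remains to prove $b \in C^1_b(\Lambda; A_1)$ and to produce the claimed splitting $b = b_1 + b_2$. For boundedness on $\Lambda$-bounded sets: restrict to $\Lambda^* = g^{-1}(W_2 A_2)$; there $g(\gamma) \in W_2 A_2$, so $\psi(g(\gamma))$ has bounded $L$-part, hence (since $f$ and $\psi\circ g$ induce the same map to $L$) the element $f(\gamma)(\psi g(\gamma))^{-1}$ has $L$-part in a fixed compact set, and the quasi-homomorphism property into the abelian group $A_1$ together with relative compactness of $f(\Lambda)$ gives a uniform bound — this is where one invokes that a quasi-homomorphism with relatively compact restriction to $\Lambda$ and abelian-valued defect is bounded on each power of $\Lambda$. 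For the decomposition: write $q \colon \Gamma \to L$ and use a locally bounded Borel section $\sigma \colon L \to H_2$ realizing $H_2 = L \ltimes_{\beta_2} A_2$; set $b_1(\gamma)$ to be $\psi$ applied to the $A_2$-component of $g(\gamma)$ relative to $\sigma(q(\gamma))$, composed appropriately — this factors through a locally bounded map $b_1' \colon L \to A_1$ — and let $b_2 := b - b_1$, which is uniformly bounded because on each fiber of $q$ the remaining discrepancy is controlled by the defects of $f$ and $g$, both uniformly bounded. The main obstacle I anticipate is the middle step: rigorously extracting the \emph{linear} map $\psi$ and the equivariance from the quasi-cocycle data, i.e. showing that the fiberwise discrepancy between $f$ and $g$ is not merely bounded-to-affine but genuinely comes from a linear $L$-map modulo bounded error. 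This requires carefully combining the rigidity of the Euclidean "rigid subgroups" $A_1, A_2$ (normal, $L$ acting by isometries) with the relative density hypotheses, in the spirit of \cite[App. C]{hrushovski2020beyond} and \cite[\S 5.24]{hrushovski2020beyond}, and is the place where Assumption \ref{Assumption: Quasi-model} on $g$ is genuinely used.
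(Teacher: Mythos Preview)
Your overall strategy is exactly the paper's: form the closed thickened graph $\Xi:=\overline{\{(f(\gamma),g(\gamma))\}}K$ inside $H_f\times H_g$, observe it is an approximate subgroup projecting into the diagonal $\Delta_L\subset L\times L$, and then run a Goursat argument. The difference is that you stop precisely at the step you flag as the ``main obstacle'' --- extracting the \emph{linear} $L$-equivariant map $\psi:A_2\to A_1$ --- and the paper has a concrete mechanism for this that your proposal is missing.

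The paper's device is Schreiber's lemma (the version in \cite[Lem.~2.9]{mac2023definitions}): intersect $\Xi^2$ with a neighbourhood of $A_1\times A_2$ to get an approximate subgroup $\Xi_W$ sitting inside the Euclidean space $A_1\times A_2$ and commensurated by $\Xi$; Schreiber then produces a genuine vector subspace $V\subset A_1\times A_2$, normalised by $\Xi$, with $\Xi_W\subset C_1V$ and $V\subset C_1\Xi_W$ for some compact $C_1$. This $V$ is the object on which Goursat is actually applied. Two further points are needed and are not in your sketch: (i) $V\cap A_2=\{0\}$, which comes from the quasi-model hypothesis on $g$ (namely $\Xi^2\cap W^2 H_f$ is relatively compact because $g^{-1}$ of a compact set is $\Lambda$-bounded and $f(\Lambda)$ is relatively compact); and (ii) $V$ surjects onto both $A_1$ and $A_2$, which uses relative density of both $p_f(\Xi)$ and $p_g(\Xi)$ together with the fact that the projection of $\Xi$ to $L\times L$ is contained in the diagonal. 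Your justification for surjectivity (``$A_1$ is generated by the defect of $f$'') is not how the argument runs and would not be enough by itself. Once $V$ is the graph of a surjective linear $\psi:A_2\to A_1$, $L$-equivariance is automatic from $\Delta_L$-normalisation of $V$.

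For the decomposition $b=b_1+b_2$: the paper does not build $b_1$ from a section $\sigma:L\to H_2$ as you propose, but rather chooses inside $\Xi^2$ a closed set of the form $\{((l,\tau(l)),(l,\tau'(l))):l\in L\}$ with $\tau,\tau'$ locally bounded (possible because $\Xi$ surjects onto $\Delta_L$ and contains a compact neighbourhood of the identity there), and sets $a(l):=\tau(l)-\psi(\tau'(l))$. Then $b_1:=a\circ\bar g$ factors through $L$ and is locally bounded, while the remainder $b_2$ is uniformly bounded because $\Xi\subset\Delta_2 C_4$ for a compact $C_4\subset A_1$. Your proposed $b_1$ (via $\psi$ of the $A_2$-component of $g$) does not obviously factor through $L$, since the $A_2$-component of $g(\gamma)$ depends on $\gamma$ and not just on $q(\gamma)$.
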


\begin{proof}
The proof of Proposition \ref{Proposition: Crux uniqueness quasi-models} is based on a Goursat lemma of sorts. Let $K$ denote a Euclidean ball in $A_1 \times A_2$. Write $H_f:= L \ltimes_{\beta_1} A_1$ and $H_g:= L \ltimes_{\beta_2} A_2$, and $p_g: H_f \times H_g \rightarrow H_g$ and $p_f: H_f \times H_g \rightarrow H_f$ the natural projections. Define the subset $$\Xi:= \overline{\{ (f(\gamma),g(\gamma)) \in H_f \times H_g : \gamma \in \Gamma\}}K.$$
As in Lemma \ref{Lemma: Graphs of quasi-homomorphisms, continuous case}, we have that $\Xi$ is an approximate subgroup. The projection of $\Xi$ to  $H_f$ contains $f(\Gamma)$ and the projection to $H_g$ contains $g(\Gamma)$, so both are relatively dense. For any neighbourhood of the identity $W \subset H_f \times H_g$, the intersection $\Xi^2 \cap W^2H_f$ is contained in 
$$\overline{\{ (f(\gamma),g(\gamma)) : g(\gamma) \in W^2K^3\}K^3}.$$
But $g^{-1}(W^2K^3)$ is covered by finitely many translates of $\Lambda$ since $g$ is a quasi-model. Hence, $f\left(g^{-1}(W^2K^3)\right)$ is relatively compact. In turn, $\Xi^2 \cap W^2H_f$ is relatively compact. Finally, the projection of $\Xi$ to $L \times L$ is contained in the diagonal $\Delta_L:=\{(l,l) \in L \times L : l \in L\}$ by assumption. It is in fact equal to it. Indeed, it contains  $$\{(\bar{f}(\lambda), \bar{g}(\lambda)): \lambda \in \Lambda\} = \{(\bar{g}(\lambda), \bar{g}(\lambda)): \lambda \in \Lambda\}$$
by our assumptions, and the latter is dense in a subset of $\Delta_L$ with non-empty interior by  Assumption \ref{Assumption: Quasi-model}. But $\overline{\{(f(\lambda), g(\lambda)): \lambda \in \Lambda\}}$ is a compact subset of $\Xi$, so the projection of $\Xi$ to $L \times L$ contains the subset with non-empty interior $\overline{\{(\bar{g}(\lambda), \bar{g}(\lambda)): \lambda \in \Lambda\}}$. But, by Assumption \ref{Assumption: Quasi-model} again, the projection of $\Xi$ is dense in $\Delta_L$, so we have proved our claim.

 Choose $W$ a symmetric relatively compact neighbourhood of the identity in $H_f \times H_g$. Define $\Xi_W:= \Xi^2 \cap W^2(A_1 \times A_2)$. According to Lemma \ref{Lemma: Intersection of approximate subgroups}, $\Xi_W$ is an approximate subgroup commensurated by $\Xi$ and by the version of Schreiber's lemma found in \cite[Lem. 2.9]{mac2023definitions} there is a vector subspace $V \subset A_1 \times A_2$ normalised by $\Xi$ and a compact subset $C_1$ normalising $V$ such that $\Xi_W \subset C_1V$ and $V \subset C_1\Xi_W$. Since the conjugation action of $H_f \times H_g$ on $A_1 \times A_2$ factorises through $L \times L$, $V$ is stable under the action of the image of $\Xi$ in $L \times L$ i.e. $V$ is stable under the action by the diagonal $\Delta_L$. Furthermore, we claim that $V$ projects surjectively to both $A_1$ and $A_2$. Indeed, note that $p_g(\Xi)$ and $p_f(\Xi)$ are relatively dense in $H_g$ and $H_f$ respectively and choose any symmetric relatively compact neighbourhood $W'$ of the identity in $\Delta_L$ such that the projection of $\Xi_W$ to $L \times L$ is contained in $W'$. Let $\Xi_W'$ denote all the elements of $\Xi^2$ that project into $W'$ and notice that $\Xi_W'$ is commensurable with $\Xi_W$ (Lemma \ref{Lemma: Intersection of commensurable sets}).  Take $\xi \in \Xi$ and suppose that $p_g(\xi)$ is contained in $p_g(W) A_2$. Then $\xi \in \Xi_W'$ because the projection of $\Xi$ to $L\times L$ is contained in the diagonal. Since $p_g(\Xi)$ is relatively dense in $H_g$, we find that there is a compact subset $C_2 \subset H_g$ such that $A_2 \subset C_2p_g(\Xi_W')$. Hence, $A_2 \subset C_2p_g(C_1) p_g(V)$ i.e $A_2 = p_g(V)$. Symmetrically, $A_1=p_f(V)$.  Now, $V \cap A_2$ is contained in $C \Xi^2 \cap H_g$ which is compact, so $V \cap A_2$ is trivial. By Goursat's lemma, $V$ is the graph of a surjective linear map $\psi:A_2 \rightarrow A_1$. But $V$ is normalised by the diagonal subgroup in $L \times L$. So $\psi$ is equivariant with respect to the $L$-actions on $V_1$ and $V_2$. Finally, since $\Xi$ contains a compact subset $K'$ that projects surjectively to a neighbourhood of the identity of $\Delta_L$, $\Xi^2$ contains $K'K$ which is a neighbourhood of the identity of the inverse image of $\Delta_L$ in $H_f \times H_g$. We deduce that $V \subset \Xi^m$ for $m$ sufficiently large. 
 
 Again, because $\Xi$ projects surjectively to $\Delta_L$ and contains $K'$, we can choose in $\Xi^2$ a closed subset of the form
 $$ \Delta_1:=\{ ((l,\tau(l)),(l,\tau'(l))) : l \in L\} $$
 with $\tau$ and $\tau'$ locally bounded.
 Now, 
 $$ \Delta_2:= V\Delta_1 = \{ ((l,\tau(l) - \psi(\tau'(l)) + \psi(v)),(l,v)) : l \in L, v \in V_2\}$$
 is contained in $\Xi^{m+2} $. Conversely, notice that $\Xi \subset \Delta_1(A_1 \times A_2)$.  Since $\Delta_1 \subset \Xi^2$ this means $\Xi \subset \Delta_1 \left(\Xi^3 \cap (A_1 \times A_2)\right)$.  Similarly,  $\Xi^3 \cap \left(A_1 \times A_2\right) \subset VC_1$ implies $\Xi^3 \cap \left(A_1 \times A_2\right) \subset VC_3$ where $C_3:=C_1 \cap \left(A_1 \times A_2\right)$.  So $\Xi \subset \Delta_2C_3 $.  Finally, notice that $\Delta_2C_3 \subset \Delta_2 C_4$ where $C_4:= p_f(C_3) \psi \circ p_g(C_3) \subset A_1$ because $V$ is the graph of $\psi$.
 
 Write $a(l) := \tau(l) - \psi(\tau'(l))$ for all $l \in L$ and define a map $\phi: H_g \rightarrow H_f$ by $\phi(l,v) := (l,a(l) + \psi(v))$.  In other words, $\phi$ is the map whose graph is $\Delta_2$. But $\Xi \subset \Delta_2 C_4$, so for all $\gamma \in \Gamma$ we have $(f(\gamma), g(\gamma)) \in \Delta_2C_4$. That is to say, $(f(\gamma), g(\gamma)) = (\phi(g(\gamma)) b(\gamma), g(\gamma))$ for some $b(\gamma) \in C_4$. In other words, $ f(\gamma)^{-1}\phi(g(\gamma))$ is bounded uniformly in $\gamma$ and contained in $A_1$. Therefore, 
 \begin{align*}
 \alpha_1(\gamma_1, \gamma_2) & =   \psi(\alpha_2(\gamma_1,\gamma_2)) + db(\gamma_1, \gamma_2) + da(\bar{g}(\gamma_1),\bar{g}(\gamma_1)).
 \end{align*}

\end{proof}

We will mainly use the uniqueness statement (Theorem \ref{Theorem: Uniqueness of quasi-models}) to characterise laminarity. An analysis of the proof of Proposition \ref{Proposition: Crux uniqueness quasi-models}  reveals that we do not use that the $L$-action is by isometries. This provides the following corollary of a local nature that will be enough for our purpose:

\begin{corollary}[Characterising laminarity via cocycles]\label{Corollary: Uniqueness of quasi-models, local version}
Let $\Lambda$ be an approximate subgroup of a group $\Gamma$ that commensurates it. Suppose that $\Lambda$ is laminar. Let $H$ be a locally compact group of the form $L \ltimes_{\beta} A$ with $A \simeq \mathbb{R}^n$ equipped with a continuous $L$-action and $\beta$ a locally bounded Borel cocycle. Let $f: \Gamma \rightarrow H$ be a map such that:
\begin{enumerate}
\item the map $\overline{f}$, obtained by the composition of $f$ and the natural projection $H \rightarrow L$, is a group homomorphism;
\item $f(\Lambda)$ is relatively compact;
\item $f^{-1}(C)$ is covered by finitely many translates of $\Lambda$ for all $C \subset H$ compact;
\item Set $\Lambda^*:= \overline{f}^{-1}(W)$ for some compact neighbourhood of the identity $W$ containing  $\overline{f}(\Lambda)$, then the defect $\{f(\gamma_1\gamma_2)f(\gamma_2)^{-1}f(\gamma_1)^{-1} : \gamma_1,\gamma_2 \in \Lambda^*\}$ of $f$ \emph{restricted to} $\Lambda^*$ is contained in $K \subset A$ compact.
\end{enumerate}
 Then there is $b \in  C^1_b(\Lambda^*; A)$ such that for all $\gamma_1, \gamma_2 \in \Lambda^*$ we have $$f(\gamma_1\gamma_2)f(\gamma_2)^{-1}f(\gamma_1)^{-1} = db(\gamma_1, \gamma_2).$$ 
\end{corollary}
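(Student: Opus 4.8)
The plan is to deduce Corollary~\ref{Corollary: Uniqueness of quasi-models, local version} from Proposition~\ref{Proposition: Crux uniqueness quasi-models} by supplying a second quasi-model to compare $f$ against, namely a genuine good model of $\Lambda^*$ whose internal space plays the role of $A_1$ and the given map $f$ the role of $g$ (or, more precisely, we want the conclusion $\alpha_1 = db + \psi(\alpha_2)$ with $\psi$ ending up trivial). First I would restrict attention to $\Lambda^*$: by hypotheses (1)--(4) the map $f$, when restricted to $\Lambda^*$, is a quasi-homomorphism with defect contained in the compact subset $K$ of $A$, and since $A$ is normal in $H$ with $L$ acting continuously, we may enlarge $K$ to a relatively compact $L$-invariant neighbourhood; the group homomorphism $\Gamma \to L$ induced by $\overline f$ restricts to a homomorphism on $\langle \Lambda^* \rangle$. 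Thus $f|_{\Lambda^*}$ satisfies the hypotheses asked of ``$g$'' in Proposition~\ref{Proposition: Crux uniqueness quasi-models}, except that we must check the density statement in Assumption~\ref{Assumption: Quasi-model} holds for $\overline f|_{\Lambda^*}$; since $W$ is a compact \emph{neighbourhood} of the identity containing $\overline f(\Lambda)$ and $\Lambda^* = \overline f^{-1}(W)$, the image $\overline f(\Lambda^*)$ contains a neighbourhood of the identity in $\overline f(\langle \Lambda^* \rangle)$, and after passing to a finite-index subgroup (using Lemma~\ref{Lemma: Invariance cohomology finite index subgroups}, which leaves the cohomology unchanged) we may put ourselves in the situation of Assumption~\ref{Assumption: Quasi-model}.

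Next I would produce the competing good model. Since $\Lambda$ is laminar, so is $\Lambda^*$ (it is commensurable with $\Lambda$ by hypothesis~(3) together with Lemma~\ref{Lemma: Intersection of commensurable sets}), and by Theorem~\ref{Theorem: Hrushovski's quasi-models} — or here directly by Proposition~\ref{Proposition: Minimal commensurable approximate subgroup} since $\Lambda^*$ is laminar — there is an honest good model $h \colon \langle \Lambda^* \rangle \to H_0$ with relatively compact image and $h^{-1}(U) \subset \Lambda^*$ for some neighbourhood $U$. A good model is in particular a quasi-model with \emph{trivial} defect, so its internal space $A_1$ can be taken to be $\{0\}$; but to match the hypotheses of Proposition~\ref{Proposition: Crux uniqueness quasi-models} I need $f$ and $h$ to induce the \emph{same} homomorphism to a common $L$. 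This is the one genuinely delicate point: the target $L$ in the proposition is the quotient $H/A$, and I need the good model $h$ to factor through — or be compatible with — the homomorphism $\overline f \colon \langle \Lambda^* \rangle \to L$. I would arrange this by replacing $H_0$ with $L \times H_0$ and $h$ with $(\overline f, h)$: since $\overline f(\Lambda^*)$ is relatively compact in $L$ (as $\overline f(\Lambda)$ is, being the image of the relatively compact $f(\Lambda)$) and $h(\Lambda^*)$ is relatively compact, the pair $(\overline f, h)$ is still a good model of $\Lambda^*$, and now it induces $\overline f$ on the $L$-factor on the nose. Then Proposition~\ref{Proposition: Crux uniqueness quasi-models} applies with the roles of its ``$f$'' and ``$g$'' played by this good model and by our $f|_{\Lambda^*}$ respectively, its ``$A_1$'' being $\{0\} \times \mathfrak{a}_0$ where $\mathfrak{a}_0$ is the (possibly nontrivial, Euclidean) rigid part of $H_0$, and its ``$L$'' being $L \times (H_0/\mathfrak{a}_0)$.

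I expect the main obstacle to be exactly this matching of internal spaces and base groups so that Proposition~\ref{Proposition: Crux uniqueness quasi-models} can be invoked cleanly: one must be careful that (i) the Euclidean rigid subgroup produced by the quasi-model machinery for $h$ is trivial or at least that the surjective equivariant linear map $\psi \colon A_2 \to A_1$ in the proposition has trivial domain, which forces $\psi = 0$ and hence $\alpha_1 = db$; and (ii) the $2$-cocycle ``$\alpha_1$'' attached to the good model $(\overline f, h)$ is a coboundary in $H^2_b(\Lambda^*;A)$ — indeed it is essentially $\overline f^*\beta$, i.e. the pullback of the cocycle $\beta$ defining $H = L\ltimes_\beta A$ along $\overline f$, which up to the bounded correction furnished by the proposition is precisely $f(\gamma_1\gamma_2)f(\gamma_2)^{-1}f(\gamma_1)^{-1}$ viewed in $C^2_b(\Lambda^*;A)$. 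Concretely: Proposition~\ref{Proposition: Crux uniqueness quasi-models} gives $\alpha_{\mathrm{good}} = db' + \psi(\alpha_f)$ with $\psi$ forced to be zero by the dimension of its source, whence $\alpha_f = d(-\psi\text{-preimage adjustments})$ is a coboundary; unwinding the identification of $\alpha_f$ with the defect cocycle $f(\gamma_1\gamma_2)f(\gamma_2)^{-1}f(\gamma_1)^{-1}$ and transporting the bornology from $\mathcal B_\Lambda$ to $\mathcal B_{\Lambda^*}$ (these agree since $\Lambda^*$ and $\Lambda$ are commensurable) yields the desired $b \in C^1_b(\Lambda^*;A)$ with $f(\gamma_1\gamma_2)f(\gamma_2)^{-1}f(\gamma_1)^{-1} = db(\gamma_1,\gamma_2)$ for all $\gamma_1,\gamma_2 \in \Lambda^*$. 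The remark in the statement that the proof of Proposition~\ref{Proposition: Crux uniqueness quasi-models} never uses that the $L$-action is by isometries is what licenses working with the continuous, not-necessarily-isometric, $L$-action on $A$ here.
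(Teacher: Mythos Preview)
Your high-level plan --- compare $f$ to a good model of $\Lambda$ via the Goursat-type argument of Proposition~\ref{Proposition: Crux uniqueness quasi-models} --- is exactly what the paper intends. However, the execution has a genuine gap, and one factual error.

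\textbf{The role assignment is backwards.} You take the proposition's ``$f$'' to be the good model $h$ and its ``$g$'' to be our given map. With that choice $\alpha_1 = \alpha_{\mathrm{good}} = 0$ and $\alpha_2 = \alpha_f$, so the conclusion reads $0 = db' + \psi(\alpha_f)$. If $\psi = 0$ (as you argue, since its target $A_1$ is trivial), this says only $db' = 0$; it gives no information whatsoever about $\alpha_f$. Your sentence ``whence $\alpha_f = d(-\psi\text{-preimage adjustments})$'' is a non-sequitur: a zero map has no well-defined preimages. The correct assignment is the reverse: set the proposition's $f$ equal to our given map (so $A_1 = A$, $\alpha_1 = \alpha_f$) and its $g$ equal to the good model (so $A_2 = \{0\}$, $\alpha_2 = 0$). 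Then $\alpha_f = db + \psi(0) = db$, which is the desired conclusion. Note that with this assignment the surjectivity conclusion $\psi\colon \{0\} \twoheadrightarrow A$ would be absurd; this is why the paper says to \emph{analyse the proof} rather than apply the statement --- the surjectivity of $\psi$ is a by-product (coming from relative density of the range of $f$, which the corollary does not assume) and is nowhere used in establishing $\alpha_1 = db + \psi(\alpha_2)$.

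\textbf{$\Lambda$ and $\Lambda^*$ are not commensurable.} Your appeal to hypothesis (3) fails because $\Lambda^* = \overline{f}^{-1}(W) = f^{-1}(W \times A)$ and $W \times A$ is not compact. (Think of $\Gamma = \mathbb{Z}[\sqrt 2]$, $\Lambda$ the usual model set, $f$ the Galois conjugate embedding into $\mathbb{R}$ with $L = \{0\}$: then $\Lambda^* = \Gamma$.) The set $\Lambda^*$ is nevertheless laminar for the right reason: $\overline{f}$ itself is a good model of it. But since the bornologies $\mathcal{B}_\Lambda$ and $\mathcal{B}_{\Lambda^*}$ differ, you cannot simply ``transport'' boundedness. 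What actually delivers $b \in C^1_b(\Lambda^*;A)$ is the ``moreover'' clause of Proposition~\ref{Proposition: Crux uniqueness quasi-models}: $b = b_1 + b_2$ with $b_2$ uniformly bounded and $b_1$ factoring through a locally bounded map on $L$, so $b_1(\Lambda^*) \subset b_1'(\overline{f}(\Lambda^*)) \subset b_1'(W)$ is bounded.
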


\subsection{Proof of uniqueness}
We show now Theorems \ref{Theorem: Uniqueness of quasi-models without cohomology} and \ref{Theorem: Uniqueness of quasi-models}.  The drawback of Proposition \ref{Proposition: Crux uniqueness quasi-models} is that we start with strong assumptions on the targets of $f$ and $g$. Its use is facilitated by: 

\begin{lemma}\label{lemma: Reduction uniqueness of quasi-models}
Let $\Lambda$ be an approximate subgroup of a group $\Gamma$ that commensurates it. Let $L$ be a locally compact group, $A$ be a vector space with an $L$-action by isometries, $\beta$ a locally bounded Borel $2$-cocycles in $Z^2(L;A)$ and $f: \Gamma \rightarrow L \ltimes_{\beta} A$ a quasi-model satisfying Assumption \ref{Assumption: Quasi-model} with defect contained in $A$. Let also $f_0: \Gamma \rightarrow H_0$ be the Bohr compactification of $\Lambda$ (Lemma \ref{Lemma: Bohr compactification and abstract automorphisms}). Then there is a rigid normal subgroup $A' \subset H$ such that $H/A' \simeq L$ and the composition of $f_0$ with the isomorphism $H/A' \simeq L$ is $\bar{f}$.
\end{lemma}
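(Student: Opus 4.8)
The plan is to push the genuine homomorphism hidden inside $f$ through the Bohr compactification $f_0$, to identify the kernel of the resulting comparison map, and to prove that this kernel is rigid by a Goursat-type argument carried out in $H\times H_0$.

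\emph{Step 1: factoring through $f_0$.} Since the defect of $f$ lies in $A=\ker(H\to L)$, the induced map $\bar f\colon\Gamma\to L$ is an honest group homomorphism, and $\bar f(\Lambda)$, being the image of the relatively compact set $f(\Lambda)$ under the projection $H\to L$, is relatively compact. By the universal property of the Bohr compactification (Lemma~\ref{Lemma: Bohr compactification and abstract automorphisms}) there is a continuous homomorphism $\phi\colon H_0\to L$ with $\bar f=\phi\circ f_0$; set $A':=\ker\phi$, a closed normal subgroup of $H_0$. The asserted compatibility is then tautological: the composite of $f_0$ with the map $H_0/A'\hookrightarrow L$ induced by $\phi$ is $\bar f$. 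That $\phi$ is in fact onto, so that $H_0/A'\simeq L$, is routine given that $f$ is a quasi-model: by the second part of Theorem~\ref{Theorem: Hrushovski's quasi-models} the set $f(\Gamma_0)$ is relatively dense in $H$, hence $\bar f(\Gamma_0)$ is dense and syndetic in $L$, and together with $\overline{f_0(\Gamma)}=H_0$ (which we may assume of the Bohr compactification) this forces $\phi$ onto; we do not belabour this point.

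\emph{Step 2: rigidity of $A'$.} Form the graph map $\Psi\colon\Gamma\to H\times H_0$, $\gamma\mapsto(f(\gamma),f_0(\gamma))$, taking $f$ symmetric as we may. Its defect lies in $K\times\{e\}$, where $K$ is the Euclidean ball bounding the defect of $f$; since $H$ acts on $A$ by isometries the ball $K$ is normal in $H$, so $K\times\{e\}$ is a compact subset, normal in $H\times H_0$, topologically generating the Euclidean group $A\times\{e\}$. As $\Psi(\Lambda)$ is relatively compact, Lemma~\ref{Lemma: Graphs of quasi-homomorphisms, continuous case} produces an approximate subgroup $\Xi:=\overline{\Psi(\Gamma)}(K\times\{e\})$ of $H\times H_0$ with relatively compact core. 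Its projection to $L\times H_0$ lies in, and is relatively dense in, the graph $\{(\phi(h_0),h_0):h_0\in H_0\}\cong H_0$, so $\Xi$ lies over $H_0$ with its vertical directions inside $A\times\{e\}$. Now intersect with $\{e_H\}\times H_0$: an element $(e,h_0)\in\Xi^m$ is a limit $\lim_n\Psi(\gamma_n)\cdot(k,e)$ for a fixed $k$ in a power of $K$, so that $f(\gamma_n)$ converges and hence lies in a fixed compact $C\subset H$; because $f$ is a quasi-model, $f^{-1}(C)$ is covered by finitely many translates of $\Lambda$, so the $f_0(\gamma_n)$ lie in the compact set $\overline{f_0(f^{-1}(C))}$ (finitely many translates of the relatively compact $f_0(\Lambda)$), and therefore so does $h_0$. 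Thus $(\{e\}\times H_0)\cap\Xi^m$ is relatively compact, and for $m$ large its second projection contains a neighbourhood of the identity of $A'=\ker\phi$ (using that $\Xi$ is relatively dense over $H_0$); being invariant under conjugation by $\Xi$, whose projection to $H_0$ is dense, these compacta are normal in $H_0$. Since a locally compact group is topologically generated by any neighbourhood of its identity, $A'$ is rigid. (Alternatively, these compacta are the balls of an $H_0$-invariant Euclidean structure on $A'$, which also gives rigidity.)

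\emph{Main obstacle.} The delicate point is the clause ``for $m$ large its second projection contains a neighbourhood of the identity of $A'$'': a priori the $A$-component of $f$ is unbounded, so it is not clear that an arbitrary element of $\ker\phi$ is a Bohr-limit of elements of $\Gamma$ lying in a single member of $\mathcal B_\Lambda$, i.e. that the compacta produced above really exhaust a neighbourhood of the identity of $A'$ rather than a proper closed subgroup. Overcoming this is where one must exploit the quasi-model covering property $f^{-1}(\mathrm{compact})\in\mathcal B_\Lambda$ in tandem with the density of $f(\Gamma)A$ in $H$ to realise each approximating sequence inside a compact part of $H$; any residual compact obstruction is then removed using Lemma~\ref{Lemma: Bounded approximate subgroups of of automorphisms are relatively compact} and the isometric nature of the $L$-action on $A$. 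The remaining verifications are bookkeeping with Lemmas~\ref{Lemma: Intersection of approximate subgroups} and~\ref{Lemma: Graphs of quasi-homomorphisms, continuous case}.
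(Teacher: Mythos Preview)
Your Step~1 is correct and essentially identical to the paper's: factor $\bar f$ through the Bohr compactification to obtain $\phi\colon H_0\to L$, set $A':=\ker\phi$, and note surjectivity from relative density of $f(\Gamma)$.

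Your Step~2, however, has a genuine gap at precisely the point you flag. You correctly show that each $C_m:=p_{H_0}\bigl(\Xi^m\cap(\{e\}\times H_0)\bigr)$ is relatively compact (the argument via $f^{-1}(\text{compact})\in\mathcal B_\Lambda$ is fine), but you do \emph{not} prove that $\bigcup_m C_m$ exhausts $A'$, nor even a generating set. Your proposed fix (``exploit the quasi-model covering property in tandem with density of $f(\Gamma)A$'') is a description of the available tools, not an argument. The difficulty is real: given $a'\in A'$ and $\gamma_n\in\Gamma$ with $f_0(\gamma_n)\to a'$, one only gets $\bar f(\gamma_n)\to e$; the $A$-component of $f(\gamma_n)$ can blow up, so $(e,a')$ need not lie in any $\Xi^m$. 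The paper resolves this by a separate argument \emph{first} proving that $A'$ is compactly generated: using that $A$ is Euclidean (hence compactly generated) and that $f(\Gamma)K$ is relatively dense in $H$, one constructs chains in $\Lambda^*=\bar f^{-1}(W^2A)$ connecting $e$ to approximants of any $a'\in A'$, with increments in a fixed set $X$ commensurable with $\Lambda$; this forces $A'$ to be generated by the compact set $(W_0C'^{-1}C'W_0^{-1})\cap A'$. Only \emph{then} does the paper prove rigidity, by a direct sequential argument tracking $f$-images of approximants to the generating compact $C$ and showing that all $H_0$-conjugates of $C$ land in a single compact. Your Goursat set-up does not circumvent this step; filling your gap would require reproducing it.

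Two smaller issues. First, your normality claim is imprecise: $\Xi^m\cap(\{e\}\times H_0)$ is \emph{not} invariant under conjugation by $\Xi$; one only gets $\xi C_m\xi^{-1}\subset C_{m+2}$ for $\xi\in p_{H_0}(\Xi)$, and one must then pass to the closure using density of $p_{H_0}(\Xi)$ and continuity of conjugation on compacta to conclude $H_0 C_m H_0^{-1}\subset\overline{C_{m+2}}$. Second, even granting the exhaustion, this would give $A'$ as an increasing union of compact sets with relatively compact $H_0$-orbit, which yields rigidity but \emph{not} directly compact generation --- and compact generation is used elsewhere in the paper (e.g.\ in the proof of Theorem~\ref{Theorem: Uniqueness of quasi-models}).
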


\begin{proof}
Write $H_f$ for $L \ltimes_{\beta} A$. For any symmetric relatively compact open neighbourhood of the identity $W \subset H_f$ sufficiently large, $ \Lambda^*=f^{-1}(W^2A)$ is an approximate subgroup containing $\Lambda$ that admits $\bar{f}: \Gamma \rightarrow L$ as a good model. Besides, since $\bar{f}(\Lambda)$ is relatively compact, there is a surjective continuous group homomorphism $\phi: H_0 \rightarrow L$ such that $\bar{f} = \phi \circ f_0$. Identify $L$ and $H_0/\ker \phi$. Write $A':=\ker \phi$ and notice that $A' \subset \overline{f_0(\Lambda^*)}$.

First of all, we show that the subgroup $A'$ is compactly generated. Since $A$ is a Euclidean space, it is compactly generated. The approximate subgroup $W^2A \cap (f(\Gamma)K)^2 \subset W^2A \cap f(\Gamma)K^2$ is relatively dense in $W^2A$ because $f(\Gamma)K$ is relatively dense in $H_f$. So there is a compact subset $C \subset H$ such that for every $\lambda \in \Lambda^*$ there are $\lambda_0 = e, \ldots, \lambda_n=\lambda \in f^{-1}(W^2A) = \Lambda^*$ satisfying $f(\lambda_{i+1})^{-1}f(\lambda_i) \in C$ for all $i \in \{0, \ldots,n-1\}$. Since $f$ is a quasi-model, there is therefore $X \subset \Gamma$ commensurable with $\Lambda$ such that $\lambda_{i+1}^{-1}\lambda_i \in X$ for all $i \in \{0, \ldots,n-1\}$ and $X$ is independent of the choice of $\lambda$. Choose a compact neighbourhood of the identity $W_0 \subset H_0$ such that $f_0(\Lambda^*) \subset W_0A'$. For any $a \in A'$ choose $\lambda \in \Lambda^*$ such that $f_0(\lambda)^{-1}a \in W_0$. Let $\lambda_0=e, \ldots, \lambda_n \in \Lambda^*$ be as above with respect to $\lambda$ and write $C':= \overline{f_0(X)}$ which is compact because $X$ is covered by finitely many translates of $\Lambda$. For all $i \in \{0, \ldots,n\}$ there is $w_i \in W_0$ and $a_i \in A'$ such that $f_0(\lambda_i)w_i^{-1} = a_i$. Then for all $i \in \{0, \ldots, n-1\}$, $$a_{i+1}^{-1}a_i \in \left(W_0C'^{-1}C'W_0^{-1}\right) \cap A'.$$ So $A'$ is generated by the compact subset $\left(W_0C'^{-1}C'W_0^{-1}\right) \cap A'$.

Now, we show that $A'$ is rigid. Write $C$ a compact generating subset of $A'$ and choose a neighbourhood basis $(W_n)_{n \geq 0}$ at $e$ in $L$ made of compact subsets. Since $f_0(\Lambda)$ is dense in a subset with non-empty interior and $f_0(\Gamma)$ is dense, there is $X \subset \Gamma$ commensurable with $\Lambda$ such that $C \subset \overline{f_0(X)}$. Write $B:=\overline{f(X)}$. Then $B$ is a compact subset of $H_f$. For all $a \in C$ there is a sequence $(x_n)_{n \geq 0}$ of elements of $X$ such that $f_0(x_n) \rightarrow a$ as $n$ goes to $\infty$.  As $\bar{f}(x_n) \rightarrow e$ and upon considering a subsequence, we may moreover take $(x_n)_{n \geq 0}$ such that $\bar{f}(x_n) \in W_n$ for all $n \geq 0$. Since $f(X)$ is relatively compact, we can take a further subsequence and assume that $(f(x_n))_{n \geq 0}$ admits a limit too. But $\bar{f}(x_n) \in W_n$ for all $n \geq 0$, so the limit belongs to $A$. For all $\gamma \in \Gamma$ the same observation yields more generally that the sequence $\left(f(\gamma) f(x_n) f(\gamma)^{-1}\right)_{n \geq 0}$ goes to an element of $f(\gamma) B f(\gamma)^{-1} \cap A$ as $n$ goes to $\infty$. But $B':= \left(\bigcup_{\gamma \in \Gamma} f(\gamma) B f(\gamma)^{-1}\right) \cap A$ is relatively compact since $A$ is rigid. Take any relatively compact neighbourhood of the identity $W \subset H_f$. Then $X':=f^{-1}(WB')$ is covered by finitely many translates of $\Lambda$ and $\gamma x_n \gamma^{-1} \in X'$ for $n$ large enough. In particular, if $C''$ denotes the compact subset $\overline{f_0(X')}$, we have $\bigcup_{\gamma \in \Gamma} f_0(\gamma)Cf_0(\gamma)^{-1} \subset C''$. Hence, $A'$ is rigid. 
\end{proof}

\begin{proof}[Proof of Theorem \ref{Theorem: Uniqueness of quasi-models}.]
Let $f_0: \Gamma \rightarrow H_0$ be the Bohr compactification as in Lemma \ref{lemma: Reduction uniqueness of quasi-models}. Let $\bar{f_1}$ denote the composition of $f_1$ and the natural projection $H \rightarrow L$. Then there is a continuous group homomorphism $\phi: H_0 \rightarrow L$ such that $\bar{f_1} = \phi \circ f_0$. Let $N$ denote the kernel of $\phi$ which is rigid by Lemma \ref{lemma: Reduction uniqueness of quasi-models}. Upon passing to a finite index subgroup $\Gamma$ and quotienting out a compact subgroup of $N$, we have that $N \simeq \mathbb{R}^n \times \mathbb{Z}^m$. By considering the supergroup $\left(H_0 \times \mathbb{R}^m\right)/\mathbb{Z}^m$ instead of $H_0$ - where $\mathbb{Z}^m$ is embedded diagonally  -  we may assume that $N$ is connected. So there is a locally bounded Borel cocycle $\beta$ such that $H_0 = L \ltimes_{\beta} N$. By Proposition \ref{Proposition: Crux uniqueness quasi-models}, we find an $L$-equivariant linear map $\psi: A_1 \rightarrow N$ such that
$$\psi(\alpha_1) = db$$
with $b=b_1 + b_2$ as in Proposition \ref{Proposition: Crux uniqueness quasi-models} and we used $f_0(\gamma_1\gamma_2)f_0(\gamma_2)^{-1}f_0(\gamma_1)^{-1}=e$.
By the surjectivity of $\psi$ we find $b= \psi(v)$ for some choice of $v \in C_b^1(\Lambda;A_1)$ taking values in the orthogonal of $A_1'=\ker \psi$ which also admits a decomposition $v=v_1 + v_2$ as in Proposition \ref{Proposition: Crux uniqueness quasi-models}.

Let $p_{\perp}$ denote the orthogonal projection to $\ker \psi$. We have $p_{\perp}(\alpha_1) - \alpha_1 = dv$.  Define $\tilde{f}:  \Gamma \rightarrow L \ltimes_{p_{\perp}(\beta)} A_1'$ by $\tilde{f}(\gamma) = (\tau(\gamma), p_{\perp}(a(\gamma)))$ where $a(\gamma)$ is the $A_1$ component of $f(\gamma)$. Notice that $L \ltimes_{p_{\perp}(\beta)} A_1'$ is simply the quotient by the orthogonal of $A_1'$ and thus has a well-defined structure as a locally compact group. Notice also that $\tilde{f}$ is a quasi-homomorphism because $p_{\perp}(\alpha_1) $ is bounded. Moreover, $f(\Lambda)$ is relatively compact as both $\tau(\Lambda)$ and $a(\Lambda)$ are. Finally, given any neighbourhood of the identity $W \in L \ltimes_{p_{\perp}(\beta)} A_1'$, $$\tilde{f}^{-1}(W) \subset \tau^{-1}(W) \cap p_{\perp}(a) ^{-1}(W).$$ 
But if we  note that
$$(\bar{f}(\lambda), v(\lambda) + p_{\perp}(a(\lambda))) =f(\lambda)$$ 
and 
$$(\bar{f}(\lambda), p_{\perp}(a(\lambda))) =\tilde{f}(\lambda)$$ 
we find that $\bar{f}(\lambda)$ is uniformly bounded as $\lambda$ ranges through $\tilde{f}^{-1}(W)$ and, hence, so is $v(\lambda)$ (recall that $v$ satisfies a decomposition as in the `moreover' part of Proposition \ref{Proposition: Crux uniqueness quasi-models}). Since $p_{\perp}(a(\lambda))$ is also uniformly bounded  as $\lambda$ ranges through $\tilde{f}^{-1}(W)$, we find that $f(\lambda)$ is uniformly bounded  as $\lambda$ ranges through $\tilde{f}^{-1}(W)$. Thus, $f( \tilde{f}^{-1}(W))$ is relatively compact. So $\tilde{f}$ is also a quasi-model satisfying Assumption \ref{Assumption: Quasi-model} and $p_{\perp}(\alpha_1) = \alpha_1 - dv$. 

By applying this procedure to $f_2$ as well, the discussion tells us that it is enough to prove Theorem \ref{Theorem: Uniqueness of quasi-models} in the situation where $H_1=L \ltimes_{p_{\perp}(\beta_1)} A_1'$, $H_2= L \ltimes_{p_{\perp}(\beta_2)} A_2'$ and $\tilde{f}_1$ and $\tilde{f}_2$ satisfy Assumption \ref{Assumption: Quasi-model}. But, then, Theorem \ref{Theorem: Uniqueness of quasi-models} is simply Proposition \ref{Proposition: Crux uniqueness quasi-models}. So the proof is complete. 

\end{proof}

Finally, we can prove Theorem \ref{Theorem: Uniqueness of quasi-models without cohomology}.

\begin{proof}[Proof of the of Theorem \ref{Theorem: Uniqueness of quasi-models without cohomology}.]
Let $\alpha_1',\alpha_2', A_1',A_2', \phi$ be given by Theorem \ref{Theorem: Uniqueness of quasi-models}.  Choose $b_1 \in C^1_b(\Lambda_1^*;A_1)$ such that $\alpha_1 - \alpha_1' = db$. In particular, $db$ is uniformly bounded. This implies that $$\sup_{\gamma \in \Gamma, \lambda \in \Lambda^*} || b(\gamma \lambda^*) - b(\gamma)|| < \infty$$
where $|| \cdot ||$ denotes a Euclidean norm on $A_1$. Thus, we can find a decomposition $b = b_1 + b_1'$ with $b_1'$ uniformly bounded and $b_1$ factorises through $\bar{f_1}:\Gamma \rightarrow H_1/A_1$ i.e. $b_1=b_1'' \circ \bar{f_1}$ for some locally bounded map $b_1''$.  Define $\phi_1: H_1 \rightarrow H_1$ by $\phi_1(h) = hb_1''(p_1(h))^{-1}.$ The map $\phi_1$ is a quasi-homomorphism.  Build similarly $b_2, b_2'$ and $b_2''$ as above with respect to $f_2$.  Define $\phi_2: H_2 \rightarrow H_2$ by $\phi_2(h) = hb_2''(p_2(h))$.  Finally, choose $z \in C^1_b(\Lambda_1^* \cap \Lambda_2^*;A_2')$ such that $\phi(\alpha_1') - \alpha_2' = dz$. As above, we can decompose it as $z=z_1 + z_2$ with $z_2$ uniformly bounded and $z_1 = z_1' \circ \bar{f_2'}$. Define $\psi: H_1 \rightarrow H_2$ by $\psi(h)=h(z_1'(h))^{-1}$. Then the quasi-homomorphism we are looking for is $\phi_2\circ \psi \circ \phi_1$ and $k_1(\gamma):= b_1'(\gamma)^{-1}$ and $k_2(\gamma) := b_2'(\gamma)^{-1}z_2(\gamma)^{-1}$. 
\end{proof}

\section{Lubotzky--Mozes--Raghunathan-type theorem}\label{Section: Distortion of approximate lattices in semi-simple groups}

The goal of this section is to prove Theorem \ref{THEOREM: EFFICIENT MESSY GENERATOIN}. Let $\mathbf{G} \subset \GL_n$ defined over a number field $K$. From each embedding $\mathbf{G}(K_v) \subset \GL_n(K_v)$ arises a norm $||\cdot||_v$ defined by  $$||m||_v = \sup_{1\leq i,j \leq n} |m_{ij}|_v$$ for $m=(m_{ij})_{1\leq i,j \leq n} \in \GL_n(K_v)$. For $m \in \GL_n(K_v)$ there is another quantity that is sometimes easier to handle:
$$||m||_v':= ||m-\id||_v+1.$$
We denote by $||\cdot||_S'$ (resp. $||\cdot||_S$) the product $\prod_{v \in S}|| \cdot ||_v'$ (resp. $\prod_{v \in S}|| \cdot ||_v$). Then $|| \cdot ||_S'$ is sub-multiplicative. The main inspiration for this section is the result by Lubotzky, Mozes and Raghunathan concerning the comparison between the Riemannian and word distances on non-uniform higher-rank lattices. 

\begin{theorem*}[\cite{MR1828742}]
Let $\Gamma$ be an irreducible lattice of a semi-simple $S$-adic linear group $G$ of rank at least $2$. Then the word metric on $\Gamma$ is equivalent to the restriction of any left-invariant Riemannian metric of $G$ to $\Gamma$. 
\end{theorem*}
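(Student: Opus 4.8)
The plan is to follow the strategy of Lubotzky--Mozes--Raghunathan and prove the two inequalities separately. The easy one is that the word metric dominates the Riemannian one: an irreducible lattice in a semisimple $S$-adic group is finitely generated (reduction theory), so, fixing a finite symmetric generating set $\Sigma$ and a proper left-invariant metric $d$ on $G$ --- Riemannian on the Archimedean factors and induced by the action on the Bruhat--Tits buildings on the others, any two such choices being coarsely equivalent --- left-invariance and the triangle inequality give $d(e,\gamma)\le|\gamma|_\Sigma\cdot\max_{s\in\Sigma}d(e,s)$, hence $d(e,\cdot)|_\Gamma\lesssim|\cdot|_\Sigma$. Since the Cartan decomposition makes $d(e,\gamma)$ comparable to $\log\|\gamma\|_S+1$ in the notation of this section, it remains to prove the reverse bound $|\gamma|_\Sigma\lesssim\log\|\gamma\|_S+1$, i.e.\ that $\Gamma$ is \emph{undistorted} in $G$.

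For the reverse bound I would first reduce. Distortion is a commensurability invariant, so Margulis' arithmeticity theorem --- available precisely because $\rank_S(G)\ge 2$ --- lets us take $\Gamma=\mathbf{G}(\mathcal{O}_{K,S})$ for a connected semisimple $K$-group $\mathbf{G}$, and then pass freely to a finite-index subgroup, an isogenous cover, and $K$-simple factors. A $K$-anisotropic factor gives a cocompact lattice, hence one quasi-isometric to the ambient group; so the substance is the case where $\mathbf{G}$ is isotropic over $K$ (or at least over some completion), and we fix a minimal parabolic $\mathbf{P}=\mathbf{L}\ltimes\mathbf{U}$, a maximal split torus $\mathbf{T}\subset\mathbf{L}$, the relative root system $\Phi$, and the root subgroups $\mathbf{U}_\alpha$.

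The crux is then an \emph{efficient generation} estimate: with $\Sigma$ enlarged to contain fixed finite generating sets of the $\mathbf{U}_\alpha(\mathcal{O}_{K,S})$, $\alpha\in\Phi$, and of $\mathbf{T}(\mathcal{O}_{K,S})$, one proves $|\gamma|_\Sigma\lesssim\log\|\gamma\|_S$ by induction on $\log\|\gamma\|_S$. Reduction theory (a Siegel-set fundamental set for $\Gamma\backslash G$, in $S$-adic form) reduces this to bounding the word length of a single root-group or torus element of norm $\le N$ by $O(\log N)$. For a torus element this holds because $\mathbf{T}(\mathcal{O}_{K,S})$ is infinite and finitely generated (Dirichlet, once $|S|\ge 2$) with norm growing exponentially in word length. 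For a root-group element one uses the Chevalley commutator relations together with the torus action: if $\alpha=\beta_1+\beta_2$ with $\beta_i\in\Phi$, then $u_\alpha(ab)$ equals $[u_{\beta_1}(a),u_{\beta_2}(b)]$ modulo higher-height root groups, so choosing $|a|\asymp|b|\asymp|ab|^{1/2}$ yields a self-improving recursion $f_\alpha(T)\le C\bigl(f_{\beta_1}(\sqrt T)+f_{\beta_2}(\sqrt T)\bigr)+C'$, which forces $f_\alpha(T)\lesssim\log T$, while the indecomposable roots are reduced by conjugating with their coroot evaluated at a unit. Summing these bounds along the factorisation from reduction theory then gives the theorem.

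The main obstacle is that these contractions cannot be performed inside a single root subgroup or unipotent radical --- such subgroups are (exponentially, or polynomially) distorted in $G$ --- so one must genuinely exploit the whole group, interlocking all root subgroups, the torus and the opposite unipotents through the Chevalley relations, and arrange the bookkeeping so that the final estimate is \emph{linear}, not merely polynomial, in $\log\|\gamma\|_S$, with every constant uniform over $\Gamma$; establishing the reduction-theory normal form with all pieces of norm $\lesssim\|\gamma\|_S$ --- a controlled substitute for bounded generation, tied in \cite{MR1828742} to divergence estimates on the symmetric space and building --- is itself most of the work. A second difficulty is that $\rank_S(G)\ge 2$ does not force $K$-rank $\ge 2$: one may face a product of rank-one factors at different places, or $K$-rank one with larger rank over a single $K_v$, and then there is no relation $\alpha=\beta_1+\beta_2$ over $K$ to exploit; one instead combines a root-group contraction at one completion with the torus-unit contraction at another, still keeping all constants uniform. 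This is precisely where the rank-one case genuinely fails --- $\SL_2(\mathbb{Z})$ is exponentially distorted in $\SL_2(\mathbb{R})$ --- so the higher-rank hypothesis is indispensable.
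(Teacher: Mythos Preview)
The paper does not give its own proof of this statement; it is quoted from \cite{MR1828742} as motivation for the paper's Theorem~\ref{THEOREM: EFFICIENT MESSY GENERATOIN}, which generalises it to approximate lattices. So the relevant comparison is with the proof of that generalisation, which follows the Witte-Morris refinement \cite{WitteMorris} of Lubotzky--Mozes--Raghunathan.

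Your outline is broadly right --- the easy direction, the arithmeticity reduction, and the identification of root-group word length as the crux all match --- but the central mechanism differs. You propose a Chevalley-commutator recursion $f_\alpha(T)\le C\bigl(f_{\beta_1}(\sqrt T)+f_{\beta_2}(\sqrt T)\bigr)+C'$ for decomposable $\alpha=\beta_1+\beta_2$, reserving torus contraction for the simple roots. The paper instead uses torus contraction \emph{uniformly for every root}: one finds $\delta\in\langle\Lambda\rangle\cap\mathbf{T}(K)$ with $|\alpha(\delta)|_v<1$ (Lemma~\ref{Lemma: Connectedness of sorts in root subgroups}; this is precisely where $|\hat S|\ge2$ enters, via Dirichlet) and iterates $u\mapsto\delta u\delta^{-1}$ with a bounded additive correction at each step (Lemma~\ref{Lemma: Sum-product phenomenon for quick generation}), yielding the linear bound $O(\log\|u\|)$ directly. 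This sidesteps the higher-height error terms in the Chevalley formula --- the commutator $[u_{\beta_1}(a),u_{\beta_2}(b)]$ produces not only $u_\alpha(cab)$ but also factors $u_{i\beta_1+j\beta_2}(c_{ij}a^ib^j)$ with $|a^ib^j|\asymp T^{(i+j)/2}>T$, and cancelling those while keeping the final estimate \emph{linear} in $\log T$ is exactly the delicate bookkeeping you warn about in your last paragraph. The structural decomposition also differs: rather than Siegel-set reduction theory plus commutator relations, the paper first pushes an arbitrary element into a minimal parabolic by a bounded product of rank-one subgroup elements of controlled size (Proposition~\ref{Proposition: Pushing into parabolic subgroups using root subgroups} and the Appendix, adapting \cite{WitteMorris}), then handles the parabolic via its Langlands decomposition (Lemma~\ref{Lemma: distortion in parabolic subgroups}). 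Your sketch is in the right spirit and could likely be completed for split groups where the commutator calculus is cleanest, but the torus-contraction route is what both \cite{MR1828742} and the present paper actually execute, and it is what extends cleanly to the approximate-lattice setting.
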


Here, the Riemannian metric simply denotes the distance $d_S(g_1, g_2):=\log ||g_2^{-1}g_1||_S'$. One side of the comparison is in fact trivial. Indeed, if $\gamma \in \Gamma$ can be written as a word of size $l$ in the letters coming from $F \subset \Gamma$, then $d_S(e,\gamma) \leq l \sup_{f \in F} \log ||f||_S'$ by sub-multiplicativity of $|| \cdot ||_S'$.

Our aim is to provide generalisations of this theorem to approximate lattices. We mention here that Hartnick and Witzel offer in \cite{https://doi.org/10.48550/arxiv.2204.01496} a look at related geometric problems for approximate lattices in totally disconnected $S$-adic semi-simple groups. Before we move on to the rest of this section, let us define the property we will establish:

\begin{definition}\label{Definition: Efficient generation}
Let $\Lambda$ be a discrete approximate subgroup in an $S$-adic linear group $G$. We say that $\Lambda$ is \emph{efficiently generated (by $F$)} if there are a finite subset $F \subset \langle \Lambda \rangle$ and a constant $C >0$ such that for all $\lambda \in \Lambda$ there are $\lambda_0=e, \ldots, \lambda_m=\lambda \in \Lambda$ with $m \leq C \log || \lambda ||'_S$ satisfying: 
 $$\forall i \in \{1; \ldots;m-1\}, \exists j \in \{1; \ldots;i\},\  \lambda_{i+1}\lambda_j^{-1} \in F\left(\lambda_i\lambda_j^{-1}\right)F^{-1}.$$
\end{definition}

If $\Lambda$ is a group this is equivalent to $\Lambda$ being undistorted.  We prove: 

\begin{theorem}[Efficient messy generation]\label{THEOREM: EFFICIENT MESSY GENERATOIN}
Let $\Lambda$ be an irreducible approximate lattice in an $S$-adic algebraic group $G$. Suppose that $\langle\Lambda\rangle$ is dense or that $G$ has $S$-rank at least $2$.  Then $\Lambda$ is efficiently generated.
\end{theorem}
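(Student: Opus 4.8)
The plan is to reduce the statement to a setting where one can invoke an appropriate avatar of the Lubotzky--Mozes--Raghunathan theorem, and then transport the group-theoretic conclusion to the approximate subgroup using the combinatorial structure we have available. First I would invoke Hrushovski's quasi-model theorem (Theorem \ref{Theorem: Hrushovski's quasi-models}) together with the arithmeticity input in the semi-simple setting: an irreducible approximate lattice $\Lambda$ in a semi-simple $S$-adic group $G$ that is either of rank $\geq 2$ or generates a dense subgroup is, up to commensurability and passing to a finite-index subgroup of $\langle\Lambda\rangle$, a model set cut out from an arithmetic lattice $\Gamma \subset G \times H$ for a suitable internal space $H$ (this is the content of Hrushovski's extension of Margulis arithmeticity to approximate lattices, combined with Proposition \ref{Proposition: BHC} which identifies $\mathbf{G}(\mathcal{O}_{K,S})$-type model sets). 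Concretely, after these reductions one may assume $\Lambda$ is commensurable with $\mathbf{G}(\mathcal{O}_{K,S})$ for a connected semi-simple $K$-group $\mathbf{G}$ without $K$-characters, realised inside $\mathbf{G}(\mathbb{A}_{K,S})$, and $\langle\Lambda\rangle$ has finite index in the arithmetic group $\mathbf{G}(\mathcal{O}_{K,S\cup S_\infty})$.

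The second step is to establish efficient generation for the arithmetic group $\Gamma_{\mathrm{ar}} := \mathbf{G}(\mathcal{O}_{K,S\cup S_\infty})$ in the ambient product $\mathbf{G}(\mathbb{A}_{K,S\cup S_\infty})$ with respect to the norm $\|\cdot\|'_{S\cup S_\infty}$. When $S\cup S_\infty$ gives a group of rank $\geq 2$ this is precisely the Lubotzky--Mozes--Raghunathan theorem quoted in the excerpt: $\Gamma_{\mathrm{ar}}$ is undistorted, i.e. word length is comparable to $\log\|\cdot\|'$, so any $\gamma$ can be reached from $e$ by a path of length $O(\log\|\gamma\|'_{S\cup S_\infty})$ using generators from a fixed finite set $F_{\mathrm{ar}}$. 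The content here is to record that the LMR bound survives the passage to a finite-index subgroup $\langle\Lambda\rangle$ (standard) and to the relevant completions — note that since $\Lambda \subset G$ only, the internal-space coordinates of elements of $\Gamma_{\mathrm{ar}}$ projecting into $\Lambda$ are uniformly bounded (they lie in the window), so $\log\|\lambda\|'_S$ and $\log\|\lambda\|'_{S\cup S_\infty}$ differ by a bounded additive constant, and the LMR path length is genuinely $O(\log\|\lambda\|'_S)$.

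The third and most delicate step is to promote a path in the \emph{group} $\langle\Lambda\rangle$ (or $\Gamma_{\mathrm{ar}}$) to a path satisfying the \emph{messy} relation of Definition \ref{Definition: Efficient generation}, whose intermediate vertices must lie in $\Lambda$ itself rather than in $\langle\Lambda\rangle$. Given the LMR path $\lambda_0 = e, g_1, \dots, g_m = \lambda$ through $\langle\Lambda\rangle$ with $g_{i+1} = g_i f_i$, $f_i \in F$, the vertices $g_i$ need not be in $\Lambda$. The fix is to conjugate: one seeks, for each $i$, some earlier index $j(i)$ so that $\lambda_{i+1}\lambda_{j}^{-1} \in F(\lambda_i\lambda_j^{-1})F^{-1}$ where the $\lambda_i$ \emph{are} required to be in $\Lambda$. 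This is where the model-set structure is essential — a product $\gamma_1\gamma_2$ of generators near a given element of $\Lambda$ can be rewritten, using density/relative-density of $\Lambda$ in the window and the pigeonhole/Schreiber-type arguments from \S\ref{Subsection: Notations} (Lemma \ref{Lemma: Intersection of commensurable sets} and Corollary \ref{Corollary: Intersection with stabiliser}), as a bounded-index conjugate of a shorter product, so one can keep the running endpoints inside $\Lambda$ at the cost of enlarging $F$ to a fixed finite set and only a bounded multiplicative blow-up in path length. I expect this bookkeeping — tracking that the conjugating elements stay in a fixed finite set and that the path length stays $O(\log\|\lambda\|'_S)$ uniformly — to be the main obstacle; the rank-$1$ dense case is handled in parallel, using that denseness of $\langle\Lambda\rangle$ forces the same arithmetic picture via Hrushovski's theorem (here the internal space carries the non-uniform directions), so the LMR input is again available after identifying $\langle\Lambda\rangle$ with a higher-rank arithmetic group via a $K$-form with extra infinite places.
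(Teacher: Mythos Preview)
Your approach is correct and takes a genuinely different route from the paper's. The paper works \emph{inside} the approximate lattice: it adapts Witte--Morris's method to push elements of $\Lambda$ into a minimal parabolic $\mathbf{P}$ via controlled products from the $K$-rank-one subgroups $\mathbf{G}_i$ (Proposition~\ref{Proposition: Pushing into parabolic subgroups using root subgroups}), then proves efficient generation separately for $\Lambda^m\cap\mathbf{P}$ (root-subgroup decomposition plus a contraction argument, Lemmas~\ref{Lemma: Sum-product phenomenon for quick generation}--\ref{Lemma: distortion in parabolic subgroups}) and for $\Lambda^2\cap\mathbf{G}_i$ (an adaptation of LMR's rank-one geodesic argument, Lemma~\ref{Lemma: LMS in rank one}), before assembling the pieces. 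Your route --- apply classical LMR as a black box to the arithmetic lattice $\Gamma_{\mathrm{ar}}=\mathbf{G}(\mathcal{O}_{K,\hat S})$ in the larger group $\mathbf{G}(\mathbb{A}_{K,\hat S})$ (which is either uniform or has $\hat S$-rank $\geq |\hat S|\geq 2$, so LMR or the cocompact case covers it), then pull the geodesic back to $\Lambda$ --- is shorter and more conceptual; the paper's hands-on approach buys finer structural control (explicit handling of root subgroups, an effective version of Borel's double-coset result) that it uses elsewhere.

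One correction and one simplification. The reduction to the arithmetic picture uses Hrushovski's \emph{arithmeticity} theorem \cite[Thm~7.4]{hrushovski2020beyond}, not the quasi-model theorem. And your step~3 is much easier than you suggest: given the LMR geodesic $e=g_0,\ldots,g_m=\lambda$ in $\Gamma_{\mathrm{ar}}$ with $g_i^{-1}g_{i+1}\in F$, use relative density of $\Lambda$ in $G$ to pick $\lambda_i\in\Lambda$ with $\lambda_i^{-1}p_G(g_i)$ in a fixed compact $K_G\subset G$ (taking $\lambda_0=e$, $\lambda_m=\lambda$). Then $\lambda_i^{-1}\lambda_{i+1}\in\Lambda^2\cap K_G\,p_G(F)\,K_G^{-1}$, and since $\Lambda^2$ is uniformly discrete this intersection is a \emph{fixed finite set} $F'$. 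So the messy relation of Definition~\ref{Definition: Efficient generation} holds with $j=i$ throughout --- no conjugation bookkeeping, no Schreiber-type arguments --- and you in fact obtain genuine undistortedness of $\Lambda$ through a path in $\Lambda$, stronger than what the paper claims.
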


Given the arithmeticity theorem \cite[Thm 7.4]{hrushovski2020beyond}, it is enough to work with \emph{arithmetic} approximate lattices. Moreover, since higher-rank lattices are undistorted by \cite{MR1828742} thus \emph{a fortiori} efficiently generated, we may assume that $\Lambda$ is not contained in a lattice. In the remaining parts of this section, we therefore make the following assumption. 

\begin{assumption}\label{Assumption: Arithmeticity}
There are a number field $K$, a finite set of places $S$ of $K$ and a simply connected absolutely simple $K$-group $\mathbf{G} \subset \GL_n$ such that $ G = \prod_{v \in S} \mathbf{G}(K_v)$  and $\Lambda$ is commensurable with $\mathbf{G}(\mathcal{O}_{K,S})$ embedded diagonally in $G$.  Also, $\langle \Lambda \rangle$ is commensurable with $\mathbf{G}(\mathcal{O}_{K,\hat{S}})$ with $S \subset \hat{S}$ and $|\hat{S}|\geq 2$.
\end{assumption}

\subsection{Efficient generation and bounds on quasi-cocycles}

Efficient generation is useful in establishing polynomial bounds for the growth of quasi-cocycles which we will need to define an induction scheme in \S \ref{Section: Induction for star-approximate lattices and Boundedness of quasi-cocycles}.

\begin{proposition}\label{Proposition: Bounds on quasi-cocycles from efficient generation}
Let $\Lambda$ and $G$ be as in Assumption \ref{Assumption: Arithmeticity}. Let $V$ be a Banach space equipped with a norm $|| \cdot ||$ and an action of $\langle \Lambda \rangle$ by continuous operators. Suppose that the family of operators corresponding to $\Lambda$ is uniformly bounded (equivalently, equicontinuous). Choose $q \in QC(\Lambda;V)$. If $\Lambda$ is efficiently generated, then $||q(\lambda)|| \preceq ||\lambda||_S$ as $\lambda$ ranges through $\Lambda$, where the implied constants depend on $\Lambda$ and $q$. 
\end{proposition}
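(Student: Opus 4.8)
The plan is to run an induction along the chains furnished by efficient generation. Throughout we are under Assumption~\ref{Assumption: Arithmeticity}. First I would fix a finite set $F \subset \langle \Lambda \rangle$ and a constant $C$ as in Definition~\ref{Definition: Efficient generation}; enlarging $F$ we may assume $F = F^{-1}$ and $e \in F$, and we fix $k_0 \geq 1$ with $F \subset \Lambda^{k_0}$, and set $K := 2k_0 + 3$. The next step is to record the uniform bounds that make the scheme run: since $\Lambda$ acts by a uniformly bounded family of operators and powers of an approximate subgroup are again approximate subgroups, every $g \in \Lambda^{K}$ acts with operator norm at most some $M < \infty$; since $q \in QC(\Lambda;V)$, the quantity $\delta := \sup\{\|q(g_1 g_2) - q(g_1) - g_1\cdot q(g_2)\| : g_1, g_2 \in \Lambda^{K}\}$ is finite; and $Q_F := \max_{f \in F}\|q(f)\|$ is finite because $F$ is finite. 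Two elementary consequences of the quasi-cocycle identity with bounded defect will be used repeatedly: $\|q(e)\| \leq \delta$, and $\|q(g^{-1})\| \leq M(\|q(g)\| + 2\delta)$ for $g \in \Lambda$.

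Then I would establish the one-step estimate. Fix $\lambda \in \Lambda$ and an efficient chain $e = \lambda_0, \dots, \lambda_m = \lambda$ with $m \leq C\log\|\lambda\|_S'$ and $\lambda_{i+1}\lambda_{j(i)}^{-1} \in F\,(\lambda_i\lambda_{j(i)}^{-1})\,F^{-1}$ for some $j(i) \in \{1,\dots,i\}$. Writing $j = j(i)$, this says $\lambda_{i+1} = f\,\lambda_i\,\lambda_j^{-1}\,f'^{-1}\,\lambda_j$ for suitable $f, f' \in F$. The crucial point is that every one of these five factors, as well as every partial product read from the left, lies in $\Lambda^{\leq K}$ — this is exactly where it matters that the chain is \emph{efficient} rather than arbitrary, so that no intermediate element ever leaves the fixed power $\Lambda^{K}$. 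Expanding $q(\lambda_{i+1})$ by four successive applications of $q(ab) = q(a) + a\cdot q(b) + dq(a,b)$ and collecting terms, each defect contributes at most $\delta$, each operator occurring as a left coefficient has norm at most $M$, and the ``$q$ of a factor'' terms are $q(f), q(f'^{-1})$ (each $\leq Q_F$), $q(\lambda_i)$, $q(\lambda_j)$ and $q(\lambda_j^{-1})$ (the last bounded by $M(\|q(\lambda_j)\| + 2\delta)$). Hence there are constants $c \geq 1$ and $c' \geq 0$, depending only on $\Lambda$ and $q$, with
$$\|q(\lambda_{i+1})\| \;\leq\; c\big(\|q(\lambda_i)\| + \|q(\lambda_j)\|\big) + c'.$$

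Finally I would iterate. Put $B_i := \max_{0 \leq k \leq i}\|q(\lambda_k)\|$; since $j(i) \leq i$ the estimate gives $B_{i+1} \leq 2cB_i + c'$, and as $B_0 = \|q(e)\| \leq \delta$ this yields $B_m \leq (2c)^m(\delta + mc') \preceq (3c)^m$. Therefore $\|q(\lambda)\| = \|q(\lambda_m)\| \leq B_m \preceq (3c)^m \leq (3c)^{C\log\|\lambda\|_S'}$, and since $\|\cdot\|_S'$ and $\|\cdot\|_S$ are comparable factor by factor on $\mathbf{G} \subset \GL_n$ (one has $\|g\|_v \leq \|g\|_v' \leq \|g\|_v + 2$, with $\|g\|_v$ bounded below on $\Lambda$), this is a bound by a fixed power of $\|\lambda\|_S$, as required. (When the $\langle\Lambda\rangle$-action on $V$ is by isometries — the situation relevant in the sequel, where $V$ is an internal Euclidean space of a quasi-model — one has $M = 1$ and, using that the chains produced by Theorem~\ref{THEOREM: EFFICIENT MESSY GENERATOIN} can be arranged with their base points $\lambda_{j(i)}$ confined to a bounded initial segment, the recursion degenerates to $\|q(\lambda_{i+1})\| \leq \|q(\lambda_i)\| + O(1)$, giving the sharp linear form $\|q(\lambda)\| \preceq m \preceq \log\|\lambda\|_S \leq \|\lambda\|_S$.)

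The main obstacle is not the iteration itself but maintaining uniform control along the ``messy'' chain: the individual steps are two-sided conjugations $\lambda_i \mapsto f\,\lambda_i\,\lambda_j^{-1}\,f'^{-1}\,\lambda_j$ relative to a base point $\lambda_j$ that moves with $i$, so one must verify that \emph{all} intermediate elements, not merely the $\lambda_i$, stay inside one fixed power $\Lambda^{K}$ — this is precisely what efficient generation buys and what turns the defects $\delta$ and the operator bounds $M$ into genuine constants. The second delicate point, which is why the statement is a growth estimate rather than an absolute bound, is that the action is only assumed uniformly bounded \emph{on $\Lambda$}: one must ensure that only bounded-power elements ever act on values of $q$ (again guaranteed by the chain structure), yet the operator norms of those elements, though bounded, may exceed $1$, which is what produces the multiplicative constant $c$ in the general recursion — the polynomial growth this entails being exactly the integrability input that is later absorbed by the (sub-)$L^p$ induction of \S\ref{Section: Induction for star-approximate lattices and Boundedness of quasi-cocycles}.
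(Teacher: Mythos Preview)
Your proof is correct and follows essentially the same approach as the paper: establish the one-step recursion $\|q(\lambda_{i+1})\| \leq c(\|q(\lambda_i)\| + \|q(\lambda_{j(i)})\|) + c'$ from the quasi-cocycle relation applied along the efficient chain, then iterate to get an exponential bound in $m$ which becomes polynomial in $\|\lambda\|_S$ via $m \leq C\log\|\lambda\|_S'$. Your bookkeeping (fixing $K$, the operator bound $M$ on $\Lambda^K$, and the defect $\delta$ on $\Lambda^K \times \Lambda^K$) is more explicit than the paper's, which simply asserts the recursion with unspecified constants $A,B,C,D$; the paper first bounds $q(\lambda_{l+1}\lambda_j^{-1})$ and then passes to $q(\lambda_{l+1})$ via $\lambda_{l+1} = (\lambda_{l+1}\lambda_j^{-1})\lambda_j$, whereas you expand the five-factor product directly --- a cosmetic difference only.

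One small inaccuracy in your parenthetical remark: the logarithmic bound of Corollary~\ref{Corollary: Logarithmic bound quasi-morphisms} is stated for the \emph{trivial} action, not merely the isometric one, and in that case the $q(\lambda_j)$ contributions cancel outright (since $q(\lambda_{i+1}) - q(\lambda_i) \approx q(\lambda_{i+1}\lambda_j^{-1}) - q(\lambda_i\lambda_j^{-1}) \approx q(f) + q(f'^{-1})$ for a quasi-morphism), so no ``confined base points'' hypothesis is needed. For a genuinely nontrivial isometric action the $q(\lambda_j)$ terms do not cancel, and the paper makes no claim that the chains it constructs have bounded base-point depth.
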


 We write $f(x) \preceq g(x)$ for two functions $f,g:X \rightarrow \mathbb{R}_{\geq 0}$ when there is a positive polynomial $p$ such that $f(x) \leq p(g(x))$ for all $x \in X$. Note that, while being somewhat ambiguous, it will be clear from the context what the maps $f,g$ and the set $X$ are. We point out that in Proposition \ref{Proposition: Bounds on quasi-cocycles from efficient generation} we have used $|| \cdot ||_S$ rather than $|| \cdot ||_S'$. This is because when $g$ runs through $G$, $||g||_S' \preceq ||g||_S$ and $||g||_S \preceq ||g||_S'$. So the two objects are interchangeable. We thus drop the $'$ from now on, but we note that $||\cdot ||_S'$ is used implicitly below when appealing to sub-multiplicativity. 
 
\begin{proof}
Let $F \subset \langle \Lambda \rangle$ and $C_0>0$ be as in Definition \ref{Definition: Efficient generation}. Take $\lambda \in \Lambda$ and choose $\lambda_0=e, \ldots, \lambda_m= \lambda \in \Lambda$ as in Definition \ref{Definition: Efficient generation}.  We will show by induction that there are constants $c_1,c_2 \geq 1$ such that for all $l \in \{0;\ldots;m\}$, $||q(\lambda_l)|| \leq c_1^l +  c_2$. For $l \in \{0;\ldots;m\}$ there is some $j \in \{0;\ldots;l\}$ such that $ \lambda_{l+1}\lambda_j^{-1} \in F(\lambda_{l}\lambda_j^{-1})F^{-1}$. Since $q$ is a quasi-cocycle,

$$||q(\lambda_{l+1}\lambda_j^{-1})|| \leq A \left(||q(\lambda_l)|| + ||q(\lambda_j)||\right) + B $$
for some constants $A,B \geq 1$ depending on $F$ and $q$ but not on $\lambda$ or $l$. Using again that $q$ is a cocycle, 
$$||q(\lambda_{l+1})|| \leq C \left(||q(\lambda_l)|| + ||q(\lambda_j)||\right) + D$$
for some other constants $C,D\geq 1$ depending on $F$ and $q$ but not on $\lambda$ or $l$. We thus find by induction on $l$, 
$$||q(\lambda)|| \leq c_1^m + c_2$$
where $c_1 = 2C$ and $c_2$ is a constant depending on $C$ and $D$ only. Since $m \leq C_0 \log||\lambda||_S$, we find that $||q(\lambda)|| \preceq ||\lambda||_S$.  
\end{proof}

As a direct corollary of the logarithmic bound in Definition \ref{Definition: Efficient generation} we find:

\begin{corollary}\label{Corollary: Logarithmic bound quasi-morphisms}
With the notation of Proposition \ref{Proposition: Bounds on quasi-cocycles from efficient generation}. Suppose in addition that $\Lambda$ acts trivially on $V$. Then $||q(\lambda)|| =O(\log(||\lambda||_S))$. 
\end{corollary}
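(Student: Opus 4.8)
The plan is to re-run the inductive scheme from the proof of Proposition \ref{Proposition: Bounds on quasi-cocycles from efficient generation}, but to exploit a cancellation that is available precisely because $\Lambda$ acts trivially on $V$: in that case $q$ is a quasi-morphism, so $q(ab^{-1}) = q(a) - q(b) + O(1)$, and the contribution of the auxiliary index $j$ appearing in Definition \ref{Definition: Efficient generation} telescopes away instead of accumulating. This replaces the geometric factor $c_1^m$ obtained in Proposition \ref{Proposition: Bounds on quasi-cocycles from efficient generation} by an additive term $mE$, and since efficient generation forces $m \preceq \log||\lambda||_S$ with a \emph{logarithm} (not a polynomial), one lands on the claimed $O(\log ||\lambda||_S)$.

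In detail: I would fix $F \subset \langle \Lambda \rangle$ and $C_0 > 0$ as in Definition \ref{Definition: Efficient generation}, choose $k_0$ with $F \subset \Lambda^{k_0}$, and note that all elements involved below --- namely $\lambda_j^{-1}$, $\lambda_l\lambda_j^{-1}$, $f(\lambda_l\lambda_j^{-1})f'^{-1}$ and $(\lambda_{l+1}\lambda_j^{-1})\lambda_j$ for $f,f' \in F$ --- lie in a single power $\Lambda^N$ with $N$ depending only on $F$; hence the quasi-cocycle defect of $q$ applies to all of them with the one constant $C(N)$. Given $\lambda \in \Lambda$, I take $\lambda_0 = e, \dots, \lambda_m = \lambda$ with $m \leq C_0 \log||\lambda||_S$ as in Definition \ref{Definition: Efficient generation}, and for each $l$ pick $j \leq l$ and $f,f' \in F$ with $\lambda_{l+1}\lambda_j^{-1} = f(\lambda_l\lambda_j^{-1})f'^{-1}$. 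Expanding $q(\lambda_{l+1}) = q\big((\lambda_{l+1}\lambda_j^{-1})\lambda_j\big)$ with the quasi-morphism relation, then $q(\lambda_{l+1}\lambda_j^{-1}) = q(f) + q(\lambda_l\lambda_j^{-1}) + q(f'^{-1}) + O(1)$ (where $q(f), q(f'^{-1})$ range over a finite set, hence are $O(1)$), then $q(\lambda_l\lambda_j^{-1}) = q(\lambda_l) + q(\lambda_j^{-1}) + O(1)$ and finally $q(\lambda_j^{-1}) = -q(\lambda_j) + O(1)$ (from $q(e) = O(1)$), the two occurrences of $q(\lambda_j)$ cancel and one is left with $||q(\lambda_{l+1})|| \leq ||q(\lambda_l)|| + E$ for a constant $E = E(F,q)$ independent of $l$ and $\lambda$. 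Induction on $l$ together with $q(e) = O(1)$ then gives $||q(\lambda)|| = ||q(\lambda_m)|| \leq O(1) + mE \leq O(1) + C_0 E \log||\lambda||_S$.

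I do not expect a genuine obstacle: the only substantive point is the observation that, under a trivial action, the term $q(\lambda_j)$ which caused the geometric blow-up in Proposition \ref{Proposition: Bounds on quasi-cocycles from efficient generation} is exactly cancelled by the $-q(\lambda_j)$ coming from $q(\lambda_l\lambda_j^{-1})$; everything else is routine bookkeeping, chiefly verifying that all intermediate products stay within a power of $\Lambda$ that is fixed independently of $l$ so that a single defect constant is legitimate, which is immediate from the finiteness of $F$.
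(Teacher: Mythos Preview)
Your proposal is correct and is precisely the argument the paper has in mind: the paper gives no proof beyond the one-line remark that it is ``a direct corollary of the logarithmic bound in Definition \ref{Definition: Efficient generation}'', and your write-up is the natural way to unpack that remark. The key point you identify --- that under a trivial action the quasi-cocycle becomes a quasi-morphism, so the $q(\lambda_j)$ contribution cancels against the $-q(\lambda_j)$ coming from $q(\lambda_l\lambda_j^{-1})$, turning the recursion $||q(\lambda_{l+1})|| \leq C(||q(\lambda_l)|| + ||q(\lambda_j)||)+D$ into the additive $||q(\lambda_{l+1})|| \leq ||q(\lambda_l)|| + E$ --- is exactly what distinguishes this corollary from Proposition \ref{Proposition: Bounds on quasi-cocycles from efficient generation}. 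Your bookkeeping (all intermediate products lie in a fixed $\Lambda^N$, so a single defect constant suffices) is also sound.
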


To prove Theorem \ref{THEOREM: EFFICIENT MESSY GENERATOIN}, we mimic the strategy from \cite{LubotzkyMozesRaghunathan} (note however that our result works even for some rank one approximate lattices).

\subsection{Reductions to factors and subvarieties}

We make a few elementary observations that simplify greatly the proofs.

\begin{lemma}\label{Lemma: Efficient generation and commensurability}
Let $\Lambda_1, \Lambda_2$ be two commensurable approximate lattices of some semi-simple $S$-adic algebraic group. If $\Lambda_2 \subset \langle \Lambda_1 \rangle$ and $\Lambda_2$ is efficiently generated, then so is $\Lambda_1$.
\end{lemma}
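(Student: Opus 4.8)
`\begin{proof}[Proof sketch of Lemma \ref{Lemma: Efficient generation and commensurability}]
The plan is to transport an efficient generation scheme for $\Lambda_2$ to one for $\Lambda_1$, using that $\Lambda_1$ and $\Lambda_2$ are commensurable and $\Lambda_2 \subset \langle \Lambda_1 \rangle$. First I would fix a finite set $E \subset \langle \Lambda_1 \rangle$ with $\Lambda_1 \subset E\Lambda_2$ and $\Lambda_2 \subset E\Lambda_1$ (enlarging $E$ so that $E = E^{-1}$ and $e \in E$), and let $F_2 \subset \langle \Lambda_2 \rangle \subset \langle \Lambda_1 \rangle$ and $C_2 > 0$ witness efficient generation of $\Lambda_2$ in the sense of Definition \ref{Definition: Efficient generation}. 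Since $E$ is finite and $\langle\Lambda_1\rangle\subset\GL_n(\mathbb{A}_{K,S})$, the norms $\|f\|'_S$ for $f \in E \cup E^{-1} \cup F_2$ are bounded by some constant; by sub-multiplicativity of $\|\cdot\|'_S$ this will let me convert the bound $m \leq C_2 \log\|\lambda'\|'_S$ along a $\Lambda_2$-path into a bound $\preceq \log\|\lambda\|'_S$ along the corresponding $\Lambda_1$-path, up to an additive constant absorbing the finitely many extra letters from $E$.

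\medskip
The key steps, in order: (i) given $\lambda \in \Lambda_1$, write $\lambda = e_0\lambda'$ with $e_0 \in E$ and $\lambda' \in \Lambda_2$, and apply efficient generation of $\Lambda_2$ to get $\lambda'_0 = e, \dots, \lambda'_m = \lambda' \in \Lambda_2$ with $m \leq C_2\log\|\lambda'\|'_S$ and the Definition \ref{Definition: Efficient generation} step condition in the letters $F_2$; (ii) replace each $\lambda'_i$ by an element $\mu_i \in \Lambda_1$ with $\mu_i \in E\lambda'_i$ (possible since $\Lambda_2 \subset E\Lambda_1$), taking $\mu_0 = e$ and $\mu_m = \lambda$, and check that the step condition now holds with a larger but still finite letter set $F_1 := (E\cup E^{-1})F_2(E\cup E^{-1})$ together with the finitely many correction terms: from $\lambda'_{i+1}(\lambda'_j)^{-1} \in F_2\,\lambda'_i(\lambda'_j)^{-1}\,F_2^{-1}$ one gets $\mu_{i+1}\mu_j^{-1} \in (E\cup E^{-1})F_2(E\cup E^{-1})\,\mu_i\mu_j^{-1}\,(E\cup E^{-1})F_2^{-1}(E\cup E^{-1})$, so $F_1$ works after symmetrising; (iii) control the length: $\|\lambda'\|'_S \leq \|e_0^{-1}\|'_S\|\lambda\|'_S$ gives $\log\|\lambda'\|'_S \leq \log\|\lambda\|'_S + O(1)$, hence $m \leq C_2\log\|\lambda\|'_S + O(1) \leq C_1\log\|\lambda\|'_S$ for a suitable $C_1$ once $\|\lambda\|'_S$ is bounded below (it is, since $\Lambda_1$ is discrete and infinite, so only finitely many $\lambda$ have $\|\lambda\|'_S$ near its minimum, and those are handled by enlarging the constant). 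This yields efficient generation of $\Lambda_1$ by $F_1$.

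\medskip
The main obstacle I anticipate is bookkeeping rather than conceptual: making sure that after substituting $\mu_i$ for $\lambda'_i$ the \emph{indexed} step condition of Definition \ref{Definition: Efficient generation} survives with a single finite set $F_1$ independent of $\lambda$ --- in particular that the index $j \leq i$ witnessing the step for the $\Lambda_2$-path can be reused for the $\Lambda_1$-path, and that no new terms $\mu_i\mu_j^{-1}$ with $j > i$ sneak in. Since the substitution is term-by-term and $E$ is fixed once and for all, the conjugating sets stay uniformly finite, so this goes through; one just has to be careful that $\mu_0 = e$ forces a compatible choice at the base of the path, which is arranged by taking $e \in E$.
\end{proof}`
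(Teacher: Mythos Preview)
Your proof is correct and follows essentially the same approach as the paper's proof: choose a finite commensurability set (your $E$, the paper's $F_2$) and transport the efficient-generation path for $\Lambda_2$ to one for $\Lambda_1$, enlarging the letter set accordingly. The paper's argument is a one-line sketch (``Then $\Lambda_1$ is efficiently generated by $F_1F_2$ with the same implied constants''), whereas you have carefully spelled out the substitution $\mu_i \in E\lambda'_i$, verified the step condition with the enlarged set $EF_2E$, and handled the additive $O(1)$ in the length bound via discreteness; in these respects your version is more precise than the paper's.
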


\begin{proof}
Let $F_1 \subset \langle \Lambda_1 \rangle$ be a finite symmetric subset such that $\Lambda_2$ is efficiently generated by $F_1$. Choose $F_2 \subset \langle \Lambda_1 \rangle$ such that  $\Lambda_1 \subset F_2\Lambda_2$ and $\Lambda_2 \subset F_2\Lambda_1$. Then $\Lambda_1$ is efficiently generated by $F_1F_2$ with the same implied constants. 
\end{proof}

\begin{lemma}\label{Lemma: Quasi-isometric decomposition approximate lattices in product}
Let $\Lambda$ be a uniform approximate lattice in a product $G_1 \times \cdots \times G_n$ of locally compact groups. Choose compact symmetric neighbourhoods of the identity $W_1 \subset G_2 \times \cdots \times G_n, W_2 \subset G_1 \times G_3 \times \cdots \times G_n,\ldots, W_n \subset G_1 \times \cdots \times G_{n-1}$. Write $\Lambda_i = \left(G_i \times W_i\right) \cap \Lambda^2$ for all $i \in \{1, \ldots, n\}$. Then:
\begin{enumerate}
\item the subset $\Lambda_1\cdots \Lambda_n$ is commensurable with $\Lambda$; 
\item If $\rho: G \rightarrow \mathbb{R}_{\geq 0}$ denotes a continuous submultiplicative map, $p_i: G \rightarrow G_i$ denotes the natural projection and $\lambda =\lambda_1 \cdots \lambda_n \in \Lambda_1\cdots \Lambda_n$, then 
$$\rho(\lambda_i) \preceq \rho(p_i(\lambda))\preceq \rho(\lambda).$$
\end{enumerate}
\end{lemma}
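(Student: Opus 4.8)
The plan is to reduce both statements to one input — relative density of the "windowed projections" $p_i(\Lambda_i)=p_i(\Lambda^2\cap(G_i\times W_i))$ in $G_i$ — and then to carry out a peeling argument for (1) and purely combinatorial submultiplicativity bookkeeping for (2). Throughout I use that, since $\Lambda$ is an approximate subgroup, $\Lambda_1\cdots\Lambda_n\subseteq(\Lambda^2)^n=\Lambda^{2n}\subseteq F\Lambda$ for a finite $F$ (so one half of the commensurability in (1) is immediate), that each $\Lambda_i$ is symmetric, and that all powers $\Lambda^m$ of a uniformly discrete approximate subgroup are uniformly discrete, so $\Lambda^m\cap(\text{compact})$ is always finite.

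For (1) it then suffices to prove that $\Lambda_1\cdots\Lambda_n$ is relatively dense in $G$: combined with $\Lambda_1\cdots\Lambda_n\subseteq\Lambda^{2n}$ and uniformity of $\Lambda$, any $\lambda\in\Lambda$ can be written $\lambda=xc$ with $x\in\Lambda_1\cdots\Lambda_n$ and $c\in\Lambda^{2n+1}\cap\mathcal C$ for a fixed compact $\mathcal C$, hence $c$ lies in a finite set and $\Lambda\subseteq F'(\Lambda_1\cdots\Lambda_n)$. To get relative density I would peel off coordinates one at a time, using that each $p_i(\Lambda_i)$ is relatively dense in $G_i$ with a fixed compact ``mesh'' $\mathcal F_i$: given $g=(g_1,\dots,g_n)$, pick $\lambda_1\in\Lambda_1$ with $p_1(\lambda_1)^{-1}g_1\in\mathcal F_1$, set $\eta_1:=\lambda_1^{-1}g$; then pick $\lambda_2\in\Lambda_2$ with $p_2(\lambda_2)^{-1}p_2(\eta_1)\in\mathcal F_2$, set $\eta_2:=\lambda_2^{-1}\eta_1$; and so on. The point of this ordering is that the $j$-th coordinate of $\eta_k$, once it has been ``fixed'', is only ever multiplied by the $j$-th coordinate of later factors $\lambda_i$ ($i>k$, $i\neq j$), which lies in the fixed compact set $p_j(W_i)$ — never by an unbounded free coordinate — so $\eta_n=(\lambda_1\cdots\lambda_n)^{-1}g$ stays in a fixed compact set, giving $\Lambda_1\cdots\Lambda_n\,\mathcal C=G$.

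For (2), write $\lambda_i=u_iv_i$ where $u_i$ has $i$-th coordinate $p_i(\lambda_i)$ and trivial other coordinates, and $v_i:=u_i^{-1}\lambda_i$; since $v_i$ and $v_i^{-1}$ lie in the fixed compact set built from the $p_j(W_i)$, continuity of $\rho$ and submultiplicativity give $\rho(\lambda_i)\asymp\rho(u_i)$ up to a fixed constant, and likewise $\rho(p_i(\lambda))=\rho(p_i(\lambda_1)\cdots p_i(\lambda_n))\asymp\rho(u_i)$ because all factors other than the $i$-th lie in $\bigcup_j p_i(W_j)$. Chaining these comparisons yields $\rho(\lambda_i)\preceq\rho(p_i(\lambda))$ (indeed $\asymp$). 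The remaining inequality $\rho(p_i(\lambda))\preceq\rho(\lambda)$ is where one uses that the $\rho$ at play — the norms $\|\cdot\|'_S=\prod_{v\in S}\|\cdot\|'_v$ with each local factor $\geq 1$ and depending only on the $v$-th component — dominates the norm of any single-coordinate projection, since replacing the other coordinates by the identity only replaces those factors by $1$; for such $\rho$ one simply has $\rho(p_i(\lambda))\leq\rho(\lambda)$, completing the chain.

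The main obstacle is the input I quoted: relative density of $p_i(\Lambda^2\cap(G_i\times W_i))$ in $G_i$, equivalently that the cut-and-project image of a uniform approximate lattice through a compact window that is a neighbourhood of the identity is again a uniform approximate lattice, with commensurability class independent of the window. This is a standard fact in the theory of approximate lattices in product groups (cf. \cite{bjorklund2016approximate} and the companion statement Proposition \ref{Proposition: Intersection and projections approximate lattices w/ closed subgroups}); for the arithmetic approximate lattices to which this lemma is ultimately applied (Assumption \ref{Assumption: Arithmeticity} in the proof of Theorem \ref{THEOREM: EFFICIENT MESSY GENERATOIN}) it is immediate from Borel--Harish-Chandra (Proposition \ref{Proposition: BHC}), since the windowed cut is then commensurable with an $\mathbf G(\mathcal O_{K,S\setminus\{v_i\}})$-type subgroup whose projection to $\mathbf G(K_{v_i})$ is a lattice. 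Everything else is routine bookkeeping, the only delicate point being the non-abelian case of the peeling, which the ordering above is designed to handle.
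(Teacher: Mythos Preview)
Your approach is essentially the same as the paper's: the paper also establishes relative density of $\Lambda_1\cdots\Lambda_n$ by the same peeling argument (stated for $n=2$ and then invoking induction, whereas you do general $n$ directly), then intersects with a compact to get the finite set $F$, and handles (2) by the same submultiplicativity bookkeeping. The paper likewise takes the relative density of $\Lambda_i$ in $G_i\times W_i$ as input, citing Lemma~\ref{Lemma: Intersection of approximate subgroups}; your remark that this is the only nontrivial ingredient is apt, and your further observation that the final inequality $\rho(p_i(\lambda))\preceq\rho(\lambda)$ is not a consequence of submultiplicativity alone but of the product structure $\rho=\prod_v\|\cdot\|'_v$ with each factor $\geq 1$ is a point the paper leaves implicit.
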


\begin{proof}
We will prove the result for $n = 2$, the general result then follows by induction on $n$. Note that $\Lambda_1$ and $\Lambda_2$ are relatively dense in $G_1 \times W_1$ and $W_2 \times G_2$ respectively (Lemma \ref{Lemma: Intersection of approximate subgroups}). Choose two compact subsets $K_1, K_2$ such that $G_1 \times \{e\} \subset \Lambda_1K_1$ and $\{e\}  \times G_2\subset \Lambda_2K_2$. If $g=(g_1,g_2) \in G_1 \times G_2$, we can find $\lambda_1 \in \Lambda_1$ such that $\lambda_1^{-1}g_1 \in K_1$ and $\lambda_2 \in \Lambda_2$ such that $\lambda_2^{-1}(e,p_2(\lambda_1)^{-1}g_2) \in K_2$. Hence, $$(\lambda_1\lambda_2)^{-1}g \in \left(\left(W_2p_1(K_1)\right) \times \{e\} \right)K_2=:K.$$ So $X=\Lambda_1\Lambda_2$ is relatively dense. But $X \subset \Lambda^4$. So $X^{-1}\Lambda \subset \Lambda^5$. Take any $\lambda \in \Lambda$, we have $\lambda_1\in \Lambda_1,\lambda_2 \in \Lambda_2$ such that $(\lambda_1\lambda_2)^{-1}\lambda \in K \cap \Lambda^5=:F$. Since $\Lambda^5$ is uniformly discrete, $F$ must be finite. So $\Lambda \subset XF$. This proves (1).  

Moreover, $p_1(\lambda_1\lambda_2)=p_1(\lambda_1)p_1(\lambda_2)$ and $p_1(\lambda_2) \in W_2$. Thus, $$\rho(p_1(\lambda_1)) \preceq \rho(p_1(\lambda_1\lambda_2)) = \rho(p_1(\lambda)).$$
But $\lambda_1 \in G_1 \times W_1$ so 
$$ \rho(\lambda_1) \preceq \rho(p_1(\lambda))\preceq \rho(\lambda).$$
So (2) is proved.
\end{proof}

\begin{lemma}\label{Lemma: Reduction to subvarieties via regular isomorphism}
Let $\mathbf{H}$ be a $K$-subgroup of $\mathbf{G}$. Suppose that there are two $K$-subvarieties $\mathbf{X},\mathbf{Y}$ of $\mathbf{H}$ such that the multiplication map $\mathbf{X} \times \mathbf{Y} \rightarrow \mathbf{H}$ is a regular $K$-isomorphism.  If $h \in \mathbf{H}(\mathbb{A}_{K,S})$, $x(h) \in \mathbf{X}(\mathbb{A}_{K,S})$ and $y(h) \in \mathbf{Y}(\mathbb{A}_{K,S})$ are such that $x(h)y(h) = h $, then:
\begin{enumerate}
 \item $||x(h)||_S \preceq ||h||_S$ and $||y(h)||_S \preceq ||h||_S$;
 \item if $e \in \mathbf{X}(K)$ and $e \in \mathbf{Y}(K)$, then there is a model set $\Lambda \subset \mathbf{H}(\mathcal{O}_{K,S})$ such that $x(\lambda),y(\lambda) \in \mathbf{G}(\mathcal{O}_{K,S})$ for all $\lambda \in \Lambda$;
 \item suppose moreover that both $\mathbf{X}$ and $\mathbf{H}$ are unipotent $K$-subgroups, choose a place $v \in S$, let $\Lambda_X \subset \mathbf{X}(K_v)$ and $\Lambda_H \subset \mathbf{H}(K_v)$ be two approximate lattices commensurable with $\mathbf{X}(\mathcal{O}_{K,v})$ and $\mathbf{H}(\mathcal{O}_{K,v})$ respectively, then there is $l \geq 0$ and $\Lambda'_H$ a model set commensurable with $\Lambda_H$ and contained in $\Lambda_H^2$ such that $x(\Lambda'_H) \subset \Lambda_X^l$.
\end{enumerate} 
\end{lemma}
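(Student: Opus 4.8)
## Proof Plan for Lemma \ref{Lemma: Reduction to subvarieties via regular isomorphism}

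\textbf{Overview of the approach.} The three parts share a common source: the maps $h \mapsto x(h)$ and $h \mapsto y(h)$ are the coordinate projections obtained from the inverse of the regular isomorphism $m: \mathbf{X}\times\mathbf{Y}\to\mathbf{H}$. Since $m^{-1}$ is a morphism of $K$-varieties, each of $x$ and $y$ is given coordinate-wise by a $K$-rational function (in fact, since $\mathbf{X},\mathbf{Y}$ are affine and $m^{-1}$ is regular, by $K$-polynomial maps after fixing affine embeddings compatible with $\mathbf{G}\subset\GL_n$). The plan is to feed these polynomial/rational expressions into the estimates for $||\cdot||_S$ and into Lemma \ref{Lemma: Pisot approximate rings and polynomials}, which is exactly the tool that converts ``polynomial in something bounded/integral'' into ``bounded/integral up to commensurability''.

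\textbf{Part (1).} First I would fix closed embeddings $\mathbf{H}\subset \GL_N$, $\mathbf{X},\mathbf{Y}\subset \GL_N$ (e.g. via the ambient $\mathbf{G}\subset\GL_n$ and a bounded number of matrix coordinates together with the inverse-matrix coordinates, so that $||\cdot||_S$ on each is comparable to the sup of absolute values of a finite list of $K$-coordinate functions). Because $m^{-1}:\mathbf{H}\to\mathbf{X}\times\mathbf{Y}$ is a morphism of affine $K$-varieties, each coordinate of $x(h)$ and $y(h)$ is a polynomial with $K$-coefficients in the coordinates of $h$ (and of $h^{-1}$). Applying $|\cdot|_v$ at each place $v\in S$ and using the ultrametric/triangle inequality together with submultiplicativity of $||\cdot||_S'$, one gets $||x(h)||_v \le P_v(||h||_v)$ for a fixed polynomial, hence $||x(h)||_S \preceq ||h||_S$, and symmetrically for $y$. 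This is the routine step; no obstacle here beyond bookkeeping of the embeddings and the product-over-$S$ bound.

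\textbf{Parts (2) and (3): the arithmetic refinement.} For (2), since $e\in\mathbf{X}(K)$ and $e\in\mathbf{Y}(K)$, I may translate so the polynomial maps defining the coordinates of $x$ and $y$ vanish appropriately at the identity; then part (2) of Lemma \ref{Lemma: Pisot approximate rings and polynomials} applies coordinate-wise: there is a model set $\Lambda_0$ contained in and commensurable with $\mathbf{H}(\mathcal{O}_{K,S})$ (take the intersection of the finitely many model sets obtained for the finitely many coordinate polynomials, using Lemma \ref{Lemma: Intersection of approximate subgroups} and the fact that an intersection of model sets attached to the same cut-and-project scheme is again such a model set) on which every coordinate of $x(\lambda)$ and $y(\lambda)$ lies in $\mathcal{O}_{K,S}$; the inverse-matrix coordinates are handled simultaneously since $x(\lambda)^{-1}=\big(y(\lambda)h^{-1}\big)$-type expressions are again $K$-polynomial in integral data. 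This yields $x(\Lambda_0),y(\Lambda_0)\subset \mathbf{G}(\mathcal{O}_{K,S})$. For (3), working at a single place $v\in S$ and in the unipotent setting, I would pass through the logarithm/exponential correspondence with the Lie algebra (available by the equivalence recalled in \S\ref{Subsection: Notations}), where $m$ becomes a polynomial (indeed, by Baker--Campbell--Hausdorff, polynomial) map; then the $v$-adic analogue of Lemma \ref{Lemma: Pisot approximate rings and polynomials}(2), applied with $\mathcal{O}_{K,v}$ in place of $\mathcal{O}_{K,S}$, produces $\Lambda_H'\subset \Lambda_H^2$ commensurable with $\Lambda_H$ on which $x(\lambda)$ has coordinates in $\mathcal{O}_{K,v}$; since $\Lambda_X$ is commensurable with $\mathbf{X}(\mathcal{O}_{K,v})$ and the latter absorbs $\mathcal{O}_{K,v}$-coordinate elements into a bounded power $\Lambda_X^l$ (the power $l$ accounts for replacing $\mathbf{X}(\mathcal{O}_{K,v})$ by the commensurable $\Lambda_X$ and for the bounded degree of the BCH polynomials), we get $x(\Lambda_H')\subset\Lambda_X^l$.

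\textbf{Main obstacle.} The genuinely delicate point is the passage in (3) to a \emph{single} place and the control of the power $l$: one must ensure that the model set $\Lambda_H'$ can be taken inside $\Lambda_H^2$ (not merely commensurable with it), which forces the use of part (2) of Lemma \ref{Lemma: Pisot approximate rings and polynomials} in the sharper ``contained in and commensurable with'' form, together with the observation that a model set for the scheme $(K,\mathbb{A}_{K,v},\mathbb{A}_K^{\{v\}})$ sitting inside $\mathbf{H}(\mathcal{O}_{K,v})$ remains a model set after intersecting with $\Lambda_H^2$ (again via Lemma \ref{Lemma: Intersection of approximate subgroups}); and that the degree of the BCH/polynomial expressions for $x$ in terms of the $\mathbf{H}$-coordinates is bounded purely in terms of the nilpotency class of $\mathbf{H}$, so that $l$ depends only on $\mathbf{H}$ and the chosen commensuration $\Lambda_X\sim\mathbf{X}(\mathcal{O}_{K,v})$, not on $\lambda$. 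Everything else is a direct application of the archimedean and non-archimedean versions of Lemma \ref{Lemma: Pisot approximate rings and polynomials} to the coordinate functions of $m^{-1}$.
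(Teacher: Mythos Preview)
Your arguments for parts (1) and (2) are correct and coincide with the paper's proof: the regularity of $m^{-1}$ makes $x$ and $y$ polynomial in the matrix entries of $h$, which immediately gives (1), and since $x(e)=y(e)=e$ these polynomials have no constant term, so Lemma~\ref{Lemma: Pisot approximate rings and polynomials}(2) applied coordinate-wise yields (2).

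Part (3), however, has a genuine gap. Your plan produces a model set $\Lambda_H'\subset\Lambda_H^2$ with $x(\Lambda_H')\subset \mathbf{X}(\mathcal{O}_{K,v})$, and then asserts that commensurability of $\Lambda_X$ with $\mathbf{X}(\mathcal{O}_{K,v})$ lets you ``absorb'' into $\Lambda_X^l$. But commensurability only gives $\mathbf{X}(\mathcal{O}_{K,v})\subset F\Lambda_X$ for some finite $F$, and there is no reason for $F$ to lie in $\langle\Lambda_X\rangle$; if it does not, no power $\Lambda_X^l$ can contain $F\Lambda_X$. The bounded degree of the BCH polynomials is irrelevant to this obstruction: the problem is not the size of $x(\lambda)$ but which coset of $\langle\Lambda_X\rangle$ it lands in.

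The paper closes this gap with an ingredient you do not invoke: the congruence subgroup property for unipotent groups. After replacing $\Lambda_X$ by $\Lambda_X^2\cap\mathbf{X}(\mathcal{O}_{K,v})^2$ one has $\langle\Lambda_X\rangle$ of finite index in $\mathbf{X}(\mathcal{O}_{K,S'})$ (with $S'=\{v\}\cup S_\infty$, via Proposition~\ref{Proposition: BHC}), and CSP then furnishes an ideal $I\lhd\mathcal{O}_{K,S'}$ with $\mathbf{X}(I)\subset\langle\Lambda_X\rangle$. Since $x(e)=e$, the polynomial map $x$ sends $\mathbf{H}(I)$ into $\mathbf{X}(I)$, so intersecting $\Lambda_H'$ further with $\mathbf{H}(I)$ forces $x(\Lambda_H')\subset\langle\Lambda_X\rangle\cap\mathbf{X}(\mathcal{O}_{K,v})$, which \emph{is} contained in some $\Lambda_X^l$ by commensurability inside a common ambient group. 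You should insert this congruence-subgroup step; without it the passage from ``integral'' to ``in $\Lambda_X^l$'' does not go through.
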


\begin{proof}
Because the map $\mathbf{X} \times \mathbf{Y} \rightarrow \mathbf{H}$ is a $K$-regular isomorphism, the two maps $x$ and $y$ are $K$-regular as well. Therefore, the entries of the matrix $x(h)$ are polynomials in the entries of $h$. Thus, $||x(h)||_S  \preceq ||h||_S$. The same applies to $y$. Hence, (1) is proved. Since $e\cdot e =e$, we have $x(e)=y(e)=e$. So we see that (2) is a consequence of Lemma \ref{Lemma: Pisot approximate rings and polynomials}. We now turn to the proof of (3). Upon considering $\Lambda_X^2 \cap \mathbf{X}(\mathcal{O}_{K,v})^2$ we may assume that $\Lambda_X \subset \mathbf{X}(\mathcal{O}_{K,v})^2$. The subgroup $\langle \Lambda_X \rangle$ has finite index in $\langle \mathbf{X}(\mathcal{O}_{K,v}) \rangle = \mathbf{X}(\mathcal{O}_{K,S'})$ where $S'$ is the union of $v$ and the set of Archimedean places of $K$, see \S \ref{Subsubsection: Pisot--Vijayaraghavan--Salem numbers of a number field}. Since $\mathbf{X}$ is unipotent it has the congruence subgroup property (see the introduction of \cite{PMIHES_1976__46__107_0} for this and definitions pertaining to the congruence subgroup property). So there is an ideal $I$ of $\mathcal{O}_{K,S'}$ such that $\mathbf{X}(I) \subset \langle \Lambda_X \rangle$. Since $x(e)=e$, $x(h) \in \mathbf{X}(I)$ for all $h \in \mathbf{H}(I)$. Therefore, if $h \in \Lambda \cap \Lambda_H^2 \cap \mathbf{H}(I)$ where $\Lambda$ is provided by (2), then $x(h)\in \langle \Lambda_X \rangle \cap \mathbf{X}(\mathcal{O}_{K,S})$. But $\mathbf{X}(\mathcal{O}_{K,S})$ is commensurable with $\Lambda_X$, so $\langle \Lambda_X \rangle \cap \mathbf{X}(\mathcal{O}_{K,S}) \subset \Lambda_X^l$ for some $l > 0$. 
\end{proof}

\subsection{Bounded generation by rank one approximate subgroups}

We start by extending a result of Borel's \cite[Prop. 15.6]{zbMATH07143875} concerning double co-sets under the left-action of an arithmetic subgroup and the right-action of a Borel $K$-subgroup. 

\begin{proposition}\label{Proposition: Generalisation of Borel's result}
Let $K$, $S$, $\mathbf{G}$ and $\Lambda$ be as in Assumption \ref{Assumption: Arithmeticity}. Let $\mathbf{P} \subset \mathbf{G}$ be a parabolic subgroup. There is a finite subset $F \subset \mathbf{G}(K)$ such that $\Lambda F \mathbf{P}(K) = \mathbf{G}(K)$.

Given $g \in \mathbf{G}(K)$ and $S_g:=\{v \in S_K: |g|_v > 1\}$, we can moreover choose $p \in \mathbf{G}(\mathcal{O}_{K,S_g})$ with $||p||_{S_g} \preceq ||g||_{S_g\setminus S}$ such that $gp^{-1} \in \Lambda F$. Here, the implied constants depend on $S_g$ only.
\end{proposition}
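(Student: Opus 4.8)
The plan is to deduce this from Borel's double-coset finiteness theorem \cite[Prop.\ 15.6]{zbMATH07143875} in three moves: extend it from Borel subgroups to an arbitrary $K$-parabolic $\mathbf{P}$ (immediate, since $\mathbf{G}(\mathcal{O}_{K,S})\backslash\mathbf{G}(K)/\mathbf{P}(K)$ is a quotient of the Borel double cosets), transport it from the arithmetic approximate subgroup $\mathbf{G}(\mathcal{O}_{K,S})$ to $\Lambda$ via commensurability, and then sharpen the size estimate by a reduction-theory construction that keeps track of the places at which each factor carries denominators. In fact the quantitative statement will subsume the qualitative one.

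For the qualitative statement in isolation, Borel's theorem — equivalently, finiteness of the orbits of an arithmetic subgroup of $\mathbf{G}(K)$ on the $K$-points of the projective variety $\mathbf{G}/\mathbf{P}$, which parametrise the $K$-parabolics of type $\mathbf{P}$ — gives a finite $F_0 \subset \mathbf{G}(K)$ with $\mathbf{G}(\mathcal{O}_{K,S}) F_0 \mathbf{P}(K) = \mathbf{G}(K)$. By Assumption \ref{Assumption: Arithmeticity} there is a finite $F_1$ with $\mathbf{G}(\mathcal{O}_{K,S}) \subset \Lambda F_1$, whence $\mathbf{G}(K) \subset \Lambda F_1 F_0 \mathbf{P}(K)$ and $F := F_1 F_0$ works. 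The commensurability must be with $\Lambda$ and not merely with $\langle\Lambda\rangle$, since otherwise one reaches only powers $\Lambda^n$; this is why Assumption \ref{Assumption: Arithmeticity} records the stronger fact, and one uses in addition the elementary inclusions $X \subset \Lambda X$ (as $e \in \Lambda$) and $\Lambda^n \subset \Lambda F_n$, $F_n$ finite, valid for every approximate subgroup.

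For the quantitative refinement, fix $g \in \mathbf{G}(K)$ and split $S_g = (S_g\cap S)\sqcup(S_g\setminus S)$. Using reduction theory relative to $\mathbf{P}$ inside $\mathbf{G}(\mathbb{A}_{K,S_g})$, write the diagonal image of $g$ as $\gamma_0\,c\,s$ with $\gamma_0$ in the $S_g$-arithmetic group, $c$ in a fixed finite set, and $s$ in a fixed Siegel domain for $\mathbf{P}$; rationality of $g,\gamma_0,c$ forces $s=\gamma_0^{-1}gc^{-1}\in\mathbf{G}(K)$, and by construction $s$ is bounded in the flag direction (its image in $(\mathbf{G}/\mathbf{P})(\mathbb{A}_{K,S_g})$ lies in a fixed compact set). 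Decomposing $s$ along the Levi decomposition $\mathbf{P}=\mathbf{L}\ltimes\mathbf{N}$ and exploiting the polynomiality of the associated $K$-regular maps (in the spirit of Lemma \ref{Lemma: Reduction to subvarieties via regular isomorphism}), one extracts $p\in\mathbf{P}(K)\cap\mathbf{G}(\mathcal{O}_{K,S_g})$ with $||p||_v$ bounded for $v\in S_g\cap S$ — here one uses that $\Lambda$ is relatively dense in $\mathbf{G}(\mathbb{A}_{K,S})$, so the size of $g$ at places of $S$ can be absorbed once by the $\Lambda$-factor — and $||p||_v\preceq||g||_v$ for $v\in S_g\setminus S$, so that $||p||_{S_g}\preceq||g||_{S_g\setminus S}$. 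Then $gp^{-1}$ is integral away from $S$, and after a single left translation by a $\lambda\in\Lambda$ it becomes of bounded local norm at every place, hence (by the Northcott finiteness of $K$-points of bounded height) lands in a fixed finite subset $\mathcal{F}_0\subset\mathbf{G}(K)$. Thus $gp^{-1}\in\Lambda\mathcal{F}_0$, and taking $F:=\mathcal{F}_0$ gives $gp^{-1}\in\Lambda F$ with $p\in\mathbf{P}(K)$; in particular $g=(gp^{-1})p\in\Lambda F\mathbf{P}(K)$, which re-proves the first assertion.

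The main obstacle is the reduction-theoretic construction of $p$: producing, uniformly in $g$ and with constants depending only on $S_g$, a representative of the coset $g\mathbf{P}(K)$ that is simultaneously integral outside $S_g$, of bounded size at the places of $S$, and of polynomially controlled size at the places of $S_g\setminus S$. This combines the good reduction of the projective variety $\mathbf{G}/\mathbf{P}$ at all but finitely many places — yielding $\mathbf{G}(\mathcal{O}_v)$-representatives of $K_v$-flags — with weak approximation on the $K$-rational variety $\mathbf{P}$ to glue the local choices into a single $p\in\mathbf{P}(K)$, and with effective height bounds on $\mathbf{G}/\mathbf{P}$ transported through $\mathbf{P}=\mathbf{L}\ltimes\mathbf{N}$ by polynomiality of regular maps to turn ``bounded in the flag direction'' into the estimate on $||p||_{S_g}$. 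Pinning all constants to $S_g$ alone is the delicate point, and is exactly what the efficient-generation theorem of the following subsections requires.
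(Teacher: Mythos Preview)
Your qualitative argument has a genuine gap: Borel's theorem \cite[Prop.\ 15.6]{zbMATH07143875} applies to arithmetic \emph{groups}, but $\mathbf{G}(\mathcal{O}_{K,S})$ is not a group when $S$ omits Archimedean places --- it is only an approximate subgroup. You cannot therefore assert directly that $\mathbf{G}(\mathcal{O}_{K,S})\backslash\mathbf{G}(K)/\mathbf{P}(K)$ is finite; the genuine arithmetic group $\mathbf{G}(\mathcal{O}_{K,\hat S})$ is far larger than $\mathbf{G}(\mathcal{O}_{K,S})$, and finiteness for the former does not descend. The paper instead \emph{adapts Godement's proof} rather than invoking its conclusion: one works in the complementary adeles $\mathbf{G}(\mathbb{A}_K^S)$, where the natural map $\iota_S:\mathbf{G}(K)\to\mathbf{G}(\mathbb{A}_K^S)$ is a good model of $(\mathbf{G}(\mathcal{O}_{K,S}),\mathbf{G}(K))$. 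There $\mathbf{P}(\mathbb{A}_K^S)$ is cocompact, and one builds a \emph{relatively dense approximate subgroup} of $\mathbf{P}(\mathbb{A}_K^S)$ by combining the uniform approximate lattice $(\mathbf{MN})(\mathcal{O}_{K,S_K\setminus S})$ (anisotropic part) with $\mathbf{A}(\mathcal{O}_{K,\{v_0\}\cup(S_K\setminus S)})$ on the split torus --- the latter via Dirichlet's unit theorem, which is the key extra input your sketch omits. Pulling back a compact neighbourhood through $\iota_S$ and using density of $\mathbf{G}(K)$ then yields the finite $F$.

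Your quantitative sketch is also on different ground and is too vague at the decisive step. Working with Siegel domains in $\mathbf{G}(\mathbb{A}_{K,S_g})$ presupposes an arithmetic group there, but again $\mathbf{G}(\mathcal{O}_{K,S_g})$ need not be one; your appeal to Northcott finiteness is misplaced since $\mathbf{G}$ is affine; and ``absorbing size at places of $S$ by relative density of $\Lambda$'' does not by itself control the split-torus contribution at places of $S_g\setminus S$. The paper's quantitative bound is obtained by tracking the decomposition $p=\lambda_a\lambda_{mn}$ explicitly: the $\mathbf{MN}$-factor is polynomially bounded via the regularity of the Langlands projection, while the torus factor $\lambda_a$ is controlled by a word-length argument in $\mathbf{A}(\mathcal{O}_{K,\{v_0\}\cup(S_g'\setminus S)})$, again using Dirichlet's unit theorem to compare $\log\|\lambda_a\|_{v_0}$ with $\sum_{v\in S_g'\setminus S}|\log\|\lambda_a\|_v|$. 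This torus estimate is the heart of the bound and has no analogue in your outline.
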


A key ingredient is Dirichlet's unit theorem. We will use it via the following corollary. 

\begin{corollary}
Let $S$ be any set of places of $K$ and let $\mathbf{A}$ be a $K$-split torus. Choose $v_0$ a place not equivalent to a place in $S$. Then $\mathbf{A}(\mathcal{O}_{K,\{v_0\} \cup S})$ is a uniform approximate lattice in $\mathbf{A}(\mathbb{A}_{K, S})$. 
\end{corollary}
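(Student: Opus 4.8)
The plan is to reduce to the one-dimensional torus and then to exhibit the relevant set as a model set supplied by Dirichlet's unit theorem. A $K$-split torus $\mathbf{A}$ is $K$-isomorphic to $\mathbf{G}_m^{d}$, and any two $K$-embeddings of $\mathbf{A}$ into a general linear group give rise to commensurable groups of $\mathcal{O}_{K,T}$-points (one argues exactly as for $\mathbf{G}(\mathcal{O}_{K,S})$ above, clearing denominators and applying Lemma \ref{Lemma: Pisot approximate rings and polynomials}). So, inside the \emph{abelian} group $\mathbf{A}(\mathbb{A}_{K,S}) = \mathbf{G}_m(\mathbb{A}_{K,S})^{d}$, the approximate subgroup $\mathbf{A}(\mathcal{O}_{K,\{v_0\}\cup S})$ is commensurable with $\mathbf{G}_m(\mathcal{O}_{K,\{v_0\}\cup S})^{d}$; since in an abelian group commensurable approximate subgroups are simultaneously uniform approximate lattices, and a finite direct product of uniform approximate lattices is again one, I would reduce to the case $\mathbf{A} = \mathbf{G}_m$.

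Write $T = \{v_0\}\cup S$, $T' = T\cup S_{\infty}$, and let $\Gamma := \mathbf{G}_m(\mathcal{O}_{K,T'})$ be the group of $T'$-units. Unwinding the definitions, $\Lambda := \mathbf{G}_m(\mathcal{O}_{K,T})$ is precisely the set of $x\in\Gamma$ whose absolute value at each Archimedean place $w\notin T$ lies in the compact neighbourhood $W_w := \{y\in K_w^{\times} : |y-1|_w\le 1,\ |y^{-1}-1|_w\le 1\}$ of $1$. By Dirichlet's $S$-unit theorem, $\Gamma$ is, diagonally embedded, a cocompact lattice in the norm-one subgroup $N := \{(y_v)\in\prod_{v\in T'}K_v^{\times} : \prod_{v\in T'}|y_v|_v = 1\}$ of $\mathbf{G}_m(\mathbb{A}_{K,T'}) = \prod_{v\in T'}K_v^{\times}$. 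Because $T'\setminus S$ contains the place $v_0$, forgetting the coordinates outside $S$ sends $N$ onto a closed cocompact subgroup $G_0\le\mathbf{G}_m(\mathbb{A}_{K,S})$ --- the whole group, unless $v_0$ is non-Archimedean and $S$ already contains every Archimedean place, in which case it is the cocompact subgroup cut out by a discreteness condition on the total absolute value --- with kernel the norm-one subgroup $H$ of $\prod_{v\in T'\setminus S}K_v^{\times}$. Under this picture $\Lambda$ is the projection to $G_0$ of $\Gamma$ intersected with a window $W\subseteq H$ obtained from $\prod_{w\in S_{\infty}\setminus T}W_w$ by imposing the norm-one relation; that relation pins down the otherwise free $v_0$-coordinate to a compact range, so $W$ is relatively compact with non-empty interior. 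In other words $\Lambda$ is a model set with cut-and-project scheme $(G_0, H, \Gamma, W)$.

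Finally I would check that $\Lambda$, diagonally embedded in $\mathbf{G}_m(\mathbb{A}_{K,S})$, is a uniform approximate lattice. Uniform discreteness is where the product formula is used: if $x\in\Lambda$ has bounded absolute value at every $v\in S$ then, $x$ being a unit at every finite place outside $T'$ and lying in $W_w$ at every Archimedean $w\notin T$, its absolute values are bounded at all places, hence --- by the product formula --- bounded away from $0$ at all places, so $x$ lies in a relatively compact subset of the adeles and therefore ranges over a finite set. Relative density follows from the fact that cutting a cocompact lattice by a relatively compact window of non-empty interior produces a relatively dense subset of the physical space $G_0$, which in turn is cocompact in $\mathbf{G}_m(\mathbb{A}_{K,S})$. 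The hard part will be this relative density: one has to know that the projection of the unit lattice $\Gamma$ to the internal space $H$ is dense modulo the maximal compact subgroup, which is exactly the point at which the fine structure of $\Gamma$ coming from Dirichlet's theorem --- together with the fact that the extra place $v_0$ has been moved into the internal space --- is essential.
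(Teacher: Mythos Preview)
Your approach---reduce to $\mathbf{G}_m$ and realise the set as a model set via Dirichlet's unit theorem---is the paper's approach. The paper organises it in two cleaner steps: first, when $S\supseteq S_\infty$, the $(\{v_0\}\cup S)$-units already form a genuine uniform \emph{lattice} (a subgroup) in $\mathbf{A}(\mathbb{A}_{K,S})$, because projecting the norm-one hypersurface of $\mathbb{A}_{K,\{v_0\}\cup S}^\times$ off the $v_0$-coordinate has compact kernel and cocompact image; second, for general $S$ one sets $S'=S\cup S_\infty$ and cuts the lattice $\mathbf{A}(\mathcal{O}_{K,\{v_0\}\cup S'})\subset\mathbf{A}(\mathbb{A}_{K,S'})=\mathbf{A}(\mathbb{A}_{K,S})\times\mathbf{A}(\mathbb{A}_{K,S'\setminus S})$ by a window in the honest direct factor $\mathbf{A}(\mathbb{A}_{K,S'\setminus S})$, citing \cite[Prop.~2.13]{bjorklund2016approximate}. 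Throwing away $v_0$ at the outset is what buys you a product: in your setup $N$ is only an \emph{extension} of $G_0$ by $H$, not $G_0\times H$, so your triple $(G_0,H,\Gamma)$ is not literally a cut-and-project scheme and needs an extra sentence (choose a Borel section, or pass to $\mathbb{A}_{K,S'}$ as the paper does). Your closing worry is also slightly misplaced: for a \emph{uniform} lattice $\Gamma$, relative density of the model set needs only that the window have non-empty interior, not that $p_H(\Gamma)$ be dense.
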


\begin{proof}
Dirichlet's unit theorem \cite[Prop. VI.1.1]{NeukirchAlgebraicNumberTheory} implies that the above is true when $S$ contains all Archimedean places of $K$. Let $S'$ be the union of $S$ and the Archimedean places of $K$. Then $\mathbf{A}(\mathcal{O}_{K,\{v_0\} \cup S})$ is a model set obtained from the cut-and-project scheme $(\mathbf{A}(\mathcal{O}_{K,\{v_0\} \cup S'}), \mathbf{A}(\mathbb{A}_{K, S}), \mathbf{A}(\mathbb{A}_{K, S'\setminus S}) )$. So it is indeed a uniform approximate lattice (\cite[Prop 2.13]{bjorklund2016approximate}). 
\end{proof}

\begin{proof}
The proof will be essentially identical to Godement's proof of \cite[Prop. 15.6]{zbMATH07143875} from \cite{SB_1962-1964__8__201_0}. Fix $g \in \mathbf{G}(K)$. Suppose that $\mathbf{P}$ is minimal. We thus have the Langlands decomposition $\mathbf{P}(K)=\mathbf{M}(K)\mathbf{A}(K)\mathbf{N}(K)$ where $\mathbf{N}$ is unipotent, $\mathbf{A}$ is a $K$-split torus and $\mathbf{M}$ is a $K$-anisotropic reductive group. So $\Lambda_{mn}=(\mathbf{MN})(\mathcal{O}_{K,S_K\setminus S})$ is a uniform approximate lattice in $\mathbf{MN}(\mathbb{A}_{K}^{S})$ (Subsection \ref{Subsubsection: Pisot--Vijayaraghavan--Salem numbers of a number field}). Moreover, given any place $v_0 \in S$, $\Lambda_a:=\mathbf{A}(\mathcal{O}_{K,\{v_0\} \cup \left(S_K\setminus S\right)})$ is a uniform approximate lattice in $\mathbf{A}(\mathbb{A}_{K}^{S})$ by the above consequence of Dirichlet's unit theorem. Therefore, there is a compact subset $C$ such that $\mathbf{P}(\mathbb{A}_{K}^{S}) = C \Lambda_a \Lambda_{mn}$. But $\mathbf{P}(\mathbb{A}_K^S)$ is a co-compact subgroup in $\mathbf{G}(\mathbb{A}_K^S)$. There is thus a further compact subset $C'$ such that $\mathbf{G}(\mathbb{A}_K^S)=C'\Lambda_a\Lambda_{mn}$. 

Now, the map $\iota_S:\mathbf{G}(K) \rightarrow \mathbf{G}(\mathbb{A}_K^S)$ is a good model of the pair $(\mathbf{G}(\mathcal{O}_{K,S}), \mathbf{G}(K))$. Take $W$ a neighbourhood of the identity in $\mathbf{G}(\mathbb{A}_K^S)$ such that $\iota_S^{-1}(W) \subset \Lambda F_1$ for some finite subset $F_1 \subset \mathbf{G}(K)$. Since $\mathbf{G}(K) \subset \mathbf{G}(\mathbb{A}_K^S)$ is dense \cite[\S II.6]{MR1090825}, we may find a finite subset $F_2 \subset \mathbf{G}(K)$ such that $C' \subset WF_2$. Therefore, $\Lambda F_1F_2 \Lambda_a\Lambda_{mn}=\mathbf{G}(K)$. This proves the first part of Proposition \ref{Proposition: Generalisation of Borel's result}. 

Let us turn to the second part. Given $g \in \mathbf{G}(K)$ take any $ \lambda_a \in \Lambda_a$ and $\lambda_{mn} \in \Lambda_{mn}$ such that $g \lambda_{mn}^{-1}\lambda_a^{-1} \in \Lambda F$ where $F=F_1F_2$. Choose $\lambda \in \Lambda$ and $f \in F$ such that $g \lambda_{mn}^{-1}\lambda_a^{-1} = \lambda f$. Since the multiplication map $\mathbf{A}(K) \times \mathbf{MN}(K) \rightarrow \mathbf{P}(K)$ is a regular isomorphism, we know that for all $v \in S_K$, both $||\lambda_a||_v$ and $||\lambda_{mn}||_v$ are bounded above by a polynomial in $||\lambda_a\lambda_{mn}||_v$. In addition, if $S'$ denotes the set of Archimedean places $v$ such that $||\lambda f ||_v=1$, then $S_K \setminus S'$ is finite and $||\lambda_a||_v = ||\lambda_{mn}||_v=1$ for all $v \in S' \setminus S_g$. Write $S'_g:= S_g \cup S_K\setminus S'$.

If $v \in S$, then $||\lambda_{mn}||_v \leq 2$ (in fact, $1$ for $v$ non-Archimedean and $2$ for $v$ Archimedean). If $v \notin S$, then $||\lambda_{mn}||_v\preceq ||\lambda_a\lambda_{mn}||_v=||f^{-1}\lambda^{-1} g||_v$. But $||\lambda||_v =1$ and $F$ is finite. So $||\lambda_{mn}||_v$ is bounded by a polynomial in $||g||_v$. Multiplying the above inequalities, we find
$$ ||\lambda_{mn}||_{S_g} \preceq ||g||_{S_g\setminus S}.$$

Just as above we see that, for all $v \notin S$, already $||\lambda_a||_v$ is bounded by a polynomial in $||g||_v$. Moreover, if $v \in S \setminus \{v_0\}$ , then $||\lambda_a||_v \leq 2$. It remains to consider $v=v_0$. Since $S_g'$ is a finite set, $\mathbf{A}(A_{K,S_g'\setminus S})$ is compactly generated. We have $\lambda_a \in \mathbf{A}(\mathcal{O}_{K,\{v_0\} \cup S_g'\setminus S})$ and $\mathbf{A}(\mathcal{O}_{K,\{v_0\} \cup S_g'\setminus S})$ is a uniform approximate lattice in $\mathbf{A}(A_{K,S_g'\setminus S})$  as a consequence of Dirichlet's unit theorem. Therefore, there is a finite subset $F'$ of $\mathbf{A}(\mathcal{O}_{K,\{v_0\} \cup S_g'\setminus S})^2$ such that $\lambda_a$ has length at most $l:=C_1\sum_{v \in S_g' \setminus S} |\log(||\lambda_a||'_v)|$ in the word metric associated to $F'$ ( \cite[Thm. 3.4]{bjorklund2016approximate}). But the projection of $\mathbf{A}(K)$ to $\mathbf{A}(\mathbb{A}_{K,S_g \setminus S'})$ is injective. So $\lambda_a$ seen as an element of $\mathbf{A}(K)$ has length at most $l$ in the word metric associated with $F'$. There is thus a constant $C_2:=C_2(v_0)\geq 0$ such that we have 
$$|\log(||\lambda_a||_{v_0})| \leq C_2l.$$
Combining all of the above we find
$$ ||\lambda_a||_{S_g} \preceq ||g||_{S_g \setminus S}.$$
Setting $p= \lambda_a\lambda_{mn}$ we obtain the desired result.
\end{proof}

Using Proposition \ref{Proposition: Generalisation of Borel's result} we are able to generalise a result of Witte-Morris \cite{WitteMorris}, itself built upon a method due to Lubotzky--Mozes--Raghunathan \cite{LubotzkyMozesRaghunathan}.

\begin{proposition}\label{Proposition: Pushing into parabolic subgroups using root subgroups}
Let $K$ be a number field, $\mathbf{G}$ be an absolutely almost simple simply connected $K$-group and $v$ be a place of $K$. Let $\Lambda \subset \mathbf{G}(K_v)$ be an approximate subgroup commensurable with $\mathbf{G}(\mathcal{O}_{K,v})$. Let $\mathbf{T}$ be a maximal $K$-split torus and let $\mathbf{G}_1, \ldots, \mathbf{G}_n$ be the $K$-rank $1$ simple subgroups associated with the positive roots of $\mathbf{G}$ (and in the same order as in \cite[p6]{WitteMorris}). Let $\mathbf{P}$ be the minimal parabolic subgroup containing $\mathbf{T}$. Then for every $\gamma \in \Lambda$ there are $\gamma_1 \in \Lambda^2 \cap \mathbf{G}_1(K_v), \ldots, \gamma_n \in \Lambda^2 \cap \mathbf{G}_n(K_v)$ with $||\gamma_1||_v, \ldots,||\gamma_n||_v\preceq ||\gamma||_v$ such that $\gamma_1\cdots \gamma_n \gamma \in \mathbf{P}(\mathcal{O}_{K,v})$. 
\end{proposition}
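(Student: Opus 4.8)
Here is a plan.

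\smallskip

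\textbf{Strategy and set-up.} The plan is to follow Witte--Morris \cite{WitteMorris} (a refinement of \cite{LubotzkyMozesRaghunathan}): peel off, one positive $K$-root at a time, the part of $\gamma$ that obstructs membership in $\mathbf{P}$, i.e.\ descend through the Bruhat order of $\mathbf{G}/\mathbf{P}$. The new point is that each elementary reduction is carried out with an element of the \emph{approximate} subgroup $\Lambda^2\cap\mathbf{G}_i(K_v)$, using Proposition \ref{Proposition: Generalisation of Borel's result} inside the rank-one factor $\mathbf{G}_i$; this is exactly why the argument survives when $\mathbf{G}$, hence some $\mathbf{G}_i$, has rank one, as advertised before Theorem \ref{THEOREM: EFFICIENT MESSY GENERATOIN}. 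Fix a reduced expression $w_0=s_{k_1}\cdots s_{k_n}$ of the longest element of the relative Weyl group; it orders the positive $K$-roots by $\beta_i=s_{k_1}\cdots s_{k_{i-1}}(\alpha_{k_i})$, so that for a representative $g^{(i-1)}\in\mathbf{G}(K)$ of $s_{k_1}\cdots s_{k_{i-1}}$ one has $\mathbf{G}_i=\langle U_{\beta_i},U_{-\beta_i}\rangle=g^{(i-1)}\langle U_{\alpha_{k_i}},U_{-\alpha_{k_i}}\rangle (g^{(i-1)})^{-1}$, which is the ordering of \cite[p.~6]{WitteMorris}. Write $\mathbf{B}_i:=\mathbf{G}_i\cap\mathbf{P}$, the minimal parabolic (Borel) of $\mathbf{G}_i$ containing $U_{\beta_i}$, and $w^{(i)}:=s_{k_{i+1}}\cdots s_{k_n}$, so $\ell(w^{(i-1)})=1+\ell(w^{(i)})$ and $w^{(n)}=e$.

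\smallskip

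\textbf{The rank-one reduction step.} Using Lemma \ref{Lemma: Intersection of commensurable sets}, Corollary \ref{Corollary: Intersection with stabiliser}, Lemma \ref{Lemma: Pisot approximate rings and polynomials} and the classical fact that a $K$-arithmetic subgroup meets a $K$-subgroup in an arithmetic subgroup, one first checks that $\Lambda^2\cap\mathbf{G}_i(K_v)$ is an approximate lattice in $\mathbf{G}_i(K_v)$ commensurable with $\mathbf{G}_i(\mathcal{O}_{K,v})$ ($\mathbf{G}_i$ being semisimple has no nontrivial $K$-characters, so Proposition \ref{Proposition: BHC} applies; when $\mathbf{G}_i$ is not absolutely simple one identifies it with a Weil restriction of an absolutely simple group, which changes nothing below). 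Hence Proposition \ref{Proposition: Generalisation of Borel's result} applies to $(\Lambda^2\cap\mathbf{G}_i(K_v),\mathbf{B}_i)$ inside $\mathbf{G}_i$. The key simplification is that for a $v$-integral $h\in\mathbf{G}_i(\mathcal{O}_{K,v})$ the set $S_h\setminus\{v\}$ consists of Archimedean places only, where $\|h\|_w\le 2$; thus the bound furnished by Proposition \ref{Proposition: Generalisation of Borel's result} forces the $\mathbf{B}_i$-factor $p$ to have bounded norm at every place, so to range over a finite set (also using that $\mathbf{B}_i(\mathcal{O}_{K,v})$ is discrete in $\mathbf{B}_i(K_v)$). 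Absorbing $p$ and the finite set of Proposition \ref{Proposition: Generalisation of Borel's result}, we obtain: there is a fixed finite $F_i\subset\mathbf{G}_i(K)$ such that every $h\in\mathbf{G}_i(\mathcal{O}_{K,v})$ can be written $h=\gamma_i f$ with $\gamma_i\in\Lambda^2\cap\mathbf{G}_i(K_v)$, $f\in F_i$ and $\|\gamma_i\|_v\preceq\|h\|_v$.

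\smallskip

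\textbf{The descent.} By induction on $i=0,\dots,n$ we produce $\gamma_1,\dots,\gamma_i$ with $\gamma_j\in\Lambda^2\cap\mathbf{G}_j(K_v)$ and $\|\gamma_j\|_v\preceq\|\gamma\|_v$ such that the element $\gamma^{(i)}$ obtained from $\gamma$ by left-multiplying by these factors (in the order prescribed in \cite{WitteMorris}) is $v$-integral and maps into the Schubert variety $X_{w^{(i)}}=\bigsqcup_{w\le w^{(i)}}\mathbf{P}w\mathbf{P}$, with $v$-denominator polynomially bounded in $\|\gamma\|_v$. The case $i=0$ is vacuous ($X_{w_0}=\mathbf{G}/\mathbf{P}$), and the case $\mathbf{G}_i=\mathbf{G}$ (possible only if $n=1$) is handled directly by $\gamma_1:=\gamma^{-1}$, since $\|\gamma^{-1}\|_v\preceq\|\gamma\|_v$. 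For the step $i-1\to i$: transporting by the fixed element $g^{(i-1)}$ to the simple root $\alpha_{k_i}$, the $\mathbf{P}_{\{\alpha_{k_i}\}}$-fibration of $X_{w^{(i-1)}}$ over $X_{w^{(i)}}$ reads off from $\gamma^{(i-1)}$ a point of the rank-one flag variety $\mathbf{G}_i/\mathbf{B}_i$ whose $v$-denominator is polynomially bounded in $\|\gamma\|_v$; applying the rank-one step to the corresponding $v$-integral element of $\mathbf{G}_i$ yields $\gamma_i\in\Lambda^2\cap\mathbf{G}_i(K_v)$ with $\|\gamma_i\|_v\preceq\|\gamma^{(i-1)}\|_v\preceq\|\gamma\|_v$ (the last estimate because cumulatively $\|\gamma^{(i-1)}\|_v$ is polynomial in $\|\gamma\|_v$) such that $\gamma^{(i)}:=\gamma_i\gamma^{(i-1)}$ lies in $X_{w^{(i)}}$. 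At $i=n$, $\gamma^{(n)}\in X_e\cap\mathbf{G}(\mathcal{O}_{K,v})=\mathbf{P}(\mathcal{O}_{K,v})$ (after harmlessly replacing $\Lambda$ at the outset by a commensurable approximate subgroup contained in $\mathbf{G}(\mathcal{O}_{K,v})$), which is the desired conclusion.

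\smallskip

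\textbf{Main obstacle.} The Bruhat-descent combinatorics is that of \cite{WitteMorris} and raises no new difficulty; likewise the norm bookkeeping is painless since a polynomial of a polynomial is a polynomial. The genuinely new work, and where I expect the effort to concentrate, is the rank-one step: verifying that $\Lambda^2\cap\mathbf{G}_i(K_v)$ is an approximate lattice commensurable with $\mathbf{G}_i(\mathcal{O}_{K,v})$ even though $\Lambda$ is \emph{non-uniform} here ($\mathbf{G}$ being $K$-isotropic), so that the intersection theorems of \S\ref{Section: Intersection theorems for approximate lattices}, which are cleanest in the uniform case, must be replaced by a hands-on argument with the covering lemmas of \S\ref{Subsection: Notations}; and then running Proposition \ref{Proposition: Generalisation of Borel's result} inside each $\mathbf{G}_i$ uniformly in $\gamma$ so as to land, at the end of the descent, exactly in $\mathbf{P}(\mathcal{O}_{K,v})$.
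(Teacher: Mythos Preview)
Your overall strategy—descending through rank-one subgroups and applying Proposition~\ref{Proposition: Generalisation of Borel's result} inside each $\mathbf{G}_i$—is exactly that of the paper, which follows \cite{WitteMorris} closely. But you have misidentified where the work lies, and this produces a genuine gap.

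Your rank-one step is correct as stated: if $h\in\mathbf{G}_i(\mathcal{O}_{K,v})$ then the parabolic factor $p$ from Proposition~\ref{Proposition: Generalisation of Borel's result} indeed ranges over a finite set (your product-formula sketch works). The problem is that the descent does not hand you such an $h$. The natural rank-one element at stage $i$ is (a conjugate of) $\hat u:=u_i^+(\gamma^{(i-1)})$, extracted via the big-cell decomposition $\Omega=\mathbf{U}^+\mathbf{P}^-$; this is a rational function of $\gamma^{(i-1)}$ with denominator $\prod_\alpha\omega_\alpha(\gamma^{(i-1)})$, and those denominators are \emph{not} bounded away from zero at Archimedean places $w\neq v$. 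Hence $\hat u\notin\mathbf{G}_i(\mathcal{O}_{K,v})$ in general, and your Schubert-fibration language does not escape this: the ``corresponding $v$-integral element of $\mathbf{G}_i$'' is never defined, and producing one with the required norm control is exactly the hard part you are trying to sidestep.

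The paper does not attempt to make the rank-one element $v$-integral. It applies Proposition~\ref{Proposition: Generalisation of Borel's result} to $\hat u$ directly (obtaining $\hat u=xfq$ with $x$ in the approximate lattice of $\mathbf{G}_i$ and $q\in\mathbf{P}_i^-$), and then bounds $\|x\|_v$ by a separate argument: first adjust by bounded unipotent and anisotropic pieces so that $\|\hat u\|_v,\|m\|_v=O(1)$; then control the split-torus part $a$ of $q$ through the fundamental weights, using that $\omega_\alpha(\Lambda^m)$ lies in finitely many translates of $\mathcal{O}_{K,v}$ and is therefore discretely bounded away from $0$ in $|\cdot|_v$. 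This is the substance of the proof, not ``painless norm bookkeeping''. Conversely, the obstacle you flag as central—that $\Lambda^2\cap\mathbf{G}_i(K_v)$ is commensurable with $\mathbf{G}_i(\mathcal{O}_{K,v})$—is immediate from $\mathbf{G}(\mathcal{O}_{K,v})\cap\mathbf{G}_i(K)=\mathbf{G}_i(\mathcal{O}_{K,v})$ together with Lemmas~\ref{Lemma: Intersection of commensurable sets} and~\ref{Lemma: Pisot approximate rings and polynomials}.
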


In the following subsection, we will pair Proposition \ref{Proposition: Pushing into parabolic subgroups using root subgroups} with this observation: 

\begin{lemma}\label{Lemma: Approximate lattices and Langlands decomposition}
With $\Lambda$ and $\mathbf{G}$ as in Assumption \ref{Assumption: Arithmeticity}. Let $\mathbf{P}$ be a minimal parabolic subgroup of $\mathbf{G}$. Write the Langlands decomposition $\mathbf{P}= \mathbf{MAN}$ where $\mathbf{N}$ is the unipotent radical of $\mathbf{P}$, $\mathbf{M}$ is $K$-anisotropic and $\mathbf{A}$ is a $K$-split torus. Then $\Lambda^2 \cap \mathbf{P}(K)$ is commensurable with 
$$\Lambda_{mn}:=\left(\Lambda^2 \cap \mathbf{M}(K)\right)\left( \Lambda^2 \cap \mathbf{N}(K)\right).$$
\end{lemma}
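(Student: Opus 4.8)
Since both $\Lambda^2\cap\mathbf P(K)$ and $\Lambda_{mn}$ are, by Lemma \ref{Lemma: Intersection of approximate subgroups}, unchanged up to commensurability if we replace $\Lambda$ by a commensurable approximate subgroup (for the product $\Lambda_{mn}$ one uses that elements of $\Lambda^2\cap\mathbf M(K)$ are uniformly $\|\cdot\|_v$-bounded outside $S$, so conjugating the finite transition set through them keeps it in finitely many translates of $\Lambda^2\cap\mathbf N(K)$), I would first reduce to the case $\Lambda=\mathbf G(\mathcal O_{K,S})$. The inclusion $\Lambda_{mn}\subset F(\Lambda^2\cap\mathbf P(K))$ is then immediate: $\Lambda_{mn}\subset\Lambda^4\cap\mathbf P(K)$, and $\Lambda^4\cap\mathbf P(K)$ is commensurable with $\Lambda^2\cap\mathbf P(K)$ by Lemma \ref{Lemma: Intersection of approximate subgroups}(2) applied to $\Lambda^2\sim\Lambda$ and $\mathbf P(K)\sim\mathbf P(K)$. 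So the content is the reverse inclusion.

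\textbf{The key reduction.} Let $\pi:\mathbf P\to\mathbf P/\mathbf{MN}$ be the quotient; since $\mathbf{MN}$ is a normal $K$-subgroup with $\mathbf P/\mathbf{MN}$ isogenous to the $K$-split torus $\mathbf A$, $\pi$ is a $K$-morphism and we may identify its target with $\mathbf A$ up to a finite kernel. As $\pi$ is regular and $\mathbf G(\mathcal O_{K,S})^2$ consists of elements $g$ with $\|g\|_v,\|g^{-1}\|_v$ bounded for all $v\notin S$, the image $\pi(\Lambda^2\cap\mathbf P(K))$ lies in $A_0:=\{a\in\mathbf A(K):\|a\|_v,\|a^{-1}\|_v\le C\ \forall v\notin S\}$ for some constant $C$. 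The crucial claim is that \emph{$A_0$ is finite}. Writing $\mathbf A\cong\mathbb G_m^r$ over $K$, this reduces to the finiteness of $\{x\in K^\times: C^{-1}\le|x|_v\le C\ \forall v\notin S\}$, which in turn follows from the hypothesis that $\Lambda$ is not contained in a lattice: this forces $S$ to omit some place at which $\mathbf G$ is isotropic, and Proposition \ref{Proposition: BHC} together with the Dirichlet-type analysis of $S$-units (as in the corollary to Proposition \ref{Proposition: Generalisation of Borel's result}) shows the relevant $S$-unit set is finite — equivalently, $\mathbf A(\mathcal O_{K,S})$ is not relatively dense in $\mathbf A(\mathbb A_{K,S})$ and, being uniformly discrete and of bounded denominator type, must be finite. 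Granting this, $\pi(\Lambda^2\cap\mathbf P(K))$ is finite, so by Corollary \ref{Corollary: Intersection with stabiliser} (applied to the left $\mathbf P(K)$-action on $\mathbf A(K)$ through $\pi$) $\Lambda^2\cap\mathbf P(K)$ is covered by finitely many translates of $\Lambda^4\cap\mathbf{MN}(K)$, which is commensurable with $\Lambda^2\cap\mathbf{MN}(K)$.

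\textbf{Decomposing inside $\mathbf{MN}$.} It remains to show $\Lambda^2\cap\mathbf{MN}(K)$ is commensurable with $(\Lambda^2\cap\mathbf M(K))(\Lambda^2\cap\mathbf N(K))$. Here $\mathbf{MN}=\mathbf M\ltimes\mathbf N$ with $\mathbf N$ the unipotent radical and $\mathbf M$ a $K$-anisotropic reductive complement, and $\mathbf{MN}$ has no nontrivial $K$-character, so $\mathbf{MN}(\mathcal O_{K,S})$ is an approximate lattice in $\mathbf{MN}(\mathbb A_{K,S})$ by Proposition \ref{Proposition: BHC}; one checks $\Lambda^2\cap\mathbf{MN}(K)$ is commensurable with it. The exact sequence $1\to\mathbf N\to\mathbf{MN}\to\mathbf M\to1$ is $K$-split, so I would use the $K$-section $\mathbf M\hookrightarrow\mathbf{MN}$ to see $\mathbf M(\mathcal O_{K,S})$ maps into (a commensurable copy of) $\mathbf{MN}(\mathcal O_{K,S})$, while $\mathbf N(\mathcal O_{K,S})$ is its intersection with $\mathbf N$; combined with Proposition \ref{Proposition: Intersection and projections approximate lattices w/ closed subgroups} (the projection of $\Lambda^2\cap\mathbf{MN}(K)$ to $\mathbf M$ and its intersection with $\mathbf N$ are approximate lattices, the latter via Theorem \ref{Theorem: Radical is hereditary}/the unipotent radical statement, using Borel density for Zariski-density) and with Lemma \ref{Lemma: Reduction to subvarieties via regular isomorphism}(1)–(2) to control norms of the $\mathbf M$- and $\mathbf N$-coordinates, this yields that $\Lambda^2\cap\mathbf{MN}(K)$ is covered by finitely many translates of $(\Lambda^2\cap\mathbf M(K))(\Lambda^2\cap\mathbf N(K))$ (the order of the two factors being irrelevant up to commensurability, again by the conjugation-boundedness argument above).

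\textbf{Main obstacle.} The delicate point is the finiteness of $A_0$, i.e.\ that the split-torus direction of $\mathbf P$ carries no unbounded arithmetic part for the window-type subsets $\Lambda^2\cap\mathbf P(K)$; everything else is a bookkeeping exercise with regular maps and the intersection lemmas. I expect this to be where the standing hypothesis that $\Lambda$ is not contained in a lattice (hence $S$ is "too small" to see the split torus as an approximate lattice) is genuinely used.
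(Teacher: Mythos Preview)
The paper states this lemma without proof, so there is nothing to compare against directly; but your approach has a real gap at the step you yourself flag as the main obstacle, and in fact the gap cannot be filled because the lemma as written appears to be false.

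Your argument for the finiteness of $A_0$ --- uniformly discrete, not relatively dense, hence finite --- is invalid (think of $\mathbb{Z}\subset\mathbb{R}^2$), and the claim itself fails even when $\Lambda$ is not contained in a lattice. Take a degree-$4$ Salem number $\tau$, set $K=\mathbb{Q}(\tau)$, and let $S$ consist of the two real places of $K$, so that the unique complex place $w$ lies outside $S$. Then $\tau$ is an algebraic unit with $|\tau|_v=1$ for every $v\notin S$ (in particular $|\sigma_w(\tau)|=1$), so $\mathrm{diag}(\tau^n,\tau^{-n})\in A_0$ for all $n\in\mathbb{Z}$. Since $\sigma_w(\tau)$ is not a root of unity, $|\sigma_w(\tau)^n-1|\le 1$ holds for a positive density of $n$, whence $\mathrm{diag}(\tau^n,\tau^{-n})\in\mathbf{SL}_2(\mathcal{O}_{K,S})$ for infinitely many $n$; thus $\Lambda^2\cap\mathbf{A}(K)$ is infinite, and since each coset of $\mathbf{MN}(K)$ in $\mathbf{P}(K)$ meets $\mathbf{A}(K)$ in a set of bounded cardinality, $\Lambda^2\cap\mathbf{P}(K)$ cannot be commensurable with any subset of $\mathbf{MN}(K)$. (Here $\mathbf{G}(K_w)\simeq\mathbf{SL}_2(\mathbb{C})$ is non-compact, so $\Lambda$ is genuinely not contained in a lattice.)

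The way this lemma is used in the proof of Lemma~\ref{Lemma: distortion in parabolic subgroups} --- where $\Lambda^2\cap\mathbf{M}$, $\Lambda^2\cap\mathbf{A}$, and $\Lambda^2\cap\mathbf{N}$ are each shown to be efficiently generated --- indicates that the intended statement includes the $\mathbf{A}$-factor: $\Lambda^2\cap\mathbf{P}(K)$ commensurable with $(\Lambda^2\cap\mathbf{M}(K))(\Lambda^2\cap\mathbf{A}(K))(\Lambda^2\cap\mathbf{N}(K))$. For that corrected version no finiteness of $A_0$ is needed, and your own ``decomposing inside $\mathbf{MN}$'' idea (via Lemma~\ref{Lemma: Reduction to subvarieties via regular isomorphism}) applied first to the Levi splitting $\mathbf{L}\times\mathbf{N}\to\mathbf{P}$ and then to the isogeny $\mathbf{M}\times\mathbf{A}\to\mathbf{L}$ (isogenies preserve commensurability) yields the result directly.
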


\begin{remark}
Using \cite{zbMATH07143875}, Proposition \ref{Proposition: Generalisation of Borel's result} should in theory allow us to exhibit Siegel sets that are fundamental domains for approximate lattices. Thus, showing that the reduction theory of approximate lattices is, to some extent, a generalisation of the one of lattices. We mention here that related ideas have been investigated at length in \cite{https://doi.org/10.48550/arxiv.2204.01496}.
\end{remark}

\subsection{Efficient generation in root subgroups}
We study now the connectedness of arithmetic approximate subgroups of root subgroups. We start with a simple lemma that corresponds to the situation where $\mathbf{U}_{\alpha}$ is abelian. 

\begin{lemma}\label{Lemma: Sum-product phenomenon for quick generation}
With $K$ being a number field and $v$ being a place of $K$. Let $V$ be a $K_v$ vector space. Let $\Lambda \subset V$ be a uniformly discrete approximate subgroup. Suppose that there is $\alpha \in K_v$ with $|\alpha|_v < 1$ such that $\alpha \cdot \Lambda$ is commensurable with $\Lambda$. Then there is $F \subset V$ finite such that for every $u \in \Lambda$ there are $u_0=e,u_1, \ldots, u_n=u \in \Lambda$ such that: 
\begin{enumerate}
\item $n = O(|\alpha|_v^{-1}\log ||u||)$ where $||\cdot||$ denotes some fixed $K_v$-norm on $V$; 
\item for each $i\in \{0,\ldots,n-1\}$ there is $f_i \in F$ such that we have $u_i=\alpha u_{i+1} + f_i$. 
\end{enumerate}
\end{lemma}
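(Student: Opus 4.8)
The plan is to drive $u$ towards the origin by iterating the ``contraction step'' $x \mapsto \alpha x + f$ and then to reach the identity in one extra move. First I would unpack the commensurability hypothesis: since $\alpha\cdot\Lambda$ and $\Lambda$ are commensurable inside the abelian group $V$, there is a finite subset $F_1 \subset V$ (which we may take to contain $0$) with $\alpha\Lambda \subseteq \Lambda + F_1$. Setting $F_- := -F_1$, this says precisely that for every $\lambda \in \Lambda$ there is $f \in F_-$ with $\alpha\lambda + f \in \Lambda$; so the relation $u_i = \alpha u_{i+1} + f_i$ with $f_i \in F_-$ can always be satisfied while staying inside $\Lambda$, which is exactly the shape of the chain in the statement read from $u_n = u$ down to $u_0 = e$.

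Next I would run the contraction. As $\|\cdot\|$ is a $K_v$-norm, $\|\alpha x\| = \theta\|x\|$ with $\theta := |\alpha|_v < 1$ (replace $\theta$ by $|\alpha|_v^{1/2}$ if $K_v = \mathbb{C}$; this changes nothing), so a step obeys $\|\alpha x + f\| \le \theta\|x\| + M$, where $M := \max_{f \in F_1}\|f\|$ (and even $\le \max(\theta\|x\|, M)$ when $v$ is non-Archimedean). Fix $R_1 := 2M/(1-\theta)$. Whenever $\|x\| \ge R_1$ a single step multiplies the norm by at most $(1+\theta)/2 < 1$, so starting from $u$ and stopping the first time the norm drops to $\le R_1$, after $m_0 \le C_1\log\|u\| + C_1$ steps (with $C_1$ depending only on $\alpha$ and on the commensurability constant $M$) we land at some $u^\ast \in \Lambda_0 := \{\lambda \in \Lambda : \|\lambda\| \le R_1\}$; tracking constants one sees this count is $O\bigl(|\alpha|_v^{-1}\log\|u\|\bigr)$ (in the non-Archimedean case each step shrinks the norm by the exact factor $\theta$, and in general the contraction rate is absorbed into the implied constant). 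The point to observe here is that $\Lambda_0$ is a \emph{fixed} finite set, since $\Lambda$ is uniformly discrete and $V$ is a finite-dimensional vector space over the local field $K_v$, hence $\Lambda$ is locally finite.

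Finally, the endgame. Enlarge the generating set to $F := F_- \cup \{\, -\alpha\lambda_0 : \lambda_0 \in \Lambda_0 \,\}$, which is finite because $F_1$ and $\Lambda_0$ are. Given $u \in \Lambda$, take the chain $u = u_n, u_{n-1}, \dots, u_1 := u^\ast \in \Lambda_0$ produced by the contraction (so $u_i = \alpha u_{i+1} + f_i$ with $f_i \in F_- \subseteq F$ and $n - 1 = m_0 = O(\log\|u\|)$), and append $u_0 := \alpha u_1 + (-\alpha u_1) = 0 = e$, which is legitimate because $-\alpha u_1 \in F$ by construction and $0 \in \Lambda$. This yields the required chain $e = u_0, u_1, \dots, u_n = u$ in $\Lambda$ with $n = O(\log\|u\|) = O(|\alpha|_v^{-1}\log\|u\|)$ and every increment $f_i \in F$. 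The only genuinely non-formal step is this endgame: a priori the contraction only deposits $u$ into the finite set $\Lambda_0$ without ever reaching $e$, and one fixes this cheaply by throwing the finitely many elements $-\alpha\lambda_0$ into $F$ so that each point of $\Lambda_0$ is sent to $e$ in one move; everything else — availability of the step, the logarithmic count, finiteness of $\Lambda_0$ — is routine.
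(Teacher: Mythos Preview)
Your proof is correct and follows essentially the same route as the paper: contract $u$ toward the origin via $x \mapsto \alpha x - f$ using a finite set $F_1$ witnessing $\alpha\Lambda \subset \Lambda + F_1$, reach a fixed bounded (hence finite, by uniform discreteness in finite dimension) subset of $\Lambda$ in $O(\log\|u\|)$ steps, and absorb that finite set into $F$ to finish. Your write-up is in fact slightly more careful than the paper's about the endgame and about the Archimedean/non-Archimedean distinction, but the argument is the same.
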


\begin{proof}
Since $V$ is a vector space, we use additive notation. Take a finite subset $F_1 \subset \mathbf{U}_{\alpha}(K_v)$ such that $\alpha \cdot \Lambda \subset \Lambda + F_1$. Now define inductively $u_{-k} \in \Lambda$ as:
\begin{itemize}
\item $u_{0} = u$;
\item $u_{-k-1} = \alpha u_{-k} - f$ where $f \in F_1$ is chosen such that $u_{-k-1} \in \Lambda$.
\end{itemize}
We have that $||u_{-k-1}|| \leq |\alpha|_v||u_{-k}|| + \sup_{f \in F}||f||$ for all $k \geq 0$. Therefore, there is an integer $n \geq 0$ with $n = O(|\alpha|_v^{-1}\log ||u||)$ such that $$||u_{-n}|| \leq (1- |\alpha|_v)^{-1}\sup_{f \in F}||f|| =: C.$$ 
Set now $u_0=e$, $u_i = u_{i-n-1}$, $u_{n+1}=u$ and $F=F_1 \cup F_2$ where $F_2$ is the set of elements of $\Lambda$ of norm at most $C$. This satisfies the conclusions of Lemma \ref{Lemma: Connectedness of sorts in root subgroups}. 
\end{proof}

Now we can consider the general case. 

\begin{lemma}\label{Lemma: Connectedness of sorts in root subgroups}
With the notation from Assumption \ref{Assumption: Arithmeticity}. Suppose that $S=\{v\}$ and $\Lambda$ is not contained in a lattice. Let $\alpha$ be a $K$-root of $\mathbf{G}$. Let $\mathbf{T}$ be a maximal $K$-split torus of $\mathbf{G}$ and let $\mathbf{U}_{\alpha}$ denote the root subgroup associated to $\alpha$ and $\mathbf{T}$. Then there are $K$-subgroups $\mathbf{U}_1, \ldots, \mathbf{U}_r$ of $\mathbf{U}_{\alpha}$ and $\delta \in \Lambda^m \cap \mathbf{T}(K)$ for some integer $m \geq 0$ such that: 
\begin{enumerate}
\item $\mathbf{U}_i$ is normalised by $\delta$ for all $i \in \{1,\ldots, r\}$; 
\item $|\alpha(\delta)|_v< 1$; 
\item the multiplication map $m:\mathbf{U}_1 \times \cdots \times \mathbf{U}_r \rightarrow \mathbf{U}_{\alpha}$ is a $K$-regular isomorphism;
\item $\mathbf{U}_i(K)$ is isomorphic to the additive group $\mathbf{G}_a(K)$ of $K$ and the automorphism of $\mathbf{G}_a(K)$ that stems from conjugation by $\delta$ is simply multiplication by $\alpha(\delta)$;
\item if, for all $i \in \{1, \ldots, r\}$, $u_i$ denotes the composition of $m^{-1}$ and projection to $\mathbf{U}_i$, then there is $\Lambda'$ contained in and commensurable with  $\mathbf{U}_{\alpha}(\mathcal{O}_{K,v})$ such that $u_i(\Lambda') \subset \mathbf{U}_{\alpha}(K) \cap \Lambda^l$ for some integer $l$.
\end{enumerate}
\end{lemma}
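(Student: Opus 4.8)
The plan is to produce the torus element $\delta$ first, then carve $\mathbf{U}_\alpha$ into $\mathbf{T}$-weight lines using the Lie algebra, and finally verify the arithmetic compatibility (5) with the help of the polynomial/subvariety lemmas of this section.

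\emph{Producing $\delta$.} By Assumption \ref{Assumption: Arithmeticity} we have $v\in\hat S$ and $|\hat S|\geq 2$, so Dirichlet's unit theorem applies: the logarithmic embedding of $\mathcal{O}_{K,\hat S}^{\times}$ is a full lattice in the hyperplane $\{\sum = 0\}\subset\mathbb{R}^{\hat S\cup S_\infty}$, and since $|\hat S\cup S_\infty|\geq 2$ this hyperplane is not contained in $\{x_v=0\}$; hence there is a unit $u$ with $|u|_v<1$ (replace $u$ by $u^{-1}$ if needed). Choose a cocharacter $\lambda^{\vee}$ of $\mathbf{T}$ defined over $K$ with $d:=\langle\alpha,\lambda^{\vee}\rangle>0$; then $\lambda^{\vee}(u)\in\mathbf{T}(K)$ and $\alpha(\lambda^{\vee}(u))=u^{d}$. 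Replacing $u$ by a suitable power (a congruence condition at the finitely many places where a fixed $K$-matrix diagonalising $\lambda^{\vee}$ fails to be integral) puts $\lambda^{\vee}(u)$ into $\mathbf{G}(\mathcal{O}_{K,\hat S})$, and a further power puts it into the finite-index subgroup $\langle\Lambda\rangle\cap\mathbf{G}(\mathcal{O}_{K,\hat S})$; the resulting $\delta\in\Lambda^m\cap\mathbf{T}(K)$ satisfies $|\alpha(\delta)|_v<1$, which is (2), and being in $\mathbf{T}(K)$ it normalises every $\mathbf{T}$-stable subgroup. This is the only point at which the hypothesis that $\Lambda$ is not contained in a lattice (equivalently $|\hat S|\geq 2$) is used.

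\emph{The decomposition.} Since $K$ has characteristic zero, $\exp$ is a $K$-isomorphism of algebraic varieties from $\mathfrak{u}_\alpha$ onto $\mathbf{U}_\alpha$. As a $\mathbf{T}$-module, $\mathfrak{u}_\alpha=\mathfrak{g}_\alpha\oplus\mathfrak{g}_{2\alpha}$ with $\mathbf{T}$ acting through $\alpha$, resp.\ $2\alpha$ (the second summand being $0$ unless $2\alpha$ is a $K$-root); choosing $K$-bases of these summands gives $\mathbf{T}$-stable lines $\mathfrak{l}_1,\dots,\mathfrak{l}_r$, those inside $\mathfrak{g}_\alpha$ listed first, and I set $\mathbf{U}_i:=\exp(\mathfrak{l}_i)\cong\mathbf{G}_a$. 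Each $\mathbf{U}_i$ is a $K$-subgroup normalised by $\mathbf{T}$, hence by $\delta$, giving (1), and conjugation by $\delta$ is multiplication by $\alpha(\delta)$ on the lines in $\mathfrak{g}_\alpha$ — giving (4) — while on the lines in $\mathfrak{g}_{2\alpha}$ it is multiplication by $\alpha(\delta)^{2}$, which is still of absolute value $<1$ at $v$ and moreover $2\alpha$ is itself a $K$-root whose root group may be decomposed by the same lemma, so this causes no trouble later. Because $\mathfrak{g}_{2\alpha}$ is central in $\mathfrak{u}_\alpha$ and contains $[\mathfrak{g}_\alpha,\mathfrak{g}_\alpha]$, the group $\mathbf{U}_\alpha$ is at most $2$-step nilpotent, so the Baker--Campbell--Hausdorff formula is a polynomial identity and, with the chosen ordering, exhibits the multiplication map $\mathbf{U}_1\times\cdots\times\mathbf{U}_r\to\mathbf{U}_\alpha$ as a triangular polynomial bijection with polynomial inverse, i.e.\ a $K$-regular isomorphism; this is (3), and it makes each $u_i$ a $K$-regular map with $u_i(e)=e$.

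\emph{Arithmetic compatibility.} Observe that $\mathbf{U}_\alpha(\mathcal{O}_{K,v})\subseteq\mathbf{G}(\mathcal{O}_{K,v})$ and that $\Lambda$ is commensurable with $\mathbf{G}(\mathcal{O}_{K,v})$, so, arranging the commensurating finite set inside $\langle\Lambda\rangle$, one gets $\mathbf{U}_\alpha(\mathcal{O}_{K,v})\subseteq\Lambda^{l_0}$ for some $l_0$. For each $i$ I apply Lemma \ref{Lemma: Reduction to subvarieties via regular isomorphism}(3) with $\mathbf{H}=\mathbf{U}_\alpha$, $\mathbf{X}=\mathbf{U}_i$, $\mathbf{Y}=\prod_{j\neq i}\mathbf{U}_j$, $\Lambda_X=\mathbf{U}_i(\mathcal{O}_{K,v})$ and $\Lambda_H=\mathbf{U}_\alpha(\mathcal{O}_{K,v})$, obtaining a model set $\Lambda_i'$ commensurable with $\mathbf{U}_\alpha(\mathcal{O}_{K,v})$ with $u_i(\Lambda_i')\subseteq\mathbf{U}_i(\mathcal{O}_{K,v})^{l_i}\subseteq\Lambda^{l_0 l_i}$; these model sets arise by intersecting $\mathbf{U}_\alpha(\mathcal{O}_{K,v})$ with congruence subgroups, so their common refinement $\Lambda'$ is again of this form, hence contained in and (by Lemma \ref{Lemma: Intersection of approximate subgroups}) commensurable with $\mathbf{U}_\alpha(\mathcal{O}_{K,v})$, while Lemma \ref{Lemma: Pisot approximate rings and polynomials}(1) absorbs the polynomial maps $u_i$ into a single power $\Lambda^l$, giving (5). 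Alternatively (5) follows directly by applying Lemma \ref{Lemma: Pisot approximate rings and polynomials}(2) to the coordinate polynomials of the $u_i$, which have zero constant term, and intersecting the resulting model sets.

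I expect this third step to be the main obstacle: one must arrange that the \emph{non-homomorphic} coordinate maps $u_i$ send a single commensurable copy of $\mathbf{U}_\alpha(\mathcal{O}_{K,v})$ into one fixed power of $\Lambda$, uniformly in $\Lambda'$ — which is exactly the content Lemma \ref{Lemma: Reduction to subvarieties via regular isomorphism}(3) was set up to deliver, the remaining work being a careful combination with Lemmas \ref{Lemma: Intersection of commensurable sets}, \ref{Lemma: Intersection of approximate subgroups} and \ref{Lemma: Pisot approximate rings and polynomials}. The construction of $\delta$ is the conceptual heart of the argument (it is where one exploits that a non-uniform rank-one approximate lattice, although genuinely distorted, still contains hyperbolic torus elements coming from the auxiliary place of $\hat S$), but it is routine modulo Dirichlet's unit theorem.
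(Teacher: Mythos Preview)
Your proposal is correct and follows essentially the same route as the paper: Dirichlet's unit theorem produces $\delta$, the Lie-algebra weight decomposition $\mathfrak{g}_\alpha\oplus\mathfrak{g}_{2\alpha}$ together with BCH yields the one-parameter subgroups $\mathbf{U}_i$, and Lemma~\ref{Lemma: Reduction to subvarieties via regular isomorphism} handles the arithmetic compatibility (5). The paper's version of the first step is slightly less explicit --- it simply notes that $\mathbf{T}(\mathcal{O}_{K,\hat S})$ is relatively dense in $\mathbf{T}(K_v)$ and commensurable with $\langle\Lambda\rangle\cap\mathbf{T}(K_v)$, so some $\Lambda^m$ meets the open condition $|\alpha(\cdot)|_v<1$ --- whereas you construct $\delta$ by hand via a cocharacter; both are fine. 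For (5) the paper is terse (``invoke Lemma~\ref{Lemma: Reduction to subvarieties via regular isomorphism}''), while you spell out the per-factor application and the intersection of the resulting model sets; one small slip is the claim $\mathbf{U}_\alpha(\mathcal{O}_{K,v})\subset\Lambda^{l_0}$, which needs the commensurating set to lie in $\langle\Lambda\rangle$ --- cleaner is to take $\Lambda_X=\Lambda^2\cap\mathbf{U}_i(K_v)$ in Lemma~\ref{Lemma: Reduction to subvarieties via regular isomorphism}(3), so that $\Lambda_X^{l_i}\subset\Lambda^{2l_i}$ directly. Your observation that lines in $\mathfrak{g}_{2\alpha}$ carry the scalar $\alpha(\delta)^2$ rather than $\alpha(\delta)$ is well taken and harmless for the application.
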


\begin{proof}
We know that the group generated by $\Lambda$ is commensurable with $\mathbf{G}(\mathcal{O}_{K,S'})$ for some finite subset $S'$ containing $v$. Thus, the assumption implies that $S'$ is not equal to $\{v\}$. Since $S'$ has size at least $2$, we know that $\mathbf{T}(\mathcal{O}_{K,S'})$ is relatively dense in $\mathbf{T}(K_v)$ by  Dirichlet's unit theorem. But $\mathbf{T}(\mathcal{O}_{K,S'})$ is commensurable with $\langle \Lambda \rangle \cap \mathbf{T}(K_v)$, so there is an integer $m \geq 0$ such that $\Lambda^m$ contains $\delta$ such that $|\alpha(\delta)|_v < 1$.

 Let $\mathfrak{u}_{\alpha}$ denote the Lie algebra of $\mathbf{U}_{\alpha}(K_v)$. We know that $\mathfrak{u}_{\alpha}$ decomposes as the direct sum  $E_1 \oplus E_2$  of eigenspaces of the action of $\Ad(\delta)$ associated to the eigenvalues $\alpha(\delta)$ and $2\alpha(\delta)$ respectively. Moreover, $E_2$ is contained in the center of $\mathfrak{u}_{\alpha}$. Write $\exp: \mathfrak{u}_{\alpha} \rightarrow \mathbf{U}_{\alpha}(K_v)$ the exponential map. We can find a $K_v$-basis $(e_1, \ldots, e_r)$ of $\mathfrak{u}_{\alpha}$ adapted to the sum $E_1 \oplus E_2$. We may assume moreover that $\exp(e_i) \in \mathbf{U}_{\alpha}(\mathcal{O}_{K,v})$. Therefore, the group $U_i:=\exp\left(K_ve_i\right)$ is defined over $K$. Now the natural multiplication map $U_1 \times \cdots \times U_r \rightarrow \mathbf{U}_{\alpha}(K_v)$ is defined over $K$ and isomorphism. Indeed, the surjectivity comes from the BCH formula and the injectivity is immediate once we note that $U_i \cap \langle U_{i+1}, \ldots, U_r\rangle =\{e\}$ for all $i \in \{1, \dots, r\}$. According to the BCH formula, given $g \in \mathbf{U}_{\alpha}(K_v)$ we have $g=u_1(g) \cdots u_r(g)$ where the maps $g \mapsto u_i(g)$ are $K$-regular maps for all $i \in \{1,\ldots, r\}$. This proves (1)-(4). It remains only to prove (5). But $u_1, \ldots, u_r$ are finitely many $K$-regular maps and $u_1(e)=\ldots=u_r(e)=e$. We conclude by invoking Lemma \ref{Lemma: Reduction to subvarieties via regular isomorphism}.
\end{proof}

\subsection{Proof of Theorem \ref{THEOREM: EFFICIENT MESSY GENERATOIN}}
We start with two technical results.

\begin{lemma}\label{Lemma: distortion in parabolic subgroups}
Let $K$, $S$, $\mathbf{G}$ and $\Lambda$ be as in Assumption \ref{Assumption: Arithmeticity}. Let $\mathbf{P}$ be a minimal parabolic subgroup. Then $\Lambda^m \cap \mathbf{P}(\mathbb{A}_{K,S})$ is efficiently generated.
\end{lemma}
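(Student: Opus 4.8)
The plan is to use the Langlands decomposition $\mathbf{P}=\mathbf{M}\mathbf{A}\mathbf{N}$ (with $\mathbf{N}$ the unipotent radical, $\mathbf{M}$ the $K$-anisotropic reductive part and $\mathbf{A}$ the maximal $K$-split torus) to split the problem into a uniform piece and a unipotent piece, each of which is handled by results already in hand, and then to glue the two together. First, by Lemma \ref{Lemma: Approximate lattices and Langlands decomposition} the set $\Lambda^{2}\cap\mathbf{P}(K)=\Lambda^{2}\cap\mathbf{P}(\mathbb{A}_{K,S})$ is commensurable with $\Lambda_{mn}=\Lambda_{M}\Lambda_{N}$, where $\Lambda_{M}=\Lambda^{2}\cap\mathbf{M}(K)$ and $\Lambda_{N}=\Lambda^{2}\cap\mathbf{N}(K)$. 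Since all powers $\Lambda^{m}\cap\mathbf{P}(\mathbb{A}_{K,S})$ ($m\geq 2$) are commensurable with one another and with $\Lambda_{mn}$ (using Lemma \ref{Lemma: Intersection of approximate subgroups}), and $\Lambda_{mn}\subseteq\langle\Lambda^{m}\cap\mathbf{P}(\mathbb{A}_{K,S})\rangle$, it suffices by the argument of Lemma \ref{Lemma: Efficient generation and commensurability} (which only uses commensurability together with this containment) to show that $\Lambda_{mn}$ is efficiently generated — here and below ``efficiently generated'' is always understood relative to the norm $||\cdot||_{S}$ of the ambient group $G$.

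The group $\mathbf{M}$ is $K$-anisotropic and has no non-trivial $K$-character, so $\Lambda_{M}$ is commensurable with $\mathbf{M}(\mathcal{O}_{K,S})$ and hence a \emph{uniform} approximate lattice in $\mathbf{M}(\mathbb{A}_{K,S})$ by Proposition \ref{Proposition: BHC}. By the \v{S}varc--Milnor lemma for approximate groups (\cite[Thm.~3.4]{bjorklund2016approximate}), the word metric on $\Lambda_{M}$ associated with $F_{M}:=\Lambda_{M}^{2}\cap W$ for a large relatively compact symmetric neighbourhood $W$ of the identity is comparable to $d_{S}$ restricted to $\Lambda_{M}$; uniform discreteness of $\Lambda_{M}$ absorbs the additive error into a multiplicative constant. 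Thus every $\mu_{M}\in\Lambda_{M}$ is joined to $e$ by a path in $\Lambda_{M}$ of length $\preceq\log\|\mu_{M}\|_{S}$ whose consecutive elements differ by left multiplication by $F_{M}$; choosing pivot index $j=i$ in Definition \ref{Definition: Efficient generation} shows $\Lambda_{M}$ is efficiently generated.

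For $\Lambda_{N}$ we run the sum-product argument behind Lemma \ref{Lemma: Sum-product phenomenon for quick generation} and Lemma \ref{Lemma: Connectedness of sorts in root subgroups} on the whole of $\mathbf{N}$. Exactly as in the proof of Lemma \ref{Lemma: Connectedness of sorts in root subgroups}, the hypothesis that $\Lambda$ is not contained in a lattice forces $\hat S\supsetneq S$, and Dirichlet's unit theorem then produces $\delta$ in some power of $\Lambda$ with $\delta\in\langle\Lambda\rangle\cap\mathbf{A}(\mathbb{A}_{K,S})$ and $\|\alpha(\delta)\|_{S}$ arbitrarily small for every simple root $\alpha$; such a $\delta$ normalises $\mathbf{N}$ and conjugation by it contracts $\mathbf{N}$. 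Writing $\delta\exp(X)\delta^{-1}=\exp(\Ad(\delta)X)$ in an $\Ad(\delta)$-eigenbasis of the Lie algebra, one has $\|\Ad(\delta)X\|\leq\theta\|X\|$ with $\theta$ as small as we wish; since $\delta\Lambda_{N}\delta^{-1}$ is commensurable with $\Lambda_{N}$ (Lemma \ref{Lemma: Intersection of approximate subgroups}), one may follow each conjugation by a fixed bounded correction returning to $\Lambda_{N}$, and the Baker--Campbell--Hausdorff cross-terms then contribute only linearly in $\|X\|$, so that for $\delta$ deep enough the norm is multiplied by at most $\tfrac12$ once $\|X\|$ is large. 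Iterating ``conjugate by $\delta$, then correct'' from $\nu\in\Lambda_{N}$ produces a sequence $\nu=\nu_{0},\nu_{1},\dots$ in $\Lambda_{N}$ with $\nu_{k+1}\in F_{N}\nu_{k}F_{N}^{-1}$ for a fixed finite $F_{N}\ni\delta^{\pm1}$, reaching an element of bounded norm after $\preceq\log\|\nu\|_{S}$ steps; reversing it and prepending the finitely many elements of $\Lambda_{N}$ of bounded norm gives a path from $e$ to $\nu$ of logarithmic length satisfying Definition \ref{Definition: Efficient generation} with pivot $j=0$. (Lemma \ref{Lemma: Reduction to subvarieties via regular isomorphism} can be invoked along the way to keep the relevant pieces inside commensurable arithmetic approximate lattices.) So $\Lambda_{N}$ is efficiently generated.

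Finally, given $\mu\in\Lambda_{mn}$, the $K$-variety isomorphism $\mathbf{M}\times\mathbf{N}\to\mathbf{M}\mathbf{N}$ and Lemma \ref{Lemma: Reduction to subvarieties via regular isomorphism}(1) write $\mu=\mu_{M}\mu_{N}$ with $\|\mu_{M}\|_{S},\|\mu_{N}\|_{S}\preceq\|\mu\|_{S}$, and, after enlarging $m$, $\mu_{M}\in\Lambda_{M}$, $\mu_{N}\in\Lambda_{N}$. Concatenate the $\Lambda_{N}$-path $e=\nu_{0},\dots,\nu_{n}=\mu_{N}$ with the right-translate $\sigma_{0}\mu_{N},\dots,\sigma_{k}\mu_{N}$ of the $\Lambda_{M}$-path $e=\sigma_{0},\dots,\sigma_{k}=\mu_{M}$: all elements lie in $\Lambda_{N}\cup\Lambda_{M}\Lambda_{N}=\Lambda_{mn}$, and since $(\sigma_{i+1}\mu_{N})(\sigma_{j}\mu_{N})^{-1}=\sigma_{i+1}\sigma_{j}^{-1}$ the twisted-conjugation condition of Definition \ref{Definition: Efficient generation} is preserved verbatim under this right translation. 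The total length is $n+k\preceq\log\|\mu\|_{S}$, so $\Lambda_{mn}$, hence $\Lambda^{m}\cap\mathbf{P}(\mathbb{A}_{K,S})$, is efficiently generated. The main obstacle is the $\mathbf{N}$-part: one must both locate the contracting torus element \emph{inside the commensurator of the approximate lattice} — which is precisely where the two-place hypothesis $|\hat S|\geq 2$ enters, via Dirichlet's unit theorem, exactly as in Lemma \ref{Lemma: Connectedness of sorts in root subgroups} — and dominate the a priori only linear growth of the Baker--Campbell--Hausdorff corrections by the contraction, which is what forces one to take $\delta$ sufficiently deep in the Weyl chamber; the uniform part and the concatenation step are comparatively routine.
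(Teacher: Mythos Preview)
Your overall strategy mirrors the paper's: reduce via Lemma~\ref{Lemma: Approximate lattices and Langlands decomposition} to $\Lambda_M\Lambda_N$, handle the uniform $\mathbf{M}$-piece by \v{S}varc--Milnor, handle the unipotent $\mathbf{N}$-piece by a contracting-torus argument, and concatenate. The gluing step and the $\mathbf{M}$-step are fine and essentially as in the paper.

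The genuine gap is in your treatment of $\Lambda_N$. You claim that after conjugating by $\delta$ and correcting by a bounded $f=\exp(Y)$, ``the Baker--Campbell--Hausdorff cross-terms then contribute only linearly in $\|X\|$.'' This is false once $\mathbf{N}$ has nilpotency class $\geq 3$ (e.g.\ the upper-triangular unipotents in $\SL_n$, $n\geq 4$). Writing $Z=\Ad(\delta)X$, the BCH expansion of $\log(\exp Y\exp Z)$ contains $-\tfrac{1}{12}[Z,[Y,Z]]$, of norm $\asymp C_0\,\theta^2\|X\|^2$, and more generally terms of order $\theta^{k}\|X\|^{k}$ up to $k=c-1$. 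No fixed $\theta<1$ makes $\theta^{c-1}\|X\|^{c-1}\leq \tfrac12\|X\|$ for all large $\|X\|$, so the inductive halving fails as written. (A related imprecision: you need $|\alpha(\delta)|_v<1$ for every place $v\in S$ individually, not merely the product $\|\alpha(\delta)\|_S$ small; this is indeed arrangeable from $|\hat S|>|S|$, but it should be said.)

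The paper sidesteps this entirely by never running the contraction on a non-abelian group. It first separates the places of $S$ via Lemma~\ref{Lemma: Quasi-isometric decomposition approximate lattices in product}, then factors $\mathbf{N}$ as a product of root subgroups $\mathbf{U}_{\alpha}$ through Lemma~\ref{Lemma: Reduction to subvarieties via regular isomorphism}, and finally decomposes each $\mathbf{U}_{\alpha}$ into one-dimensional $K$-subgroups (Lemma~\ref{Lemma: Connectedness of sorts in root subgroups}) on which the purely additive Lemma~\ref{Lemma: Sum-product phenomenon for quick generation} applies. Your shortcut can be repaired, but not via BCH in Lie-algebra coordinates: work instead with the matrix norm $\|\nu-I\|_v$. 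Conjugation by $\delta$ acts \emph{linearly} on $\nu-I$ and, in an $\mathbf{A}$-diagonalising basis over $K$, scales each entry by a positive-root value, hence contracts by $\theta_v=\max_\alpha|\alpha(\delta)|_v$; then $\|f\nu'-I\|_v\leq \|f\|_v\,\theta_v\|\nu-I\|_v+\|f-I\|_v$, which is the honest linear recursion you want and iterates to logarithmic length at every place simultaneously.
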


\begin{proof} In this proof we will use Lemmas \ref{Lemma: Approximate lattices and Langlands decomposition}, \ref{Lemma: Connectedness of sorts in root subgroups} and \ref{Lemma: Quasi-isometric decomposition approximate lattices in product} to reduce to proving Lemma \ref{Lemma: distortion in parabolic subgroups} in unipotent groups. As we state them, the successive reductions in fact imply that some approximate subgroup commensurable with $\Lambda^2 \cap \mathbf{P}(K)$ and contained in $\langle \Lambda \rangle$ is efficiently generated. But we know that this is sufficient according to Lemma \ref{Lemma: Efficient generation and commensurability}.

Write the Langlands decomposition $\mathbf{P}= (\mathbf{MAN})(K)$ where $\mathbf{N}$ is the unipotent radical of $\mathbf{P}$, $\mathbf{M}$ is $K$-anisotropic and $\mathbf{A}$ is a $K$-split torus.  We have that $\Lambda^2 \cap \mathbf{M}(\mathbb{A}_{K,S})$ is relatively dense in $\mathbf{M}(\mathbb{A}_{K,S})$ by \S \ref{Subsubsection: Pisot--Vijayaraghavan--Salem numbers of a number field}. But $\mathbf{M}(\mathbb{A}_{K,S})$ is compactly generated and undistorted in $\mathbf{G}(\mathbb{A}_{K,S})$. Therefore, by \cite[Thm 3.4]{bjorklund2016approximate}, there is a constant $C>0$ such that for every $\lambda \in \Lambda^2 \cap \mathbf{M}(\mathbb{A}_{K,S})$ there are $\lambda_0=e, \lambda_1, \ldots, \lambda_n=\lambda \in \Lambda^2 \cap \mathbf{M}(\mathbb{A}_{K,S})$ such that $||\lambda_{i+1}\lambda_i^{-1}||_S \leq C$ and $n \leq C \log ||\lambda||_S$.

Similarly, note that $\mathbf{A}(\mathbb{A}_{K,S})$ is isomorphic as a locally compact group to $\mathbb{Z}^n\times \mathbb{R}^m \times K$ where $K$ is a compact subgroup. Therefore, if we consider $\mathbb{Z}^n\times \mathbb{R}^m \times K \subset \mathbb{R}^{m+n} \times K$, then a consequence of Schreiber's theorem (\cite[Cor. 1]{machado2019infinite}) implies that that there is a constant $C>0$ such that for every $\lambda \in \Lambda^2 \cap \mathbf{A}(\mathbb{A}_{K,S})$ there are $\lambda_0=e, \lambda_1, \ldots, \lambda_n=\lambda \in \Lambda^2 \cap \mathbf{A}(\mathbb{A}_{K,S})$ such that $||\lambda_{i+1}\lambda_i^{-1}||_S \leq C$ and $n \leq C \log ||\lambda||_S$.

Let us now look at $\Lambda^2 \cap \mathbf{N}(\mathbb{A}_{K,S})$. We know that $\Lambda^2 \cap \mathbf{N}(\mathbb{A}_{K,S})$ is a uniform approximate lattice in $\mathbf{N}(\mathbb{A}_{K,S})$. Since $S$ is finite, we see that $\mathbf{N}(\mathbb{A}_{K,S}) = \prod_{v \in S}\mathbf{N}(K_v)$. By Lemma \ref{Lemma: Quasi-isometric decomposition approximate lattices in product}, we therefore have that $\Lambda^2 \cap \mathbf{N}(\mathbb{A}_{K,S})$ is commensurable with $\left(\Lambda^2 \cap \mathbf{N}(\mathcal{O}_{K,v_1})\right)\cdots\left( \Lambda^2 \cap \mathbf{N}(\mathcal{O}_{K,v_n})\right)$ where $v_1, \ldots, v_n$ is an enumeration of $S$. So, by Part 2. of Lemma \ref{Lemma: Quasi-isometric decomposition approximate lattices in product}, we only have to prove the case $S=\{v\}$. According to the structure of semi-simple groups, there are roots $\alpha_1, \ldots, \alpha_m$ relative to $\mathbf{A}$ such that the multiplication map $\mathbf{U}_{\alpha_1} \times \cdots \times \mathbf{U}_{\alpha_m} \rightarrow \mathbf{N}$ is a regular $K$-isomorphism. According to Lemma \ref{Lemma: Reduction to subvarieties via regular isomorphism}, it thus suffices to prove the results for $\Lambda^2 \cap \mathbf{U}_{\alpha}(K_v)$ for all $K$-roots $\alpha$. Let $\mathbf{U}_1, \ldots, \mathbf{U}_l$ be the $K$-subgroups and $\delta \in \mathbf{A}(\mathcal{O}_{K,v})\cap \Lambda^m$ - for some integer $m > 0$ - given by Lemma \ref{Lemma: Connectedness of sorts in root subgroups}. It suffices to prove Lemma \ref{Lemma: distortion in parabolic subgroups} for elements of $\Lambda^2 \cap \mathbf{U}_i(K_v)$ for $i \in \{1, \ldots, l\}$. Take $u \in \Lambda^2 \cap \mathbf{U}_i(K_v)$ for $i \in \{1, \ldots, l\}$. Let $\phi$ denote the $K$-isomorphism $\mathbf{U}_i \rightarrow \mathbf{G}_a$. Set $\Lambda_i = \phi(\Lambda^2 \cap \mathbf{U}_i(K_v))$. Then $|\phi(u)|_v \preceq ||u||_v$ where $| \cdot |_v$ denotes the absolute value on $\mathbf{G}_a(K_v) = K_v$ (the equality being regarded as an equality of sets). By Lemma \ref{Lemma: Sum-product phenomenon for quick generation} there are a constant $C > 0$ independent of $u$, an integer $ k\leq C \log |\phi(u)|_v$, a finite subset $F \subset \langle \Lambda \rangle \cap \mathbf{U}_i(K_v)$  and elements $u_0=e, u_1, \ldots, u_k=u \in \mathbf{U}_i(\mathcal{O}_{K,v})$ such that for all $j \in \{0, \ldots, k-1\}$ we have $\delta^{-1}u_{j-1}f_j\delta=u_j$ for some $f_j \in F$. The proof is complete.
\end{proof}

\begin{lemma}[Proof of Lemma 4.7, \cite{LubotzkyMozesRaghunathan}]\label{Lemma: LMS in rank one, technical bit}
Let $K$, $S$, $\mathbf{G}$ and $\Lambda$ be as in Assumption \ref{Assumption: Arithmeticity}. Suppose that $\rank_K\left(\mathbf{G}\right)=1$. Then for all $\lambda \in \Lambda$ there are a finite subset $\Sigma \subset \mathbf{G}(K)$, $\Theta_1, \ldots, \Theta_r \in \Lambda$  and a constant $B > 0$ such that 
\begin{enumerate}
\item $$\frac{1}{B} \sum_{i=1}^r ||\Theta_{i-1}^{-1}\Theta_i||_S \leq ||\lambda||_S \leq B\sum_{i=1}^r ||\Theta_{i-1}^{-1}\Theta_i||_S ;$$
\item for all $i \in \{1, \ldots, r\}$, either $||\Theta_{i-1}^{-1}\Theta_i||_S \leq B$ or there are $\xi_{i-1}, \xi_i \in \Sigma$ such that $\xi_{i-1}^{-1}\Theta_{i-1}^{-1}\Theta_i\xi_i \in \mathbf{N}(K)$.
\end{enumerate}
\end{lemma}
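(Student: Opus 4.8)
The plan is to transcribe the argument of Lemma~4.7 of \cite{LubotzkyMozesRaghunathan}, adapting its reduction-theoretic input to approximate lattices and to the $S$-adic setting. Under Assumption~\ref{Assumption: Arithmeticity}, let $\mathbf{P}=\mathbf{M}\mathbf{A}\mathbf{N}$ be a minimal parabolic $K$-subgroup --- unique up to $\mathbf{G}(K)$-conjugacy since $\rank_K(\mathbf{G})=1$, with $\mathbf{A}$ one-dimensional $K$-split and $\mathbf{M}$ the $K$-anisotropic kernel. By Lemma~\ref{Lemma: Approximate lattices and Langlands decomposition} (and since $\mathbf{P}(\mathbb{A}_{K,S})$ is cocompact in $G$, as $\mathbf{G}/\mathbf{P}$ is projective) the intersections $\Lambda^2\cap\mathbf{P}(K)$, $\Lambda^2\cap\mathbf{M}(K)$ and $\Lambda^2\cap\mathbf{N}(K)$ are uniform approximate lattices. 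Fix base points in the symmetric space (for $v$ Archimedean) or Bruhat--Tits building (otherwise) of each $\mathbf{G}(K_v)$, let $X=\prod_{v\in S}X_v$ with base point $x_0$ and product distance $d$ normalised so that $d(x_0,g\cdot x_0)$ and $\log||g||_S$ differ by bounded multiplicative and additive constants on $G$; equivalently one works algebraically with the Bruhat decomposition $\mathbf{G}(K)=\mathbf{P}(K)\sqcup\mathbf{N}(K)\,w\,\mathbf{P}(K)$ and the height $|\alpha(a)|_S$ of the $\mathbf{A}$-component of the big-cell part.

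Next I would record the reduction theory. By the generalisation of Borel's theorem (Proposition~\ref{Proposition: Generalisation of Borel's result}) one has $\mathbf{G}(K)=\Lambda F\mathbf{P}(K)$ with $F$ finite, and there is a finite set $\Phi\subset\mathbf{G}(K)$ of representatives for the $\langle\Lambda\rangle$-orbits of rational minimal parabolics; accordingly $X$ is covered by a bounded neighbourhood of $\langle\Lambda\rangle\,\Phi\,\mathcal{S}$ for a fixed Siegel-type region $\mathcal{S}$ of $\mathbf{P}$. Let the \emph{thick part} $X_0\subseteq X$ be the subset on which all the translated Busemann functions $h\circ g$ ($g\in\langle\Lambda\rangle\Phi$) attached to the cusps are bounded --- it is co-bounded under $\Lambda$. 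Since $\mathbf{M}$ is $K$-anisotropic, $\mathbf{M}(K)\cap\langle\Lambda\rangle$ is finite, so the stabiliser in $\langle\Lambda\rangle$ of each visited horoball $g\cdot\mathcal{H}$ ($g\in\langle\Lambda\rangle\Phi$) is commensurable with $g\,\mathbf{N}(\mathcal{O}_{K,S})\,g^{-1}$, and the standard boundary horosphere is co-bounded under $\mathbf{N}(\mathcal{O}_{K,S})$.

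Given $\lambda\in\Lambda$, take a geodesic $\sigma\colon[0,T]\to X$ from $x_0$ to $\lambda\cdot x_0$, so $T$ and $\log||\lambda||_S$ are comparable, and cut $[0,T]$ into the maximal subintervals on which $\sigma$ lies in $X_0$ and the complementary excursion subintervals, on each of which $\sigma$ stays inside a single horoball $g_j\cdot\mathcal{H}$ with $g_j=\delta_j\phi_j$, $\delta_j\in\langle\Lambda\rangle$, $\phi_j\in\Phi$. Subdividing the thick subintervals at unit spacing and picking $\Theta_i\in\Lambda$ with $\Theta_i\cdot x_0$ boundedly close to $\sigma(t_i)$ makes each $\Theta_{i-1}^{-1}\Theta_i$ range in a fixed finite set, so $||\Theta_{i-1}^{-1}\Theta_i||_S\le B$. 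For an excursion tracked by $\Theta,\Theta'\in\Lambda$ at its endpoints, both $\delta_j^{-1}\Theta\cdot x_0$ and $\delta_j^{-1}\Theta'\cdot x_0$ lie boundedly close to the standard horosphere, which is co-bounded under $\mathbf{N}(\mathcal{O}_{K,S})$, so $\delta_j^{-1}\Theta=\nu c$ and $\delta_j^{-1}\Theta'=\nu'c'$ with $\nu,\nu'\in\langle\Lambda\rangle\cap\phi_j\mathbf{N}(K)\phi_j^{-1}$ and $c,c'$ in a fixed bounded set; hence $\Theta^{-1}\Theta'=c^{-1}(\nu^{-1}\nu')c'$ with $\nu^{-1}\nu'\in\phi_j\mathbf{N}(K)\phi_j^{-1}$, and with $\xi:=\phi_j^{-1}c$, $\xi':=\phi_j^{-1}c'$ one gets $\xi^{-1}\Theta^{-1}\Theta'\xi'\in\mathbf{N}(K)$. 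Concatenating produces $\Theta_0=e,\ldots,\Theta_r=\lambda$ in $\Lambda$ and, since only finitely many $c$'s occur for the given $\lambda$, a finite $\Sigma\subset\mathbf{G}(K)$ realising~(2).

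For the norm estimates~(1) I would argue as in \cite{LubotzkyMozesRaghunathan}: the horoball geometry relates the height reached by an excursion to the size of its factor, the thick-part steps contribute $O(\log||\lambda||_S)$ in total, and comparing $\log||\lambda||_S$ with $T=d(x_0,\lambda\cdot x_0)$ and with the lengths of the subintervals --- together with submultiplicativity of $||\cdot||_S'$ and the fact that no net cancellation occurs between the boundedly-many genuinely large factors --- yields both $\tfrac1B\sum_i||\Theta_{i-1}^{-1}\Theta_i||_S\le||\lambda||_S$ and $||\lambda||_S\le B\sum_i||\Theta_{i-1}^{-1}\Theta_i||_S$. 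The step I expect to be the main obstacle is precisely this bookkeeping, once the reduction theory of \cite{LubotzkyMozesRaghunathan} must be run when $X$ is a \emph{product} of rank-one spaces (hence not Gromov-hyperbolic) and $\Lambda$ is merely an approximate lattice carrying a window at the places of $\hat S\setminus S$: the first difficulty is handled by decomposing place-by-place via Lemma~\ref{Lemma: Quasi-isometric decomposition approximate lattices in product}, the second by Proposition~\ref{Proposition: Generalisation of Borel's result} --- whose second half already supplies the norm control at the internal places --- and by the intersection theorems of \S\ref{Section: Intersection theorems for approximate lattices}. One then feeds the excursion factors into Lemmas~\ref{Lemma: Sum-product phenomenon for quick generation} and~\ref{Lemma: Connectedness of sorts in root subgroups} to confirm this is the shape needed in the proof of Theorem~\ref{THEOREM: EFFICIENT MESSY GENERATOIN}.
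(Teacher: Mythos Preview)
Your plan reconstructs the horoball geometry of \cite{LubotzkyMozesRaghunathan} directly for the approximate lattice $\Lambda$ in $G=\mathbf{G}(\mathbb{A}_{K,S})$. The paper takes a far shorter route: it invokes \cite[Lemma~4.7]{LubotzkyMozesRaghunathan} \emph{as stated} for the genuine lattice $\mathbf{G}(\mathcal{O}_{K,\hat S})\subset\mathbf{G}(\mathbb{A}_{K,\hat S})$ (which is commensurable with $\langle\Lambda\rangle$ by Assumption~\ref{Assumption: Arithmeticity}), and then makes a single observation about that proof. Namely, the quasi-geodesic $h_0=e,\ldots,h_r=\lambda$ used by LMR may be chosen inside the factor $\mathbf{G}(\mathbb{A}_{K,S})\times\{e\}\subset\mathbf{G}(\mathbb{A}_{K,\hat S})$, since both endpoints lie there. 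The lattice elements $\Theta_i\in\mathbf{G}(\mathcal{O}_{K,\hat S})$ produced by LMR sit boundedly close to the $h_i$, hence are bounded at every place of $\hat S\setminus S$; this forces them into a fixed model set $\Lambda'$ commensurable with $\Lambda$. One then picks a finite $X$ with $\Lambda'\subset\Lambda X$, replaces each $\Theta_i$ by $\Theta_i f_i^{-1}\in\Lambda$ with $f_i\in X$, and enlarges $\Sigma$ to $X\Sigma$. All the reduction-theoretic bookkeeping you flag as ``the main obstacle'' is thus absorbed into the cited result for lattices; nothing needs to be redone for approximate lattices.

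Your direct approach could in principle be carried out, but it contains a concrete error: you assert that $\mathbf{M}(K)\cap\langle\Lambda\rangle$ is finite because $\mathbf{M}$ is $K$-anisotropic. This is false --- $\langle\Lambda\rangle\cap\mathbf{M}(K)$ is commensurable with $\mathbf{M}(\mathcal{O}_{K,\hat S})$, an infinite arithmetic group. What \emph{is} true is that $\Lambda^m\cap\mathbf{M}(\mathbb{A}_{K,S})$ is finite for each fixed $m$, since $\mathbf{M}(\mathbb{A}_{K,S})$ is compact and $\Lambda^m$ is uniformly discrete; this weaker statement is what you actually need to control the horoball stabilisers. Even with this fix, you would still have to set up cusp decomposition and thick/thin excursions for an approximate lattice in a product of rank-one spaces --- work that the paper avoids entirely by passing to the ambient lattice.
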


\begin{proof}
Let $\hat{S}$ be the union of $S$ with the set of infinite places of $K$. The proof of \cite[Lemma 4.7]{LubotzkyMozesRaghunathan} provides such $\Theta_i \in \mathbf{G}(O_{K,\hat{S}})$ and $\Sigma$. We only need to explain why we may choose the $\Theta_i$ in $\Lambda$ (seen here as a subset of $\mathbf{G}(O_{K,\hat{S}})$). To do so we will have to modify $\Sigma$ as well. 

Fix $\lambda \in \Lambda$. In the proof of \cite[Lemma 4.7]{LubotzkyMozesRaghunathan} we first choose a $\delta$-quasi-geodesic $h_0=e, \ldots, h_r=\lambda$ from $e$ to $\lambda$ in $\mathbf{G}(\mathbb{A}_{K,S})$ for some $\delta > 0$. There is then a finite subset of indices $\{0<i_1 < \ldots < i_r=n\}$ such that $||h_{i_j}^{-1}\Theta_{i_j}||_S \leq C$ for some constant $C=C(\Lambda)$. We may choose $h_i \in  \mathbf{G}(\mathbb{A}_{K,S}) (= \mathbf{G}(\mathbb{A}_{K,S}) \times \{e\} \subset \mathbf{G}(\mathbb{A}_{K,\hat{S}}))$ for all $i\in \{0, \ldots, r\}$. Therefore, there is $\Lambda'$ commensurable with $\mathbf{G}(\mathcal{O}_{K,S})$ and hence, with $\Lambda$ such that $\Theta_i \in \Lambda'$ for all $i \in \{1, \ldots, r\}$. Consider $X$ a finite subset of $\langle \Lambda, \Lambda' \rangle$ such that $\Lambda' \subset \Lambda X$. For every $i \in \{1, \ldots, r\}$, choose $f_i \in F$ such that $\Theta_i' :=\Theta_if_i^{-1} \in \Lambda$ and set $\xi_i':=f_i\xi_i$. Setting now $\Sigma'=F\Sigma$ we obtain the desired result. 
\end{proof}

\begin{lemma}\label{Lemma: LMS in rank one}
Let $K$, $S$, and $\mathbf{G}$ be as above. Suppose that $\rank_K\left(\mathbf{G}\right)=1$. Let $\Lambda$ be commensurable with $\mathbf{G}(\mathcal{O}_{K,S})$ such that $\langle \Lambda \rangle$ is not a lattice. Then $\Lambda$ is efficiently generated.
\end{lemma}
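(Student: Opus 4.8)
\emph{Proof strategy.} The plan is to run the Lubotzky--Mozes--Raghunathan argument of \cite{LubotzkyMozesRaghunathan} in the present setting, feeding the geometric decomposition supplied by Lemma \ref{Lemma: LMS in rank one, technical bit} into the efficient generation of horospherical subgroups obtained in Lemma \ref{Lemma: distortion in parabolic subgroups}. We equip $G=\prod_{v\in S}\mathbf{G}(K_v)$ with $d_S(g,h)=\log||h^{-1}g||_S'$; submultiplicativity of $||\cdot||_S'$ already gives that a word of length $m$ in a finite set has $||\cdot||_S'$ bounded by an exponential in $m$, so the only content is to produce \emph{short} decompositions. Assumption \ref{Assumption: Arithmeticity} is in force, and since $\langle\Lambda\rangle$ is not a lattice the set $\hat S$ with $\langle\Lambda\rangle$ commensurable with $\mathbf{G}(\mathcal{O}_{K,\hat S})$ has $|\hat S|\geq 2$; hence, by Dirichlet's unit theorem, for each $K$-root $\alpha$ there is a torus element $\delta_\alpha\in\Lambda^{m_\alpha}\subset\langle\Lambda\rangle$ with $|\alpha(\delta_\alpha)|_v<1$ at every $v\in S$ --- the availability of such contracting elements is exactly what distinguishes our $\Lambda$ from a lattice in rank one. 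By Lemma \ref{Lemma: Efficient generation and commensurability} it suffices to exhibit one approximate subgroup commensurable with $\mathbf{G}(\mathcal{O}_{K,S})$ and contained in $\langle\Lambda\rangle$ that is efficiently generated, so I may freely replace $\Lambda$ by a fixed power $\Lambda^N$, by $\Lambda^2\cap\langle\Lambda\rangle$, or restrict to a finite index subgroup of $\langle\Lambda\rangle$.

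\emph{The decomposition and the key count.} Fix $\lambda\in\Lambda$ and apply Lemma \ref{Lemma: LMS in rank one, technical bit}: we obtain a finite $\Sigma\subset\mathbf{G}(K)$, a constant $B>0$ and $\Theta_0=e,\Theta_1,\ldots,\Theta_r\in\Lambda$ with $\Theta_r=\lambda$ (append one bounded step if needed) such that $\sum_{i=1}^r||\Theta_{i-1}^{-1}\Theta_i||_S\asymp||\lambda||_S$ and each $\Theta_{i-1}^{-1}\Theta_i$ is either of $||\cdot||_S$-norm at most $B$, or of the form $\xi_{i-1}n_i\xi_i^{-1}$ with $\xi_{i-1},\xi_i\in\Sigma$ and $n_i\in\mathbf{N}(K)$. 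Since the $\Theta_i$ are bounded perturbations of a $\delta$-quasi-geodesic from $e$ to $\lambda$ in $(G,d_S)$, we get $r=O(\log||\lambda||_S')$ and, summing \emph{logarithms} along the quasi-geodesic,
$$\sum_{i=1}^r\log||\Theta_{i-1}^{-1}\Theta_i||_S'=\sum_{i=1}^r d_S(\Theta_{i-1},\Theta_i)=O\big(d_S(e,\lambda)\big)=O\big(\log||\lambda||_S'\big).$$
This is the decisive point: working with $\log$-step-sizes rather than step-sizes keeps the total logarithmic (rather than only bounding $r$ logarithmically while the individual horospherical steps remain huge).

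\emph{Filling in the steps.} If $||\Theta_{i-1}^{-1}\Theta_i||_S\leq B$, then $\Theta_{i-1}^{-1}\Theta_i$ lies in a fixed finite set (uniform discreteness of $\Lambda^2$) and $\Theta_i$ is reached from $\Theta_{i-1}$ by a single move with pivot $\Theta_{i-1}$. Otherwise, write $\Theta_{i-1}^{-1}\Theta_i=\xi_{i-1}n_i\xi_i^{-1}$ with $||n_i||_S\preceq||\Theta_{i-1}^{-1}\Theta_i||_S$, decompose $\mathbf{N}$ into one-parameter root subgroups, each $K$-isomorphic to $\mathbf{G}_a$ and contracted by a suitable $\delta_\alpha$, as in Lemma \ref{Lemma: Connectedness of sorts in root subgroups}, and apply the sum--product generation of Lemma \ref{Lemma: Sum-product phenomenon for quick generation} --- the mechanism underlying Lemma \ref{Lemma: distortion in parabolic subgroups} in the horospherical direction --- to produce a path from $e$ to $n_i$ inside $\Lambda^N\cap\mathbf{N}(K)$ of length $O(1+\log||n_i||_S')$ in which every move has the form $u'\in FuF^{-1}$ with pivot $e$ and $F\subset\langle\Lambda\rangle$ containing the $\delta_\alpha^{\pm1}$ and the finite digit sets (the contracting conjugations by $\delta_\alpha^{\pm1}$ are precisely what the $F(\cdot)F^{-1}$ shape of a move permits). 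Transporting this path to a sub-path from $\Theta_{i-1}$ to $\Theta_i$ and absorbing the fixed conjugators $\xi_{i-1},\xi_i$ into the bounded data, as the $\Theta_i$ were placed in $\Lambda$ in the proof of Lemma \ref{Lemma: LMS in rank one, technical bit}, keeps the intermediate elements in $\Lambda^N$, the pivots among previously reached elements, and the generators in $\langle\Lambda\rangle$. Concatenating over $i$, the total length is $O\big(r+\sum_i\log||\Theta_{i-1}^{-1}\Theta_i||_S'\big)=O(\log||\lambda||_S')$, so $\Lambda^N$, hence $\Lambda$, is efficiently generated.

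\emph{Main obstacle.} The delicate part is the last step: one must genuinely realise the horospherical sub-paths as legitimate moves in the sense of Definition \ref{Definition: Efficient generation} --- all intermediate points in a single power $\Lambda^N$, all pivots among earlier elements, all generators and conjugating elements inside $\langle\Lambda\rangle$ --- while correctly bookkeeping the finite conjugators $\Sigma$ coming from Borel's reduction theory (Proposition \ref{Proposition: Generalisation of Borel's result}); and, on the counting side, one must exploit the quasi-geodesic property of the $\Theta_i$ so that the per-step logarithmic costs sum to $O(\log||\lambda||_S')$ and not $O((\log||\lambda||_S')^2)$.
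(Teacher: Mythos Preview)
Your proposal is correct and follows essentially the same route as the paper's proof: both feed the quasi-geodesic decomposition of Lemma \ref{Lemma: LMS in rank one, technical bit} into efficient generation of the horospherical pieces (Lemma \ref{Lemma: distortion in parabolic subgroups}), and both rely on the same key count $\sum_i \log||\Theta_{i-1}^{-1}\Theta_i||_S' = O(\log||\lambda||_S')$ coming from the quasi-geodesic property. The paper handles the conjugators $\xi_i\in\Sigma$ slightly differently---it passes to $\Lambda_{n,i}=\Lambda^2\cap\xi_i\Lambda^2\xi_i^{-1}\cap\mathbf{N}$ and applies Lemma \ref{Lemma: distortion in parabolic subgroups} to $\xi_i^{-1}\lambda_i\xi_i\in\Lambda^2\cap\xi_i^{-1}\mathbf{N}\xi_i$ directly, rather than building a path in $\mathbf{N}$ and transporting---but this is only an organisational difference, and you correctly identify that bookkeeping as the delicate point.
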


\begin{proof}
 Fix $\lambda \in \Lambda$. Let $\Theta_1, \ldots, \Theta_r$ and $\Sigma$ be as in Lemma \ref{Lemma: LMS in rank one, technical bit}. Let $i$ be such that $\xi_i\Theta_i^{-1}\Theta_{i+1}\xi_{i+1}^{-1} \in \mathbf{N}(\mathbb{A}_{K,S})$. We will use repeatedly that $\Sigma$ is contained in the commensurator of $\Lambda$. In particular, $\xi_i \Lambda^2 \xi_i^{-1}$ is commensurable with $\Lambda$. So $\Lambda_{n,i}=\Lambda^2 \cap \xi_i \Lambda^2 \xi_i^{-1} \cap \mathbf{N}(\mathbb{A}_{K,S})$ is commensurable with $\Lambda^2 \cap \mathbf{N}(\mathbb{A}_{K,S})$ and, hence, is a uniform approximate lattice. So there is a finite subset $F_i$, depending on $\xi_i$ only, such that there is $\lambda_i \in \Lambda_{n,i}$ with $\xi_i\Theta_i^{-1}\Theta_{i+1}\xi_{i+1}^{-1} = \lambda_if$ for some $f \in F_i$. Therefore, there is a finite subset $F_i' \subset \langle \Lambda \rangle$ such that $\xi_i^{-1}\lambda_i^{-1}\xi_i\Theta_i^{-1}\Theta_{i+1} \in F_i' \subset \xi_i^{-1}F_i\xi_{i+1}$. So $||\xi_i^{-1}\lambda_i^{-1}\xi_i||_S \preceq ||\Theta_i^{-1}\Theta_{i+1}||_S$. Now, $\xi_i^{-1}\lambda_i^{-1}\xi_i \in \Lambda^2$ and $\xi_i^{-1}\lambda_i^{-1}\xi_i \in \xi_i^{-1} \mathbf{N}(\mathbb{A}_{K,S})\xi_i$. Hence, according to Lemma \ref{Lemma: distortion in parabolic subgroups} there is a finite subset $F_i'' \subset \langle \Lambda \rangle$ - depending on $\xi_i$ only - such that $\lambda_{i,0}=e, \ldots, \lambda_{i,n}=\Theta_i^{-1}\Theta_{i+1}$ with $n \leq C_i \log ||\Theta_i^{-1}\Theta_{i+1}||$ such that for $l \in \{0; \ldots; n-1\}$  we have $\lambda_{i,l+1}\lambda_{i,0}^{-1}:= F_i''\lambda_{i,l}\lambda_{i,0}^{-1}F_i''$.
But
$$\sum_{i=1}^rC_i \log ||\Theta_{i-1}^{-1}\Theta_i||_S \leq C' \log ||\lambda||_S.$$
So, putting everything back together, $\Lambda$ is efficiently generated by $\bigcup_i F_i''$ which is indeed a finite subset contained in $\langle \Lambda \rangle$.
\end{proof}

\begin{proof}[Proof of Theorem \ref{THEOREM: EFFICIENT MESSY GENERATOIN}.]
Applying Proposition \ref{Proposition: Pushing into parabolic subgroups using root subgroups} to $\Lambda$, we can find $\mathbf{P}$, $F$, $\Lambda_1, \ldots, \Lambda_n \subset \langle \Lambda \rangle$ and $\mathbf{G}_1,\ldots, \mathbf{G}_n$ as in the conclusion of Proposition \ref{Proposition: Pushing into parabolic subgroups using root subgroups}. Since $\langle \Lambda \rangle \cap \mathbf{G}_i(K_v)$ is dense in $\mathbf{G}_i(K_v)$ for all $i$, we may enlarge all the $\Lambda_i$'s such that each of them is not contained in a lattice in $\mathbf{G}_i(K_v)$, but $\Lambda_i \subset \Lambda^m \cap \mathbf{G}_i(K_v)$ for some $m \geq 0$ independent of $i$. Take $\lambda \in \Lambda$. There are $\lambda_1 \in \Lambda_1 \subset \Lambda^m, \ldots, \lambda_n \in \Lambda_n \subset \Lambda^m$ and $\lambda_{n+1} \in \mathbf{P}(K) \cap \Lambda^{2n+1}$ such that $\lambda_1\cdots \lambda_{n+1}=\lambda$ and $||\lambda_1||_v, \ldots, ||\lambda_n||_v,||\lambda_{n+1}||_v \preceq ||\lambda||_v$. According to Lemma \ref{Lemma: LMS in rank one}, for every $i \in \{1, \ldots, n\}$ we have that $\Lambda_i$ is efficiently generated. By Lemma \ref{Lemma: distortion in parabolic subgroups}, $\Lambda^{mn+1} \cap \mathbf{P}(K)$ is efficiently generated. Thus,  $\Lambda$ is efficiently generated according to Lemma \ref{Lemma: Efficient generation and commensurability}.
\end{proof}

\section{Structural quasi-cocycle of an approximate lattice}\label{Section: Conjugation-multiplication phenomena}

Given an approximate subgroup $\Lambda$ of some group $\Gamma$ that commensurates it, an abelian subgroup $V$ equipped with a bornology $\mathcal{B}$ and an action of $\Gamma$ leaving $\mathcal{B}$ invariant we define the set of  \emph{quasi-cocycles} by 
$$ QC(\Lambda;V) := \{c: \Gamma^n \rightarrow V : \forall X \in \mathcal{B}_{\Lambda}, dc(X,X) \text{ bounded }\}$$
and $\widetilde{QC}(\Lambda;V)$ its quotient modulo bounded maps. With $\Lambda_s, \Lambda_r, G$ as in the setup of our main result (Theorem \ref{THEOREM: MAIN THEOREM}), quasi-cocycles provide an easy construction of (potentially) many approximate lattices. Write $Z$ the centre of $\Rad(G)$ and equip it with the bornology $\mathcal{B}_{\Lambda_z}$ coming from $\Lambda_z :=\Lambda_r^2 \cap Z$. Then the map 
$$ q \in QC(\Lambda_s;Z) \longmapsto \{(\lambda_s, q(\lambda_s)\lambda_r) : \lambda_s \in \Lambda_s, \lambda_r \in \Lambda_r\}. $$
where $G$ is identified with $\left(G/\Rad(G)\right) \ltimes \Rad(G)$, provides many approximate lattices that are all extensions of $\Lambda_s$ by $\Lambda_r$. Our goal in this section is to reverse this process. 

\subsection{Quasi-cocycles and semi-direct products}
Consider a semi-direct product $L \ltimes A$ of two groups $L$ and $A$, with $A$ abelian and in which the product $(l_1,a_1)(l_2,a_2)$ is given by $(l_1l_2, a_1 + l_1 \cdot a_2)$.
Let $\Lambda$ be an approximate subgroup in $L \ltimes A$. Write $\Lambda_A := \Lambda^2 \cap A$ and $\Lambda_L$ the projection of $\Lambda$ to $L$. For every $m \geq 0$ and every $\lambda \in \Lambda_L^m$, we can find $a(\lambda) \in A$ such that $(\lambda, a(\lambda)) \in \Lambda^m$. For all $m_1, m_2 \geq 0$, $\lambda_1 \in \Lambda_L^{m_1}$ and $\lambda_2 \in \Lambda_L^{m_2}$ we have 
$$ (\lambda_1\lambda_2, a(\lambda_1\lambda_2)) (\lambda_2, a(\lambda_2))^{-1}  (\lambda_1, a(\lambda_1))^{-1} \in A \cap \Lambda^{2(m_1+m_2)}.$$
So:
\begin{equation}
a(\lambda_1\lambda_2) - \lambda_1 \cdot a(\lambda_2) - a(\lambda_1) \in A \cap \Lambda^{2(m_1 + m_2)}\label{Eq: quasi-cocycle equation}
\end{equation} 
where $ \cdot $ denotes the action of $L$ on $A$. In particular, $ a \in QC(\Lambda_L;A)$ where $A$ is equipped with the bornology $\mathcal{B}_{\Lambda_A}$.  The interest in studying $a$ lies in the following simple observation: 

\begin{lemma}\label{Lemma: Small values of quasi-cocycle imply splitting}
If finitely many translates of $\Lambda_A$ cover $a(\Lambda_L)$, then $\left(L \times \{0\}\right) \cap \Lambda^2$ is an approximate subgroup commensurable with $\Lambda_L \times \{0\} \subset L \times \{0\}$. 
\end{lemma}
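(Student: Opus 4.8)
The plan is to prove the two commensurability inclusions separately, after a preliminary reduction. First I would record that $(L\times\{0\})\cap\Lambda^2$ is an approximate subgroup: since $L\times\{0\}$ is a subgroup of $L\ltimes A$ it is in particular a $1$-approximate subgroup, so by Lemma~\ref{Lemma: Intersection of approximate subgroups}(1) the set $\Lambda^2\cap(L\times\{0\})^2=\Lambda^2\cap(L\times\{0\})$ is an approximate subgroup. Because the projection $p_L\colon L\times\{0\}\to L$ is a group isomorphism, commensurability of subsets of $L\times\{0\}$ is detected by their images in $L$; hence it suffices to prove that $N:=\{\lambda\in L:(\lambda,0)\in\Lambda^2\}$ and $\Lambda_L$ are commensurable inside $L$.

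One inclusion is immediate: the image of $\Lambda^2$ under $p_L$ is $\Lambda_L^2$, so $N\subset\Lambda_L^2$, and $\Lambda_L^2$ is covered by finitely many translates of the approximate subgroup $\Lambda_L$; pulling back, $(L\times\{0\})\cap\Lambda^2$ is covered by finitely many translates of $\Lambda_L\times\{0\}$.

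For the reverse inclusion --- the crux --- let $F\subset A$ be finite with $a(\Lambda_L)\subset F+\Lambda_A$, where $\Lambda_A=\Lambda^2\cap A$. Given $\lambda\in\Lambda_L$ we have $(\lambda,a(\lambda))\in\Lambda$ and $a(\lambda)=f+\mu$ for some $f\in F$ and $\mu\in\Lambda_A$. Using that $A$ is abelian and that $\Lambda_A$ is symmetric, the semidirect-product multiplication gives $(e,-\mu)(\lambda,a(\lambda))=(\lambda,a(\lambda)-\mu)=(\lambda,f)$, so $(\lambda,f)\in\Lambda^2\cdot\Lambda=\Lambda^3$. Now fix, for each $f$ that occurs, one witness $\lambda_f\in\Lambda_L$ with $(\lambda_f,f)\in\Lambda^3$; then the $A$-coordinates cancel exactly (this is why we grouped by $f$): $(\lambda_f,f)^{-1}(\lambda,f)=(\lambda_f^{-1}\lambda,0)\in\Lambda^6\cap(L\times\{0\})$, and therefore $(\lambda,0)=(\lambda_f,0)\,(\lambda_f^{-1}\lambda,0)\in(\lambda_f,0)\bigl(\Lambda^6\cap(L\times\{0\})\bigr)$. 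Hence $\Lambda_L\times\{0\}$ is covered by the finitely many translates $\{(\lambda_f,0)\}_f$ of $\Lambda^6\cap(L\times\{0\})$.

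Finally I would descend from $\Lambda^6$ back to $\Lambda^2$: since $\Lambda^3$ is an approximate subgroup commensurable with $\Lambda$, Lemma~\ref{Lemma: Intersection of approximate subgroups}(2), applied to the pairs $(\Lambda,L\times\{0\})$ and $(\Lambda^3,L\times\{0\})$, shows that $\Lambda^6\cap(L\times\{0\})=(\Lambda^3)^2\cap(L\times\{0\})^2$ is commensurable with $\Lambda^2\cap(L\times\{0\})$. Chaining the two finite coverings yields that $\Lambda_L\times\{0\}$ is covered by finitely many translates of $(L\times\{0\})\cap\Lambda^2$, completing the proof. The only point requiring a little care is this last bookkeeping of powers: the natural cancellation computation lands in $\Lambda^6$ rather than $\Lambda^2$, so one must invoke stability of intersections of squares under commensurability to return to the stated approximate subgroup; everything else is routine manipulation of the twisted multiplication and of finite index sets.
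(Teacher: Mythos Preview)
Your proof is correct and follows essentially the same route as the paper's: both show that $(\lambda_L,0)$ lies in finitely many translates of a power of $\Lambda$ and then intersect with $L\times\{0\}$. The only difference is bookkeeping --- the paper invokes Lemma~\ref{Lemma: Intersection of commensurable sets} directly to pass from $\Lambda_L\times\{0\}\subset F^{-1}\Lambda^k$ to commensurability with $\Lambda^2\cap(L\times\{0\})$, whereas you unpack that lemma by hand via the witness elements $(\lambda_f,f)$ and then use Lemma~\ref{Lemma: Intersection of approximate subgroups}(2) to descend from $\Lambda^6$ to $\Lambda^2$.
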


\begin{proof}
Let $F$ be a finite subset such that $a(\Lambda_L) \subset F\Lambda_A$. For every $\lambda_L$ there is $\lambda_A$  and $f \in F$ such that $(\lambda_L,f\lambda_A) \in \Lambda$. Hence, $(\lambda_L, e) \in F^{-1}\Lambda^2$. By Lemma \ref{Lemma: Intersection of commensurable sets}, $\Lambda^2 \cap L$ is thus commensurable with $\Lambda_L \times \{0\} \subset L \times \{0\} \subset L \ltimes A$. So the result is proven.
\end{proof}

\subsection{Central radical}Let us first consider the case of a trivial action on $A$. In this situation (\ref{Eq: quasi-cocycle equation}) becomes
$$\forall \lambda_1 \in \Lambda_L^n, \forall \lambda_2 \in \Lambda_L^m, a(\lambda_1\lambda_2) - a(\lambda_1) - a(\lambda_2) \in \Lambda^{n+m} \cap A.$$
It is interesting to compare (\ref{Eq: quasi-cocycle equation}) with the defining equations of quasi-cocycles with target a normed locally compact abelian group $H$. To do so, consider a good model $\tau: A \rightarrow H$ of $(\Lambda^8 \cap A, A)$ (\cite[Thm 1.7]{machado2019goodmodels}). Suppose that $H$ is compactly generated and fix a norm $| \cdot |$ - arising from any left-invariant distance - on $H$. If $q$ denotes $\tau \circ a$ we find for all $m_1, m_2 \geq 0$ and $\lambda_1 \in \Lambda^{m_1}, \lambda_2 \in \Lambda^{m_2}$ that 
$$|q(\lambda_1\lambda_2) - q(\lambda_1)  - q(\lambda_2)| \leq  C(m_1,m_2) < \infty $$
for some constants $C(m_1,m_2)$. In other words, $q$ is a quasi-cocycle and, even, a \emph{quasi-morphism}. Moreover, $a(\Lambda_L)$ is covered by finitely many translates of $\Lambda_A$ if and only if $q(\Lambda_L)$ is bounded in $H$. The problem is therefore reduced to proving that certain quasi-morphisms and quasi-cocycles are bounded. The additional assumption that $H$ is locally compact can be arranged easily:

\begin{lemma}\label{Proposition: the right good model in central case}
With $L,A, \Lambda$ and $\tau: A \rightarrow H$ as in the previous paragraph. Write $p:L \times A \rightarrow A$ the natural projection. If $\Lambda_L$ is an approximate lattice in the semi-simple $S$-adic group $L$ and is efficiently generated, then $\overline{\tau \circ p(\langle \Lambda \rangle)}$ is compactly generated.
\end{lemma}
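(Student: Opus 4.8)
The plan is to use the \emph{efficient generation} of $\Lambda_L$ to trap $q:=\tau\circ a$ inside a subgroup of $H$ generated by a \emph{relatively compact} set, and then deduce compact generation of the closure by a soft argument valid in any locally compact group. Since the action of $L$ on $A$ is trivial, both $p\colon L\times A\to A$ and the good model $\tau$ are group homomorphisms, hence so is $\tau\circ p$, and therefore $(\tau\circ p)(\langle\Lambda\rangle)=\langle(\tau\circ p)(\Lambda)\rangle$. For $\lambda\in\Lambda$ with $L$-component $\lambda_L\in\Lambda_L$, the element $\lambda(\lambda_L,a(\lambda_L))^{-1}$ lies in $\Lambda^2\cap A=\Lambda_A$, so $(\tau\circ p)(\lambda)\in q(\lambda_L)+\tau(\Lambda_A)$; and $\tau(\Lambda_A)$ is relatively compact because $\Lambda_A\subseteq\Lambda^8\cap A$ and $\tau$ is a good model of $(\Lambda^8\cap A,A)$. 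Thus $(\tau\circ p)(\langle\Lambda\rangle)\subseteq\langle q(\Lambda_L)\cup\tau(\Lambda_A)\rangle$, and everything reduces to showing that $q(\Lambda_L)$ is contained in $\langle\widetilde E\rangle$ for a single relatively compact symmetric subset $\widetilde E\ni e$ of $H$.

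The crux is this last claim. Fix $F$ and $C>0$ as in Definition~\ref{Definition: Efficient generation}, and $m_0$ with $F\subseteq\Lambda_L^{m_0}$. Given $\lambda\in\Lambda_L$, take a chain $\mu_0=e,\dots,\mu_N=\lambda$ in $\Lambda_L$ as there. The decisive observation is that in each relation $\mu_{i+1}\mu_j^{-1}\in F(\mu_i\mu_j^{-1})F^{-1}$, \emph{every} group element occurring — $\mu_i,\mu_j,\mu_{i+1}$, the products $\mu_i\mu_j^{-1},\mu_{i+1}\mu_j^{-1}$, and the letters of $F$ — lies in $\Lambda_L^{k_0}$ for one fixed $k_0$ depending only on $m_0$. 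Applying the quasi-cocycle identity \eqref{Eq: quasi-cocycle equation} (which, the action being trivial, reads $a(\lambda\mu)-a(\lambda)-a(\mu)\in\Lambda^{2(\ast+\ast)}\cap A$) a \emph{bounded} number of times, and using that $\tau(\Lambda^k\cap A)$ is relatively compact for each fixed $k$ — immediate from the good-model property for $k\le 8$, and in general since $\Lambda^k\cap A$ is commensurable with $\Lambda^2\cap A$ by Lemma~\ref{Lemma: Intersection of approximate subgroups} — one gets that each increment $q(\mu_{i+1})-q(\mu_i)$ (the case $i=0$ handled the same way via $q(\mu_0)=q(e)=e$) lies in one fixed relatively compact symmetric set $\widetilde E\ni e$, of the shape: a finite set, plus a finite set, plus $\tau$ of a fixed approximate subgroup of $A$. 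Telescoping, $q(\lambda)=\sum_{i=0}^{N-1}\bigl(q(\mu_{i+1})-q(\mu_i)\bigr)$ lies in the $N$-fold sumset of $\widetilde E$; since $e\in\widetilde E$, that sumset is contained in $\langle\widetilde E\rangle$ \emph{no matter how large $N$ is}. Only finiteness of the chain and its bounded-alphabet structure are used; the logarithmic bound on $N$ from Definition~\ref{Definition: Efficient generation}, essential in Proposition~\ref{Proposition: Bounds on quasi-cocycles from efficient generation}, is irrelevant here.

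To conclude, set $D:=\overline{\widetilde E\cup\tau(\Lambda_A)}$, symmetrised and with $e$ adjoined; it is compact, and by the above $(\tau\circ p)(\langle\Lambda\rangle)\subseteq\langle D\rangle$. Moreover every ingredient of $\widetilde E$ and of $\tau(\Lambda_A)$ is $\tau\circ p$ of an element of $\langle\Lambda\rangle$, so $\widetilde E\cup\tau(\Lambda_A)\subseteq(\tau\circ p)(\langle\Lambda\rangle)$ and hence $D\subseteq H'':=\overline{(\tau\circ p)(\langle\Lambda\rangle)}$. Working inside the locally compact group $H''$, choose a relatively compact symmetric open identity neighbourhood $W$; then $\langle D\cup\overline W\rangle$ is an open — hence closed — subgroup of $H''$ containing the dense subgroup $(\tau\circ p)(\langle\Lambda\rangle)$, so it equals $H''$. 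Thus $\overline{\tau\circ p(\langle\Lambda\rangle)}$ is generated by the compact set $D\cup\overline W$, i.e.\ is compactly generated.

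The one genuinely delicate step is the middle one. The naive route — bounding $q$ in a metric on $H$ — only yields $\|q(\lambda)\|=O(\log\|\lambda\|_S)$, which in an infinitely generated $H$ does not preclude $q(\Lambda_L)$ from spreading in infinitely many directions; the point of the messy-chain structure of efficient generation is precisely that it pins \emph{each} increment $q(\mu_{i+1})-q(\mu_i)$ into a single fixed relatively compact set, after which the absorption into $\langle\widetilde E\rangle$ and the open-subgroup argument are routine.
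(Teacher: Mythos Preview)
Your proof is correct and shares the same core idea as the paper: use efficient generation of $\Lambda_L$ together with the quasi-cocycle relation to show that each increment $q(\mu_{i+1})-q(\mu_i)$ lies in a single relatively compact set of the shape $a(F)+a(F^{-1})+\tau(\Lambda^{K}\cap A)$, so that $q(\Lambda_L)$ is contained in a subgroup generated by a compact set. The paper writes this as $a(\lambda_{l+1})\in a(\lambda_l)+2a(F)+\Lambda^{100}\cap A$ and concludes exactly as you do that $\langle a(\Lambda_L)\cup(\Lambda^4\cap A)\rangle$ is generated by a finite set together with $\Lambda^4\cap A$.

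The one genuine difference is in the final passage from ``$(\tau\circ p)(\langle\Lambda\rangle)$ sits in a compactly generated subgroup'' to ``its closure is compactly generated''. The paper invokes the structural fact, specific to locally compact \emph{abelian} groups, that a closed subgroup of a compactly generated group is again compactly generated; this is why the paper also carries along the auxiliary finite set $F_0$ with $\Lambda^2\subset\Lambda F_0$, to exhibit explicitly a compactly generated ambient group. Your open-subgroup argument (adjoin a relatively compact identity neighbourhood $W$ inside $H''$, note that $\langle D\cup\overline W\rangle$ is open hence closed and contains the dense subgroup) avoids this LCA-specific input entirely and would work verbatim in any locally compact group. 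Since $H$ here is abelian anyway the extra generality is not needed, but your route is cleaner: it dispenses with both the $F_0$ reduction and the appeal to the LCA structure theorem. Your observation that only the \emph{existence} of the chain, not the logarithmic bound on its length, is used here is also correct and worth noting.
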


\begin{proof}
Let $F_0$ be a finite subset of $\langle \Lambda \rangle$ such that $\Lambda^2 \subset \Lambda F_0$. We have $$\tau \circ p(\langle \Lambda \rangle) \subset \tau \circ p (\Lambda) \tau \circ p(\langle F_0 \rangle)$$ and, by the definition of $a$, $$\tau \circ p (\Lambda) \subset \tau \circ a(\Lambda_L) \tau (\Lambda_A).$$
Note that in locally compact abelian groups, a closed subgroup of a compactly generated subgroup is also compactly generated. Since $F_0$ is finite and $\tau (\Lambda_A)$ is relatively compact, it is enough to prove that $\tau \circ a(\Lambda_L)$ is contained in a compactly generated subgroup. 

Choose $F \subset \langle \Lambda_L \rangle$ finite such that $\Lambda_L$ is efficiently generated by $F$. For $\lambda \in \Lambda_L$ we can find $\lambda_0=e,\ldots, \lambda_m=\lambda \in \Lambda_L$ such that for all $l \in \{0, \ldots, m-1\}$ there is $i \leq l$ satisfying $\lambda_{l+1}\lambda_i^{-1} \in F\lambda_l\lambda_i^{-1} F$. So (\ref{Eq: quasi-cocycle equation}) implies 
 $$a(\lambda_{l+1}) \in a(\lambda_l) + 2a(F) + \Lambda^{100} \cap A$$
 where we use $100$ as an obvious upper bound of the correct power. So there is a finite subset $X$ such that $\langle a(\Lambda_L) \cup \left(\Lambda^4 \cap A \right)\rangle = \langle X \cup \left(\Lambda^4 \cap A\right) \rangle$. Since $\tau( X \cup \left(\Lambda^4 \cap A\right))$ is relatively compact, we are done.
\end{proof}

When the action of $L$ on $A$ (hence of $\Lambda_L$ on $\Lambda_A$) is not trivial, building a good model $\tau$ of $(\Lambda_A, A)$ that is equivariant with respect to the action of $\Lambda_L$ and lands in a nicely behaved target group is not as straightforward. We address this next.

\subsection{Reductive actions without invariant vectors}\label{Subsection: Representation-theoretic sum-product phenomenon}

From now on suppose that we are given $A = \prod_{v \in S} \mathbb{Q}_v^{n_v}$ and an $S$-adic reductive Zariski-closed subgroup $L \subset \GL(A)$. Let $\Lambda$ be an approximate lattice in $L \ltimes A$ such that $\Lambda_r := \Lambda \cap A$ is a uniform approximate lattice and the projection $\Lambda_{red}$ of $\Lambda$ to $L$ is an approximate lattice. Let $a \in QC(\Lambda_L;\Lambda_A)$ be defined as in \S \ref{Section: Conjugation-multiplication phenomena}. Our goal in this section is to find a good model $\tau: \Gamma \rightarrow H$  that turns $a$ into a quasi-cocycle with coefficients in a product of finite dimensional $\mathbb{Q}_v$-vector spaces, $v \in S$.  Let us first recall Meyer's theorem for $S$-adic unipotent groups \cite[Prop. 1.1]{mac2023sadic} in a special case. 

\begin{proposition}\label{Proposition: Meyer's theorem in S-adic vector spaces}
Let $\Lambda$ be an approximate lattice in $V := \prod_{k = 1}^n \mathbb{Q}_{p_k}^{n_k}$. There is an approximate lattice $\Lambda' \subset \langle \Lambda \rangle$ commensurable with $\Lambda$ such that:
\begin{enumerate}
\item $\Lambda'$ spans a finite dimensional $\mathbb{Q}$-vector space $W$;
 
\item $\langle \Lambda' \rangle$ is a discrete subgroup of $W\otimes \mathbb{A}_S$ when embedded diagonally; 
\item there is a subgroup $B=\prod_{v \in S} V_v$ of $W\otimes \mathbb{A}_S$, with $V_v$ a $\mathbb{Q}_v$-vector subspace of $W \otimes \mathbb{Q}_v$, such that $W\otimes \mathbb{A}_S \simeq A \times B$ and $\Lambda'$ is a model set associated to a cut-and-project scheme $(A,B,\Delta)$;
\item if $\gamma \in \GL(A)$ commensurates $\Lambda$ (or, equivalently, commensurates $\Lambda'$), then $\gamma(W)=W$ and $\gamma$ gives rise to $\tilde{\gamma} \in \GL(W \otimes \mathbb{A}_S)$ such that $\tilde{\gamma}(A)=A$ and $\tilde{\gamma}(B)=B$.
\end{enumerate}
\end{proposition}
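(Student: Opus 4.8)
The plan is to derive items (1)--(3) directly from the cited statement and to put the work into the equivariance assertion (4).

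First I would observe that (1), (2) and (3) are a special case of \cite[Prop. 1.1]{mac2023sadic} applied to $\Lambda$: the hypothesis there, that $\Lambda$ be an approximate lattice in an $S$-adic vector space, is met by $V$, and one only records that the internal space decomposes as $B=\prod_{v\in S}V_v$ with $V_v\subseteq W\otimes\mathbb{Q}_v$ the kernel of the physical projection $W\otimes\mathbb{Q}_v\to\mathbb{Q}_v^{n_v}$. Along the way I would note, for later use, that $\Lambda'$ is laminar (Proposition \ref{Proposition: Equivalence good models and model sets}), that $\langle\Lambda'\rangle$ is a cocompact lattice in $W\otimes\mathbb{A}_S$ whose $\mathbb{Q}$-span is exactly $W$, and that the composition $\tau\colon\langle\Lambda'\rangle\hookrightarrow W\otimes\mathbb{A}_S\simeq A\times B\to B$ is a good model of $\Lambda'$ with dense image; in particular $B$ is (up to compact kernels) the target of the minimal good model of $\Lambda'$ in the sense of Proposition \ref{Proposition: Minimal commensurable approximate subgroup}.

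For (4), I would first reduce to $\gamma$ commensurating $\Lambda'$: since $\Lambda$ and $\Lambda'$ are commensurable, so are $\gamma\Lambda$ and $\gamma\Lambda'$, hence $\gamma\Lambda'$ is commensurable with $\Lambda'$, which proves the ``equivalently'' in the statement. Being additive and $\mathbb{Q}_v$-linear on each factor, $\gamma$ is $\mathbb{Q}$-linear, so $\gamma(W)=\operatorname{span}_{\mathbb{Q}}\gamma(\Lambda')$. The set $\Lambda'':=\Lambda'^{2}\cap(\gamma\Lambda')^{2}$ is an approximate lattice in $A$ commensurable with $\Lambda'$ (Lemma \ref{Lemma: Intersection of approximate subgroups}), so items (1)--(3) applied to $\Lambda''$ furnish a rational span $W''$, and clearly $W''\subseteq W\cap\gamma(W)$. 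I would then argue that $\dim_{\mathbb{Q}}W$ is a commensurability invariant of $\Lambda'$: the internal space $B$ is determined up to compact kernels by the commensurability class (uniqueness of the minimal good model, the $S$-adic counterpart of Proposition \ref{Proposition: Minimal commensurable approximate subgroup}), while the physical space $A$ is fixed, so $W\otimes\mathbb{A}_S\simeq A\times B$, and hence $\dim_{\mathbb{Q}}W$, depends only on that class. Therefore $\dim W''=\dim W$, and from $W''\subseteq W$ and $W''\subseteq\gamma(W)$ with $\dim\gamma(W)=\dim W$ we obtain $W''=W=\gamma(W)$. To produce $\tilde\gamma$, I would use functoriality of the minimal good model (extending Lemma \ref{Lemma: Bohr compactification and abstract automorphisms} to the $S$-adic setting): $\gamma$ commensurating $\Lambda'$ induces a continuous automorphism $\gamma_B$ of $B$ with $\tau\circ\gamma=\gamma_B\circ\tau$ on $\langle\Lambda'\rangle$ (after passing to a finite-index subgroup if needed), and $\tilde\gamma:=\gamma\times\gamma_B$ on $A\times B=W\otimes\mathbb{A}_S$ satisfies $\tilde\gamma(A)=A$, $\tilde\gamma(B)=B$ by construction and restricts to $\gamma$ on the diagonal copy of $W$ by the compatibility relation together with $\gamma(W)=W$.

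I expect the only non-formal point to be the uniqueness/commensurability-invariance of the cut-and-project scheme $(A,B,\Delta)$ attached to $\Lambda'$ --- specifically that the rational structure $W$, equivalently the internal space $B$, is an invariant of the commensurability class. The subtlety is precisely that $\gamma$ commensurates but does not preserve $\langle\Lambda'\rangle$, so one cannot simply restrict $\gamma$ and invoke the Bohr-compactification lemma verbatim; one has to route the argument through the intersection $\Lambda''$ and the minimal good model, as above. Everything else is bookkeeping with the results recalled in this section.
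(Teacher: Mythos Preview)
The paper does not actually prove this proposition: it is introduced with ``Let us first recall Meyer's theorem for $S$-adic unipotent groups \cite[Prop.~1.1]{mac2023sadic} in a special case'' and then stated without argument, so all four items---including the equivariance assertion~(4)---are imported wholesale from the cited reference. Your proposal therefore goes beyond what the paper does; there is no in-paper proof to compare against.

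Your sketch of (4) is nonetheless reasonable and you have put your finger on the right difficulty. Two remarks. First, for $\gamma(W)=W$ there is a shorter route than the dimension argument via a putative $S$-adic minimal model: since $\Lambda'$ is laminar, part (1) of Proposition~\ref{Proposition: Minimal commensurable approximate subgroup} (whose proof only uses Baire category and a good model, and goes through verbatim for any commensurable $\Lambda''\subset\langle\Lambda'\rangle$) gives that $\langle\Lambda'''\rangle$ has finite index both in $\langle\Lambda'\rangle$ and in $\gamma\langle\Lambda'\rangle$; taking $\mathbb{Q}$-spans yields $W''=W=\gamma(W)$ directly, without needing any uniqueness statement for the internal space. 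Second, once $\gamma(W)=W$ is known, the lift $\tilde\gamma$ can be taken to be the ``natural'' one $\gamma|_W\otimes\id_{\mathbb{A}_S}$---this is exactly how the paper uses (4) later in the proof of Proposition~\ref{Proposition: quasi-cocycle takes values in Lambda}. A short computation with the $\mathbb{Q}_v$-linear extension of $\iota_v\colon W\to\mathbb{Q}_v^{n_v}$ shows $\iota_v^{\mathbb{Q}_v}\circ(\gamma|_W\otimes\id)=\gamma_v\circ\iota_v^{\mathbb{Q}_v}$, hence $\tilde\gamma$ preserves each $V_v=\ker\iota_v^{\mathbb{Q}_v}$ and thus $B$; that it preserves a chosen complement $A$ (rather than just acting on the quotient) is where the equivariance of the splitting genuinely enters, and this is indeed the point you flag as non-formal. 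Your invocation of an ``$S$-adic counterpart'' of Lemma~\ref{Lemma: Bohr compactification and abstract automorphisms} is a legitimate way to handle it, though the paper does not supply such a statement explicitly.
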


We can now prove:

\begin{proposition}\label{Proposition: quasi-cocycle takes values in Lambda}
With $L, A, \Lambda$ and $a$ as above. Suppose that $A$ does not contain any non-trivial central element. Let $W \subset A$ be the $\mathbb{Q}$-vector subspace given by Proposition \ref{Proposition: Meyer's theorem in S-adic vector spaces} applied to $\Lambda_r$. Then $a=a_1 + a_2$ with $a_1$ a $1$-cocycle and $a_2(\Lambda_{red}) \subset \frac{1}{m}\langle \Lambda_r \rangle$ for some integer $m>0$.
\end{proposition}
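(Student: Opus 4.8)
The plan is to pass to an arithmetic model of $\Lambda$ and then peel off the part of $a$ responsible for its failure to be rational, isolating a genuine cocycle.

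First I would import the arithmetic structure. By Proposition \ref{Proposition: Arithmeticity from action on subgroups} applied to the normal abelian subgroup $A\triangleleft L\ltimes A$ (on which $\Lambda^2\cap A$ is an approximate lattice, $\Lambda$ being taken Zariski-dense), the projection of $\Lambda$ to $\left(L\ltimes A\right)/C_{L\ltimes A}(A)$, hence $\Lambda_{red}$ itself up to a finite kernel and commensurability, is a generalised arithmetic approximate subgroup; together with Proposition \ref{Proposition: BHC} this lets me assume, after replacing $\Lambda$ by a commensurable approximate lattice and $\langle\Lambda\rangle$ by a finite-index subgroup, that $\langle\Lambda\rangle$ is commensurable with $\mathbf{G}(\mathcal{O}_{K,S'})$ embedded diagonally, for a number field $K$, a finite set of places $S'\supseteq S$ and a $K$-group $\mathbf{G}=\mathbf{L}\ltimes\mathbf{A}$ with $A=\prod_{v\in S}\mathbf{A}(K_v)$ and $L$ containing $\langle\Lambda_{red}\rangle$ as a Zariski-dense subgroup. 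Write $S''=S'\setminus S$. Then Proposition \ref{Proposition: Meyer's theorem in S-adic vector spaces} applied to $\Lambda_r$ recovers $W$, identified with $\mathbf{A}(K)$ regarded as a $\mathbb{Q}$-vector space, the splitting $W\otimes\mathbb{A}_S\simeq A\oplus B$ with $B\simeq\mathbf{A}(\mathbb{A}_{K,S''})$, the discrete subgroup $\langle\Lambda_r\rangle$ commensurable with $\mathbf{A}(\mathcal{O}_{K,S})$, and the good model $\tau$ realised as the restriction to the diagonal copy of $W$ of the linear projection onto $B$; by part (4) of that proposition this projection is $\langle\Lambda_{red}\rangle$-equivariant. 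I also record that, since $A$ has no non-zero $\langle\Lambda_{red}\rangle$-invariant vector and $\langle\Lambda_{red}\rangle$ is Zariski-dense in the reductive group $L$, the algebraic $\mathbf{L}$-module $\mathbf{A}$ is semisimple with no invariants, so that the $\mathbb{Q}[\langle\Lambda_{red}\rangle]$-modules $W=\mathbf{A}(K)$, $B$ and $A^\#:=\mathbf{A}(\mathbb{A}_{K,S''})$ are all semisimple without invariant vectors.

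Next comes the decomposition itself. Because $\langle\Lambda\rangle\to\langle\Lambda_{red}\rangle$ is a genuine group homomorphism, the section giving $a$ may be chosen inside $\langle\Lambda\rangle$, so every value $a(\lambda)$ is the $\mathbf{A}$-component of a $K$-rational (diagonal) element of $\mathbf{G}(\mathcal{O}_{K,S'})$; thus $a$ already takes values in the diagonal copy of $\mathbf{A}(K)$, that is, in $W$. For $\lambda\in\Lambda_{red}$ the relevant lift is, by construction of $\Lambda$ as a cut-and-project set, bounded at the places of $S''$, so $a(\Lambda_{red})\subset\tfrac{1}{m_0}\langle\Lambda_r\rangle$; the only obstruction is that along powers $\Lambda_{red}^m$ the denominators of $a$ at the auxiliary places grow. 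To absorb this I would compare $a$ with the section coming from the splitting $\mathbf{G}(\mathcal{O}_{K,S'})=\mathbf{L}(\mathcal{O}_{K,S'})\ltimes\mathbf{A}(\mathcal{O}_{K,S'})$ (available after enlarging $S'$ so that $\mathbf{G}=\mathbf{L}\ltimes\mathbf{A}$ is defined over $\mathcal{O}_{K,S'}$): the discrepancy between the structural extension $1\to\langle\Lambda\rangle\cap A\to\langle\Lambda\rangle\to\langle\Lambda_{red}\rangle\to 1$ and this split extension is, on a finite-index subgroup, a genuine $1$-cocycle, and semisimplicity of the modules above, through the existence of equivariant complements, is what lets me spread it out to a genuine cocycle $a_1:\langle\Lambda_{red}\rangle\to W\subset A$ defined on all of $\langle\Lambda_{red}\rangle$ with $a_2:=a-a_1$ taking values in $\tfrac1m\langle\Lambda_r\rangle$. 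By \eqref{Eq: quasi-cocycle equation} the defect of $a_2$ coincides with that of $a$, so $a_2$ is again a quasi-cocycle, now with values in $\tfrac1m\langle\Lambda_r\rangle$, and $\tau\circ a_2$ will then be the quasi-cocycle with coefficients in the finite-dimensional space $B$ that this section aims at.

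The step I expect to be delicate is precisely this passage from the arithmetic comparison to an honest equality $a=a_1+a_2$ with a \emph{uniform} denominator $m$: one matches the window-respecting section of $\Lambda\to\Lambda_{red}$ with the algebraic splitting only up to finite index, then propagates the decomposition along the whole of $\langle\Lambda_{red}\rangle$ using the quasi-cocycle relation and the $S''$-boundedness of $\Lambda_{red}$, all the while controlling how conjugation by $\langle\Lambda_{red}\rangle$ — which introduces unbounded denominators at $S''$ — interacts with $\langle\Lambda_r\rangle$ and with the Meyer decomposition of $\Lambda_r$. The no-invariant-vectors hypothesis is used exactly to make this bookkeeping work: it provides the equivariant complements in $W$, $B$ and $A^\#$ and, equivalently, prevents a $\langle\Lambda_{red}\rangle$-invariant line in $W$ from producing an unbounded quasi-morphism component of $a$ that no genuine cocycle could absorb.
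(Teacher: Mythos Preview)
The central gap is your arithmeticity import. Proposition \ref{Proposition: Arithmeticity from action on subgroups} only tells you that the \emph{projection} of $\Lambda$ to $(L\ltimes A)/C_{L\ltimes A}(A)\simeq L$ is a GAAS; it says nothing about $\langle\Lambda\rangle$ itself. Your claim that ``$\langle\Lambda\rangle$ is commensurable with $\mathbf{G}(\mathcal{O}_{K,S'})$ embedded diagonally'' for some $K$-group $\mathbf{G}=\mathbf{L}\ltimes\mathbf{A}$ is precisely what is at stake: if it held, $\Lambda$ would be a model set and the quasi-cocycle $a$ would already be at bounded distance from a cocycle. The whole point of this section is that $\Lambda$ need \emph{not} be laminar, so you cannot assume a joint $K$-structure on $L\ltimes A$ compatible with $\Lambda$. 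In particular your assertion that ``$a$ already takes values in $W$'' because the section lies in $\langle\Lambda\rangle\subset\mathbf{G}(K)$ is circular, and the phrase ``by construction of $\Lambda$ as a cut-and-project set'' assumes laminarity outright.

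The paper proceeds quite differently. It never puts a $K$-structure on $A$; instead it works intrinsically with the $\mathbb{Q}$-span $W$ of $\Lambda_r$ from Proposition \ref{Proposition: Meyer's theorem in S-adic vector spaces}, shows that the Zariski-closure of $\langle\Lambda_{red}\rangle$ acting on any finitely generated $\mathbb{Q}[\langle\Lambda_{red}\rangle]$-submodule of $W\otimes\mathbb{A}_S$ is reductive, and then uses Weyl complete reducibility together with Borel density and the no-invariant-vector hypothesis to prove the non-obvious inclusion $\langle\Lambda\rangle\cap A\subset W$. The decomposition $a=a_1+a_2$ then comes from an equivariant projection $\pi_\perp$ onto $W$ inside the $\langle\Lambda_{red}\rangle$-module generated by $a(\langle\Lambda_{red}\rangle)$: one sets $a_2=\pi_\perp\circ a$, and $a_1=a-a_2$ is a genuine cocycle because its defect lies in $\ker\pi_\perp\cap(\langle\Lambda\rangle\cap A)\subset\ker\pi_\perp\cap W=\{0\}$. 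Finally, the uniform denominator $a_2(\Lambda_{red})\subset\tfrac{1}{m}\langle\Lambda_r\rangle$ is obtained not from any window argument but from the efficient-generation theorem of \S\ref{Section: Distortion of approximate lattices in semi-simple groups}: one produces a $\Lambda_{red}'$-stable subgroup $N$ commensurable with $\langle\Lambda_r\rangle$ and then walks along an efficient word for each $\lambda\in\Lambda_{red}'$, using the quasi-cocycle relation at each step to keep $a_2$ inside $\tfrac{1}{m_1}N$. Your proposal has no substitute for this inductive mechanism.
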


\begin{proof}
By Proposition \ref{Proposition: Meyer's theorem in S-adic vector spaces}, the conjugation action of $\langle \Lambda \rangle$ on $A$ stabilises $W$. Write $\rho_S$ the natural lift of that action to $W \otimes \mathbb{A}_S$ ((4) of Proposition \ref{Proposition: Meyer's theorem in S-adic vector spaces}) and $\pi: W \otimes \mathbb{A}_S \rightarrow A$ the projection. Let $\mathbf{L}$ denote the Zariski-closure of $\rho_S(\langle \Lambda \rangle)$ in $\GL(W \otimes \mathbb{A}_S)$ and let $\mathbf{U}$ denote its unipotent radical. Then $\mathbf{L}(\mathbb{A}_S)$ is contained in $\GL(A) \times \GL(B)$ where we have identified $\GL(A)$ with $\{g \in \GL(W \otimes \mathbb{A}_S): g(A)=A, g_{|B}=id_{B}\}$ and symmetrically for $\GL(B)$. Moreover, the natural projection $\pi: \GL(A) \times \GL(B) \rightarrow \GL(A)$ sends $\rho_S(\gamma)$ to $\rho(\gamma)$ for all $\gamma \in \langle \Lambda \rangle$ ((4) of Proposition \ref{Proposition: Meyer's theorem in S-adic vector spaces}). Then $\pi(\mathbf{U}(W \otimes \mathbb{A}_S))$ is a unipotent subgroup of $L$ that is normalised by $\langle \Lambda \rangle$. So $\pi(\mathbf{U}(\mathbb{Q}))=e$ since $\langle \Lambda \rangle$ is Zariski-dense in the reductive group $L$. But the projection of $W$ to $A$ is injective. So the restriction of $\pi$ to $\mathbf{L}(\mathbb{Q})$ is injective as well. Therefore, $\mathbf{U}(\mathbb{Q})$ is trivial i.e. $\mathbf{L}$ is reductive.

We claim that for any element $x \in W \otimes \mathbb{A}_S$, the $\mathbb{Q}$-span $W(x)$ of $\langle \Lambda_{red} \rangle \cdot x$ is finite-dimensional and the Zariski-connected component of $e$ of the Zariski-closure of $\langle \Lambda_{red} \rangle$ seen as a subset of $\GL(W(x))$ is reductive. Indeed, on the one hand, this is obvious when $x=y \otimes q$ with $y \in W$ and $q \in \mathbb{A}_S$. On the other hand, a general $x$ can be written $x=\sum_{i=1}^r x_i \otimes q_i$ where for all $i \in \{1, \ldots, r\}$, $x_i \in W$ and $q_i \in \mathbb{A}_S$. So 
$$W(x) \subset W(x_1 \otimes q_1) + \cdots + W(x_r \otimes q_r).$$  
So $W(x)$ is indeed finite-dimensional and the image of $\langle \Lambda_{red} \rangle$ in $\GL(W(x))$ has Zariski-closure with reductive Zariski-connected component of $e$. This proves our claim. Now, for all integers $n \geq 0$ define $W_n$ as the $\mathbb{Q}$-vector space generated by $\rho\left(\langle \Lambda_{red} \rangle\right) \left(\Lambda^n \cap A\right)$. By construction, $\Lambda^n \cap A$ is covered by finitely many translates of $W$ (Lemma \ref{Lemma: Intersection of approximate subgroups}). So there are $x_1, \ldots, x_r \in A$ such that 
$\Lambda^n \cap A \subset \{x_1,\ldots, x_r\} + W$. Hence, 
$$W_n \subset W + W(x_1) + \ldots + W(x_2) $$
which means that $W_n$ is finite dimensional, the image of $\langle \Lambda_{red} \rangle$ in $\GL(W_n)$ is Zariski-dense in a reductive subgroup and $W_n$ contains $W$ as a sub-representation of $\langle \Lambda_{red} \rangle$. By Weyl's complete reducibility, there is therefore a sub-$\langle \Lambda_{red} \rangle$-representation $W^{\perp}$ such that $W \oplus W^{\perp} = W_n$. Let $\pi_{\perp}: W_{n} \rightarrow W^{\perp}$ denote the projection to $ W^{\perp}$ parallel to $W$. Then $\pi_{\perp}$ commutes with the natural action of $\langle \Lambda_{red} \rangle$. In addition, $$\pi_{\perp}(\Lambda^n \cap A)\subset F_n:=\{\pi_{\perp}(x_1),\ldots, \pi_{\perp}(x_2)\}.$$ Let $F_{n-2} \subset F_n$ denote $\pi_{\perp}(\Lambda^{n-2} \cap A)$. We have 
$$\Lambda_{red} \cdot (F_{n-2}) = \pi_{\perp}(\Lambda_{red}\cdot(\Lambda^{n-2} \cap A)) \subset \pi_{\perp}(\Lambda^n \cap A) \subset F_n.$$
Since $F_n$ is finite, for any $x \in F_{n-2}$,  $\Lambda_x:=\Lambda_{red}^2 \cap \Stab(x)$ is commensurable with $\Lambda_{red}$ (Corollary \ref{Corollary: Intersection with stabiliser}). Since $\Lambda_x$ is an approximate lattice, the Borel density theorem for approximate lattices (\cite[\S A.11]{hrushovski2020beyond}) implies that $x$ is stabilised by $L$. But $L$ has no non-trivial invariant vectors, so $x=0$. As a conclusion, $\Lambda^{n-2} \cap A \subset W$ for all $n \geq 0$.

Let $W'$ denote the smallest $\mathbb{Q}$-vector subspace stable under the action of $\langle \Lambda_{red} \rangle$ containing $a(\langle \Lambda_{red} \rangle)$. Although $W'$ is \emph{a priori} not finite-dimensional, it is an ascending union of finite dimensional sub-representations of $\langle \Lambda_{red} \rangle$. Proceeding by induction and with the same arguments as above, we find that there is a projection $\pi_{\perp}: W' \rightarrow W$ that commutes with the action of $\langle \Lambda_{red} \rangle$.  Write $a_2= \pi_{\perp} \circ a$ and $a_1= a - a_2$. For all $\gamma \in \langle \Lambda_{red} \rangle$, we have $a_1(\gamma) \in \ker \pi_{\perp}$ and $a_2(\gamma) \in W$. Thus, (\ref{Eq: quasi-cocycle equation}) implies that for all $\lambda_1 \in \Lambda_{red}^{k_1}, \lambda_2 \in \Lambda_{red}^{k_2}$ 
\begin{equation*}
a_1(\lambda_1\lambda_2) - \lambda_1 \cdot a_1(\lambda_2) - a_1(\lambda_1) \in \Lambda^{2(k_1 + k_2)} \cap \ker \pi_{\perp}
\end{equation*} 
and 
\begin{equation*}
a_2(\lambda_1\lambda_2) - \lambda_1 \cdot a_2(\lambda_2) - a_2(\lambda_1) \in \Lambda^{2(k_1 + k_2)} \cap W.
\end{equation*} 
Since $\langle \Lambda \rangle \cap A \subset W$ by the preceding paragraph,  we have that $a_1$ is a $1$-cocycle and $a_2$ satisfies (\ref{Eq: quasi-cocycle equation}). 

Since $\Lambda^{100} \cap W$ is laminar, we can find by Corollary \ref{Corollary: Bounded approximate subgroups of of automorphisms are relatively compact} an approximate subgroup $\Xi \subset \langle \Lambda \rangle \cap W$ commensurable with $\Lambda^2 \cap W$ and containing $\Lambda^{100} \cap W$, and a subgroup $N \subset \langle \Lambda \rangle \cap W$ containing $\langle \Xi \rangle$ as a finite index subgroup such that $\Lambda_{red}$ is covered by finitely many translates of the normaliser $N_{\langle \Lambda \rangle}(N)$. Define $\Lambda_{red}':= \Lambda_{red}^2 \cap N_{\langle \Lambda \rangle}(N)$. As $\Lambda^2 \cap W$ spans $W$, we can furthermore find an integer $m_0 >0$ such that $N_0:=m_0N \subset \langle\Lambda^2 \cap W\rangle$. Suppose that $\Lambda_{red}'$ is not contained in a lattice. By Theorem \ref{THEOREM: EFFICIENT MESSY GENERATOIN}, it is efficiently generated by some finite subset $F \subset \langle\Lambda_{red}' \rangle$. For every $\lambda \in \Lambda_{red}'$ there are $\lambda_0=e,\ldots, \lambda_m=\lambda \in \Lambda_{red}'$ such that for all $l \in \{0, \ldots, m-1\}$ there is $i \leq l$ satisfying $\lambda_{l+1}\lambda_i^{-1} \in F\lambda_l\lambda_i^{-1} F^{-1}$. Since $N$ spans $W$, there is $m_1 \geq m_0$ such that $a_2(F) \subset \frac{1}{m_1}N$. So (\ref{Eq: quasi-cocycle equation}) combined with the fact that $\Lambda_{red}'$ normalises $N$ implies 
 $$a_2(\lambda_{l+1}) \in F \cdot a_2(\lambda_l) + \frac{1}{m_1}N.$$
 Since $F$ normalises $N$ as well, we find by induction that $a_2(\lambda_{l+1}) \in \frac{1}{m_1}N$. Now, $\Lambda_{red}$ is covered by finitely many translates of $\Lambda_{red}'$ and $N$ spans $W$, so we can find an integer $m_2 \geq m_1$ such that $a_2(\Lambda_{red}) \subset \frac{1}{m_2}N$. 
 
 It remains to deal with the case $\Lambda_{red}'$ contained in a lattice. Then we proceed as above invoking finite generation of said lattice (\cite[\S IX.3]{MR1090825}) rather than efficient generation of $\Lambda_{red}'$. 
\end{proof}

\begin{remark}\label{Remark: Decomp quasi-cocycles}
As the choice of $W^{\perp}$ in the proof of Proposition \ref{Proposition: quasi-cocycle takes values in Lambda} is not usually unique, the decomposition $a$ into $a_1 + a_2$ is far from being unique. However, if $b_1 + b_2$ denotes a second decomposition of $a$ with $b_1$ a $1$-cocycle and $b_2$ taking values in $W$, then $\delta:= a_1 - b_1 = b_2 - a_2$. The first equality yields that $\delta$ is a $1$-cocycle, while the second equality shows that it takes values in $W$. So all decompositions are equal modulo $Z^1(\langle \Lambda_{red} \rangle; W)$.  
\end{remark}

 We can construct the relevant good model alluded to above in a key case.

\begin{lemma}\label{Proposition: the right model to look quasi-cocycle through, no invariant vector case}
Let $G$ be an $S$-adic linear group. Suppose that $G = S \ltimes U$ where $S$ is semi-simple, $U$ is unipotent abelian and $S$ fixes no non-trivial element of $U$. Let $\Lambda \subset G$ be an approximate lattice such that $U \cap \Lambda^2$ is an approximate lattice in $U$ and such that the projection of $\Lambda$ to $S$ is a lattice $\Gamma$. For all $\gamma\in \Gamma$ let $a(\gamma) \in U$ be chosen such that $(\gamma, a(\gamma)) \in \Lambda$. Then there is a subgroup $\Delta \subset U$ and a group homomorphism $\tau: \Delta \rightarrow \mathbb{R}^d$ with relatively dense image such that: 
\begin{enumerate}
\item $\Delta$ is normalised by $\Lambda$. In particular, $\Gamma$ acts on $\Delta$;
\item $\Gamma$ acts by uniformly bounded linear operators on $\mathbb{R}^d$ and $\tau$ is $\Gamma$-equivariant; 
\item $\tau$ is a good model of an approximate subgroup $\Xi \subset \Delta$ commensurable with $\Lambda^2 \cap U$;
\item there is a $1$-cocycle $b_{phy}: \Gamma \rightarrow U$ such that $a-b_{phy}$ takes values in $\Delta$.
\end{enumerate}
\end{lemma}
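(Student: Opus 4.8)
The plan is to combine Meyer's theorem for $S$-adic vector spaces with the decomposition of the section $a$ into a genuine $1$-cocycle plus a ``rational'' remainder, and then to upgrade the good model of the internal data of $\Lambda^2\cap U$ to one equivariant for the lattice acting on it. Throughout, write $U_\Lambda:=\Lambda^2\cap U$, a uniform approximate lattice in $U$ by hypothesis. Since $(\gamma,a(\gamma))(e,w)(\gamma,a(\gamma))^{-1}=(e,\gamma\cdot w)$ for $w\in U$, the lattice $\Gamma=\langle\Lambda_{red}\rangle$ acts on $U$ by linear automorphisms, and this action commensurates $U_\Lambda$ since $\bigcup_{\lambda\in\Lambda}\lambda U_\Lambda\lambda^{-1}\subset\Lambda^6\cap U$ is commensurable with $U_\Lambda$ (Lemma~\ref{Lemma: Intersection of approximate subgroups}). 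First I would apply Proposition~\ref{Proposition: Meyer's theorem in S-adic vector spaces} to $U_\Lambda$: this produces a finite-dimensional $\mathbb{Q}$-vector subspace $W\subset U$ spanned by an approximate subgroup commensurable with $U_\Lambda$, an isomorphism $W\otimes\mathbb{A}_S\simeq U\times B$, and --- since every $\gamma\in\Gamma$ commensurates $U_\Lambda$ --- the facts that $\gamma(W)=W$ and that the induced action on $W\otimes\mathbb{A}_S$ preserves both factors. Because $S$ fixes no nonzero vector of $U$, Proposition~\ref{Proposition: quasi-cocycle takes values in Lambda} then applies and yields $a=a_1+a_2$ with $a_1\in Z^1(\Gamma;U)$ and $a_2(\Gamma)\subset\tfrac1m N$, where $m$ is an integer and $N\subset\langle U_\Lambda\rangle$ is the finitely generated, full-rank (inside $W$) subgroup produced in the proof of that proposition. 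I set $b_{phy}:=a_1$, so $a-b_{phy}=a_2$ takes values in $\tfrac1m N\subset W$.

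Next I would construct $\Delta$. Since $U_\Lambda$ has a good model (it is a Meyer set) and $\bigcup_{\lambda\in\Lambda}\lambda U_\Lambda\lambda^{-1}$ is commensurable with $U_\Lambda$, Corollary~\ref{Corollary: Bounded approximate subgroups of of automorphisms are relatively compact} supplies $\Lambda'\subset\langle\Lambda\rangle$ commensurable with $\Lambda$ that normalises a subgroup $\Delta'\subset U$ containing $\langle U_\Lambda\rangle$ with finite index; one checks $\Delta'\subset W$. The image of $\langle\Lambda'\rangle$ in $S$ is a finite-index subgroup $\Gamma'\le\Gamma$ (commensurable approximate lattices inside $\langle\Lambda_{red}\rangle$ generate commensurable subgroups, by arithmeticity of $\Gamma$, cf.\ Proposition~\ref{Proposition: BHC}), so $\Delta'$ is $\Gamma'$-invariant. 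I would then set
$$\Delta:=\tfrac1m N+\sum_{\gamma\Gamma'\in\Gamma/\Gamma'}\gamma\cdot\Delta',$$
a finite sum of finitely generated subgroups of $W$. Then $\Delta$ is $\Gamma$-invariant, contains $\langle U_\Lambda\rangle$ and $a_2(\Gamma)$, is finitely generated and of full $\mathbb{Q}$-rank in $W$, hence commensurable with $\langle U_\Lambda\rangle$ as a subgroup of $U$, and is normalised by $\Lambda$ (by $\Gamma$ on the $S$-factor and trivially by the abelian group $\Lambda\cap U$). This gives (1), and since $a-b_{phy}=a_2$ has values in $\Delta$, also (4).

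Finally I would build $\tau$. As $U_\Lambda$ is laminar, Proposition~\ref{Proposition: Minimal commensurable approximate subgroup} together with Lemma~\ref{Lemma: Bohr compactification and abstract automorphisms} yields an approximate subgroup $\Xi$ commensurable with $U_\Lambda$, which may be taken inside $\Delta$, and a good model $\tau_0$ of $\Xi$ onto a connected abelian Lie group without compact normal subgroup --- necessarily some $\mathbb{R}^d$ --- with dense image, such that each $\gamma\in\Gamma$ (acting on $\Delta$ commensurating $\Xi$) induces a unique continuous automorphism $\overline\gamma$ of $\mathbb{R}^d$ with $\tau_0\circ\gamma=\overline\gamma\circ\tau_0$. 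Extending $\tau_0$ from $\langle\Xi\rangle$ to $\Delta$, using $[\Delta:\langle\Xi\rangle]<\infty$ and unique divisibility of $\mathbb{R}^d$, gives a homomorphism $\tau:\Delta\to\mathbb{R}^d$ which remains a good model of an approximate subgroup commensurable with $U_\Lambda$ --- here one uses that $\tau_0$ is injective (a bounded subgroup of the torsion-free group $U$ is trivial) and that $\tau_0$-preimages of compact sets are covered by finitely many translates of $\Xi$ --- and which is $\Gamma$-equivariant by construction; this is (3). For the uniform boundedness in (2), I would choose $k$ with $\tau_0(\Xi^k)$ of nonempty interior (Baire) and note $\bigcup_{\gamma\in\Gamma}\gamma\cdot\Xi^k\subset\Lambda^{6k}\cap U$, so $\overline\gamma(\tau_0(\Xi^k))=\tau_0(\gamma\cdot\Xi^k)\subset\tau_0(\Lambda^{6k}\cap U)$ uniformly in $\gamma$; thus $\{\overline\gamma:\gamma\in\Gamma\}$ carries a fixed nonempty open subset of $\mathbb{R}^d$ into a fixed compact subset and is therefore relatively compact in $\GL_d(\mathbb{R})$.

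The step I expect to be the main obstacle is the construction of a single subgroup $\Delta\subset U$ that is simultaneously $\Gamma$-invariant, commensurable with $\langle\Lambda^2\cap U\rangle$ (so that it can carry a good model of something commensurable with $\Lambda^2\cap U$), and large enough to contain the image of the non-homomorphic correction $a_2$ --- together with the bookkeeping required to transport the internal good model through this commensuration and to verify that the resulting $\Gamma$-action on $\mathbb{R}^d$ is by uniformly bounded operators rather than merely continuous.
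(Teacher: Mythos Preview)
Your approach is correct but takes a genuinely different route from the paper. The paper does not use the abstract minimal Lie model of Proposition~\ref{Proposition: Minimal commensurable approximate subgroup} at all: instead it reads off $\tau_0$, $\Delta$ and the $\Gamma$-equivariance directly from the proof of Proposition~\ref{Proposition: quasi-cocycle takes values in Lambda} (i.e.\ from the explicit internal space $B=\prod_{v\in S}V_v$ of Proposition~\ref{Proposition: Meyer's theorem in S-adic vector spaces}, whose part (4) already encodes the action). The target of $\tau_0$ is then $\prod_{v\in S}\mathbb{Q}_v^{n_v}$ rather than $\mathbb{R}^d$, and the paper's only new step is to show that on each non-Archimedean place $v$ the component $q_v$ of $\tau_0\circ q$ is bounded --- using finite generation of $\Gamma$ and a $\Gamma$-invariant compact open subgroup of $\mathbb{Q}_v^{n_v}$ --- so that after projecting away the compact non-Archimedean part one lands in $\mathbb{R}^{n_\infty}$. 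This is more concrete and explains \emph{why} the model is Euclidean. Your argument, by contrast, obtains $\mathbb{R}^d$ abstractly as the minimal connected abelian Lie target without compact subgroups, and then has to manufacture $\Gamma$-equivariance and uniform boundedness separately; this works, but two links you left implicit deserve one sentence each: (i) the automorphisms of the Bohr compactification $H_0$ from Lemma~\ref{Lemma: Bohr compactification and abstract automorphisms} descend to $\mathbb{R}^d=H_0/K_0$ because the maximal compact $K_0$ is characteristic; and (ii) after replacing $\Xi$ by $\Xi^2\cap\Delta$ so that $\Xi\subset\Delta$, the finite-index extension of $\tau_0$ to $\Delta$ remains a good model only of some approximate subgroup commensurable with $\Xi$ (which is all the statement asks), not of $\Xi$ itself --- your remark about injectivity is not quite the right justification here. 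Both routes reach the goal; the paper's is shorter because the Meyer internal space already carries all the structure you reconstruct.
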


\begin{proof}
By Proposition \ref{Proposition: quasi-cocycle takes values in Lambda} there is a $\mathbb{Q}$-vector subspace $W \subset U$ invariant under $\Gamma$ such that $W$ contains $\langle \Lambda \rangle \cap U$ and a $1$-cocycle $b_{phy}$ such that $q:=b_{phy} - a$ takes values in $W$. The proof of Proposition \ref{Proposition: quasi-cocycle takes values in Lambda} gives moreover a $\mathbb{Q}$-linear map $\tau_0: W \rightarrow \prod_{v \in S} \mathbb{Q}_v^{n_v}$, a subgroup $\Delta$ containing $q(\Gamma)$ and $\Lambda^4 \cap W$, and normalised by $\Gamma$, such that $(\tau_0)_{|\Delta}$ is a good model of an approximate subgroup commensurable with $\Lambda^2 \cap A$. Finally, there is an action of $\Gamma$ by uniformly bounded linear operators on $\prod_{v \in S} \mathbb{Q}_v^{n_v}$ making $\tau_0$ $\Gamma$-equivariant. We will show that the $\mathbb{Q}_v^{n_v}$ component $q_v$ of $\tau_0 \circ q$ is bounded if $v$ is non-Archimedean. Let $X \subset \Gamma$ be a finite generating subset of $\Gamma$ \cite[\S IX.3]{MR1090825}. Since $\Gamma$ acts by uniformly bounded operators, there is a compact open subgroup $O$ of $ \mathbb{Q}_v^{n_v}$ that is stable set-wise under $\Gamma$ and that contains both $q_v(X)$ and the projection of $\tau_0( \Lambda^4 \cap W)$. Then the quasi-cocycle relation (\ref{Eq: quasi-cocycle equation}) satisfied by $q$ implies that $q_v(\Gamma) \subset O$. So there is a compact open subgroup $O$ of $ \prod_{v\in S \setminus \{\infty\}} \mathbb{Q}_v^{n_v}$ such that $\tau_0 (\Delta) \subset \mathbb{R}^{n_{\infty}} \times O$. Hence, $\tau$ defined as the composition of $\tau_0$ and the projection $\mathbb{R}^{n_{\infty}} \times O\rightarrow \mathbb{R}^{n_{\infty}}$ is as required. 
\end{proof}

\subsection{Boundedness of quasi-cocycles and splitting}
 With the notation of \S \ref{Subsection: Representation-theoretic sum-product phenomenon}. Let $a$ be as above and let $a_1$ and $a_2$ be as in Proposition \ref{Proposition: quasi-cocycle takes values in Lambda}. We want to prove that in many situations both $a_1$ and $a_2$ `vanish'. Throughout this paper, the $1$-cocycle $a_1$ will be easily dealt with invoking well-established cohomology results for higher-rank lattices due to Margulis. Our main focus is therefore to study $a_2$. 

We start by rephrasing the `vanishing' of $a_2$ in terms of quasi-cocycles with target a finite-dimensional vector space. We have that $a_2$ takes values in the $\mathbb{Q}$-span $W$ of $ \Lambda_r $ and $a_2\left( \Lambda_{red}\right) \subset \frac{1}{m}\langle \Lambda_r \rangle$. For all $v \in S$, we have moreover linear maps $f_v: W \rightarrow V_v$ towards a $\mathbb{Q}_v$-vector space equipped with an action of $\langle \Lambda_{red} \rangle$, denoted by $\cdot$, such that $f_v(\lambda\cdot w) = \lambda \cdot f_v(w)$ and such that the diagonal map $f = \prod f_v$ restricted to $\langle \Lambda_r \rangle$ is a good model of $\Lambda_r$. For all $v \in S$, the set $V_v$ corresponds to the $\mathbb{Q}_v$-factor of the space $B$ from Proposition \ref{Proposition: Meyer's theorem in S-adic vector spaces}. Since $f_v(\Lambda_r)$ is bounded in $V_v$,  \eqref{Eq: quasi-cocycle equation} implies that $q_v:=f_v \circ a_2$ is a quasi-cocycle with respect to any norm on $V_v$. In addition:

\begin{proposition}\label{Proposition: Boundedness implies splitting}
Let $a, a_1, a_2$ and $q_v$ be as above. If $q_v(\Lambda_{red})$ is bounded for all $v \in S$, then $a_2(\Lambda_{red})$ is covered by finitely many translates of $\Lambda_r$. In this case, the subset 
 $$\Xi:=\{(\lambda, a_1(\lambda)) \in G: \lambda \in \Lambda_{red}\}$$
 is an approximate subgroup covered by finitely many translates of $\Lambda$. 
 
 If, moreover, $a_1$ is a $1$-co-boundary, then there is a reductive Levi subgroup $L' \subset G$ such that $\Lambda^2 \cap L'$ is an approximate lattice in $L'$. 
\end{proposition}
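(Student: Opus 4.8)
The plan is to establish the three assertions in turn; throughout, the work is bookkeeping with the good model $f=\prod_{v\in S}f_v$, the $1$-cocycle identity for $a_1$, and the arithmetic of the semidirect product $L\ltimes A$.

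For the first assertion, observe that since $S$ is finite and each $V_v$ is finite dimensional over $\mathbb{Q}_v$, the hypothesis that $q_v(\Lambda_{red})$ is bounded for every $v\in S$ amounts to $f(a_2(\Lambda_{red}))$ being relatively compact in $\prod_{v\in S}V_v$. Now $a_2(\Lambda_{red})\subset\frac{1}{m}\langle\Lambda_r\rangle$, and I would first record that $\frac{1}{m}\langle\Lambda_r\rangle$ contains $\langle\Lambda_r\rangle$ as a finite-index subgroup: by Proposition \ref{Proposition: Meyer's theorem in S-adic vector spaces}, $\langle\Lambda_r\rangle$ is commensurable with a finitely generated $\mathbb{Z}_S$-module, and multiplication by $m$ is an automorphism of $W\otimes\mathbb{A}_S$, so the quotient $\frac{1}{m}\langle\Lambda_r\rangle/\langle\Lambda_r\rangle$ is finite. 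Writing $\frac{1}{m}\langle\Lambda_r\rangle=T+\langle\Lambda_r\rangle$ with $T$ finite, every $y\in a_2(\Lambda_{red})$ takes the form $t+\gamma$ with $t\in T$ and $\gamma\in\langle\Lambda_r\rangle$ satisfying $f(\gamma)\in f(a_2(\Lambda_{red}))-f(T)$, a fixed relatively compact subset of $\prod_v V_v$; since $f|_{\langle\Lambda_r\rangle}$ is a good model of $\Lambda_r$, the preimage under $f$ of any relatively compact set, intersected with $\langle\Lambda_r\rangle$, is covered by finitely many translates of $\Lambda_r$. Hence $a_2(\Lambda_{red})$ is covered by finitely many translates of $\Lambda_r$.

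For the second assertion, fix for each $\lambda\in\Lambda_{red}$ an element $a(\lambda)=a_1(\lambda)+a_2(\lambda)$ with $(\lambda,a(\lambda))\in\Lambda$, and write $a_2(\lambda)=\eta_\lambda+\xi_\lambda$ with $\eta_\lambda$ in a fixed finite subset $F_0\subset A$ and $\xi_\lambda\in\Lambda_r$ (first assertion). Since $\Lambda_r=\Lambda\cap A\subset\Lambda$, one gets $(\lambda,a_1(\lambda))=(e,-\eta_\lambda)(e,-\xi_\lambda)(\lambda,a(\lambda))\in(e,-F_0)\,\Lambda_r\,\Lambda\subset(e,-F_0)\,\Lambda^2$, so $\Xi$ is covered by finitely many translates of $\Lambda^2$, hence of $\Lambda$. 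That $\Xi$ is an approximate subgroup follows from the cocycle identity $a_1(\lambda_1\lambda_2)=a_1(\lambda_1)+\lambda_1\cdot a_1(\lambda_2)$, which gives $a_1(e)=0$ and $a_1(\lambda^{-1})=-\lambda^{-1}\cdot a_1(\lambda)$, hence $\Xi=\Xi^{-1}\ni e$, and also $\Xi^2=\{(\lambda,a_1(\lambda)):\lambda\in\Lambda_{red}^2\}\subset\{(e',a_1(e')):e'\in E\}\,\Xi$ for any finite $E\subset\langle\Lambda_{red}\rangle$ with $\Lambda_{red}^2\subset E\Lambda_{red}$.

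For the third assertion, suppose $a_1=dv_0$ with $v_0\in A$, i.e.\ $a_1(\lambda)=\lambda\cdot v_0-v_0$. A direct computation in $L\ltimes A$ gives $\Xi=g^{-1}(\Lambda_{red}\times\{0\})g$ with $g:=(e,v_0)\in A$. Set $L':=g^{-1}(L\times\{0\})g$: since $g$ belongs to the unipotent radical $A$ of $G$, $L'$ is a reductive Levi subgroup of $G$, and conjugation by $g$ identifies $(L\times\{0\},\Lambda_{red}\times\{0\})$ with $(L',\Xi)$, so $\Xi$ is an approximate lattice in $L'$. By the second assertion $\Xi$ is covered by finitely many translates of $\Lambda$, and trivially by one translate of the subgroup $L'$; by Lemma \ref{Lemma: Intersection of commensurable sets} it is therefore covered by finitely many translates of $\Lambda^2\cap L'$. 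As $\Xi$ is symmetric and relatively dense in $L'$, this forces $\Lambda^2\cap L'$ to be relatively dense in $L'$; and $\Lambda^2\cap L'$ is a uniformly discrete approximate subgroup, sitting inside the uniformly discrete set $\Lambda^2$ and being an approximate subgroup by Lemma \ref{Lemma: Intersection of approximate subgroups}. Hence $\Lambda^2\cap L'$ is an approximate lattice in $L'$.

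The heart of the matter — routine though it is, given the earlier results — lies in the first assertion: one must transport the relatively compact image of $a_2$ down through the good model $f$ while $a_2$ only a priori lands in the slightly larger group $\frac{1}{m}\langle\Lambda_r\rangle$, and this hinges on $\frac{1}{m}\langle\Lambda_r\rangle$ being commensurable with $\langle\Lambda_r\rangle$ (Proposition \ref{Proposition: Meyer's theorem in S-adic vector spaces}). The other two assertions are then formal manipulations of the cocycle identity and of conjugation by the unipotent element $(e,v_0)$.
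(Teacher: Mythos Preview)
Your proof is correct and follows essentially the same route as the paper's. The paper is terser: it states the first assertion without spelling out the finite-index step for $\frac{1}{m}\langle\Lambda_r\rangle/\langle\Lambda_r\rangle$, and for the second assertion it simply observes that $\lambda\mapsto(\lambda,a_1(\lambda))$ is a group homomorphism (because $a_1$ is a $1$-cocycle), so $\Xi$ is the homomorphic image of the approximate subgroup $\Lambda_{red}$ --- a slightly slicker packaging than your direct verification of the axioms, but the same content. The third assertion is handled identically, via conjugation by $(e,v_0)$ and Lemma~\ref{Lemma: Intersection of commensurable sets}.
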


\begin{proof}
Since $q_v$ is bounded for all $v$, $a_2(\Lambda_{red})$ is covered by finitely many translates of $\Lambda_r$. Let $F$ be a finite subset such that $a_2(\Lambda_{red}) \subset \Lambda_rF$. Then for all $\lambda \in \Lambda_{red}$ we have 
$$(\lambda, a_1(\lambda)\Lambda_rF) \cap \Lambda \neq \emptyset.$$
Hence, 
$$\Xi \subset F^{-1}\Lambda^2.$$
Moreover, $\Xi$ is an approximate subgroup since it is a homomorphic image of $\Lambda_{red}$. As in the previous section, identify $L$ with the subgroup $L \times \{e\}$ of $G=L \ltimes A$. If $a_1$ is a co-boundary, then there is $u \in A$ such that for all $\gamma \in \langle \Lambda_{red} \rangle$ we have $a_1(\gamma)=\gamma \cdot u - u$ i.e. 
$$(e,u^{-1})(\gamma, e)(e,u) = (\gamma, a_1(\gamma)).$$
Set $L' := (e,u^{-1}) L (e,u)$.  So, as in Lemma \ref{Lemma: Small values of quasi-cocycle imply splitting}, we have for all $\lambda_{red} \in \Lambda_{red}$, $$(\lambda_{red}, e)^{(e,u)} = (\lambda_{red}, a_1(\lambda_{red})) \in (e,-a_2(\Lambda_{red}))\Lambda \subset F^{-1}\Lambda^2.$$ By Lemma \ref{Lemma: Intersection of commensurable sets}, $L' \cap \Lambda^2$ is an approximate lattice commensurable with $(\Lambda_{red}, e)^{(e,u)}$.
\end{proof}

Note that the scope of Proposition \ref{Proposition: Boundedness implies splitting} is wider than that of Lemma \ref{Lemma: Small values of quasi-cocycle imply splitting}. The difference can be regarded as accounting for the possibility of choosing the `wrong' Levi subgroup at first. 

\subsection{Commutativity and vanishing}
We prove now a first result implying the vanishing of the quasi-cocycle. This will illustrate our approach in Subsection \ref{Subsection: a criterion for boundedness of quasi-cocycles}.  We use commutation relations to show the boundedness of the quasi-cocycle adapting a well-established strategy \cite[Lemma 5.1]{BurgerMonod}.

\begin{lemma}\label{Lemma: Commutativity implies boundedness of quasi-cocycles}
Suppose that $L=G_1G_2$ is an almost direct product of two closed subgroups and that there are $\Lambda_1 \subset \Lambda_{red}^2 \cap G_1$ and $\Lambda_2 \subset \Lambda_{red}^2 \cap G_2$ approximate lattices such that $\Lambda_{red}$ is commensurable with $\Lambda_1\Lambda_2$. Suppose that $\langle \Lambda_1 \rangle$ and $\langle \Lambda_2 \rangle$ (equivalently, $G_1$ and $G_2$) act each without non-trivial invariant vectors on $A$. If $q_v$ is as above, then $q_v(\Lambda_{red})$ is bounded. Moreover, there is a reductive Levi subgroup $L' \subset G$ such that $L' \cap \Lambda^2$ is an approximate lattice in $L'$.
\end{lemma}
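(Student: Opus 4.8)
\emph{Plan.} The strategy is to run Burger--Monod's commutation argument for (quasi\nobreakdash-)cocycles on products \cite[Lemma 5.1]{BurgerMonod} in the present twisted, approximate setting, and then feed the output into Proposition \ref{Proposition: Boundedness implies splitting}. Fix $v \in S$. Recall that $q_v = f_v\circ a_2$ is a quasi-cocycle valued in the \emph{finite-dimensional} $\mathbb{Q}_v$-vector space $V_v$ (Proposition \ref{Proposition: Meyer's theorem in S-adic vector spaces}), that $\Lambda_1,\Lambda_2\subseteq\Lambda_{red}^2$ act on $V_v$ by uniformly bounded operators, and that neither $\langle\Lambda_1\rangle$ nor $\langle\Lambda_2\rangle$ fixes a nonzero vector of $V_v$ — the last point because $V_v$ is, $\langle\Lambda_{red}\rangle$-equivariantly, a subspace of $W\otimes\mathbb{Q}_v$ with $W\subseteq A$, while $A^{\langle\Lambda_i\rangle}=A^{G_i}=0$ by the no-invariant-vectors hypothesis and Borel density \cite[\S A.11]{hrushovski2020beyond}. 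Write $c(\gamma,\gamma') = \gamma\cdot q_v(\gamma') - q_v(\gamma\gamma') + q_v(\gamma)$ for the defect; by definition $c$ is bounded on $X\times X$ for every $X$ bounded in $\mathcal{B}_{\Lambda_{red}}$, in particular on products of finitely many copies of the bounded set $\Lambda_1\cup\Lambda_2\cup\Lambda_1\Lambda_2$ together with any finite set.

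\emph{Boundedness of $q_v$ on $\Lambda_1$ and $\Lambda_2$.} Since $L=G_1G_2$ is an almost direct product of normal subgroups, $[G_1,G_2]$ is connected and contained in the finite group $G_1\cap G_2$, hence trivial; so any $\gamma_1\in\Lambda_1$ and $\gamma_2\in\Lambda_2$ commute, and expanding $q_v(\gamma_1\gamma_2)=q_v(\gamma_2\gamma_1)$ through $c$ in the two ways gives
$$\bigl(q_v(\gamma_1)-\gamma_2\cdot q_v(\gamma_1)\bigr)-\bigl(q_v(\gamma_2)-\gamma_1\cdot q_v(\gamma_2)\bigr)=c(\gamma_1,\gamma_2)-c(\gamma_2,\gamma_1),$$
whose right-hand side is uniformly bounded over $\gamma_1\in\Lambda_1$, $\gamma_2\in\Lambda_2$. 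Choose $\gamma_1^{(1)},\dots,\gamma_1^{(k)}\in\Lambda_1$ whose common fixed subspace in $V_v$ is $V_v^{\langle\Lambda_1\rangle}=0$; this is possible because $V_v$ is finite-dimensional. Specialising $\gamma_1=\gamma_1^{(j)}$ in the displayed identity, the vector $q_v(\gamma_1^{(j)})$ is fixed, $\gamma_2\cdot q_v(\gamma_1^{(j)})$ is bounded (as $\Lambda_2$ acts equicontinuously), and the two defect terms are bounded, so $q_v(\gamma_2)-\gamma_1^{(j)}\cdot q_v(\gamma_2)$ stays bounded as $\gamma_2$ ranges over $\Lambda_2$, for each $j$. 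The linear map $V_v\to V_v^{\,k}$, $w\mapsto(w-\gamma_1^{(j)}\cdot w)_{j}$, is injective, hence has a bounded left inverse; therefore $q_v(\Lambda_2)$ is bounded, and symmetrically $q_v(\Lambda_1)$ is bounded.

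\emph{From $\Lambda_1\Lambda_2$ to $\Lambda_{red}$.} Using the defect identity once more, $q_v(\Lambda_1\Lambda_2)$ is bounded: the cross term $\gamma_1\cdot q_v(\gamma_2)$ is bounded because $\Lambda_1$ acts equicontinuously and $q_v(\Lambda_2)$ is bounded, and $c$ is bounded on the bounded set $(\Lambda_1\cup\Lambda_2)^2$. As $\Lambda_{red}$ is commensurable with $\Lambda_1\Lambda_2$, fix finite $F\subseteq\langle\Lambda_{red}\rangle$ with $\Lambda_{red}\subseteq F\Lambda_1\Lambda_2$; expanding $q_v(f\mu)$ through $c$ and using that $q_v(F)$ is finite, that $F$ acts by finitely many (hence uniformly bounded) operators, and that $c$ is bounded on the bounded set $(F\cup\Lambda_1\Lambda_2)^2$, one concludes that $q_v(\Lambda_{red})$ is bounded. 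Since $v\in S$ was arbitrary, the hypothesis of Proposition \ref{Proposition: Boundedness implies splitting} holds, so $a_2(\Lambda_{red})$ is covered by finitely many translates of $\Lambda_r$ and $\Xi=\{(\lambda,a_1(\lambda)):\lambda\in\Lambda_{red}\}$ is covered by finitely many translates of $\Lambda$.

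\emph{Eliminating $a_1$ and conclusion.} It remains to see that the honest $1$-cocycle $a_1\colon\langle\Lambda_{red}\rangle\to A$ is a coboundary, after which the last assertion of Proposition \ref{Proposition: Boundedness implies splitting} provides the reductive Levi $L'$ with $\Lambda^2\cap L'$ an approximate lattice. Decompose $A$ as a module over $G_1\times G_2$ (the two actions commute): by complete reducibility of finite-dimensional representations of reductive groups in characteristic $0$, $A$ is a sum of irreducibles, each of the form $A_i\boxtimes B_i$ (up to a division-algebra twist over $\mathbb{Q}_v$), with $A_i$ nontrivial as a $G_1$-module and $B_i$ nontrivial as a $G_2$-module — precisely the no-invariant-vectors hypotheses. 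After replacing $\Lambda_{red}$ by a commensurable approximate subgroup we may assume $\langle\Lambda_{red}\rangle=\langle\Lambda_1\rangle\langle\Lambda_2\rangle$; restricting $a_1$ to a finite-index subgroup $\Gamma_1\times\Gamma_2$ with $\Gamma_i$ Zariski-dense in $G_i$, the Künneth formula for group cohomology over a field, together with $A_i^{\Gamma_1}=A_i^{G_1}=0$ and $B_i^{\Gamma_2}=B_i^{G_2}=0$, gives $H^1(\Gamma_1\times\Gamma_2;A)=0$; injectivity of restriction to finite-index subgroups on $H^1$ then forces $[a_1]=0$ in $H^1(\langle\Lambda_{red}\rangle;A)$, i.e. $a_1$ is a coboundary. (One may equally invoke Margulis' vanishing of $H^1$ for lattices in the higher-rank group $L=G_1G_2$.) The main obstacle is the boundedness step: making the classical commutation trick survive the facts that the identity for $q_v$ is only quasi, that the coefficients are twisted, and that one must track which sets lie in $\mathcal{B}_{\Lambda_{red}}$; the feature that makes it go through is the finite-dimensionality of $V_v$ from Meyer's theorem, which both lets $V_v^{\langle\Lambda_1\rangle}=0$ be cut out by finitely many elements and lets an injective linear map of vector spaces admit a bounded left inverse.
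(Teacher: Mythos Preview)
Your proof is correct and follows the paper's own route almost verbatim: the Burger--Monod commutation identity to bound $q_v$ on each factor (the paper's inequality $\lVert(\lambda_2-\id)\cdot q_v(\lambda_1)\rVert_v\le\lVert(\lambda_1-\id)\cdot q_v(\lambda_2)\rVert_v+2C$ is exactly your displayed equation rewritten), followed by vanishing of $H^1$ on the product $\langle\Lambda_1\rangle\times\langle\Lambda_2\rangle$ (the paper invokes the inflation--restriction sequence where you use K\"unneth, and both gloss over the passage back from $\langle\Lambda_1\rangle\langle\Lambda_2\rangle$ to $\langle\Lambda_{red}\rangle$ in the same way) and an appeal to Proposition~\ref{Proposition: Boundedness implies splitting}. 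One caveat: your parenthetical alternative via Margulis' $H^1$-vanishing is not available here, since $\langle\Lambda_{red}\rangle$ need not be a discrete lattice and $L$ is merely reductive; the K\"unneth argument is the one that actually works.
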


\begin{proof}
Let $|| \cdot ||_v$ denote a norm on $V_v$. For all $\lambda_1 \in \Lambda_1, \lambda_2 \in \Lambda_2$,  \eqref{Eq: quasi-cocycle equation} yields
$$ q_v(\lambda_1\lambda_2) = q_v(\lambda_1) + \lambda_1 \cdot q_v(\lambda_2) + \delta(\lambda_1,\lambda_2) $$
and 
$$ q_v(\lambda_1\lambda_2) = q_v(\lambda_2) + \lambda_2\cdot q_v(\lambda_1) + \delta'(\lambda_1,\lambda_2)$$
where  $||\delta(\lambda_1,\lambda_2)||_v,||\delta'(\lambda_1,\lambda_2) ||_v\leq C < \infty$ uniformly in $\lambda_1, \lambda_2$. 

Therefore, 
$$||(\lambda_2 - \id) \cdot q_v(\lambda_1)||_v \leq  ||\left(\lambda_1 -\id\right)\cdot (q_v(\lambda_2))||_v + 2C. $$
But, for a fixed $\lambda_2$, the right-hand-side is uniformly bounded by some constant $C(\lambda_2) > 0$ as $\lambda_1$ runs through $\Lambda_1$. So there is a constant $C'(\lambda_2)>0$ such that $$\inf_{x \in \ker (\lambda_2 - \id)} ||q_v(\lambda_1) - x||_v \leq C'(\lambda_2)$$ for all $\lambda_1 \in \Lambda_1$. We claim that $\langle \Lambda_2 \rangle$ acts without non-trivial invariant vector on $V_v$. Indeed, otherwise, there is a non-trivial invariant vector for the action of $\langle \Lambda_2 \rangle$ on $W \otimes \mathbb{A}_S$ (recall that we are using the notation from \S \ref{Subsection: Representation-theoretic sum-product phenomenon}). Since the action of $ \langle \Lambda_2 \rangle$ arises from an action on $W$, this means that there is a non-trivial invariant vector in $W$ and, in turn, in $A$ as $W$ projects injectively to $A$. A contradiction. Since $V_v$ is finite dimensional, there is therefore $X \subset \Lambda_2$ finite such that the family $X$ has no non-trivial common invariant vector in $V_v$ i.e. 
$$ \bigcap_{\lambda_2 \in X}\ker \left(\lambda_2 - \id \right)=\{0\}.$$ Then $$||q_v(\lambda_1)||_v \leq  C''(X) < \infty.$$
Moreover,  $a_1$ restricts to a $1$-cocycle on $\langle \Lambda_1 \rangle \times \langle \Lambda_2 \rangle$ with coefficients in $A$. Since the sets of invariant vectors $A^{\langle \Lambda_1 \rangle}$ and $A^{\langle \Lambda_2 \rangle}$ are reduced to $\{e\}$, the extension-restriction exact sequence implies that $(a_1)_{|\langle \Lambda_1 \rangle \times \langle \Lambda_2 \rangle}$ is a $1$-co-boundary. We conclude by invoking Proposition \ref{Proposition: Boundedness implies splitting}. 
\end{proof}

As an immediate corollary we have: 

\begin{corollary}\label{Corollary: Boundedness quasi-cocycle case centre}
With $q_v$ as above. Suppose that $\pi(\langle \Lambda_{red} \rangle \cap Z_L)$ has no non-trivial invariant vector where $Z_L$ denotes the center of $L$. Then $q_v(\Lambda_{red})$ is bounded. 
\end{corollary}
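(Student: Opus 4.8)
The plan is to reuse the commutation trick from the proof of Lemma~\ref{Lemma: Commutativity implies boundedness of quasi-cocycles}, the point being that an element of $Z_L$ commutes with \emph{all} of $\Lambda_{red}$, so -- unlike in Lemma~\ref{Lemma: Commutativity implies boundedness of quasi-cocycles}, where one needs two commuting invariant-vector-free factors -- the single invariant-vector-free ``factor'' $\langle\Lambda_{red}\rangle\cap Z_L$ already forces $q_v$ to be bounded on the whole of $\Lambda_{red}$; this is why it is an immediate corollary. Fix $v\in S$ and a norm $\|\cdot\|_v$ on $V_v$, and recall that $\Lambda_{red}$ acts by uniformly bounded operators on $V_v$, say with operator norm bound $M$. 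Given $z\in\langle\Lambda_{red}\rangle\cap Z_L$, choose $m$ with $z\in\Lambda_{red}^m$. For $\lambda\in\Lambda_{red}$ one has $z\lambda=\lambda z\in\Lambda_{red}^{2m}$, so applying the quasi-cocycle identity for $q_v$ (cf. \eqref{Eq: quasi-cocycle equation} and the discussion preceding Proposition~\ref{Proposition: Boundedness implies splitting}) to the pairs $(z,\lambda)$ and $(\lambda,z)$ and subtracting gives $(z-\id)\cdot q_v(\lambda)=(\lambda-\id)\cdot q_v(z)+\delta(\lambda)$ with $\|\delta(\lambda)\|_v$ bounded by a constant depending only on $m$; since $\|(\lambda-\id)\cdot q_v(z)\|_v\le(M+1)\|q_v(z)\|_v$, the whole right-hand side is bounded uniformly in $\lambda\in\Lambda_{red}$ by some $C(z)$.

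Next I would use that $z$ acts semisimply on $V_v$: since $z$ centralises $\langle\Lambda_{red}\rangle$, its image in $\GL(V_v)$ centralises the Zariski closure of the image of $\langle\Lambda_{red}\rangle$, which is reductive -- this reductivity is exactly what is established for $\mathbf{L}$ and its relevant quotients in the proof of Proposition~\ref{Proposition: quasi-cocycle takes values in Lambda} -- hence $z$ is a semisimple element of $\GL(V_v)$. Therefore $V_v=\ker(z-\id)\oplus(z-\id)V_v$ with $z-\id$ invertible on $(z-\id)V_v$, and the bound $\|(z-\id)\cdot q_v(\lambda)\|_v\le C(z)$ translates, exactly as in the proof of Lemma~\ref{Lemma: Commutativity implies boundedness of quasi-cocycles}, into $\inf_{x\in\ker(z-\id)}\|q_v(\lambda)-x\|_v\le C'(z)$ for all $\lambda\in\Lambda_{red}$.

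Finally I would close the argument by choosing finitely many such $z$. The hypothesis says $\langle\Lambda_{red}\rangle\cap Z_L$ has no non-zero invariant vector for its action on $A$; if it had one in $V_v$, then since $V_v$ is a subrepresentation of $W\otimes\mathbb{A}_S$ and this action is the scalar extension of the $\mathbb{Q}$-rational action of $\langle\Lambda_{red}\rangle$ on $W$, we would get a non-zero invariant vector in $W$ and, via the $\langle\Lambda_{red}\rangle$-equivariant inclusion $W\subset A$, a non-zero invariant vector in $A$, a contradiction (this is the same reduction used inside the proof of Lemma~\ref{Lemma: Commutativity implies boundedness of quasi-cocycles}). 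As $\dim_{\mathbb{Q}_v}V_v<\infty$, there are $z_1,\dots,z_k\in\langle\Lambda_{red}\rangle\cap Z_L$ with $\bigcap_{i=1}^k\ker(z_i-\id)=\{0\}$ in $V_v$; the resulting linear injection $V_v\hookrightarrow\bigoplus_{i=1}^kV_v/\ker(z_i-\id)$ is bounded below for the quotient norms, so there is a constant $C>0$ with $\|q_v(\lambda)\|_v\le C\sum_{i=1}^k\inf_{x\in\ker(z_i-\id)}\|q_v(\lambda)-x\|_v\le C\sum_{i=1}^kC'(z_i)$, uniformly in $\lambda\in\Lambda_{red}$. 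Hence $q_v(\Lambda_{red})$ is bounded.

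I do not expect a genuine obstacle here: the substantive inputs -- the reductive Zariski closure from Proposition~\ref{Proposition: quasi-cocycle takes values in Lambda}, uniform boundedness of the $\Lambda_{red}$-action on $V_v$, and the quasi-cocycle inequality -- are already in place. The only steps demanding some care are the semisimplicity of $z$ on $V_v$, which should be phrased through the reductivity of the ambient Zariski closure rather than through compactness of a cyclic subgroup, and the passage ``no invariant vector in $A$'' $\Rightarrow$ ``no invariant vector in $V_v$'', which rests on the identity $(W\otimes\mathbb{A}_S)^{H}=W^{H}\otimes\mathbb{A}_S$ for $H=\langle\Lambda_{red}\rangle\cap Z_L$ together with the equivariant inclusion $W\subset A$.
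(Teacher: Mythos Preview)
Your proof is correct and follows exactly the approach the paper has in mind: rerun the commutation estimate from the proof of Lemma~\ref{Lemma: Commutativity implies boundedness of quasi-cocycles} with a central element $z$ playing the role of $\lambda_2$ and an arbitrary $\lambda\in\Lambda_{red}$ playing the role of $\lambda_1$, then use finitely many $z_i$ with $\bigcap_i\ker(z_i-\id)=\{0\}$. The semisimplicity detour is unnecessary, though: in finite dimensions the map $z-\id$ induces an isomorphism $V_v/\ker(z-\id)\to (z-\id)V_v$, so $\|(z-\id)q_v(\lambda)\|_v\le C(z)$ already bounds the distance of $q_v(\lambda)$ to $\ker(z-\id)$ without any spectral information on $z$ --- this is precisely how the step is handled in the proof of Lemma~\ref{Lemma: Commutativity implies boundedness of quasi-cocycles}.
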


The commutation relation we want to utilise in the following is not as clear-cut as the one we assume in Lemma \ref{Lemma: Commutativity implies boundedness of quasi-cocycles}. We need to induce the cocycle considered to the ambient group to take advantage of it. This is achieved in \S \ref{Section: Induction for star-approximate lattices and Boundedness of quasi-cocycles}.

\subsection{Quasi-cocycles and Meyer sets}

The objective of this section is to provide a partial converse to Proposition \ref{Proposition: Boundedness implies splitting}. The results we establish are key parts of the proof of Theorem \ref{THEOREM: MAIN THEOREM}. We start with a general criterion, consequence of the uniqueness of quasi-models, see Corollary \ref{Corollary: Uniqueness of quasi-models, local version}. In this first part, we work with a set-up slightly more general than above. Recall that given a group $S$ acting by automorphisms on an abelian group $A$ and a $2$-cocycle $\alpha \in Z^1(S;A)$ we denote by $S \ltimes _{\alpha} A$ the group whose underlying subset is the product $S \times A$ and with multiplication defined by 
$$ (s_1,a_1)(s_2,a_2) = (s_1s_2,a_1 + s_1 \cdot a_2 + \alpha(s_1,s_2))$$
for all $s_1,s_2 \in S$ and $a_1,a_2 \in A$. As in \S \ref{Section: Cohomology for approximate subgroups}, when $S \ltimes_{\alpha} A$ is a locally compact group, we implicitly assume that the decomposition is obtained from a Borel locally bounded section of the projection $S \ltimes_{\alpha} A \rightarrow S$. 

\begin{proposition}\label{Proposition: Cocycle criterion laminarity}
Let $\Lambda$ be an approximate lattice in a locally compact group $S \ltimes_{\alpha} A$ with $A$ abelian and $\alpha$ taking finitely many values. Suppose that $\Lambda^2 \cap A$ is an approximate lattice in $A$. Write $\Lambda_s$ the projection of $\Lambda$ to $S$ and for every $m \geq 0$ and $\lambda_s \in \Lambda_s^m$ choose $a(\lambda_s) \in A$ such that $(\lambda_s, a(\lambda_s)) \in \Lambda^m$. Suppose that $\Lambda$ is a model set and write $\tau_s: \langle \Lambda_s \rangle \rightarrow H_s$ and $\tau_r: \langle \Lambda \rangle \cap A \rightarrow H_r$ good models of $\Lambda_s$ and $\Lambda^2 \cap A$ respectively (which exist by \cite[Lem. 3.3]{machado2019goodmodels}) with $H_r \simeq \mathbb{R}^n$, $H_r$ equipped with a continuous action of $H_s$ making $\tau_r$ equivariant for the induced actions of $\langle \Lambda_s \rangle$ and $\tau_r(\langle \Lambda \rangle \cap A)$ relatively dense. Then there is $b: \langle \Lambda_s \rangle \rightarrow H_r$ bounded on $\Lambda_s$ such that 
$$\gamma=(\gamma_s, \gamma_r) \in \langle \Lambda \rangle \mapsto \tau_r(\gamma_r - a(\gamma_s) ) - b(\gamma_s)$$ is a $1$-cocycle, where $\gamma_s$ and $\gamma_r$ denote the components of $\gamma$ in $S$ and $A$ respectively.
\end{proposition}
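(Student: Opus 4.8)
\emph{Strategy.} The plan is to repackage the triple $(\tau_s,\tau_r,a)$ as a single map $f$ of $\langle\Lambda\rangle$ into a locally compact group of the form $H_s\ltimes H_r$, check that $f$ meets the hypotheses of the local characterisation of laminarity via cocycles (Corollary~\ref{Corollary: Uniqueness of quasi-models, local version}), which applies because a model set is laminar (Proposition~\ref{Proposition: Equivalence good models and model sets}), and then massage the cochain it produces into the desired $1$-cocycle. First I would record the elementary observation that for every $\gamma=(\gamma_s,\gamma_r)\in\langle\Lambda\rangle$ one has $\gamma_r-a(\gamma_s)\in\langle\Lambda\rangle\cap A$: indeed $\gamma\,(\gamma_s,a(\gamma_s))^{-1}$ lies in $\langle\Lambda\rangle$, and a direct computation in $S\ltimes_\alpha A$ (using normality of $A$ and the cocycle identity for $\alpha$) shows it equals $(e,\gamma_r-a(\gamma_s))$; moreover, if $\gamma$ lies in a bounded power of $\Lambda$ then $\gamma_r-a(\gamma_s)$ lies in a bounded power of $\Lambda^2\cap A$ (Lemma~\ref{Lemma: Intersection of approximate subgroups}). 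I then set $f(\gamma):=\bigl(\tau_s(\gamma_s),\ \tau_r(\gamma_r-a(\gamma_s))\bigr)\in H_s\ltimes H_r$.

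\emph{Verifying the hypotheses.} The projection $\overline f=\tau_s\circ p_s$ is a group homomorphism; $f(\Lambda)$ is relatively compact since both coordinates are bounded on $\Lambda$ (for the second, $\tau_r$ is a good model of $\Lambda^2\cap A$ and $\{\gamma_r-a(\gamma_s):\gamma\in\Lambda\}$ lies in a bounded power of $\Lambda^2\cap A$); and $f^{-1}(C)$ is covered by finitely many translates of $\Lambda$ for $C$ compact, by standard approximate-group bookkeeping (lift $\gamma_s$ through $(\gamma_s,a(\gamma_s))\in\Lambda^m$, conjugate the $A$-part back inside $\langle\Lambda\rangle$, and apply Lemmas~\ref{Lemma: Intersection of commensurable sets} and~\ref{Lemma: Intersection of approximate subgroups}). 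The heart of the matter is hypothesis~(4): with $x_\gamma:=\gamma_r-a(\gamma_s)$, one computes, for all $\gamma_1,\gamma_2$,
\[
f(\gamma_1\gamma_2)f(\gamma_2)^{-1}f(\gamma_1)^{-1}=\bigl(e,\ \tau_r\bigl(z(\gamma_{1,s},\gamma_{2,s})\bigr)\bigr),\qquad z(g_1,g_2):=\alpha(g_1,g_2)+a(g_1)+g_1\cdot a(g_2)-a(g_1g_2),
\]
where $z(g_1,g_2)$ is exactly the $A$-component of $(g_1,a(g_1))(g_2,a(g_2))(g_1g_2,a(g_1g_2))^{-1}\in\langle\Lambda\rangle$, hence lies in $\langle\Lambda\rangle\cap A$; by the quasi-cocycle relation~\eqref{Eq: quasi-cocycle equation} and the hypothesis that $\alpha$ takes finitely many values, $z$ maps $\Lambda_s^m\times\Lambda_s^m$ into a fixed power of $\Lambda^2\cap A$, so the defect of $f$ restricted to $\Lambda^*:=\overline f^{-1}(W)$ lands in the relatively compact set $\tau_r(\Lambda^N\cap A)\subset H_r$. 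Corollary~\ref{Corollary: Uniqueness of quasi-models, local version} then produces $b_0\in C^1_b(\Lambda^*;H_r)$ with $f(\gamma_1\gamma_2)f(\gamma_2)^{-1}f(\gamma_1)^{-1}=db_0(\gamma_1,\gamma_2)$ on $\Lambda^*\times\Lambda^*$.

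\emph{Extracting $b$.} Restricting this identity to $\langle\Lambda\rangle\cap A$, where the defect vanishes since $z(g_1,e)=z(e,g_2)=0$, shows first that $b_0|_{\langle\Lambda\rangle\cap A}$ is a group homomorphism into $\mathbb R^n$, hence trivial as it is bounded, and then that $b_0$ is constant along the right $A$-cosets meeting $\Lambda^*$; consequently $b_0(\gamma)$ depends only on $\gamma_s$ for $\gamma\in\Lambda^*$, and one puts $b(\gamma_s):=-b_0\bigl((\gamma_s,a(\gamma_s))\bigr)$ on all of $\langle\Lambda_s\rangle$, a map bounded on $\Lambda_s$. Unwinding the corollary's identity then gives $db=-\zeta$ on all pairs with $\gamma_{1,s},\gamma_{2,s},\gamma_{1,s}\gamma_{2,s}\in\tau_s^{-1}(W)$, where $\zeta(g_1,g_2):=\tau_r(z(g_1,g_2))$, which is exactly the statement that $\gamma\mapsto\tau_r(\gamma_r-a(\gamma_s))-b(\gamma_s)$ is a $1$-cocycle \emph{near the identity}. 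To promote this to a genuine $1$-cocycle on $\langle\Lambda\rangle$ one observes that the obstruction $\zeta+db$ is an honest $2$-cocycle of $\langle\Lambda_s\rangle$, bounded on $\Lambda_s$ and vanishing near the identity; since $\alpha$ takes finitely many values its restriction to $\langle\Lambda_s\rangle$ vanishes on a finite-index subgroup and is therefore a coboundary over the torsion-free module $H_r$, which allows one to correct the local primitive $b$ to a global one still bounded on $\Lambda_s$.

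\emph{Main obstacle.} The delicate step is the defect estimate feeding hypothesis~(4): one must see that the local defect of $f$, a priori built out of the unbounded section $a$ \emph{and} the cocycle $\alpha$, is in fact controlled by a bounded power of $\Lambda^2\cap A$; this is precisely where \eqref{Eq: quasi-cocycle equation} and the finiteness of the values of $\alpha$ enter. The secondary subtlety is the passage from the local cocycle identity supplied by Corollary~\ref{Corollary: Uniqueness of quasi-models, local version} to a global one, which again rests on the cohomological triviality forced by $\alpha$ having finitely many values.
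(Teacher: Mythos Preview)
Your approach matches the paper's almost exactly: you build the same map $f(\gamma)=(\tau_s(\gamma_s),\tau_r(\gamma_r-a(\gamma_s)))$ into $H_s\ltimes H_r$, verify the same four hypotheses of Corollary~\ref{Corollary: Uniqueness of quasi-models, local version}, compute the defect as $\tau_r(z)$ with $z=\alpha-da$, and argue that the resulting cochain factors through the $S$-component. The paper's argument for the last point is phrased slightly differently---it observes that for both $g(\gamma)=\tau_r(\gamma_r-a(\gamma_s))$ and for any $1$-cocycle $c$, the map $\gamma_r\mapsto c(\gamma_r\gamma)-c(\gamma)$ on $\langle\Lambda\rangle\cap A$ is a group homomorphism, hence the same holds for $b_0=g-c$, and a bounded homomorphism into $\mathbb{R}^n$ is zero---but this is equivalent to your coset-invariance argument.

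The divergence is in your final paragraph. The paper does \emph{not} perform a separate local-to-global passage: it simply reads the output of Corollary~\ref{Corollary: Uniqueness of quasi-models, local version} (via the proof of Proposition~\ref{Proposition: Crux uniqueness quasi-models}) as producing a $b_0\in C^1_b(\Lambda^*;H_r)$ defined on all of $\langle\Lambda\rangle$ with $g-b_0$ a \emph{global} $1$-cocycle, and then deduces that $b_0$ factors through $p_s$. Your attempted local-to-global argument is not sound as written. The assertion that ``since $\alpha$ takes finitely many values its restriction to $\langle\Lambda_s\rangle$ vanishes on a finite-index subgroup'' is false in general: a $2$-cocycle with finite image need not vanish identically on any finite-index subgroup (it need only become a coboundary there, and even that requires an argument). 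More seriously, $\alpha$ takes values in $A$, not in the domain $\langle\Lambda\rangle\cap A$ of $\tau_r$, so the expression $\tau_r\circ\alpha$ is not defined; only the combination $z=\alpha-da$ lands in $\langle\Lambda\rangle\cap A$. What you actually need is that the $2$-cocycle $\zeta+db$ on $\langle\Lambda_s\rangle$---which, as you correctly note, vanishes on $\Lambda_s'\times\Lambda_s'$---is a global coboundary with primitive bounded on $\Lambda_s$; your argument does not establish this. You should either justify that the proof of Proposition~\ref{Proposition: Crux uniqueness quasi-models} already delivers the global identity (this is what the paper does implicitly), or drop the local-to-global paragraph and work with the global reading directly.
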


\begin{proof}
The gist of the proof is to use the data of $\tau_s, \tau_r$ and $a$ to build a quasi-model of sorts of $\Lambda$. We then conclude by appealing to the uniqueness of quasi-models (Corollary \ref{Corollary: Uniqueness of quasi-models, local version}). Define the map 
\begin{align*}
 f:\langle \Lambda \rangle & \longrightarrow H_s \ltimes H_r \\
 (\gamma_s, \gamma_r) & \longmapsto (\tau_s(\gamma_s), \tau_r(\gamma_r - a(\gamma_s)))
\end{align*}
where $\gamma_s$ and $\gamma_r$ are defined as in the statement. Then $f(\Lambda)$ is relatively compact and for every compact subset $C \subset H_s \ltimes H_r$, $f^{-1}(C)$ is covered by finitely many translates of $\Lambda$. Furthermore, if $g$ denotes the map giving the second coordinate of $f$, for all $\gamma_1, \gamma_2 \in \langle \Lambda \rangle$ we have $$dg(\gamma_1, \gamma_2) = \tau_r(\alpha(\gamma_1, \gamma_2)  - da(\gamma_{1,s}, \gamma_{2,s}))$$ and $\alpha(\gamma_1, \gamma_2)  - da(\gamma_{s,1}, \gamma_{s,2}) \in \langle \Lambda \rangle \cap A$. But $\alpha$ takes finitely many values and $da(\gamma_{s,1}, \gamma_{s,2})$ takes values in $\Lambda^4 \cap A$ as $\gamma_1, \gamma_2$ range though $\Lambda^*:=\Lambda (\langle \Lambda \rangle \cap A)$. So $dg(\Lambda^*, \Lambda^*)$ is bounded. Finally, $\tau_r(\langle \Lambda \rangle \cap A)$ is relatively dense in $H_r$. So $f$ and $\Lambda$ satisfy the conditions of Corollary \ref{Corollary: Uniqueness of quasi-models, local version}. Hence, we can find $b_0: \langle \Lambda \rangle \rightarrow H_r$, uniformly bounded over every subset commensurable with $\Lambda^*$, such that $g - b_0$ is a $1$-cocycle. Notice that the action of $\langle \Lambda \rangle \cap A$ is trivial on $A$, so for any $1$-cocycle $c$ and any $\gamma \in \langle \Lambda \rangle$, the map $\gamma_r \in \langle \Lambda \rangle \cap A \mapsto c(\gamma_r\gamma) - c(\gamma)$ is a group homomorphism. Similarly, the map $\gamma_r \in \langle \Lambda \rangle \cap A \mapsto  g(\gamma_r \gamma) - g(\gamma)$ is a group homomorphism. Hence, $\gamma_r \in \langle \Lambda \rangle \cap A \mapsto b_0(\gamma_r\gamma) - b_0(\gamma)$ is a group homomorphism and is uniformly bounded. In other words, $b_0$ factors through the projection of $\langle \Lambda \rangle$ to $S$.
\end{proof}

We can improve upon Proposition \ref{Proposition: Cocycle criterion laminarity} in certain situations. The key is to show that the good model $\tau_r$ can be extended in such a way that $\tau_r(\lambda_r) - \tau_r(a(\lambda_s))$ is well defined and equal to $\tau_r(\lambda_r - a(\lambda_s))$, while remaining equivariant with respect to the action of $\langle \Lambda_s \rangle$.

\begin{proposition}\label{Proposition: Equivalence laminarity/non-trivial cohomology class}
Let $G$ be an $S$-adic linear group. Suppose that $G = S \ltimes A$ where $S$ is semi-simple and $A$ is abelian. Let $\Lambda \subset A$ be an approximate lattice such that $A \cap \Lambda^2$ is an approximate lattice in $A$ and such that the projection of $\Lambda$ to $S$ is a lattice $\Gamma$. There are a subgroup $\Delta \subset A$ and a good model $\tau: \Delta \rightarrow L$ depending only on $\Lambda^2 \cap A$ and $\Gamma$, where $L\simeq \mathbb{R}^n$ is equipped with an action of $\langle \Lambda_s \rangle$ making $\tau$ equivariant, that satisfy the following.

 For all $\gamma\in \Gamma$, let $a(\gamma) \in A$ be chosen such that $(\gamma, a(\gamma)) \in \Lambda$. The following are equivalent: 
\begin{enumerate}
\item $\Lambda$ is laminar; 
\item there is a $1$-cocycle $b_{phy}: \Gamma \rightarrow A$ such that $a= b_{phy} + q$, $q$ takes values in $\Delta$ and $\tau \circ q$ is within bounded distance of a $1$-cocycle.  
\end{enumerate}
\end{proposition}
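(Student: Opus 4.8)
The plan is to show that conditions (1) and (2) are \emph{both} equivalent to one intermediate assertion: that $\tau\circ a_2$ lies within bounded distance of a genuine $1$-cocycle $\Gamma\to L$, where $a=a_1+a_2$ is a decomposition of $a$ as in Proposition \ref{Proposition: quasi-cocycle takes values in Lambda}. That condition (2) is equivalent to this is almost formal: if $a=b_{phy}+q=b_{phy}'+q'$ are two decompositions with $q,q'$ valued in $\Delta$, then $\delta:=q-q'=b_{phy}'-b_{phy}$ is a $1$-cocycle valued in $\Delta$ (Remark \ref{Remark: Decomp quasi-cocycles}), so by $\Gamma$-equivariance of $\tau$ the map $\tau\circ\delta$ is a genuine $1$-cocycle, and hence ``$\tau\circ q$ within bounded distance of a $1$-cocycle'' does not depend on the chosen $b_{phy}$; taking $b_{phy}=a_1$, $q=a_2$ gives the claimed reformulation of (2).

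\textbf{Constructing $\Delta$ and $\tau$, and reducing to the fixed-point-free case.} First I would split off the $S$-fixed part of $A$: since $A$ is a completely reducible $S$-module, write $A=A_0\oplus A^S$ with $A^S$ the fixed subspace, so $G=(S\ltimes A_0)\times A^S$. Using Proposition \ref{Proposition: Intersection and projections approximate lattices w/ closed subgroups} together with Meyer's theorem for $S$-adic abelian groups (Proposition \ref{Proposition: Meyer's theorem in S-adic vector spaces}) one checks that $\Lambda^2\cap(S\ltimes A_0)$ and $\Lambda^2\cap A^S$ are approximate lattices, so by Lemma \ref{Lemma: Laminarity from laminarity in decompositions} the statement for $G$ follows from the statements for $S\ltimes A_0$ and for the abelian group $A^S$. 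The abelian factor is the central case of \S\ref{Section: Conjugation-multiplication phenomena}: there $\tau$ is simply the canonical minimal Euclidean good model of $\Lambda^2\cap A^S$, the $\Gamma$-action is trivial, and ``$1$-cocycle'' reads ``homomorphism'' (compare Lemma \ref{Proposition: the right good model in central case}). So I may assume $A^S=\{0\}$. In that case Proposition \ref{Proposition: Meyer's theorem in S-adic vector spaces} applied to $\Lambda^2\cap A$ produces a $\Gamma$-invariant finite-dimensional $\mathbb{Q}$-subspace $W\subset A$ spanned by a commensurable copy of $\Lambda^2\cap A$, and the construction in the proofs of Proposition \ref{Proposition: quasi-cocycle takes values in Lambda} and Lemma \ref{Proposition: the right model to look quasi-cocycle through, no invariant vector case} yields a subgroup $\Delta\subset A$ normalised by $\langle\Lambda\rangle$ and a $\Gamma$-equivariant homomorphism $\tau:\Delta\to L$ onto a Euclidean space $L\simeq\mathbb{R}^n$ with $\Gamma$ acting by uniformly bounded operators, which is a good model of an approximate subgroup commensurable with $\Lambda^2\cap A$ and has relatively dense image; the non-Archimedean internal coordinates get dropped because \eqref{Eq: quasi-cocycle equation} confines the corresponding quasi-cocycle components to a compact open subgroup. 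As $W$, $\Delta$ and $\tau$ are manufactured from $\Lambda^2\cap A$ and the $\Gamma$-action alone, they depend only on $\Lambda^2\cap A$ and $\Gamma$. Finally Proposition \ref{Proposition: quasi-cocycle takes values in Lambda} furnishes $a=a_1+a_2$ with $a_1\in Z^1(\Gamma;A)$ and, after enlarging $\Delta$ inside $W$, $a_2(\Gamma)\subset\Delta$.

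\textbf{Equivalence with laminarity.} The last step is to run the argument of Proposition \ref{Proposition: Cocycle criterion laminarity} with $\tau_s:=\rho\colon\Gamma\to H_s:=\overline{\rho(\Gamma)}\subset\GL(L)$ (a good model of the group $\Gamma$, since any homomorphism to a compact group is one) and with the equivariant Euclidean good model appearing there taken to be $\tau$ --- legitimate up to the canonical equivariant isomorphism of minimal Euclidean internal spaces (Proposition \ref{Proposition: Minimal commensurable approximate subgroup}), up to commensurability, and a divisible extension. One forms $f\colon\langle\Lambda\rangle\to H_s\ltimes L$, $f(\gamma_s,\gamma_r)=(\tau_s(\gamma_s),\tau(\gamma_r-a(\gamma_s)))$; then $f(\Lambda)$ is relatively compact and $f^{-1}(C)$ is covered by finitely many translates of $\Lambda$ for every compact $C$. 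Because $G$ is a genuine semidirect product and $a_1$ is a $1$-cocycle, $da=da_2$, so the defect of $f$ is the inflation to $\langle\Lambda\rangle$ of $-\,d(\tau\circ a_2)$. If $\Lambda$ is laminar, Corollary \ref{Corollary: Uniqueness of quasi-models, local version} applies to $f$ and gives $b\in C^1_b(\Lambda^*;L)$ with $d(\tau\circ a_2)=-db$ on $\Lambda^*$, whence $\tau\circ a_2+b$ is a $1$-cocycle and $\tau\circ a_2$ is within $\|b\|$ of it. Conversely, if $\tau\circ a_2$ is within bounded distance of a $1$-cocycle $c$, then $d(\tau\circ a_2)=d(\tau\circ a_2-c)$ is the differential of a cochain bounded on $\Lambda^*$, so the defect of $f$ is the coboundary of a bounded cochain; correcting $f$ by that cochain turns it into a genuine homomorphism $f'$ with $f'(\Lambda)$ relatively compact and $f'^{-1}(U)$ covered by finitely many translates of $\Lambda$ for a neighbourhood $U$ of the identity, i.e. a good model of an approximate subgroup commensurable with $\Lambda$; hence $\Lambda$ is laminar. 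Combining the two equivalences and gluing the $A_0$- and $A^S$-parts via Lemma \ref{Lemma: Laminarity from laminarity in decompositions} finishes the proof.

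\textbf{Where the difficulty lies.} The hard part will be the second step combined with the identification in the third: producing $(\Delta,\tau)$ \emph{canonically} --- depending on $\Lambda^2\cap A$ and $\Gamma$ only --- and matching it with the a priori unrelated equivariant Euclidean good model coming out of Proposition \ref{Proposition: Cocycle criterion laminarity}, while carefully quotienting the spurious compact and non-Archimedean directions of the target without perturbing the relevant class in bounded $2$-cohomology. This is precisely what the uniqueness machinery of \S\ref{Section: Cohomology for approximate subgroups} and the minimal-good-model statement (Proposition \ref{Proposition: Minimal commensurable approximate subgroup}) are there to handle; the bookkeeping of passing to commensurable approximate subgroups and extending $\tau$ by divisibility to contain $a_2(\Gamma)$ is the main source of technical friction.
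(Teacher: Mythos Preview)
Your proposal is correct and follows essentially the same route as the paper: split $A$ into its $S$-fixed and fixed-point-free parts (the paper cites \cite[Prop.~4.17]{mac2023sadic} for $A=Z_G\times U$, which is your $A^S\oplus A_0$), build $(\Delta,\tau)$ via Lemma~\ref{Proposition: the right model to look quasi-cocycle through, no invariant vector case} and Lemma~\ref{Proposition: the right good model in central case} in the two cases, and then run Proposition~\ref{Proposition: Cocycle criterion laminarity}/Corollary~\ref{Corollary: Uniqueness of quasi-models, local version} on the map $f(\gamma_s,\gamma_r)=(\tau_s(\gamma_s),\tau(\gamma_r-a(\gamma_s)))$. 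The only cosmetic difference is that for $(2)\Rightarrow(1)$ the paper writes down the good model $f(\gamma_s,\gamma_r)=(\alpha(\gamma_s),\tau(\gamma_r-b_{phy}(\gamma_s))-b_{int}(\gamma_s))$ directly rather than phrasing it as ``correct $f$ by the bounded cochain'', and for $(1)\Rightarrow(2)$ the paper makes explicit the one-line check (end of the proof of Proposition~\ref{Proposition: Cocycle criterion laminarity}) that the bounded $b$ produced by Corollary~\ref{Corollary: Uniqueness of quasi-models, local version} factors through the projection $\langle\Lambda\rangle\to\Gamma$---you should spell this out rather than silently treat $b$ as a function on $\Gamma$.
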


\begin{proof} First of all, we prove (2) $\Rightarrow$ (1).  Let $\rho: \Gamma \rightarrow \GL_n(\mathbb{R})$ be the map obtained by the action on the range of $\tau$.  Let $b_{int}$ be a $1$-cocycle such that $b_{int}(\gamma_s) - \tau \circ q(\gamma_s)$ is bounded as $\gamma_s$ ranges through $\Gamma$. Let $K$ denote $\overline{\rho(\Gamma)}$, then $K$ is a compact subgroup. For every $(\gamma_s, \gamma_r) \in \langle \Lambda \rangle$ we have $\gamma_r- b_{phy}(\gamma_s) \in \Delta$ by assumption. Define thus $f(\gamma_s,\gamma_r) = (\alpha(\gamma_s), \tau(\gamma_r - b_{phy}(\gamma_s)) - b_{int}(\gamma_s))$. Then $f: \langle \Lambda \rangle \rightarrow K \ltimes \mathbb{R}^d$ is a group homomorphism. Moreover, $f(e, \lambda) =(e, \tau(\lambda))$ so is bounded on $\lambda \in \Lambda^2 \cap A$. And for $\gamma_s \in \Gamma$, $f(\gamma_s, a(\gamma_s)) = (\alpha(\gamma_s), \tau \circ q(\gamma_s) -  b_{int}(\gamma_s))$ so is bounded independently of $\gamma_s$. Therefore, $f(\Lambda)$ is relatively compact. One readily sees that $f$ is a good model of $\Lambda$. 

Let us prove (1) $\Rightarrow$ (2). We first treat two cases. 

When the action of $S$ on $A$ is trivial: choose a good model $\tau_r': p(\langle \Lambda \rangle) \rightarrow H$ of $\Lambda \cap A$ where $H$ is some locally compact abelian group. By Lemma \ref{Proposition: the right good model in central case}, since $\Lambda_s$ is efficiently generated, $H$ is compactly generated. Upon quotienting out a compact subgroup of $H$ we can therefore assume $H=\mathbb{R}^m \times \mathbb{Z}^n$ and by expending the range we can assume $H=\mathbb{R}^{m+n}$. Now, $\tau_r$ restricted to $\langle \Lambda \rangle \cap A$ is also a good model. Set $\tau_s: \Gamma \rightarrow \{0\}$, and apply Lemma \ref{Proposition: Cocycle criterion laminarity} to $\Lambda$, $\tau_r$ and $\tau_s$. We have that there is $b:\Gamma \rightarrow H$ bounded such that  $$\gamma=(\gamma_s, \gamma_r) \in \langle \Lambda \rangle \mapsto \tau_r(\gamma_r -  a(\gamma_s)) - b(\gamma_s),$$
where here again $\gamma_s$ and $\gamma_r$ denote the $S$ and $A$ components respectively, is a $1$-cocycle. But 
$$ \tau_r(\gamma_r -  a(\gamma_s)) - b(\gamma_s) = \tau_r(\gamma_r) -  \tau_r \circ a(\gamma_s) - b(\gamma_s)$$
and $(\gamma_s, \gamma_r) \in \langle \Lambda \rangle \mapsto \tau_r(\gamma_r)$ is also a $1$-cocycle. So $(\gamma_s, \gamma_r) \in \langle \Lambda \rangle \mapsto \tau_r \circ a(\gamma_s) - b(\gamma_s)$ is a $1$-cocycle which settles the first case.

When the action on $A$ has no invariant vector. Let $\tau_r$ and $b_{phy}: \Gamma \rightarrow A$ denote the good model and the $1$-cocycle given by Lemma \ref{Proposition: the right model to look quasi-cocycle through, no invariant vector case}. What we are looking for is precisely Proposition \ref{Proposition: Cocycle criterion laminarity}. Indeed, according to Proposition \ref{Proposition: Cocycle criterion laminarity} the map $\psi: (\gamma_s, \gamma_r) \in \langle \Lambda \rangle \rightarrow \tau_r(\gamma_r - a(\gamma_s)) - b(\gamma_s)$ is a $1$-cocycle of $\langle \Lambda \rangle$ for some bounded map $b$. Since $(\gamma_s, \gamma_r) \in \langle \Lambda \rangle \rightarrow \tau_r(\gamma_r - b_{phy}(\gamma_s))$ is a $1$-cocycle of $\langle \Lambda \rangle$, $\tau_r(b_{phy}(\gamma_s) - a(\gamma_s)) - b(\gamma_s)$ is a $1$-cocycle of $\langle \Lambda \rangle$ and, hence, of $\langle \Lambda_s \rangle$.

In general, according to \cite[Prop. 4.17]{mac2023sadic} we have that $A= Z_G \times U$ where $U$ is unipotent without non-trivial invariant element and such that both $\Lambda^2 \cap Z_G$ and $\Lambda^2 \cap U$ are approximate lattices in $Z_G$ and $U$ respectively. Let $a = a_z + a_u$ be the decomposition of $a$ into the $Z_G$ component and the $U$ component. Consider the projections $\Lambda_u$ and $\Lambda_z$ of $\Lambda$ to $G/Z_G$ and $G/U$ respectively. Let $\Delta_z \subset Z_G$ and $\Delta_u \subset U$ be the subgroups stabilised by $\Gamma$ and $\tau_z: \Delta_z \rightarrow \mathbb{R}^m$ and $\tau_u: \Delta_u \rightarrow \mathbb{R}^n$ be the good models given by the first and second paragraph of this proof respectively. Consider the subgroup $\Delta:= \Delta_z \times \Delta_u$ and the map $\tau$ defined as the product map from $\Delta$ to $\mathbb{R}^m \times \mathbb{R}^n$. Since $\Lambda$ is laminar, $\Lambda_z$ and $\Lambda_u$ are laminar as well (\cite[Lem. 3.3]{machado2019goodmodels}). Hence, there is a $1$-cocycle $b_{phy}: \Gamma \rightarrow U$ such that $\tau_z \circ a_z$ and $\tau_u \circ \left(a_u - b_{phy} \right)$ are well-defined and within bounded distance of a $1$-cocycle in $\mathbb{R}^m$ and $\mathbb{R}^n$ respectively. 
\end{proof}

An elaboration on the above argument would certainly enable us to establish a similar result for more general semi-simple parts. However, since we will show that quasi-cocycles are ``trivial'' in all other cases we consider, we keep with the above assumptions for the sake of simplicity.  As a first application of this characterisation:

\begin{proof}[Proof of Proposition \ref{Proposition: A counter-example to a general Meyer-type theorem}.]
This is a direct from Proposition \ref{Proposition: Equivalence laminarity/non-trivial cohomology class}.

\end{proof}

\section{$L^p$-Induction and boundedness of quasi-cocycles}\label{Section: Induction for star-approximate lattices and Boundedness of quasi-cocycles}
In this section, we describe an induction process that allows us to obtain a quasi-cocycle defined on $G$ from a quasi-cocycle defined on an $\star$-approximate lattice. As in \cite{zbMATH05936993} and \cite{BurgerMonod}, the main idea is to define a well-suited (dynamical) cocycle on the invariant hull. We point the interested reader to \cite{bjorklund2020analytic} and \cite{machado2020apphigherrank} for related results on approximate lattices.

\subsection{Integrable fundamental domain and spectral gap}

Under a spectral gap assumption, we can ensure that $s$ satisfies an integrability condition. To do so, we adapt a strategy due to Maucourant concerned with building integrable fundamental domains of lattices (see \cite[Thm 5.3]{zbMATH07128510}).

\begin{lemma}\label{Lemma: Integrable Borel section}
 Let $\Gamma$ be a lattice in a product of locally compact $G \times H$ and let $d$ be a left-invariant proper distance on $G$ compatible with the topology. Suppose that $G \times H/\Gamma$ has spectral gap as a $G$-space and $W$ is a compact subset such that the projection of $W^2 \times W_0$ to $\Gamma$ is injective. Then there are $\alpha >0$ and a fundamental domain $W^2 \times W_0 \subset \mathcal{F} \subset G \times W_0$ such that 
$$ \int_{\mathcal{F}}e^{\alpha d(e,g)}d\mu_{G \times H}(g,h) < \infty. $$
\end{lemma}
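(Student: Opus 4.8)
\textit{Proof sketch.} The plan is to follow Maucourant's strategy: first use the spectral gap to show that the $G$-translates of a fixed compact positive-measure set $\Omega\subset X:=(G\times H)/\Gamma$ cover $X$ up to an exponentially small error, and then read off an integrable fundamental domain from this. Throughout, normalise the invariant probability measure $\mu_X$ on $X$ to be the push-forward of $\mu_{G\times H}$ under a Borel fundamental domain, and let $\Omega\subset X$ be the image of $W^2\times W_0$, which is injective by hypothesis, so $\mu_X(\Omega)=\mu_{G\times H}(W^2\times W_0)>0$. For $R>0$ write $G_{\le R}:=\{g\in G:d(e,g)\le R\}$ (relatively compact since $d$ is proper) and set $A_R:=G_{\le R}\cdot\Omega$, $B_R:=X\setminus A_R$. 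Since $d$ is left-invariant and symmetric, $\bigcup_R A_R=G\cdot\Omega$ is $G$-invariant of positive measure, hence of full measure by ergodicity (a consequence of the spectral gap); thus $b_R:=\mu_X(B_R)\downarrow 0$.

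The core step is the estimate $b_R\le Ce^{-\alpha R}$ for some $\alpha,C>0$. Using the spectral gap I would pick a symmetric, compactly supported, absolutely continuous probability measure $\nu$ on $G$ such that the Koopman averaging operator $\pi(\nu)$ satisfies $\rho:=\|\pi(\nu)|_{L^2_0(X)}\|<1$ (on the subspace $L^2_0(X)$ of functions of zero mean), and set $r_0:=\sup_{g\in\mathrm{supp}\,\nu}d(e,g)<\infty$. The key geometric observation is that if $x\in B_{R+r_0}$ then $g\cdot x\in B_R$ for every $g\in\mathrm{supp}\,\nu$: otherwise $g\cdot x\in G_{\le R}\cdot\Omega$ with $d(e,g)\le r_0$, whence $x\in G_{\le R+r_0}\cdot\Omega$ by left-invariance of $d$, a contradiction. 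Consequently $\mathbf 1_{B_{R+r_0}}\le\pi(\nu)\mathbf 1_{B_R}$ pointwise. Writing $\mathbf 1_{B_R}=b_R+f_R$ with $f_R\in L^2_0(X)$ (so $\|f_R\|_2^2=b_R-b_R^2$), and using $\pi(\nu)\mathbf 1=\mathbf 1$ together with $\|\pi(\nu)f_R\|_2\le\rho\|f_R\|_2$, one gets
$$b_{R+r_0}\le\langle\mathbf 1_{B_{R+r_0}},\pi(\nu)\mathbf 1_{B_R}\rangle=b_R b_{R+r_0}+\langle\mathbf 1_{B_{R+r_0}},\pi(\nu)f_R\rangle\le b_R b_{R+r_0}+\rho\sqrt{b_R b_{R+r_0}}.$$
Hence $\sqrt{b_{R+r_0}}\,(1-b_R)\le\rho\sqrt{b_R}$; once $R$ is large enough that $b_R\le\delta$ for some fixed $\delta<1-\rho$ (possible as $b_R\to0$), this yields $\sqrt{b_{R+r_0}}\le\rho'\sqrt{b_R}$ with $\rho':=\rho/(1-\delta)<1$, and iterating gives the claimed exponential decay. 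By the layer-cake formula, $\int_X e^{\beta h}\,d\mu_X<\infty$ for some $\beta>0$, where $h(x):=\inf\{d(e,g):g^{-1}\cdot x\in\Omega\}$ satisfies $\{h\le R\}=A_R$, i.e. $\mu_X(h>R)=b_R$.

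Finally I would manufacture $\mathcal F$. For $\mu_X$-a.e. $x$, the measurable selection theorem lets me choose $g(x)\in G$ with $d(e,g(x))\le h(x)+1$ and $g(x)^{-1}\cdot x\in\Omega$, taking $g(x)=e$ whenever $x\in\Omega$; then $g(x)^{-1}\cdot x$ has a unique representative $(w(x),w_0(x))\in W^2\times W_0$, so $(g(x)w(x),w_0(x))$ represents $x$, lies in $G\times W_0$, and has $d(e,g(x)w(x))\le h(x)+1+M$ with $M:=\sup_{w\in W^2}d(e,w)<\infty$. The resulting set $\mathcal F_1$ is a Borel transversal off a $\mu_X$-null set and contains $W^2\times W_0$; completing it over the remaining null set by an arbitrary Borel choice of representatives inside $G\times W_0$ (every orbit meets $G\times W_0$ in the setting of interest, e.g. when the projection of $\Gamma$ to $H$ is dense) gives a genuine fundamental domain $W^2\times W_0\subset\mathcal F\subset G\times W_0$. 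Since $\mu_{G\times H}|_{\mathcal F}$ pushes forward to $\mu_X$,
$$\int_{\mathcal F}e^{\beta d(e,g)}\,d\mu_{G\times H}(g,h)\le e^{\beta(1+M)}\int_X e^{\beta h(x)}\,d\mu_X(x)<\infty,$$
which is the assertion with $\alpha:=\beta$. The main obstacle is the core exponential-covering estimate: extracting genuine exponential decay requires \emph{both} the $L^2$ spectral-gap contraction and the soft input $b_R\to0$ (needed to absorb the factor $1/(1-b_R)$), exactly the interplay in Maucourant's argument; the passage to an honest fundamental domain is then routine measurable selection together with the layer-cake formula. $\square$
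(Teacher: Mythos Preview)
Your proof is correct and follows essentially the same Maucourant-type strategy as the paper: use the spectral gap to show that the measure of points not reached by $G_{\le R}\cdot\Omega$ decays exponentially in $R$, and read off the integrable fundamental domain from this. The paper phrases spectral gap as a Cheeger-type inequality and builds $\mathcal{F}$ explicitly as a union of shells $K^n\cdot(W\times W_0)\setminus\bigcup_{m<n}K^m\cdot(W\times W_0)\Gamma$ (deferring the decay estimate to \cite[Thm~5.3]{zbMATH07128510}), whereas you use the averaging-operator form $\|\pi(\nu)|_{L^2_0}\|<1$ and assemble $\mathcal{F}$ via measurable selection; these are equivalent implementations of the same argument, and both tacitly use density of $p_H(\Gamma)$ in $H$ to ensure $\mathcal{F}\subset G\times W_0$.
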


\begin{proof}
The conclusion does not change if $d$ is replaced by a distance $d'$ such that $d'(\cdot,\cdot) \leq ad(\cdot,\cdot) + b$. Therefore, we may assume that $d$ is the word metric arising from a compact subset $K \subset G$ containing $e$ to be chosen as follows. Since $G \times H/\Gamma$ has spectral gap as a $G$-space, there are a generating compact subset $K \subset G$ and $c \in (0;1)$ such that for every Borel subset $B \subset G \times H/\Gamma$ with $\nu(B) \leq 1/2$ there is $k \in K$ such that 
\begin{equation}
\nu\left((kB) \Delta B\right) > c\nu(B). \label{Eq: Spectral gap}
\end{equation}

Since the projection of $\Gamma$ to $H$ is dense, the projection of $G \times W_0$ to $G \times H/\Gamma$ is surjective. Let $\mathcal{F}_1 :=W^2 \times W_0$. Define for all $n\geq 0$ the subset $$\mathcal{F}_n:=K^n \cdot\left( W \times W_0\right) \setminus \bigcup_{m < n} G^n\cdot \left(W \times W_0\right)\Gamma.$$
Since $K$ generates $G$, $\mathcal{F}:=\bigcup_n\mathcal{F}_n \subset G \times W_0$ is a fundamental domain of $\Gamma$ containing $W \times W_0$.  Following the computations as in the second paragraph of the proof of  \cite[Thm 5.3]{zbMATH07128510} we also have
$$\int_{\mathcal{F}}e^{\alpha d(e,g)}d\mu_{G \times H}(g,h) < \infty. $$
\end{proof}

To apply Lemma \ref{Lemma: Integrable Borel section} to our problem we invoke a consequence of property $(\tau)$ \cite{MR1953260}. It shows that Lemma \ref{Lemma: Integrable Borel section} is satisfied for model sets in $S$-adic semi-simple groups. We follow the strategy of \cite[Thm 1.12]{MR1719827}.

\begin{proposition}\label{proposition: Spectral gap}
Let $G,H$ be a product of $S$-adic algebraic semi-simple groups such that $G \times H$ has $S$-rank at least $2$. Let $\Gamma$ be an irreducible lattice and suppose that $G$ is non-compact. Then the $G$-action on $(G \times H)/ \Gamma$ has spectral gap.
\end{proposition}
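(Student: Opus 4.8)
The plan is to deduce the statement from property $(\tau)$, following the strategy of \cite[Thm.~1.12]{MR1719827}. \emph{Step 1.} First I would reduce to the arithmetic case. Since $\Gamma$ is an irreducible lattice in the semi-simple group $G\times H$ of $S$-rank at least $2$, Margulis' arithmeticity theorem applies; passing to a commensurable lattice and an isogenous group — which does not affect whether the $G$-action on the quotient has a spectral gap — one reaches a situation where there are a number field $K$, a finite set of places $V$, and a connected simply connected semi-simple $K$-group $\mathbf{G}$ (a finite product of Weil restrictions of absolutely almost simple groups) with $G\times H=\prod_{v\in V}\mathbf{G}(K_v)$ and $\Gamma=\mathbf{G}(\mathcal{O}_{K,V})$ embedded diagonally. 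As $G$ is non-compact, there is a place $v_0\in V$ contributing to $G$ with $\mathbf{G}(K_{v_0})$ non-compact; set $G_0:=\mathbf{G}(K_{v_0})\leq G$, and note that a spectral gap for the $G_0$-action forces one for the $G$-action.

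\emph{Step 2.} Then I would pass to the automorphic quotient. Since $\mathbf{G}$ is simply connected and non-compact at $v_0$, strong approximation provides a $\mathbf{G}(\mathbb{A}_{K,V})$-equivariant identification of $(G\times H)/\Gamma=\mathbf{G}(\mathbb{A}_{K,V})/\mathbf{G}(\mathcal{O}_{K,V})$ with the double-coset space $\mathcal{K}\backslash\mathbf{G}(\mathbb{A}_K)/\mathbf{G}(K)$, where $\mathcal{K}=\prod_{v\notin V}\mathbf{G}(O_v)$ is compact open in $\mathbf{G}(\mathbb{A}_K^V)$. Thus, as a $G_0$-representation, $L^2_0\bigl((G\times H)/\Gamma\bigr)$ is the space of $\mathcal{K}$-fixed vectors in the automorphic spectrum $L^2_0\bigl(\mathbf{G}(\mathbb{A}_K)/\mathbf{G}(K)\bigr)$; writing $\mathcal{K}$ as an inverse limit over congruence levels realises it as a union of the $L^2_0$-spaces of the congruence tower over the arithmetic lattice $\mathbf{G}(\mathcal{O}_{K,V\cup V_\infty})$, with $V_\infty$ the archimedean places of $K$.

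\emph{Step 3.} Finally I would invoke property $(\tau)$. By the $\tau$-conjecture \cite{MR1953260}, the arithmetic group $\mathbf{G}(\mathcal{O}_{K,V\cup V_\infty})$ has property $(\tau)$ with respect to its congruence subgroups; equivalently, the $G_0$-representation on $L^2_0\bigl(\mathbf{G}(\mathbb{A}_K)/\mathbf{G}(K)\bigr)$ does not weakly contain the trivial representation, so there are a compact set $Q\subset G_0$ and $\varepsilon>0$ with $\sup_{q\in Q}\|q\xi-\xi\|\geq\varepsilon\|\xi\|$ for every vector $\xi$. Restricting this estimate to the $\mathcal{K}$-fixed subspace $L^2_0\bigl((G\times H)/\Gamma\bigr)$ shows that the $G_0$-action — hence the $G$-action — has a spectral gap. (When $\operatorname{rank}_{K_{v_0}}\mathbf{G}\geq 2$, Step 3 is unnecessary: $G_0$ then has property (T) and acts ergodically on $(G\times H)/\Gamma$ by Moore's ergodicity theorem, so the gap is automatic.) I expect the main difficulty to be not conceptual but the bookkeeping of Step 2 — precisely matching $(G\times H)/\Gamma$ with the adelic double-coset space while tracking the isogeny and centre of $\mathbf{G}$ — together with checking that property $(\tau)$ applies in exactly the generality needed here: arbitrary $K$-forms in the $S$-arithmetic setting, with $G_0$ possibly of rank one at an archimedean or a non-archimedean place. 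That last verification is what \cite[Thm.~1.12]{MR1719827} provides.
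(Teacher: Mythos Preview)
Your proposal is correct and follows essentially the same approach as the paper: reduce to the arithmetic, simply connected setting via Margulis' arithmeticity theorem, then invoke Clozel's property $(\tau)$ \cite{MR1953260} to obtain the spectral gap for a non-compact factor. The paper's proof is more compressed --- it cites Clozel's result directly as giving no almost invariant vectors for any non-compact simple factor acting on $L^2_0(L/\Gamma')$, and then transfers this to $L^2_0((G\times H)/\Gamma)$ by citing \cite[\S 3.1]{MR1719827} --- whereas you unpack the same mechanism explicitly through strong approximation and the congruence tower; but the substance is identical.
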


\begin{proof}
Since $\Gamma$ is arithmetic, there is $L \rightarrow G \times H$ with finite kernel and finite index image such that $L$ is a product of groups of points of a simply connected $K$-group $\mathbb{H}$ over inequivalent completions of $K$ and $\Gamma$ is commensurable with $\Gamma'$ an arithmetic lattice in $L$ (\cite[Theorem 1 and IX.1.6.(i)]{MR1090825}). As a consequence of \cite{MR1953260}, we know that the action of any non-compact simple factor of $L$ acts with no almost invariant vectors on the space $L^2_0(L/\Gamma')$ of $L^2$ functions with $0$ mean. Therefore, the restriction of the quasi-regular representation $L^2_0(G \times H/\Gamma)$ to any non-compact factor of $G \times H$ has no almost invariant vector either (\cite[\S 3.1]{MR1719827}).
\end{proof}

\subsection{Borel cocycles and return times}\label{Subsection: Borel cocycles and return times}

From now on fix $G \times H$ a product of second countable locally compact groups and $\Gamma \subset G \times H$ a lattice.  Let $W_0 \subset H$ be  a neighbourhood of the identity to be determined later.  Let $\Lambda \subset G$ be an approximate lattice containing the model set given by $W_0$ such that $\langle \Lambda \rangle$ is equal to the projection of $\Gamma$.  Let  $W$ denote the ball of radius $\epsilon$ centred at $e$ for some small $\epsilon > 0$ to be chosen later. Let $\mathcal{F} \subset G \times H$ be a fundamental domain as in Lemma \ref{Lemma: Integrable Borel section} and $s: G \times H/\Gamma \rightarrow \mathcal{F}$ be the section.    We can build a cocycle $\alpha_s: \left(G \times H\right) \times G \times H/\Gamma \rightarrow G$ defined for all $g\in G, h \in H$ and $X \in G\times H/\Gamma$ by :
$$ \alpha_s(g,h,X)= s((g,h)X)^{-1}(g,h)s(X).$$
In particular, 
$$p_G(\alpha_s(g,e, X) )\in p_G((gX)^{-1}gX) = p_G(X^{-1}X \cap G \times W_0^2) \subset \Lambda^2.$$ These cocycles have now been well studied (\cite{bjorklund2020analytic, machado2020apphigherrank}). We will study them here in relation with certain transverse subsets.

\begin{proposition}[\S 4, \cite{BjorklundHartnickKarasik}]\label{Proposition: Transverse measure}
Let $\Lambda$,  $G$, $H$ be as above.  For all $g \in G$, define a cross-section by 
$$\mathcal{T}_{g,W_0}:= \{ X \in G \times H/\Gamma : g \in X \cap G \times W_0 \}.$$
 Then there is a finite Borel measure $\eta_g$ defined on $\mathcal{T}_g$ that satisfies the following: 
\begin{enumerate}
\item  for all $B \subset \mathcal{T}_{g,W_0}$ Borel and $h \in G$ such that $hB \subset \mathcal{T}_{g',W_0}$, 
$$\eta_{g'}(hB)=\eta_g(B);$$ 
\item for $W$ as above and $B \subset \bigcup_{g \in W} \mathcal{T}_{g,W_0}$ Borel
$$ \nu(B) = \int_W\eta_g( B \cap \mathcal{T}_{g,W_0})d\mu_G(g) = \int_{\mathcal{T}_{e,W_0}}\mu_G(\{g \in G: gX \in B\})d\eta_e(X).$$
\end{enumerate}
\end{proposition}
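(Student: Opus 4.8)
The plan is to construct $\eta_g$ as the transverse (Palm-type) measure associated with the cross-section $\mathcal{T}_{g,W_0}$, obtained by disintegrating the finite $(G\times H)$-invariant measure $\nu$ on $(G\times H)/\Gamma$ along the local product (``flow box'') structure of the $G$-action near $\mathcal{T}_{g,W_0}$. First I would record the elementary identity $\mathcal{T}_{g,W_0}=(g,e)\cdot\mathcal{T}_{e,W_0}$, which reduces the problem to constructing a single finite measure $\eta:=\eta_e$ on $\mathcal{T}:=\mathcal{T}_{e,W_0}$ and then setting $\eta_g:=(g,e)_*\eta$ on $\mathcal{T}_{g,W_0}$; with this choice, property (1) will reduce to $G$-invariance of $\eta$.

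For the construction, I would use the uniform discreteness of $\Lambda$ — hence of the model set $p_G\big(\Gamma\cap(G\times W_0^2)\big)\subseteq\Lambda^2$ — together with discreteness of $\Gamma$ to fix an \emph{admissible} relatively compact neighbourhood $W$ of $e$ in $G$, meaning: the box map $\Psi_W\colon W\times\mathcal{T}\to(G\times H)/\Gamma$, $(g,X)\mapsto(g,e)X$, is injective, and returns to $\mathcal{T}$ are separated, i.e. $\{g\in W^{-1}W:(g,e)X\in\mathcal{T}\}=\{e\}$ for every $X\in\mathcal{T}$. By the Lusin--Souslin theorem $\Psi_W$ is then a Borel isomorphism onto the Borel set $\Omega_W:=(W,e)\mathcal{T}$, so pulling back $\nu|_{\Omega_W}$ gives a finite measure $\widetilde\nu_W$ on $W\times\mathcal{T}$. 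The $G$-invariance of $\nu$ translates into invariance of $\widetilde\nu_W$ under left translation in the first coordinate (as long as one stays inside $W$); disintegrating $\widetilde\nu_W$ over the second coordinate and invoking uniqueness of left Haar measure on $G$ fibrewise forces $\widetilde\nu_W=(\mu_G|_W)\otimes\eta_W$ for a unique finite measure $\eta_W$ on $\mathcal{T}$, equivalently $\eta_W(B)=\mu_G(W)^{-1}\,\nu\big((W,e)B\big)$. Comparing on nested admissible balls — and passing to a common refinement in general — shows $\eta_W$ is independent of $W$; this is the sought $\eta$.

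Property (1) then unwinds, via the definition $\eta_g=(g,e)_*\eta$, to the statement that $\eta\big((k,e)C\big)=\eta(C)$ whenever $C$ and $(k,e)C$ both lie in $\mathcal{T}$. I would obtain this by transporting a thin box: $(k,e)\cdot\Psi_W(V\times C)=\Psi_{kVk^{-1}}\big((kVk^{-1})\times(k,e)C\big)$, and for $V$ small the set $kVk^{-1}$ is again admissible, so invariance of $\nu$ plus the product formula above yield the equality — the only subtlety being the modular function of $G$, which is trivial in all cases at hand since $G$ is a (product of) $S$-adic semisimple group(s), hence unimodular. For property (2), write $B=\Psi_W(\widetilde B)$ for Borel $B\subseteq\Omega_W=\bigcup_{g\in W}\mathcal{T}_{g,W_0}$. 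Fubini applied to $\widetilde\nu_W=(\mu_G|_W)\otimes\eta$, together with return-separation (which gives $B\cap\mathcal{T}_{g,W_0}=(g,e)\widetilde B_g$), yields $\nu(B)=\int_W\eta_g\big(B\cap\mathcal{T}_{g,W_0}\big)\,d\mu_G(g)$; reading the same product measure in the other order and using the $G$-equivariance from (1) to unfold along the cross-section gives the Siegel/Kac-type identity $\nu(B)=\int_{\mathcal{T}_{e,W_0}}\mu_G\big(\{g\in G:gX\in B\}\big)\,d\eta_e(X)$.

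The main obstacle is the disintegration step: showing that the local pullback of $\nu$ genuinely splits as Haar measure on the $G$-factor times a measure on the transversal. This is where full $G$-invariance of $\nu$ is essential, and where one must handle that $G$ is $S$-adic rather than connected, phrasing ``locally left-invariant locally finite measure $=$ Haar'' through the compact-open subgroups of the non-archimedean factors. Care is also needed to extract admissibility and return-separation of $W$ from uniform discreteness of $\Lambda$, and to verify the descriptive-set-theoretic statements about $\Psi_W$; for all of this I would follow \cite{BjorklundHartnickKarasik}, the cross-section formalism also being treated in \cite{bjorklund2020analytic} and \cite{machado2020apphigherrank}.
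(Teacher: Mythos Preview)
The paper does not give its own proof of this proposition; it is quoted directly from \cite{BjorklundHartnickKarasik}. Your flow-box construction of the transverse (Palm) measure via disintegration of $\nu$ along the local product $W\times\mathcal{T}_{e,W_0}$, followed by the push-forward $\eta_g=(g,e)_*\eta_e$ and the Fubini argument for the two integral formulas, is precisely the standard approach and is what is carried out in the cited reference, so your outline is correct and aligned with the source.
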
 

The main result of this section is:  

\begin{lemma}\label{Lemma: Return times are large}
Let $g \in G$ and $B \subset \mathcal{T}_{g,W_0}$ be Borel. Suppose that $\eta_g(B) > 0$. Then the set of \emph{return times}, defined as
$$ \mathcal{R}(B):= \{h \in G: B \cap hB \neq \emptyset \}$$
is commensurable with $g\Lambda g^{-1}$ (and contained in $g\Lambda^2g^{-1}$). 
\end{lemma}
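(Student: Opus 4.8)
The plan is to unwind the definitions of $\mathcal{R}(B)$ and of the cross-section $\mathcal{T}_{g,W_0}$, and to translate the non-emptiness condition $B \cap hB \neq \emptyset$ into a statement about $\Gamma$-orbits in $G \times H/\Gamma$. First I would observe that if $h \in \mathcal{R}(B)$, then there is $X \in B$ with $hX \in B$, so both $X$ and $hX$ lie in $\mathcal{T}_{g,W_0}$. This means $g \in X \cap (G \times W_0)$ and $g \in (hX) \cap (G \times W_0)$, i.e. $(h^{-1},e)g \in X \cap (G \times W_0^2)$ up to the obvious bookkeeping; more precisely $g, (h,e)^{-1}g \in X$, so $(h,e)^{-1} = \big((h,e)^{-1}g\big)\,g^{-1} \in X^{-1}X$. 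Since $X = \gamma\Gamma$ for some $\gamma$, and $X$ meets $G \times W_0$ in the element $g$, writing $X = g\Gamma$ we get $(h,e)^{-1} \in g\Gamma g^{-1} \cap (G \times W_0^{-1}W_0)$. Projecting to $G$ and using that $p_G(\Gamma \cap (G\times W_0^2))$ is contained in $\Lambda^2$ (this is exactly the computation displayed just before Proposition~\ref{Proposition: Transverse measure}), we conclude $h^{-1} \in g\Lambda^2 g^{-1}$, hence $h \in g\Lambda^2 g^{-1}$ since $\Lambda$ is symmetric. This gives the containment $\mathcal{R}(B) \subset g\Lambda^2 g^{-1}$.

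Next I would prove the reverse commensurability, i.e. that $g\Lambda g^{-1}$ is covered by finitely many translates of $\mathcal{R}(B)$ (and vice versa, which follows from the containment above together with Lemma~\ref{Lemma: Intersection of approximate subgroups}). Here the positivity hypothesis $\eta_g(B) > 0$ enters. The idea is that for $\lambda \in \Lambda$ with $(\lambda^{-1},e) = p_G$-part of an element of $g^{-1}$-conjugate, translation by $h = g\lambda g^{-1}$ maps $\mathcal{T}_{g,W_0}$ into $\mathcal{T}_{g',W_0}$ for a nearby $g'$, and by the invariance property (1) of the transverse measure $\eta$ in Proposition~\ref{Proposition: Transverse measure}, $\eta$ is preserved along such translations. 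Using the $\sigma$-finiteness/finiteness of $\eta_g$ and the fact that $B$ has positive measure, a pigeonhole argument shows that for $\lambda$ ranging over $\Lambda$, the translates $hB$ cannot be pairwise disjoint from $B$ unless $\lambda$ avoids a set that is small in an appropriate counting sense; one packages this via Proposition~\ref{Proposition: Transverse measure}(2) relating $\eta$ to the ambient Haar measure, noting $\nu$ is finite since $\Gamma$ is a lattice. Concretely: if $g\lambda_1 g^{-1}, \dots, g\lambda_N g^{-1}$ were such that the sets $g\lambda_i g^{-1} B$ were pairwise disjoint and all disjoint from $B$, their $\eta$-measures would all equal $\eta_g(B)$, forcing $N \eta_g(B)$ to be bounded by the total mass, hence $N$ bounded; the complement is then covered by boundedly many translates of $\mathcal{R}(B)$.

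The main obstacle I anticipate is making the pigeonhole/packing argument precise while correctly tracking the base points $g$ versus $g'$: translating a subset of $\mathcal{T}_{g,W_0}$ by $h \in G$ lands in $\mathcal{T}_{h g, W_0}$ only when $hg$ still has $H$-component in $W_0$, so one must either shrink $W_0$, or work with $W_0^2$ and absorb the discrepancy into the finite error inherent in commensurability. A clean way around this is to fix the element $g = e$ after conjugating (replacing $\Lambda$ by $g^{-1}\Lambda g$ changes nothing up to the stated conjugation), use the cross-section $\mathcal{T}_{e,W_0}$ and the cocycle $\alpha_s$ so that return times are literally $\{\lambda \in \Lambda^2 : \lambda B \cap B \neq \emptyset\}$ inside $\mathcal{T}_{e,W_0}$, and then invoke the disjointness-counting against the finite measure $\eta_e$. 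The formula in Proposition~\ref{Proposition: Transverse measure}(2), $\nu(B) = \int_{\mathcal{T}_{e,W_0}} \mu_G(\{g : gX \in B\}) \, d\eta_e(X)$, is precisely the tool that converts ``many disjoint $\Lambda$-translates of $B$'' into ``infinite $\nu$-mass'', contradicting finiteness of $\nu$; this yields the finite cover and completes the commensurability.
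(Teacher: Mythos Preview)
Your argument for the containment $\mathcal{R}(B)\subset g\Lambda^2 g^{-1}$ is correct: if $X,hX\in B\subset \mathcal{T}_{g,W_0}$ then $(g,w),(h^{-1}g,w')\in X$ for some $w,w'\in W_0$, so $(g^{-1}h^{-1}g,w^{-1}w')\in\Gamma$ and hence $g^{-1}h^{-1}g\in p_G(\Gamma\cap(G\times W_0^{-1}W_0))\subset\Lambda^2$.

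The commensurability direction, however, has a genuine gap. Your pigeonhole step asserts that if $h_iB$ are pairwise disjoint for $h_i=g\lambda_i g^{-1}$, then ``their $\eta$-measures would all equal $\eta_g(B)$, forcing $N\eta_g(B)$ to be bounded by the total mass''. But $h_iB\subset\mathcal{T}_{h_ig,W_0}=\mathcal{T}_{g\lambda_i,W_0}$, and these are \emph{different} cross-sections carrying \emph{different} transverse measures $\eta_{g\lambda_i}$; there is no common ambient finite measure against which to sum them (each cross-section has $\nu$-measure zero). Your proposed fix via thickening to $W'B$ and using $\nu$ does bound a maximal disjoint family, but the non-disjointness condition becomes $f^{-1}h\in W'\mathcal{R}(B)W'^{-1}$, so you only obtain $g\Lambda g^{-1}\subset F\,W'\mathcal{R}(B)W'^{-1}$. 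Since $G$ need not admit small normal neighbourhoods and $\mathcal{R}(B)$ is unbounded, the conjugates $rW'^{-1}r^{-1}$ for $r\in\mathcal{R}(B)$ are not uniformly controlled, and you cannot absorb the flanking $W'$ factors into finitely many translates of $\mathcal{R}(B)$.

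The paper avoids this by working entirely inside $G$ rather than in the quotient. For a generic $X$ it considers the hitting set $T_{\mathrm{hit}}(X,B)=\{h\in G:hX\in B\}$, notes that $T_{\mathrm{hit}}(X,B)T_{\mathrm{hit}}(X,B)^{-1}\subset\mathcal{R}(B)$, and runs a Ruzsa-type covering argument: a subset $F\subset g\Lambda g^{-1}$ with the right-translates $T_{\mathrm{hit}}(X,B)^{-1}f$ pairwise disjoint is bounded in size because $T_{\mathrm{hit}}(X,B)^{-1}F$ is $W'$-separated (its difference set lies in $g\Lambda^4 g^{-1}$, which meets $W'^2$ trivially), and the Siegel-type identity $\int\Phi_U\,d\nu=\mu_G(U)$ converts this into a volume bound. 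Maximality of $F$ then gives $g\Lambda g^{-1}\subset \mathcal{R}(B)F$ directly, with no spurious $W'$ factors. The key missing ingredient in your sketch is this passage to the pointwise hitting set in $G$ and the use of uniform discreteness of $g\Lambda^4 g^{-1}$ for the packing bound.
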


\begin{proof}
Choose a symmetric compact  neighbourhood of the identity $W' \subset G$ such that $g\Lambda^4g^{-1} \cap W'^2 =\{e\}$. For all $X \in G \times H/\Gamma$ set $T_{hit}(X,B):=\{g \in G: gX \in B\}$.  For all $U \subset G$ open,  the map $\Phi_U: X \mapsto \sum_{g \in T_{hit}(X,B)^{-1}}\mathbf{1}_U(g)$ is Borel and $G$-equivariant and its Haar integral is $\mu_G(U)$ for some normalisation of the Haar measure of $G$ independent of $U$ (e.g. \cite[\S 5.2]{bjorklund2016approximate}).  Moreover, $$T_{hit}(X,B)T_{hit}(X,B)^{-1} \subset \mathcal{R}(B) \subset g\Lambda^2g^{-1}.$$ We exploit a variant of Rusza's covering lemma: let $F \subset g\Lambda g^{-1}$ be a finite subset such that for all $X \in G \times H/\Gamma$,  the subsets $T_{hit}(X,B)^{-1}f$ for $f \in F$ are pairwise disjoint.  Since these subsets are disjoint we have
\begin{align*}
\sum_{f \in F} \mu_G(Uf^{-1}) \leq  \sum_{f \in F} \mu_{G \times H/\Gamma}(\Phi_{Uf^{-1}}) =\sum_{g \in T_{hit}(X,B)^{-1}F}\mathbf{1}_U(g) \leq \frac{\mu_G(UW')}{\mu_G(W')}
\end{align*}
where we have used disjointness to go from the second to the third term and $F^{-1}T_{hit}(X,B)T_{hit}(X,B)^{-1}F\subset g\Lambda^4g^{-1}$ to go from the third to the last term.  Since $G$ is unimodular, the first term is equal to $|F|\mu_G(U)$. So $F$ is finite. Therefore,  if $F \subset g\Lambda^2g^{-1}$ is chosen of maximal size satisfying the disjointness property, we find that 
$$g\Lambda g^{-1} \subset T_{hit}(X,B)T_{hit}(X,B)^{-1} F \subset \mathcal{R}(B)F$$
which concludes the proof. 
\end{proof}

\begin{remark}
Underpinning the proof of Lemma \ref{Lemma: Return times are large} are ideas recently developed in the study of approximate lattices \cite{BjorklundHartnickKarasik,  machado2020apphigherrank}.  Most of the results regarding the induction process could be recast in this more general framework.  We do not do that for he sake of brevity. 
\end{remark}
\subsection{$L^p$-Induction of quasi-cocycles}

With $G,H,\Gamma, W_0$, $\Lambda$, $s$ and $\alpha_s$ be as in \S \ref{Subsection: Borel cocycles and return times} above.  Fix a Banach space $B$ equipped with a norm $|| \cdot ||$ (here, $B$ is a Banach space over a local field $k$ that may be non-Archimedean).  Cocycles $\alpha_s$ can be used efficiently to induce quasi-cocycles.

\begin{definition}
Let $b: \langle \Lambda \rangle \rightarrow B$ be a quasi-cocycle of $\Lambda$ (i.e. an element of $QC(\Lambda;B)$). Define 
$$\hat{b}(g,h):= b\circ p_G\circ \alpha_s(g^{-1},h^{-1},\cdot)^{-1}.$$ 
\end{definition}

Then $\hat{b}(g,h)$ is measurable, takes values in $B$ and satisfies a quasi-cocycle-like behaviour. For all $g_1, g_2\in G$, $h_1, h_2 \in H$ and all $X \in G \times H/\Gamma$
\begin{equation}
||\hat{b}(g_1g_2,h_1h_2)(X) - \hat{b}(g_1,h_1)(X) - \pi(p_G\circ \alpha_s(g_1^{-1},h_1^{-1},X))^{-1}\cdot \hat{b}(g_2,h_2)((g_1,h_1)^{-1}X)|| \leq C \label{Eq: quasi-cocycle equation for induction}
\end{equation}
where  $C$ is a constant independent of $(g_1,h_1)$ and $(g_2,h_2)$.  The values of $\hat{b}$ and the values of $b$ are moreover closely related.

\begin{proposition}\label{Proposition: Boundedness of induced quasi-cocycle implies boundedness of quasi-cocycle}
Let $b: \langle \Lambda \rangle \rightarrow B$ be a quasi-cocycle of $\Lambda$. Suppose that there is some $p> 0$ such that $$\sup_{g \in G}||\hat{b}(g,e)||_p < \infty.$$
Then $b(\Lambda)$ is bounded. 
\end{proposition}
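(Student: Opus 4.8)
The plan is to run a Fubini-type argument on the cross-section $\mathcal{T}_{e,W_0}$ using the transverse measure $\eta_e$ from Proposition~\ref{Proposition: Transverse measure}, together with the fact that return times are large (Lemma~\ref{Lemma: Return times are large}). First I would observe that for $X \in \mathcal{T}_{e,W_0}$ and $g$ in the hitting set $T_{hit}(X,B)$ of a suitable Borel piece $B$, the value $\hat b(g,e)(X)$ is, up to the bounded defect coming from \eqref{Eq: quasi-cocycle equation for induction}, of the form $\pi(\cdot)^{-1}\cdot\big(b(\lambda) + O(1)\big)$ where $\lambda = p_G\circ\alpha_s(g^{-1},e,X)^{-1} \in \Lambda^2$ records the return of $X$ to the cross-section. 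The key point is that as $g$ ranges over $T_{hit}(X,B)^{-1}T_{hit}(X,B)$ for $X$ fixed, the associated $\lambda$'s range over a commensurated approximate subgroup (by Lemma~\ref{Lemma: Return times are large}, this set is commensurable with $\Lambda$), and conversely every element of $\Lambda$ is hit up to a finite multiplicative error $F$. So the ``rows'' of the function $(g,X)\mapsto \hat b(g,e)(X)$ essentially re-index the values $b(\lambda)$, $\lambda\in\Lambda$.

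Concretely, I would fix a compact $B\subset\mathcal{T}_{e,W_0}$ with $\eta_e(B)>0$, and for each $\lambda\in\Lambda$ consider the set of pairs $(g,X)$ with $X\in B$, $gX\in B$, and $p_G\circ\alpha_s(g^{-1},e,X)^{-1}=\lambda$ (more precisely lying in $\lambda F$ for the finite set $F$ from Rusza's covering argument). Using part (2) of Proposition~\ref{Proposition: Transverse measure} to relate the Haar measure on $G$ to $\eta_e$ via the cross-section, and unimodularity, one gets that this set has positive, uniformly-bounded-below measure — because $\lambda$ is an honest return time and $B$ has positive transverse mass, the ``time spent'' realizing the return $\lambda$ is comparable to $\mu_G$ of a fixed neighbourhood. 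On such a pair, the cocycle identity \eqref{Eq: quasi-cocycle equation for induction} iterated gives $\|\hat b(g,e)(X)\| \geq \|b(\lambda)\| - C'$ after using that $\pi(\cdot)$ acts by \emph{uniformly bounded} operators on $B$ (here the uniform boundedness of the $\Lambda$-action, part of the standing hypothesis on $QC(\Lambda;B)$, is essential — it lets us pass norm estimates through the twisting factor $\pi(p_G\circ\alpha_s(\cdots))^{-1}$ without loss). Hence
$$\|\hat b(\cdot,e)\|_p^p \;=\;\int_G \|\hat b(g,e)\|^p\, d\mu_G(g)\;\geq\; \int_G \Big(\int_{\mathcal T_{e,W_0}} \|\hat b(g,e)(X)\|^p\, d\eta_e(X)\Big)\cdots$$
wait — I should phrase it via Proposition~\ref{Proposition: Transverse measure}(2) so that the $G$-integral is fibered over $\mathcal T_{e,W_0}$. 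The resulting lower bound is, up to a fixed positive constant $c_0 = c_0(B)>0$ and the additive error, $\sum_{\lambda\in\Lambda}$ (over a net of $\lambda$) of $c_0(\|b(\lambda)\|-C')^p$ where the sum's "weights" are bounded below; if $b(\Lambda)$ were unbounded this forces $\|\hat b(\cdot,e)\|_p=\infty$, contradicting the hypothesis. This proves $b(\Lambda)$ is bounded.

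The main obstacle I anticipate is the bookkeeping for $p<1$: the functional $\|\cdot\|_p$ is only a quasi-norm, so the triangle-type manipulations and the passage from \eqref{Eq: quasi-cocycle equation for induction} to a clean lower bound on a single fiber need care, and one cannot freely split integrals. I expect the right move is to avoid any $L^p$ triangle inequality entirely: work fiberwise, get the pointwise lower bound $\|\hat b(g,e)(X)\|\geq \|b(\lambda_{g,X})\|-C'$ on a positive-measure set of $(g,X)$ for each relevant $\lambda$, and then integrate the \emph{non-negative} integrand directly — monotonicity of the integral, not subadditivity, is all that's needed, and that works for every $p>0$. A secondary technical point is ensuring the finite ``fudge'' set $F$ and the resulting shift $\lambda\mapsto\lambda F$ only change $\|b(\lambda)\|$ by a bounded additive amount, which follows from $b\in QC(\Lambda;B)$ and $F\subset\langle\Lambda\rangle$ finite, exactly as in the proof of Proposition~\ref{Proposition: Bounds on quasi-cocycles from efficient generation}. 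If $G$ has noncompact factors one also wants to record that the measure of the realizing set for each $\lambda$ is bounded below uniformly in $\lambda$, which is where Lemma~\ref{Lemma: Return times are large} (return times commensurable with $g\Lambda g^{-1}$, contained in $g\Lambda^2 g^{-1}$) together with Proposition~\ref{Proposition: Transverse measure}(1) (invariance of $\eta$) is used.
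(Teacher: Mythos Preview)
Your plan diverges from the paper's proof in a crucial way, and the divergence is precisely where your argument has a genuine gap. The paper does \emph{not} attempt a direct fiberwise lower bound of the kind you sketch. Instead, it sets $l(g,X)=\min\{\|\hat b(g,e)(X)\|^p,1\}$, observes from \eqref{Eq: quasi-cocycle equation for induction} and $p\le 1$ that $l(gh,X)\le C(l(g,X)+l(h,g^{-1}X))$, and then invokes Ozawa's theorem \cite[Thm.~8]{zbMATH05936993}: uniform $L^1$-boundedness of $g\mapsto l(g,\cdot)$ produces a single $f\in L^1(G\times H/\Gamma)$ with $l(g,X)\le f(X)+f(g^{-1}X)$ pointwise a.e. Only after this pointwise control does the paper pass to level sets $\{f\le C\}$ on the cross-section, and Lemma~\ref{Lemma: Return times are large} is applied to \emph{those} level sets, not to an arbitrary $B$.

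The step you are missing is exactly the content of Ozawa's theorem. Your claim that ``the time spent realizing the return $\lambda$ is comparable to $\mu_G$ of a fixed neighbourhood'' amounts to asserting that, for an appropriate $g=g(\lambda)$, the set $\{X:p_G\circ\alpha_s(g^{-1},e,X)^{-1}=\lambda\}$ has $\nu$-measure bounded below independently of $\lambda$. Neither Lemma~\ref{Lemma: Return times are large} nor Proposition~\ref{Proposition: Transverse measure} gives this: the former says only that $\mathcal{R}(B)$ is commensurable with $\Lambda$ when $\eta(B)>0$, not that $\eta(B\cap h^{-1}B)$ is bounded below uniformly in $h\in\mathcal{R}(B)$; the latter is an invariance/disintegration statement. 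Concretely, identifying the cross-section with the window in $H$, the overlap $\mathcal{T}_{g,W_0}\cap h^{-1}\mathcal{T}_{g,W_0}$ corresponds to $W_0\cap W_0 h_H^{-1}$ (with $(\,h,h_H)\in\Gamma$), whose $\mu_H$-measure can be arbitrarily small as $h_H$ approaches $\partial W_0$. Moreover, once you thicken by $W$ to get positive $\nu$-measure, conjugation by large $\lambda$ distorts $W$, so the naive realizing set $W^2\cap\lambda^{-1}W^2\lambda$ need not be uniformly large either. (Your aside ``wait --- I should phrase it via Proposition~\ref{Proposition: Transverse measure}(2)'' is a symptom: the hypothesis is $\sup_g\|\hat b(g,e)\|_p$, not $\int_G$, so there is no global double integral to fiber.)

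It may be possible to salvage a direct argument by shrinking the window (taking $\Lambda_0$ the model set for some $W_0''$ with $W_0''W_0''\subset W_0$, so overlaps are uniformly large) and carefully tracking the section, but this is real additional work not supplied by the lemmas you cite. As written, the plan does not go through; the missing idea is Ozawa's pointwise domination.
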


Proposition \ref{Proposition: Boundedness of induced quasi-cocycle implies boundedness of quasi-cocycle} generalises the usual induction process for lattices in bounded cohomology (see \cite[Prop. 13.2.6]{MR1840942}) as well as a result obtained in \cite{bjorklund2020analytic} under the assumption that $\Lambda$ is a uniform model set. Our strategy is close to Ozawa's \cite{zbMATH05936993}.

\begin{proof}
Define $l: G \times G \times H /\Gamma\rightarrow \mathbb{R}$ by 
$$l(g,X):=\min\{||\hat{b}(g,e)(X)||^p,1\}.$$
Since $L^p(G \times H /\Gamma,B) \subset L^q(G \times H /\Gamma,B)$ for $q \leq p$, we may assume that $p \leq 1$. Then for any $x,y \in \mathbb{R}_{\geq 0}$ we have $(x+y)^p \leq x^p + y^p$. So (\ref{Eq: quasi-cocycle equation for induction}) implies
$$l(gh,X) \leq C(l(g,X) + l(h,g^{-1}X))$$ 
where $C$ is a constant depending on the constant appearing in \eqref{Eq: quasi-cocycle equation for induction} and an upper bound of the operator norm of elements of $\pi(\Lambda)$. Besides, $l(g,\cdot)$ is bounded in $L^1(G \times H /\Gamma)$ uniformly in $g$ by assumption. According to \cite[Thm 8]{zbMATH05936993}, there is a map $f \in L^1(G \times H /\Gamma)$ such that for all $g \in G$ and $\nu$-almost all $X \in G \times H /\Gamma$ we have 
$$l(g,X) \leq f(X) + f(g^{-1}X).$$
By a Fubini-type argument there is a Borel subset $\Omega\subset G \times H /\Gamma$ of measure $1$ such that 
for all $X \in \Omega$ and almost all $g \in G$ we have 
\begin{equation}
l(g,X) \leq f(X) + f(g^{-1}X). \label{Eq: Ozawa}
\end{equation}
According to part (4) of Proposition \ref{Proposition: Transverse measure} there is $g \in W$ such that $\eta_g(\mathcal{T}_{g,W_0} \cap \Omega) = \eta_g(\mathcal{T}_{g,W_0})$. Define $$\mathcal{T}'_{g,C}:=\mathcal{T}_{g,W_0} \cap f^{-1}([0;C]) \cap \Omega.$$ For $C$ large enough, $\eta_g(\mathcal{T}'_{g,C}) > \frac{1}{2}\eta_g(\mathcal{T}_{g,W_0})$. Similarly, define $$\mathcal{T}''_{g,C}:=\{X \in \mathcal{T}_{g,W_0}: \mu_G(\{w \in W: f(wX) \leq C\})>0\}.$$ Again, for $C$ sufficiently large $\eta_g(\mathcal{T}''_{g,C}) > \frac{1}{2}\eta_g(\mathcal{T}_{g,W_0})$ (Part (4) of Proposition \ref{Proposition: Transverse measure}). So the set $\mathcal{T}_{g,C}:= \mathcal{T}'_{g,C} \cap \mathcal{T}''_{g,C}$ has positive $\eta_g$-measure for $C$ sufficiently large. Take $h$ in the set $\mathcal{R}(\mathcal{T}_{g,C})$ of return times and choose $X \in \mathcal{T}_{g,C}$ such that $hX \in \mathcal{T}_{g,C}$. Since $X \in \mathcal{T}'_{g,C}$ we have $f(X) \leq C$. Since $hX \in \mathcal{T}''_{g,C}$ there is $w \in W$ such that $f(wh X) \leq C$ and (\ref{Eq: Ozawa}) is satisfied for $X$ and $wh$. Thus, 
$$||b \left( p_G \circ \alpha_s(wh, e, X)^{-1}\right)||^p \leq l((wh)^{-1}, X) \leq f(X) + f(whX) \leq 2C.$$
But, for every $Y \in \mathcal{T}_{g,W_0}$ and $w \in W$ we have $p_G \circ s(wY)=wg$ (as $W^2 \times W_0 \subset \mathcal{F}$). So
$$p_G \circ \alpha_{s}(wh, e,X)=p_G\left(s(whX)^{-1}(wh,e)s(X)\right)= g^{-1}hg.$$ So $b(g^{-1}\mathcal{R}(\mathcal{T}_{g,C})g) \leq 2C$. By Lemma \ref{Lemma: Return times are large} the set $\mathcal{R}(\mathcal{T}_{g,C})$ is commensurable with $g\Lambda g^{-1}$ and contained in $g\Lambda^2g^{-1}$. Since $b$ is a quasi-cocycle, $b(\Lambda)$ is bounded.
\end{proof}

\subsection{A criterion for boundedness of quasi-cocycles}\label{Subsection: a criterion for boundedness of quasi-cocycles}
With $G,H,$ $\Gamma,W_0,$ $\Lambda, s,b$ and $B$ as in \S \ref{Subsection: Borel cocycles and return times}.  Choose a number field $K$, an absolutely almost simple $K$-group $\mathbf{G} \subset \SL_n$ and two disjoint finite subsets $S_1, S_2 \subset S_K$ whose union contains all the places $v$ such that $\mathbf{G}$ is $K_v$-anisotropic. Suppose from now on that $G = \mathbf{G}(\mathbb{A}_{K,S_1})$, $H=\mathbf{G}(\mathbb{A}_{K,S_2})$ and $\Gamma \subset \mathbf{G}(\mathbb{A}_{K,S_1 \cup S_2})$ is commensurable with $\mathbf{G}(\mathcal{O}_{K,S_1 \cup S_2})$.

\begin{proposition}\label{Proposition: Integrability of quasi-cocycle}
 With the notation as above. We have:
\begin{enumerate}
 \item if $\Lambda$ is uniform, then $\alpha_s(g,h,\cdot)$ takes finitely many values and so does $\hat{b}(g,h)$; 
 \item if $\Lambda$ is non-uniform, then there is $p > 0$ such that $\hat{b}(g,h)$
belongs to $L^p(G \times H/ \Gamma, B)$ for all $g\in G$ and $h \in H$; 
 \item if $\Lambda$ is non-uniform and $\pi$ is the trivial action, then  for all $p>0$, $g \in G$ and $h \in H$, $\hat{b}(g,h)$
belongs to $L^p(G \times H/ \Gamma, B)$. 
\end{enumerate} 
\end{proposition}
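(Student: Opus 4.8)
The plan is to reduce all three parts to a single pointwise estimate on $\hat{b}(g,h)(X)$ and then integrate. Fix $(g,h)\in G\times H$ and write $s(X)=(s_G(X),s_H(X))$ with $s_G(X)\in G$, $s_H(X)\in W_0$. Recalling that $\alpha_s(g^{-1},h^{-1},X)$ lies in $\Gamma\subset\mathbf{G}(K)$ with $p_G\big(\alpha_s(g^{-1},h^{-1},X)\big)=s_G((g^{-1},h^{-1})X)^{-1}g^{-1}s_G(X)$, submultiplicativity of $\|\cdot\|'_{S_1}$ together with the polynomial bound $\|\gamma^{-1}\|_{S_1}\preceq\|\gamma\|_{S_1}$ (Cramer's rule inside $\SL_n$) yields a fixed exponent $a\geq 1$ with
\[
\|p_G(\alpha_s(g^{-1},h^{-1},X))^{-1}\|_{S_1}\ \preceq\ C(g,h)\,\|s_G((g^{-1},h^{-1})X)\|_{S_1}^{a}\,\|s_G(X)\|_{S_1}^{a},
\]
where $C(g,h)$ depends only on $(g,h)$. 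It therefore suffices to control $\|b(\lambda)\|$ in terms of $\|\lambda\|_{S_1}$ and to understand the integrability of $\|s_G(\cdot)\|_{S_1}$ over $(G\times H)/\Gamma$.

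For (1), assume $\Lambda$ is uniform; then $\Gamma$ is a cocompact lattice in $G\times H$, so, using that the open set $G\times W_0$ surjects onto the compact quotient $(G\times H)/\Gamma$ (density of $p_H(\Gamma)$), we may carve out a relatively compact Borel fundamental domain $\mathcal{F}\subset G\times W_0$ with $W^2\times W_0\subset\mathcal{F}$. For fixed $(g,h)$ the subset $\mathcal{F}^{-1}(g,h)\mathcal{F}\cap\Gamma$ is then a discrete subset of a relatively compact set, hence finite, so $\alpha_s(g,h,\cdot)$ takes finitely many values; composing with $p_G$, inversion and $b$ shows $\hat{b}(g,h)$ does too.

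For (2) and (3), assume $\Lambda$ is non-uniform; then $\mathbf{G}$ is $K$-isotropic, so we are in the situation of Theorem \ref{THEOREM: EFFICIENT MESSY GENERATOIN} ($\langle\Lambda\rangle$ is dense in $G$ by strong approximation, or $G$ has $S_1$-rank at least $2$), while $G\times H$ has $S$-rank at least $|S_1\cup S_2|\geq 2$ and $G$ is non-compact. Hence $\Lambda$ is efficiently generated, so Proposition \ref{Proposition: Bounds on quasi-cocycles from efficient generation} gives $\|b(\lambda)\|\preceq\|\lambda\|_{S_1}$, and Proposition \ref{proposition: Spectral gap} together with Lemma \ref{Lemma: Integrable Borel section} lets us choose $\mathcal{F}$ with $\int_{\mathcal{F}}\|g\|^{\alpha}_{S_1}\,d\mu_{G\times H}(g,h)<\infty$ for some $\alpha>0$, i.e.\ $\|s_G(\cdot)\|^{\alpha}_{S_1}\in L^1((G\times H)/\Gamma,\nu)$; since $\nu$ is finite and $(g^{-1},h^{-1})$-invariant, $\|s_G((g^{-1},h^{-1})\cdot)\|^{\alpha}_{S_1}\in L^1$ as well. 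Plugging the bound on $b$ into the displayed estimate gives $\|\hat b(g,h)(X)\|\leq C'(g,h)\,\|s_G((g^{-1},h^{-1})X)\|^{a'}_{S_1}\|s_G(X)\|^{a'}_{S_1}$ for a fixed $a'$, and Hölder's inequality places the right-hand side in $L^{p}$ with $p=\alpha/(2a')>0$, proving (2). When $\pi$ is trivial, Corollary \ref{Corollary: Logarithmic bound quasi-morphisms} improves this to $\|b(\lambda)\|=O(\log\|\lambda\|_{S_1})$, whence $\|\hat b(g,h)(X)\|\leq C''(g,h)+a'\big(\log\|s_G((g^{-1},h^{-1})X)\|_{S_1}+\log\|s_G(X)\|_{S_1}\big)$; the exponential moment forces $\log\|s_G(\cdot)\|_{S_1}\in L^{p}((G\times H)/\Gamma,\nu)$ for every finite $p$ (since $t^{k}\leq k!\,\alpha^{-k}e^{\alpha t}$ and $\nu$ is finite), and the same holds for its translate, giving (3).

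The genuinely hard inputs — the polynomial control of quasi-cocycles via the Lubotzky--Mozes--Raghunathan-type Theorem \ref{THEOREM: EFFICIENT MESSY GENERATOIN}, and the existence of an exponentially integrable fundamental domain coming from the spectral gap of Proposition \ref{proposition: Spectral gap} — are already available, so the remaining work is essentially bookkeeping. The one point needing care is the non-uniform case of (2): one must track the polynomial degree $a'$ produced jointly by the quasi-cocycle bound and by inverting elements of $\mathbf{G}$, weigh it against the exponent $\alpha$ supplied by Lemma \ref{Lemma: Integrable Borel section}, and accept that the resulting exponent $p$ may be smaller than $1$ — which is harmless here and consistent with the paper's use of $L^{p}$-spaces for $p<1$ in this section.
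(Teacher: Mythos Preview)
Your proposal is correct and follows essentially the same route as the paper: bound $\|p_G(\alpha_s)\|_{S_1}$ via $\|s_G\|_{S_1}$ using submultiplicativity and the polynomial inversion estimate in $\SL_n$, feed in the polynomial (resp.\ logarithmic) control on $b$ coming from efficient generation, and then combine with the exponential integrability of the fundamental domain through Cauchy--Schwarz/H\"older. The only point the paper makes slightly more explicit is that $p_G\circ\alpha_s(g^{-1},h^{-1},\cdot)$ lands in an approximate subgroup $\Lambda'$ commensurable with $\Lambda$ (depending on $h$), so that the bound from Proposition~\ref{Proposition: Bounds on quasi-cocycles from efficient generation} applies there; you use this implicitly, and the extension from $\Lambda$ to $\Lambda'$ is routine via the quasi-cocycle relation.
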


\begin{proof}
When $\Gamma$ co-compact, we see that $s$ takes values in a compact subset of $G \times H$. Thus, $\alpha_s(g,h)(\cdot)$ takes values in a compact subset of $\Gamma$ i.e. $\alpha_s(g,h)(\cdot)$ only takes finitely many values.  

Suppose now that $\Gamma$ is not co-compact. Fix $(g,h) \in G \times H$. Then $\alpha_s(g,h)$ takes values in $G \times W^{-1}hW \cap \Gamma$. So $p_{G} \circ \alpha_s$ takes values in an approximate subgroup $\Lambda'$ commensurable with $\Lambda$ and contained in $\langle \Lambda \rangle$. By Proposition \ref{Proposition: Bounds on quasi-cocycles from efficient generation}, for all $\lambda  \in \Lambda'$ we have,
$$ ||b(\lambda)|| \preceq ||\lambda||_{S_1}. $$
In particular, there are $c_1,c_2 > 0$ such that,
$$ ||b(\lambda)|| \leq c_1||\lambda||_{S_1}^{c_2}.$$
Since $\Lambda'$ is commensurable with $\Lambda$, the constant $c_2$ is seen not to depend on $h$, but the constant $c_1$ might. Thus, 
\begin{align*}
\int_{G \times H / \Gamma} ||\hat{b}(g,h)(y)||^pd\mu(y) \leq c_1^p \int_{G \times H / \Gamma} ||p_{G}(\alpha_s(g,h)(y))||_{S_1}^{pc_2} d\mu(y) .
\end{align*}
So it suffices to prove that 
$$\int_{G \times H / \Gamma} ||p_{G}(\alpha_s(g,h)(y))||_{S_1}^{pc_2} d\mu(y) < \infty.$$
But $||\cdot||_{S_1}$ is an operator norm, so, 
\begin{align*}
||p_{G}(\alpha_s(g,h)(y))||_{S_1} \leq ||p_{G}(s((g,h)y))^{-1}||_{S_1}||g||_{S_1}||p_{G}(s(y))||_{S_1}. 
\end{align*}
As $\mathbf{G} \subset \SL_n$, there are $c_3,c_4 \geq 1$ such that
$$||p_{G}(s((g,h)y))^{-1}||_{S_1} \leq c_3||p_{G}(s((g,h)y))||_{S_1}^{c_4}.$$
By the Cauchy--Schwartz inequality and the $G \times H$-invariance of $\mu$, it is thus enough to show that 
$$\int_{G \times H / \Gamma} ||p_{G}(s((g,h)y))||_{S_1}^{2pc_2c_4} d\mu(y) = \int_{G \times H / \Gamma} ||p_{G}(s(y))||_{S_1}^{2pc_2c_4} d\mu(y)< \infty$$
for any $(g,h) \in G \times H$. But the latter is finite as soon as $2pc_2c_4$ is sufficiently small, because $\log ||p_{G}( x^{-1}y )||_{S_1}$ is bounded above by a left-invariant distance on $G$ (Lemma \ref{Lemma: Integrable Borel section}). 
 
If moreover $\pi$ is trivial, then Corollary \ref{Corollary: Logarithmic bound quasi-morphisms} implies that  for all $\lambda  \in \Lambda$
$$ ||b(\lambda)|| =O\left(\log(||\lambda||_{S_1})\right). $$
Repeating the same computations as in the previous paragraph proves (3). 
\end{proof}

We are now ready to prove the main result of this section. 

\begin{proposition}\label{Proposition: Quasi-cocycle is bounded}
 With $\Lambda$, $b$ and $B$ as above. If $B$ has finite dimension, then $b(\Lambda)$ is a bounded subset of $B$. 
\end{proposition}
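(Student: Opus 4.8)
\textbf{Proof plan for Proposition \ref{Proposition: Quasi-cocycle is bounded}.}

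The plan is to combine the induction machinery built above with the Burger--Monod strategy adapted to the $S$-adic setting of Theorem \ref{Theorem: Vanishing in higher-rank}. First I would reduce to the arithmetic case: by Hrushovski's arithmeticity theorem we may assume Assumption \ref{Assumption: Arithmeticity} holds, so $G = \mathbf{G}(\mathbb{A}_{K,S_1})$ with $\mathbf{G}$ absolutely almost simple simply connected, and $\Lambda$ commensurable with $\mathbf{G}(\mathcal{O}_{K,S_1})$. Since $\Lambda$ is an \emph{irreducible} approximate lattice with $|\hat S| \geq 2$, we may split $\hat S = S_1 \sqcup S_2$ nontrivially and set $H = \mathbf{G}(\mathbb{A}_{K,S_2})$; then $\Gamma \subset \mathbf{G}(\mathbb{A}_{K,S_1 \cup S_2})$ is an irreducible lattice and $\langle \Lambda \rangle$ is (commensurable with) its projection to $G$. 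This places us exactly in the framework of \S \ref{Subsection: a criterion for boundedness of quasi-cocycles}.

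Next I would induce. By Theorem \ref{THEOREM: EFFICIENT MESSY GENERATOIN}, $\Lambda$ is efficiently generated (this uses the hypothesis that $G$ has $S$-rank $\geq 2$ or $\langle\Lambda\rangle$ is dense, which the irreducibility together with $|\hat S|\geq 2$ provides), so Proposition \ref{Proposition: Bounds on quasi-cocycles from efficient generation} gives the polynomial bound $\|b(\lambda)\| \preceq \|\lambda\|_{S_1}$ needed to make the induction scheme work. Form $\hat b(g,h)$ as in \S \ref{Subsection: Borel cocycles and return times}: by Proposition \ref{Proposition: Integrability of quasi-cocycle}, $\hat b(g,h) \in L^p(G\times H/\Gamma, B)$ for some $p>0$ (using the integrable fundamental domain from Lemma \ref{Lemma: Integrable Borel section}, which applies thanks to the spectral gap of Proposition \ref{proposition: Spectral gap}). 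By Proposition \ref{Proposition: Boundedness of induced quasi-cocycle implies boundedness of quasi-cocycle}, it suffices to show $\sup_{g\in G}\|\hat b(g,e)\|_p < \infty$ --- i.e. that the induced quasi-cocycle on the $G$-factor is \emph{uniformly} $L^p$-bounded, not merely $L^p$ for each fixed $g$.

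The heart of the argument --- and the main obstacle --- is upgrading ``$L^p$ for each $g$'' to ``uniformly $L^p$ in $g$'' via a commutation argument \emph{à la} \cite[Lemma 5.1]{BurgerMonod}. Here one exploits that $G$ and $H$ commute inside $G\times H$: for $g\in G$ and $h\in H$, the cocycle identity \eqref{Eq: quasi-cocycle equation for induction} relates $\hat b(g,h)$, $\hat b(g,e)$ and $\hat b(e,h)$, and swapping the roles of $g,h$ gives a second relation; subtracting these and using that $\pi(\Lambda)$ acts by uniformly bounded operators forces $\|(\pi(h^{-1})-\id)\cdot \hat b(g,e)(X)\| \leq \|\hat b(e,h)(X)\| + \|\hat b(h,e)(\cdot)\| + C$, whose right-hand side is controlled independently of $g$. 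Since $H$ acts with finitely many common invariant vectors (indeed none, after passing to the relevant isotypic piece --- this is where finite dimensionality of $B$ is essential: one picks finitely many $h_1,\dots,h_k\in \Lambda^2\cap H$ with $\bigcap_i \ker(\pi(h_i)-\id)=\{0\}$), one concludes $\|\hat b(g,e)\|$ is bounded modulo an $L^p$ error uniform in $g$. I expect the delicate points to be: (i) making the measurable/$L^p$ bookkeeping precise when passing between $\hat b(g,h)$, $\hat b(e,h)$ and $\hat b(h,e)$ --- the $H$-variable enters through the $H$-action on $L^p(G\times H/\Gamma,B)$ by translation, which is where $p<1$ forces the operator-norm-variation trick alluded to in the introduction; and (ii) ensuring the finitely-many-invariant-vectors reduction is compatible with the $\langle\Lambda_s\rangle$-action coming from the quasi-model, which for finite-dimensional $B$ follows from Weyl complete reducibility and the Borel density theorem for approximate lattices (\cite[\S A.11]{hrushovski2020beyond}) exactly as in the proof of Lemma \ref{Lemma: Commutativity implies boundedness of quasi-cocycles}. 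Once $\hat b(g,e)$ is uniformly $L^p$-bounded, Proposition \ref{Proposition: Boundedness of induced quasi-cocycle implies boundedness of quasi-cocycle} yields that $b(\Lambda)$ is bounded, completing the proof.
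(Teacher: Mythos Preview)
Your overall architecture is right---reduce to the arithmetic situation, induce to $L^p(G\times H/\Gamma,B)$, and bound $\sup_g\|\hat b(g,e)\|_p$ via a commutation trick---but there is a genuine gap at the heart of the commutation step. You write $\pi(h^{-1})$ for $h\in H$ and speak of choosing $h_1,\dots,h_k\in\Lambda^2\cap H$, but $\Lambda\subset G$ and $\pi$ is only defined on $\langle\Lambda\rangle\subset G$: a priori $H$ does not act on $B$ at all, and $\Lambda^2\cap H$ is empty. The twist appearing in the induced quasi-cocycle relation \eqref{Eq: quasi-cocycle equation for induction} is $\pi(p_G\circ\alpha_s(\cdot))$, evaluated on elements of $\langle\Lambda\rangle$, so the $H$-variable only enters through the dynamical cocycle $\alpha_s$, not through $\pi$. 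What the paper does---and what your plan is missing---is to invoke Margulis' superrigidity \cite[Thm~2]{MR1090825}: the representation $\pi:\langle\Lambda\rangle\to\GL(B)$ extends to a continuous homomorphism $\phi:G\times H\to\GL(B)$, and since $\pi(\Lambda)$ is bounded while $G$ is simple without compact factors, $G\subset\ker\phi$. Only once you know that $\pi$ factors through $H$ does it make sense to compare $\pi(\alpha_s(e,h^{-1},y))$ with $\phi(e,h)$, and only then can you run a Burger--Monod-style argument using the $H$-action on $B$.

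Even with superrigidity in hand, your single ``no common fixed vector'' argument does not cover all cases. After reducing to $B$ irreducible as an $H$-module, the paper distinguishes three regimes for $\phi(H)$: unbounded (handled by the cone decomposition of Lemma~\ref{Lemma: Spectral gap through norms}, which is the operator-norm-variation trick you allude to and which works for $p<1$), compact nontrivial (handled by spectral gap for the unitary $H$-action on $L^2(G\times H/\Gamma,B)$), and trivial. In the trivial case $B$ carries the trivial $H$-action, so every vector is invariant and you cannot find $h_i$ with $\bigcap_i\ker(\phi(h_i)-\id)=\{0\}$; instead one splits $\hat b=\hat b_1+\hat b_2$ into the constant part and the $L^2_0$ part, bounds $\hat b_1$ by \cite[Lemma 6.1]{BurgerMonod} (it is a genuine quasi-morphism of $G$ into $\mathbb R$), and bounds $\hat b_2$ using spectral gap of the $H$-action on $L^2_0$. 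Your plan should incorporate both the superrigidity step and this trichotomy.
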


\begin{proof}
 First of all, according to Proposition \ref{Proposition: Boundedness of induced quasi-cocycle implies boundedness of quasi-cocycle} and the discussion preceding Proposition \ref{Proposition: Integrability of quasi-cocycle} it suffices to prove that $\hat{b}$ is bounded in $L^p(G \times H/\Gamma)$ for some $p > 0$. Moreover, if $\Lambda$ is an irreducible $\star$-approximate lattice in $G$ and $C$ denotes the maximal compact factor of $G$, then the projection of $\Lambda$ to $G/C$ is injective (in fact, injective over $\langle \Lambda \rangle$) and its image is a $\star$-approximate lattice. So we will assume that $G$ has no compact factor. By Margulis' superrigidity theorem \cite[Thm 2]{MR1090825} the map $\pi: \langle \Lambda \rangle \rightarrow \GL(B)$ comes from a continuous group homomorphism $\phi: G \times H \rightarrow \GL(B)$ with kernel containing $G$. Since $H$ is semi-simple, $B$ splits as a direct sum $\bigoplus B_i$ of irreducible representations of $H$.  Considering the projection $b_i$ of $b$ to each irreducible factor, we see that it suffices to consider the case $B$ irreducible. Also, if $H$ is compact, then $\Lambda^m$ is a lattice in $G$ for some integer $m>0$ and $G$ has higher-rank. In this case, the result is due to Burger and Monod \cite[Thm 1.1]{BurgerMonod}. So suppose from now on that $H$ is non-compact. Choose $p$ such that $\hat{b}(g,h)\in L^p(G \times H/ \Gamma, B)$ (Proposition \ref{Proposition: Integrability of quasi-cocycle}). For all $g \in G, h\in H$ and $y \in G\times H /\Gamma$ write
\begin{align*}
\hat{b}(g,h)(y)=\hat{b}(g,e)(y) +\pi(\alpha_s(g^{-1},e,y))^{-1}\hat{b}(e,h)(g^{-1}y) + \delta(g,h)(y);\\
\hat{b}(g,h)(y)=\hat{b}(e,h)(y) + \pi(\alpha_s(e,h^{-1},y))^{-1}\hat{b}(g,e)(h^{-1}y) + \delta'(g,h)(y).
\end{align*}
Since $\alpha_s(g,h)$ takes values in $\Gamma \cap \left(G \times W^{-1}hW\right)$, there is an integer $m(h) > 0$ - independent of $g$ - such that $p_{G} \circ \alpha_s(g,h)$ takes values in $\Lambda^{m(h)}$. Since $b \in QC(\Lambda; B)$ we can thus choose $||\delta(g,h)||, ||\delta'(g,h)|| \leq C(h)$ for some constant $C(h)$ independent of $g$ (but depending on $h$). 
Hence,
$$\int_{G \times H/\Gamma}||\pi(\alpha_s(e,h^{-1},y))^{-1}\hat{b}(g,e)(h^{-1}y) - \hat{b}(g,e)(y)||^pd\mu(y) < \delta(h)$$
where $\delta(h)$ is a constant depending on $h$ only. We will now study three cases: first of all, when $\pi(H)$ is unbounded; then when $\pi(H)$ is compact non-trivial; lastly when $\pi(H)$ is trivial.

Suppose that $\pi(H)$ is unbounded, assume as we may that $p \leq 1$ and let $T$ be a maximal split torus in $H$. Then $\pi(H)$ is a closed semi-simple subgroup of $\GL(B)$ and $\pi(T)$ is a maximal split torus in $\pi(H)$. The result follows from the following lemma: 

\begin{lemma}\label{Lemma: Spectral gap through norms}
Let $S \subset \SL_n(k)$ be a non-compact Zariski-closed semi-simple subgroup where $k$ is a characteristic $0$ local field. Let $K' \subset S$ be a maximal compact subgroup and $T \subset S$ a maximal split torus. Suppose that the action on $k^n$ is irreducible. Then there are finitely many cones $C_1, \ldots, C_m \subset k^n$ (i.e. subsets stable under multiplication by a scalar), elements $k_1, \ldots, k_m \in K'$, a constant $c > 0$ and a neighbourhood of the identity $W' \subset S$ such that:
\begin{enumerate}
\item $k^n = C_1 \cup \ldots \cup C_n$; 
\item For all $i \in \{1, \ldots, m\}$ there is non-trivial weight $\alpha$ of $k^n$ with respect to $T$ such that for all $v_i \in C_i \setminus \{0\}$,  $w \in W'$ and $t \in T$,
$$ ||tk_iw\cdot v_i|| \geq \frac{|\alpha(t)|}{c}||v_i||.$$
\end{enumerate}
\end{lemma}
 
 The proof of Lemma \ref{Lemma: Spectral gap through norms} can be found at the end of this section. Let us show how to conclude using it.   Let $C_1, \ldots, C_m$, $k_1, \ldots, k_m$, $c$ and $W'$ be as in Lemma \ref{Lemma: Spectral gap through norms}. Suppose from now on that $W \subset W'$ and that the operator norm of an element of $W$ is bounded by $2$. For every $i \in \{1, \ldots, r\}$ we can choose $t_i \in T$ such that $||t_ik_iw \cdot v||/||v|| > 4r$ for all $v \in C_i, w \in W$. 

Since $B= \bigcup_{i=1}^r C_i$, there is $i=i(g) \in \{1, \ldots, r\}$ such that 
$$ \int_{G \times H/\Gamma} \mathbf{1}_{\hat{b}(g,e)(y) \in C_i}||\hat{b}(g,e)(y)||^pd\mu(y) \geq r^{-1} \int_{G \times H/\Gamma}||\hat{b}(g,e)(y)||^pd\mu(y).$$
Since $\pi(\alpha_s(e,(t_ik_i)^{-1},y))^{-1} = \pi(s(y))^{-1}\pi(e,t_ik_i)\pi(s((e,t_ik_i)^{-1}y))$, $s$ takes values in $W$ and $\mu$ is left-invariant, we find
\begin{align*}
\int_{G \times H/\Gamma} ||\pi(\alpha_s(e,(t_ik_i)^{-1},y))^{-1}\hat{b}(g,e)((t_ik_i)^{-1}y)||^pd\mu(y) > 2 \int_{G \times H/\Gamma} ||\hat{b}(g,e)(y)||^pd\mu(y).
\end{align*}
But by sub-additivity of the map $x \mapsto x^p$ we find
$$\int_{G \times H/\Gamma}||\pi(\alpha_s(e,h^{-1},y))^{-1}\hat{b}(g,e)(h^{-1}y)||^p - ||\hat{b}(g,e)(y)||^pd\mu(y) < \delta(h)$$
for all $g \in G$, $h\in H$. Whence,
$$\int_{G \times H/\Gamma} ||\hat{b}(g,e)(y)||^pd\mu(y) \leq \sup_{1\leq i \leq r}\delta(t_ik_i).$$

\medbreak
Let us now consider the case $\pi(H)$ compact non-trivial. Then $\Gamma$ must be uniform according to Godement's criterion (Proposition \ref{Proposition: BHC}).  Although not entirely necessary, it is simpler to first deal directly with the case where $B$ is defined over a non-Archimedean local field in order to avoid integration of functions taking values in non-Archimedean fields. Since $\pi(H)$ is compact, there is a compact open subgroup $O$ of $B$ that is invariant setwise under $\pi(H)$. By taking a dilation of $O$, we can assume that $O$ contains the bounded subset $\{b(\lambda_1\lambda_2) - b(\lambda_1) - \lambda_1 \cdot b(\lambda_2) : \lambda_1, \lambda_2 \in \Lambda^2\}$ as well as $b(F)$ for any finite subset $F \subset \Lambda^2$ chosen in advance. Then $b(\Lambda) \subset O$ because $\Lambda$ is uniform and, hence, `coarsely connected', see \cite[Thm 3.4]{bjorklund2016approximate}. So suppose that $B$ is a Banach space over $\mathbb{R}$. There are two sub-cases: if $H$ is a compact subgroup, then $\Lambda$ is commensurable with a lattice and we have already dealt with this case. Suppose that $H$ is unbounded.  So we may take any $p \geq 0$ (Proposition \ref{Proposition: Integrability of quasi-cocycle}). By Proposition \ref{proposition: Spectral gap} the representation of $H$ on $L^2(G \times H /\Gamma)$ has spectral gap. According to \cite[Lemma 4]{MR1415573}, $L^2(G \times H /\Gamma, B)$ seen as a unitary representation of $H$ has spectral gap too. We claim moreover that $L^2(G \times H /\Gamma, B)^{H}=\{0\}$. Indeed, $H = L \times K$ where $L$ is the kernel of the restriction of $\pi$ to $H$. Since $H$ is non-compact whereas $\pi(H)$ is, $L$ is unbounded. But by the Howe--Moore property, the action of $L$ on $G \times H/\Gamma$ is ergodic. So $L^2(G \times H /\Gamma, B)^{H} \subset L^2(G \times H /\Gamma, B)^{L} $ which is equal to the set of constant functions. Now, $\pi$ has no non-trivial invariant vectors in $B$. So $L^2(G \times H /\Gamma, B)^{H}=\{0\}$ as required. Therefore, there are a compact subset $K \subset G$ and a constant $c > 0$ such that if $f \in L^2(G \times H /\Gamma, B)$ there is $k \in K$ with $||k \cdot f - f||_2 > c||f||_2$. Suppose that for all $w \in  W$ the operator norm of $\pi(w)-\id$ is bounded above by $c/4$. Then, as above, we have for all $k \in K$
\begin{equation}
\int_{G \times H/\Gamma}||\pi(\alpha_s(e,k,y))\hat{b}(g,e)(k^{-1}y) - \hat{b}(g,e)(y)||^2d\mu(y) < C_2 \label{Eq: proof boundedness end}
\end{equation}
where $C_2=C_2(K)$.
But for all $k \in K$, $g \in G$ and $y\in G \times H /\Gamma$ we have
\begin{align*}\pi(\alpha_s(e,k,y)) - \pi(k)= & (\left(\pi(s(k^{-1}y))-\id\right)\pi(k)\pi(s(y)) + \pi(k)\left((\pi(s(y))-\id)\right)).
\end{align*}
Since $s$ takes values in $W$ we deduce 
$$||\pi(\alpha_s(e,k,y))\hat{b}(g,e)(k^{-1}y) - \pi(k)\hat{b}(g,e)(k^{-1}y)|| \leq c/2||\hat{b}(g,e)(k^{-1}y)||.$$
Using the above in combination with (\ref{Eq: proof boundedness end}) and using the triangle inequality we find 
$$||\pi(k)\hat{b}(g,e)(k^{-1}\cdot) - \hat{b}(g,e)(\cdot)||_2 - \frac{c}{2}||\hat{b}(g,e)(\cdot)||_2 \leq \sqrt{C_2}.$$
So 
$$\frac{c}{2}||\hat{b}(g,e)(\cdot)||_2 \leq \sqrt{C_2}$$
according to the spectral gap inequality.

Suppose finally that $\pi(H)=\{e\}$.  When $B$ is defined over a non-Archimedean field, that $b(\Lambda)$ is bounded is a direct consequence of the triviality of the action combined with efficient generation of $\Lambda$. Suppose now that $B=\mathbb{R}$. Then $\hat{b}(g,h) \in L^2(G \times H /\Gamma, B)$. Write $\hat{b}_1(g,h)$ the orthogonal projection of $\hat{b}(g,h)$ to the set $L^2(G \times H /\Gamma, B)^{G \times H}$ of constant functions and $\hat{b}_2(g,h)$ its complement i.e. the orthogonal projection of $\hat{b}(g,h)$ to $L^2_0(G \times H /\Gamma, B)$ the space of $L^2$ functions with $0$ mean. Note that both $L^2(G \times H /\Gamma, B)^{G \times H}$ and $L^2_0(G \times H /\Gamma, B)$ are stable under the action of $H$. In addition, the action of $H$ is trivial on $L^2(G \times H /\Gamma, B)^{G \times H}$ and without invariant vectors on $L^2_0(G \times H /\Gamma, B)$. 

In this situation, the quasi-cocycle relation implies for all $g_1, g_2 \in G$
\begin{align*}
C  \geq  \int_{G \times H / \Gamma} ||\hat{b}(g_1g_2,e)(y)-\hat{b}(g_1,e)(y) - \hat{b}(g_2,e)(g_1^{-1}y)||d\mu(y). 
\end{align*}
for some constant independent of $g_1$ and $g_2$. Therefore, we have by the triangle inequality
\begin{align*}
C  \geq  ||\hat{b}_1(g_1g_2,e)(y)-\hat{b}_1(g_1,e)(y) - \hat{b}_1(g_2,e)(y)||. 
\end{align*}
So $\hat{b}_1(g,e)$ is uniformly bounded when $g$ runs through $G$ by \cite[Lemma 6.1]{BurgerMonod}. 

On the other hand, for all $g \in G, h \in H$, 
$$\int_{G \times H/\Gamma}||\pi(\alpha_s(e,h^{-1},y))^{-1}\hat{b}(g,e)(h^{-1}y) - \hat{b}(g,e)(y)||^2d\mu(y) < \delta(h)$$ 
becomes 
$$\int_{G \times H/\Gamma}||\hat{b}(g,e)(h^{-1}y) - \hat{b}(g,e)(y)||^2d\mu(y) < \delta(h).$$
Which can be rewritten
$$\int_{G \times H/\Gamma}||\hat{b}_2(g,e)(h^{-1}y) - \hat{b}_2(g,e)(y)||^2d\mu(y) < \delta(h).$$
Exploiting spectral gap of the action of $H$ on $L^2_0(G \times H /\Gamma, B)$ we conclude as in the previous case that $\hat{b}_2(g,e)$ is uniformly bounded when $g$ runs through $G$. So $\hat{b} = \hat{b}_1 + \hat{b}_2$ is bounded as well. 
\end{proof}


\begin{proof}[Proof of Lemma \ref{Lemma: Spectral gap through norms}.]
We will first show that for every $v \in k^n$ there is $k(v) \in K$ such that $k(v)\cdot v \notin \Fix(T)$. We argue by contradiction. Then $K \cdot v \subset \Fix(T)$, so $KTK\cdot v = K \cdot v$ is compact. But $KTK=S$ so $S \cdot v$ is compact. Moreover, the $S$-subrepresentation $V$ generated by $v$ is contained in $\Fix(T)$. Since $S$ acts irreducibly on $k^n$,  $V = k^n$ and $T$ is trivial. A contradiction. 

Write $k^{n}=\bigoplus_{\alpha \in \Sigma} E_{\alpha} \oplus \Fix(T)$ where $\Sigma$ is the set non-trivial weights. Since $B$ is finite dimensional, all norms are equivalent, and we may assume that $|| \cdot ||$ is a sum of norms on each of the direct summands of the above decomposition. Take $v \in k^n\setminus \{0\}$ and let $k(v)$ be as above. There is $\alpha \in \Sigma$ such that the projection $p_{\alpha}(k(v)\cdot v)$ of $k(v)\cdot v$ to $E_{\alpha}$ is non-trivial. Let $c_v$ denote $||p_{\alpha}(k(v)\cdot v)||/||v||$. By continuity, there is a neighbourhood of the identity $W_v \in S$ and a neighbourhood $U_v$ of $v \in k^n$ such that for all $w \in W_v$ and $v' \in U_v$, 
$$||p_{\alpha}(k(v)w\cdot v')||/||v'|| \geq c_v/2.$$
Then if $t\in T$, 
\begin{align*}
||tk(v)w\cdot v'|| & \geq ||p_{\alpha}(tk(v)w\cdot v')|| \\
				  & = |\alpha(t)| ||p_{\alpha}(k(v)w\cdot v')|| \\
				  &\geq \frac{2|\alpha(t)|}{c_v} ||v'||.
\end{align*}
So set $C_v = kU_v$. By compactness of $\mathbf{P}^1(k^n)$ there are $v_1, \ldots, v_m$ such that 
$$C_{v_1} \cup \ldots \cup C_{v_m} = k^n.$$ Then $C_i:=C_{v_i}$, $W :=\bigcap_{i=1}^n W_{v_i}$ and $c := \min_{1 \leq i \leq n} c_{v_i}/2$ work. 
\end{proof}

\section{Structure of approximate lattices in linear groups}\label{Section: Proof of the main theorems}
We are now ready to conclude the proof of our main results (Theorem \ref{THEOREM: FIRST THEOREM} and Theorem \ref{THEOREM: MAIN THEOREM}). In this section we will primarily be interested in the following setup:

\begin{assumption}\label{Assumption: final proof}
Let $\Lambda$ be an approximate lattice in an $S$-adic linear group $G$. We assume that $G$ is Zariski-connected and that $\langle \Lambda \rangle$ is Zariski-dense in $G$. We denote by $\Rad(G)$ its solvable radical, by $G_s$ the quotient $G/\Rad(G)$, by $\Lambda_s$ the projection of $\Lambda$ to $G_s$ and by $\Lambda_r$ the intersection $\Lambda^2 \cap \Rad(G)$.
\end{assumption}

\subsection{Reduction to an abelian radical}

We perform in this section the first and main reduction. 

\begin{lemma}\label{Lemma: Reduction to abelian radical}
Let $\Lambda, G$ satisfy Assumption \ref{Assumption: final proof}.  Let $N$ denote the nilpotent radical of $G$ and $Z_N$ its centre. There is a reductive Levi subgroup $R \subset G$ such that $\Lambda^2 \cap L$ is an approximate lattice in $L:=RZ_N$.  Moreover,  the projection of $\Lambda^2 \cap L$ to $L/[L,L]Z_N$ is uniformly discrete.
\end{lemma}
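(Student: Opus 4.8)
The plan is to produce the reductive Levi subgroup $R$ by applying the vanishing theorem for quasi-cocycles (Theorem~\ref{Theorem: Vanishing in higher-rank}) together with the splitting criterion (Proposition~\ref{Proposition: Boundedness implies splitting}) step by step, peeling off the nilpotent radical layer by layer. First I would reduce to the case where the action is concentrated on the centre of the nilpotent radical. Concretely, let $N$ be the nilpotent radical of $G$; by Theorem~\ref{Theorem: Radical is hereditary} we know $N \cap \Lambda^2$ is a uniform approximate lattice in $N$. Working with $Z_N$ in place of the centre $Z$ of $\Rad(G)$, Lemma~\ref{Lemma: Intersection with centre} (or rather its analogue for $Z_N$, which follows by the same argument using Lemma~\ref{Lemma: Intersection with centraliser}) shows $Z_N \cap \Lambda^2$ is a uniform approximate lattice in $Z_N$. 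Since $Z_N$ is abelian and normal, the projection $\Lambda_s$ acts on it; we want to split off a Levi modulo $Z_N$.

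The core of the argument is an induction. Consider the quotient $\bar{G} = G / Z_N$, with radical $\Rad(\bar G) = \Rad(G)/Z_N$. By induction on the nilpotency length (or on $\dim \Rad(G)$), we may assume the statement holds for $\bar G$: there is a reductive Levi subgroup $\bar R \subset \bar G$ with $\bar\Lambda^2 \cap \bar R Z_{\bar N}$ an approximate lattice, where $\bar\Lambda$ is the image of $\Lambda$. Pull this back: we get a Zariski-closed subgroup $\tilde L \subset G$ containing $Z_N$ such that $\tilde L / Z_N$ is (essentially) reductive and $\Lambda^2 \cap \tilde L$ is an approximate lattice in $\tilde L$ (here one uses Proposition~\ref{Proposition: Intersection and projections approximate lattices w/ closed subgroups} to lift the approximate-lattice property, since $\Lambda^2 \cap Z_N$ is already known to be an approximate lattice). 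Now $\tilde L = R_0 \ltimes Z_N$ up to a compact/finite correction where $R_0$ is a reductive Levi of $\tilde L / Z_N$ lifted arbitrarily, and $Z_N$ is abelian, so we are exactly in the setup of \S\ref{Subsection: Representation-theoretic sum-product phenomenon}. The approximate lattice $\Lambda^2 \cap \tilde L$ yields a structural quasi-cocycle $a = a_1 + a_2$ as in Proposition~\ref{Proposition: quasi-cocycle takes values in Lambda}, decomposing $Z_N$ into its $R_0$-invariant part and the part with no invariant vectors. On the no-invariant-vectors part, I would invoke Theorem~\ref{Theorem: Vanishing in higher-rank} applied to the projection of $\Lambda^2 \cap \tilde L$ to the relevant semi-simple factor together with Proposition~\ref{Proposition: quasi-cocycle takes values in Lambda}'s finite-dimensionality: but wait — Theorem~\ref{Theorem: Vanishing in higher-rank} needs a no-rank-one-factor hypothesis that we do \emph{not} have in Lemma~\ref{Lemma: Reduction to abelian radical}. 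So instead I would use the weaker splitting tools available unconditionally: the point of Lemma~\ref{Lemma: Reduction to abelian radical} is only to split off $Z_N$ up to its $L$-invariant part, which is \emph{kept} inside $L = RZ_N$. That is, we do not try to vanish $a_2$ (the part valued in $W$, the span of $\Lambda^2 \cap Z_N$); we only need to vanish the cohomology class governing the \emph{complement} of $W$ and $Z_N^{R}$.

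More precisely, here is the mechanism I expect to work: decompose $Z_N = Z_N^{R_0} \times U'$ (a complement of invariant vectors), using that $R_0$ is reductive so $Z_N$ is completely reducible as an $R_0$-module over $\mathbb{Q}$ (cf. \cite[Prop. 4.17]{mac2023sadic}, already invoked in the proof of Proposition~\ref{Proposition: Equivalence laminarity/non-trivial cohomology class}). Then $\Lambda^2 \cap Z_N$ splits compatibly into approximate lattices in each factor (intersection theorems, Proposition~\ref{Proposition: Intersection and projections approximate lattices w/ closed subgroups}). The subgroup we want is $L := R' Z_N$ where $R'$ is a Levi subgroup of $R_0 \ltimes Z_N^{R_0}$ — but $R_0$ already centralises $Z_N^{R_0}$, so $R_0 \ltimes Z_N^{R_0}$ is a \emph{direct} product and $R_0 \times Z_N^{R_0}$ is itself (modulo the unipotent correction from going back up the tower) a reductive-times-central group; the content is that $\Lambda^2 \cap L$ is an approximate lattice, i.e. the structural quasi-cocycle $a_2|_{U'}$ valued in $U'$ can be absorbed. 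For $U'$ we do \emph{not} need vanishing of the full quasi-cocycle; we only need that after conjugating $R_0$ by a suitable element of $U'$ (i.e. changing the $1$-cocycle part $a_1$ by a coboundary), the remaining $a_2$-part is bounded — but in fact $U' \cap \Lambda^2$ is itself already an approximate lattice in $U'$, and $R_0$ acting on $U'$ with no invariant vectors forces, via the Borel density argument in the proof of Proposition~\ref{Proposition: quasi-cocycle takes values in Lambda} (the "$\Lambda^{n-2} \cap A \subset W$" step, stabiliser argument using Corollary~\ref{Corollary: Intersection with stabiliser} and Borel density \cite[\S A.11]{hrushovski2020beyond}), that $a_2$ valued in $U'$ lands in $\tfrac{1}{m}\langle \Lambda_{U'}\rangle$; combined with the $1$-cocycle $a_1$ on the reductive $R_0$ (which, being a $1$-cocycle into a semisimple-module, is a coboundary after passing to a finite-index subgroup by Margulis/Whitehead-type vanishing — here one must be careful since $R_0$ need not be higher rank, but $a_1$ lands in a \emph{finite-dimensional} module and we only need it to be a coboundary over $\langle \Lambda_s\rangle$ which is a lattice or dense, so \cite[Prop. 4.17]{mac2023sadic} / standard cohomology of arithmetic groups applies)—we get via Proposition~\ref{Proposition: Boundedness implies splitting} a reductive Levi $R'$ with $\Lambda^2 \cap R' Z_N$ an approximate lattice. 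Setting $R := R'$ and $L := R Z_N$ gives the first claim.

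For the "moreover" part: $L / [L,L]Z_N$ is a quotient torus of $R$, namely the maximal central torus quotient, and the image of $\Lambda^2 \cap L$ there is the projection of an approximate lattice to an abelian group which kills the semisimple part and the nilpotent part; one checks it is uniformly discrete by the intersection theorem Proposition~\ref{Proposition: Intersection and projections approximate lattices w/ closed subgroups} again, since $[L,L]Z_N \cap \Lambda^2$ is an approximate lattice in $[L,L]Z_N$ (this last uses Theorem~\ref{Theorem: Radical is hereditary}(3) on unipotent commutators together with the semi-simple part, and the fact that $[R,R]$-intersections of approximate lattices are approximate lattices — the Levi-subgroup intersection theorems mentioned at the end of \S\ref{Section: Intersection theorems for approximate lattices}). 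I expect the \textbf{main obstacle} to be handling the discrepancy between the ambient Zariski structure and the approximate-lattice structure when climbing back up the quotient tower $G/Z_N \to G$: lifting a reductive Levi of a quotient to an honest subgroup of $G$ introduces unipotent ambiguity (two Levis are $U$-conjugate), and one must ensure the conjugating element can be chosen compatibly with the approximate lattice — this is precisely what the structural quasi-cocycle $a_1$ being a coboundary buys us, and making that rigorous (especially tracking that $a_1$ is a coboundary \emph{over the right group} and with the right coefficients, not merely after restricting to a finite-index subgroup, which would only give commensurability) is the delicate point. The secondary obstacle is the no-invariant-vectors decomposition of $Z_N$ being only defined over $\mathbb{Q}$ and needing to interact correctly with the $S$-adic structure, but this is already packaged in Proposition~\ref{Proposition: Meyer's theorem in S-adic vector spaces} and \cite[Prop. 4.17]{mac2023sadic}.
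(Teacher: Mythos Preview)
Your approach diverges substantially from the paper's, and it contains a genuine gap.

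The paper's proof is short and uses a single external tool you never invoke: Proposition~\ref{Proposition: Arithmeticity from action on subgroups}. The argument is: $\Lambda^2\cap Z_N$ is an approximate lattice (Theorem~\ref{Theorem: Radical is hereditary} plus Lemma~\ref{Lemma: Intersection with centre}), so the projection $\Lambda'$ of $\Lambda$ to $G/Z_N$ is an approximate lattice; then Proposition~\ref{Proposition: Arithmeticity from action on subgroups}, applied to the action on the nilpotent radical, forces $\Lambda'$ to be a \emph{generalized arithmetic approximate subgroup} (GAAS). GAAS approximate lattices are model sets, and model sets in $S$-adic groups automatically intersect some reductive Levi $S'$ in an approximate lattice (this is one of the ``extremely regular'' properties alluded to in \S\ref{Subsection: S-adic algebraic groups} and established in \cite{mac2023sadic}). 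Pulling $S'$ back to $G$ gives $L$. The ``moreover'' clause likewise follows directly from the arithmeticity of $\Lambda'$. No quasi-cocycle, no vanishing theorem, no induction.

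Your proposal instead tries to produce the Levi via the structural quasi-cocycle and Proposition~\ref{Proposition: Boundedness implies splitting}. There are two concrete problems. First, Lemma~\ref{Lemma: Reduction to abelian radical} carries \emph{no} rank hypothesis, so Theorem~\ref{Theorem: Vanishing in higher-rank} is unavailable; you acknowledge this, but your fallback (``we only need to split off $Z_N$ up to its $L$-invariant part'') is not a proof. Splitting via Proposition~\ref{Proposition: Boundedness implies splitting} still requires $a_1$ to be a coboundary and the $q_v$'s to be bounded; neither is supplied. Your appeal to ``standard cohomology of arithmetic groups'' for $H^1$ vanishing is exactly what fails in rank one, and the rank-one case is not excluded here. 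Second, your induction is mis-stated: applying the inductive hypothesis in $\bar G=G/Z_N$ yields a reductive Levi $\bar R$ with $\bar\Lambda^2\cap \bar R\cdot Z_{\bar N}$ an approximate lattice, where $Z_{\bar N}=Z_2(N)/Z_N$, not $Z_N/Z_N$. Pulling back gives $\tilde L$ with $\tilde L/Z_2(N)$ reductive, not $\tilde L/Z_N$ reductive, so you have not landed in the configuration $R_0\ltimes Z_N$ you then analyze.

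The point of Lemma~\ref{Lemma: Reduction to abelian radical} in the paper's architecture is precisely to \emph{avoid} the quasi-cocycle analysis until the radical is abelian: it is a reduction that precedes Sections~\ref{Section: Conjugation-multiplication phenomena}--\ref{Section: Induction for star-approximate lattices and Boundedness of quasi-cocycles} in logical order, and it is powered by the arithmeticity-from-action result, not by them.
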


\begin{proof}
According to Theorem \ref{Theorem: Radical is hereditary} and Lemma \ref{Lemma: Intersection with centre}, $\Lambda^2 \cap Z_N$ is an approximate lattice. By Proposition \ref{Proposition: Intersection and projections approximate lattices w/ closed subgroups},  the projection $\Lambda'$ of $\Lambda$ to $G/Z_N$ is an approximate lattice as well. By Proposition \ref{Proposition: Arithmeticity from action on subgroups}, $\Lambda'$ is in fact a generalized arithmetic approximate lattice. So there is a reductive Levi subgroup $S'$ of $G/Z_N$ such that $\Lambda'^2 \cap S'$ is an approximate lattice in $S'$. Denote by $L$ the inverse image of $S'$ in $G$.  Then $L \cap \Lambda^2$ is an approximate lattice in $L$ by Proposition \ref{Proposition: Intersection and projections approximate lattices w/ closed subgroups}. Finally, $L$ is of the desired form. The `moreover' part of the statement also follows from arithmeticity of $\Lambda'$. 
\end{proof}

The next result ensures that the assumptions we will make on $\langle \Lambda_s \rangle$ are passed down to $\Lambda^2 \cap L$. 

\begin{lemma}\label{Lemma: Reduction to abelian radical, higher-rank part}
With $\Lambda, G, L$ as in Lemma \ref{Lemma: Reduction to abelian radical}.  There is an approximate lattice $\Lambda' \subset L$ commensurable with $\Lambda^2 \cap L$ and whose image in $G/\Rad(G)$ generates a finite index subgroup of $\langle \Lambda_s \rangle$.
\end{lemma}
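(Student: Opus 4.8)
The plan is to show that $\Lambda':=\Lambda^2\cap L$ itself has the required property, so that no genuine modification is needed. Write $p\colon G\to G_s:=G/\Rad(G)$ for the quotient map, so that $p(\Lambda)=\Lambda_s$. First I would record the projection $p|_L$: since $R$ is a reductive Levi subgroup we have $R\,\Rad(G)=G$, hence the image of $L\supseteq R$ under $p$ is all of $G_s$, so $p|_L\colon L\to G_s$ is surjective with kernel $L_r:=L\cap\Rad(G)$. Next, since $\langle\Lambda\rangle$ is Zariski-dense, $\Lambda_s$ is a Zariski-dense approximate lattice in the semi-simple group $G_s$ (Theorem \ref{Theorem: Radical is hereditary} and Proposition \ref{Proposition: Intersection and projections approximate lattices w/ closed subgroups}); in particular $\Lambda_s$ is uniformly discrete, hence so is every power $\Lambda_s^k$, by the standard fact that if $\Xi$ is a uniformly discrete approximate subgroup then $\Xi^{m}\subseteq E_m\Xi$ for some finite $E_m$, and a sequence of elements of $\Xi^{m}\setminus\{e\}$ tending to the identity would, after passing to a fixed coset, force a sequence of elements of $\Xi$ converging to a point of $E_m^{-1}$, contradicting uniform discreteness of $\Xi$.

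The heart of the argument is then short. We have $p(\Lambda^2\cap L)\subseteq p(\Lambda^2)=\Lambda_s^2$, so $p(\Lambda^2\cap L)$ is uniformly discrete in $G_s$. Since $\Lambda^2\cap L$ is an approximate lattice in $L$ (Lemma \ref{Lemma: Reduction to abelian radical}) and $p|_L$ has kernel $L_r$, Proposition \ref{Proposition: Intersection and projections approximate lattices w/ closed subgroups} upgrades uniform discreteness to: $p(\Lambda^2\cap L)$ is an approximate lattice in $G_s$. Being an approximate lattice contained in $\Lambda_s^2$, it is commensurable with $\Lambda_s$ by \cite[Lem. A.4]{hrushovski2020beyond}. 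Finally, as $p(\Lambda^2\cap L)\subseteq\langle\Lambda_s\rangle$ and $\Lambda_s$ is laminar (by Hrushovski's resolution of Question \ref{Question: Structure approximate lattices} for semi-simple groups), a commensurable approximate subgroup of $\Lambda_s$ contained in $\langle\Lambda_s\rangle$ generates a finite-index subgroup of $\langle\Lambda_s\rangle$ — this is the argument of Proposition \ref{Proposition: Minimal commensurable approximate subgroup}, or the last assertion of Proposition \ref{Proposition: BHC} via arithmeticity. Hence $\langle p(\Lambda')\rangle$ has finite index in $\langle\Lambda_s\rangle$, as desired.

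I expect the only real subtlety to be the verification that $p(\Lambda^2\cap L)$ is an approximate lattice in $G_s$, i.e.\ (via Proposition \ref{Proposition: Intersection and projections approximate lattices w/ closed subgroups}) that $(\Lambda^2\cap L)^2\cap L_r$ is an approximate lattice in $L_r=L\cap\Rad(G)$. The point is that $L_r$ contains the central torus of the Levi subgroup $R$, and the naive approach of intersecting the two approximate lattices $\Lambda^2\cap L$ and $\Lambda^2\cap\Rad(G)$ does not obviously behave well in those torus directions; likewise a direct covering argument inside $L$ runs into the non-normality of $L_r$ in $\Rad(G)$. The route above circumvents this analysis altogether by exploiting the uniform discreteness of $\Lambda_s^2$ (hence of its subset $p(\Lambda^2\cap L)$) together with Proposition \ref{Proposition: Intersection and projections approximate lattices w/ closed subgroups}.

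As a fallback, if one prefers to argue directly, the torus directions can be handled using the \emph{moreover} clause of Lemma \ref{Lemma: Reduction to abelian radical}: the projection of $\Lambda^2\cap L$ to $L/[L,L]Z_N$ is uniformly discrete, and $L_r/Z_N$ maps to $L/[L,L]Z_N$ by an isogeny of (quotients of) tori, so the projection of $(\Lambda^2\cap L)^2\cap L_r$ to $L_r/Z_N$ is uniformly discrete; combined with the fact that $\big((\Lambda^2\cap L)^2\cap L_r\big)^2\cap Z_N$ is commensurable with the approximate lattice $\Lambda^2\cap Z_N$, the converse part of Proposition \ref{Proposition: Intersection and projections approximate lattices w/ closed subgroups} then gives that $(\Lambda^2\cap L)^2\cap L_r$ is an approximate lattice in $L_r$. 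This is, however, the more technical of the two proofs, so I would present the first.
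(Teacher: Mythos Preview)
Your proposal is correct and takes a genuinely different, shorter route than the paper. The paper argues by conjugation: for each $\gamma_s\in\langle\Lambda_s\rangle$ it finds $u\in\Rad(G)\cap\Comm_G(\Lambda)$ with $L^{\gamma}=L^{u}$ (via \cite[Cor.~4.18]{mac2023sadic}), so that $\gamma u^{-1}$ commensurates $\Lambda^2\cap L$; the image of this commensurator in $G_s$ then contains a subgroup normalised by $\langle\Lambda_s\rangle$, and the paper invokes Margulis' normal subgroup theorem on the higher-rank factors to force finite index. Finally $\Lambda'$ is built by adjoining to $\Lambda^2\cap L$ finitely many elements of the commensurator. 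Your route bypasses both the conjugation step and the normal subgroup theorem: from $p(\Lambda^2\cap L)\subset\Lambda_s^2$ you get uniform discreteness for free, Proposition~\ref{Proposition: Intersection and projections approximate lattices w/ closed subgroups} upgrades this to an approximate lattice, commensurability with $\Lambda_s$ follows, and arithmeticity handles the finite-index step. This even shows $\Lambda'=\Lambda^2\cap L$ itself works, with no enlargement needed.

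One remark on your final step. The appeal to ``the argument of Proposition~\ref{Proposition: Minimal commensurable approximate subgroup}'' is not quite immediate: that proposition produces a specific $\Lambda'$ carrying the good model to a connected Lie group, and a priori $\langle\Lambda'\rangle$ need not equal $\langle\Lambda_s\rangle$, so the finite-index conclusion for $\langle\Lambda_s\rangle$ does not fall out directly. Your second justification --- Proposition~\ref{Proposition: BHC} via arithmeticity --- is the one that actually does the work: Hrushovski's arithmeticity gives that $\langle\Lambda_s\rangle$ is \emph{group}-commensurable with a product of $S$-arithmetic groups (this is exactly the content of Assumption~\ref{Assumption: Arithmeticity}), after which the last assertion of Proposition~\ref{Proposition: BHC}, applied to $p(\Lambda^2\cap L)^2$ intersected with that arithmetic group, yields the finite-index claim. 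I would state this explicitly rather than offering two alternative justifications of unequal strength.
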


\begin{proof}
Choose $\gamma_s \in \langle \Lambda_s \rangle$ and take $\gamma \in \langle \Lambda \rangle$ any representative of it.  We have that $\gamma\Lambda^2\gamma^{-1} \cap L^{\gamma}$ is an approximate lattice in $L^{\gamma}$. But $\gamma \Lambda^2 \gamma^{-1}$ is commensurable with $\Lambda$ so $\Lambda^2 \cap L^{\gamma}$ is an approximate lattice in $L^{\gamma}$ (Lemma \ref{Lemma: Intersection of approximate subgroups}).  According to \cite[Cor. 4.18]{mac2023sadic}, there is therefore $u \in \Comm_G(\Lambda) \cap \Rad(G)$ such that $L^{u}=L^{\gamma}$.  Hence, one deduces that $\gamma u^{-1}$ commensurates $\Lambda^2 \cap L$.  Define $$\Gamma:=\Comm_L(\Lambda^2 \cap L) \cap \pi^{-1}\left( \langle \Lambda_s \rangle\right). $$ Then $\Lambda^2 \cap L \subset \Gamma$ and $\left(\Lambda^2 \cap L\right)^{\gamma u^{-1}} \subset \Gamma$.  So $\pi(\Gamma)$ contains a subgroup $\Gamma_s$ normalised by $\gamma_s$ and containing an approximate lattice commensurable with $\Lambda_s$.  Similarly, $\Gamma_s$ contains a subgroup normalised by  $\langle \Lambda_s \rangle$. But $\langle \Lambda_s \rangle$ is an almost direct product of higher-rank lattices and rank one lattices.  Since $\Gamma_s$ contains an approximate lattice commensurable with $\Lambda_s$ it contains a finite index subgroup of all the rank one factors. Furthermore, it must intersect every higher-rank factor in an infinite subset. Since it is normal, it contains a finite index subgroup of all higher-rank factors by Margulis normal subgroup theorem \cite[Thm. 4]{MR1090825}. Take finally $F \subset \Gamma$ finite such that $\pi(F)$ generates $\Gamma_s$ (such a set exists by \cite[\S IX.3]{MR1090825}).  Then $\left(\Lambda^2 \cap L\right) \cup F \cup F^{-1}$ is the approximate lattice we are looking for. 
\end{proof}
The overall strategy is to proceed by induction on the dimension of the radical of the ambient group. We start by treating the case of an abelian radical.

\subsection{Reduction to $Z_{G,1}$}
Recall that we defined $Z_{G,1}$ in the introduction as 
$$Z_{G,1} = \langle g \in G: \exists N \in \mathcal{N}, N \subset \Stab(g) \rangle$$
where $\mathcal{N}$ denotes the set of Zariski-closed normal subgroup of $\Rad(G)$ such that $G/N$ has $S$-rank one and the projection $\Lambda$ to $G/N$ is contained in a lattice. We can strengthen Lemma \ref{Lemma: Reduction to abelian radical}. 

\begin{lemma}\label{Lemma: End of proof}
Let $\Lambda$ be an approximate lattice in an $S$-adic linear algebraic group $G$. There is a Zariski-closed subgroup $L \subset G$ such that: 
\begin{enumerate}
\item $L \cap \Lambda^2$ is an approximate lattice in $L$; 
\item $L$ contains a Levi subgroup and $L \cap \Rad(G) = Z_{G,1}$. 
\end{enumerate}
\end{lemma}

\begin{proof}

According to Lemmas \ref{Lemma: Reduction to abelian radical} and \ref{Lemma: Reduction to abelian radical, higher-rank part},  we  may assume that $G = G_{red} \ltimes Z_N$ with $G_{red}$ reductive. Let $\Lambda_{red}$ denote the projection of $\Lambda$ to $G_{red}$. By \cite[Prop. 4.17]{mac2023sadic}, $Z_N = Z_{G,1} \oplus \bigoplus_{\Delta} U^{\Delta}$ where $\Delta$ is a normal subgroup of $\langle \Lambda_s \rangle$ such that $\langle \Lambda_{red} \rangle/\Delta$ is not a rank one lattice. Moreover, all $\Delta$'s arise as the intersections of $\langle \Lambda_{red} \rangle$ with a Zariski-closed subgroup $N_{\Delta}$ of $G/Z_N$. Since $G/Z_N$ is reductive, we have three cases: (1) $G_{red}/N_{\Delta}$ is not semi-simple; (2) $G_{red}/N_{\Delta}$ is semi-simple but the projection of $\Lambda_{red}$ is not irreducible; (3) the projection of $\Lambda_{red}$ to $G_{red}/N_{\Delta}$ is irreducible, and is not contained in a discrete subgroup or $G_{red}/N_{\Delta}$ has $S$-rank at least $2$. Choose any $\Delta$, and define $\widehat{U}_{\Delta}:=Z_{G,1} \oplus \bigoplus_{\Delta' \neq \Delta} U^{\Delta'}$. By \cite[Prop. 4.17]{mac2023sadic}, $\Lambda^2 \cap \widehat{U}_{\Delta}$ is an approximate lattice in $\widehat{U}_{\Delta}$. So the projection of $\Lambda$ to $G/\widehat{U}_{\Delta}$ is a lattice. Since $U_{\Delta}$ is unipotent, one checks that $G/\widehat{U}_{\Delta}$ further projects to $G_{\Delta}:=\left(G_{red}/N_{\Delta}\right) \ltimes U_{\Delta}$.  By Proposition \ref{Proposition: Arithmeticity from action on subgroups} the projection $\Lambda_{\Delta}$ of $\Lambda$ to $G_{\Delta}$ is an approximate lattice. But, in cases (1) - (3), we find a reductive Levi subgroup $R_{\Delta} \subset G_{\Delta}$ such that $\Lambda_{\Delta}^2 \cap R_{\Delta}$ is an approximate lattice in $R_{\Delta}$ (and $\langle \Lambda_{\Delta} \rangle \cap  R_{\Delta}$ is dense when in case (3)). Indeed, in case (3) $G_{\Delta}$ is in the situation of \S \ref{Subsection: Representation-theoretic sum-product phenomenon}. But degree $1$ cohomology vanishes \cite[IV.5.8]{MR1090825} and quasi-cocycles are bounded (Theorem \ref{Theorem: Vanishing in higher-rank}) so we can apply Proposition \ref{Proposition: Boundedness implies splitting}; in case (2) apply Lemma \ref{Lemma: Commutativity implies boundedness of quasi-cocycles}; in case (1) first notice that the projection of $\Lambda_{red}$ intersects the centre of $G_{red}/N_{\Delta}$ in a lattice (Theorem \ref{Theorem: Radical is hereditary}), then apply Corollary \ref{Corollary: Boundedness quasi-cocycle case centre}. Then the inverse image $L'$ of $R_{\Delta}$ has intersection with $Z_N$ equal to $\hat{U}_{\Delta}$ and intersects $\Lambda^2$ in an approximate lattice.  Lemma \ref{Lemma: Reduction to abelian radical, higher-rank part} moreover implies that there is $\Lambda' \subset L'$ commensurable with $\Lambda^2 \cap L'$ such that $\langle \Lambda' \rangle$ projects densely to a simple factor of $G$ if and only if $\langle \Lambda\rangle$ does. We can thus repeat this process inductively until we find $L''$ such that $L'' \cap N = Z_{G,1}$. Note that, to ensure that the induction process goes smoothly, one can make sure to deal with all $\Delta$'s of type (3) first.

But we wanted $\Rad(G) \cap L'' =Z_{G,1}$.  By Lemma \ref{Lemma: Reduction to abelian radical} the projection of $\Lambda^2 \cap L''$ to $G/[G,G]Z_N$ is uniformly discrete.  So $L'' \cap [G,G]Z_N \cap \Lambda^2$ is an approximate lattice in $L :=L'' \cap [L,L]Z_N$. But $L$ contains a Levi subgroup of $G$ and $\Rad(L) = Z_{G,1}$.  Finally,  according to Lemma \ref{Lemma: Reduction to abelian radical, higher-rank part} we can find $\Lambda' \subset L$ an approximate lattice such that the projection of $\langle\Lambda'\rangle$ to some simple  factor of $G$ is not discrete if and only if the projection of $\langle \Lambda \rangle$ is not discrete. 
\end{proof}

\subsection{Proof of Theorems \ref{THEOREM: FIRST THEOREM} and \ref{THEOREM: MAIN THEOREM}}

\begin{proof}[Proof of Theorem \ref{THEOREM: MAIN THEOREM}.]
Let us start with building the relevant cut-and-project scheme.  By \cite[Prop. 4.17]{mac2023sadic} $Z_{G,1}=Z_G \times U$ and $\Lambda^2 \cap Z_G$, $\Lambda^2 \cap U$ are approximate lattices in $Z_G$ and $U$ respectively.. Furthermore, the action by conjugation on the unipotent subgroup $U$ has no non-trivial invariant vector. Define $\tau_z: Z_G \rightarrow V_z$ as a good model of an approximate subgroup commensurable with $\Lambda^2 \cap Z_G$ and whose target does not contain a non-trivial connected compact subgroup.  Define $W$ as the $\mathbb{Q}$-span of $\Lambda^2 \cap U$ and $\tau_u: W \rightarrow B \times \mathbb{A}^S \otimes W$ the diagonal homomorphism of the map $W \rightarrow B$ from Proposition \ref{Proposition: Meyer's theorem in S-adic vector spaces} and the natural map $W \rightarrow \mathbb{A}^S \otimes W$.  The group homomorphism $\tau_u$ is a good model of an approximate lattice commensurable with $\Lambda^2 \cap U$.  Let $\tau=\tau_z \times \tau_u$, $\Delta$ be the graph of $\tau$ and $V$ denote $V_z \times B \times \mathbb{A}^S \otimes W$.  Then $(Z_{G,1}, V, \Delta)$ is a cut-and-project scheme (Proposition \ref{Proposition: Equivalence good models and model sets}).

We can now build the map $\Phi$. Let $\alpha$ denote a class in $Z^1(\Gamma, Z_{G,1}) \times \widetilde{QC}(\Gamma, V) / Z^1(\Gamma, \Delta)$. Take a representative of $\alpha$ i.e. a pair made of a $1$-cocycle $b_{phys}: \Gamma \rightarrow Z_{G,1}$ and a quasi-cocycle $\bar{q}: \Gamma \rightarrow V$. Since $\tau(\Delta)$ is dense in $V$, upon taking another representative in the same class, we can assume that there is $q: \Gamma \rightarrow \Delta$ such that  $\tau \circ q = \bar{q}$. In particular, $q \in QC(\Gamma; Z_{G,1})$ where $Z_{G,1}$ is equipped with the bornology arising from $\Lambda^2 \cap Z_{G,1}$. Write $a:=b + q$. Define  $\Gamma_a:=\{(\gamma, a(\gamma)) \in G_s \ltimes \Rad(G): \Gamma \in \Gamma\}$ where we have chosen an arbitrary representation of $G$ as a semi-direct product using the Levi decomposition. Then $\Phi(\alpha)$ defined as the class of $\Lambda_a:= \Gamma_a\Lambda_{s,>1}\Lambda_r \cup (\Gamma_a\Lambda_{s,>1}\Lambda_r)^{-1}$ is as desired.  In particular, any other choice of representatives of $\alpha$ yields a commensurable approximate lattice. 

Let us show that $\Phi$ is a bijection.  We want to show surjectivity.  Take $\overline{\Lambda} \in Comm_G(\Lambda_s; \Lambda_r)$ and $\Lambda \in \overline{\Lambda}$.  Let $L$ be given by Lemma \ref{Lemma: End of proof} applied to $\Lambda$. Since the intersection of $L$ with $\Rad(G)$ is $Z_{G,1}$ and $L$ contains a semi-simple Levi subgroup $S$ of $G$, $Z_{G,1}$ is equal to $\Rad(L)$. In particular, $\Rad(L)$ is abelian. Therefore, there is $L \rightarrow S \ltimes Z_{G,1}$ surjective with finite kernel. Let $\tilde{\Lambda}$ be the image of $\Lambda$ in $S \ltimes Z_{G,1}$.  As in Section \ref{Section: Conjugation-multiplication phenomena} for every $\gamma \in \Gamma$, choose $a(\gamma) \in Z_{G,1}$ such that $(\gamma, a(\gamma)) \in \tilde{\Lambda}$.  By Proposition \ref{Proposition: quasi-cocycle takes values in Lambda}, there is a unique class $$\alpha_{L,S,\Lambda} \in H^1(\langle \Lambda_s \rangle,Z_{G,1}) \times \widetilde{QC}(\Lambda_s;V)/H^1(\langle \Lambda_s \rangle,\Delta)$$ associated with $a$ (see also Remark \ref{Remark: Decomp quasi-cocycles}).  But $H^1(\langle \Lambda_s \rangle,Z_{G,1}) = H^1(\Gamma_1,Z_{G,1})$ \cite[IV.5.8]{MR1090825} and $\widetilde{QC}(\Lambda_s;V)=\widetilde{QC}(\Gamma_1;V)$ by our vanishing result (Theorem \ref{Theorem: Vanishing in higher-rank}). So $H^1(\langle \Lambda_s \rangle,Z_{G,1}) \times \widetilde{QC}(\Lambda_s;V)/H^1(\langle \Lambda_s \rangle,\Delta) \simeq H^1(\Gamma, Z_{G,1}) \times \widetilde{QC}(\Gamma, V) / H^1(\Gamma, \Delta)$. Then $\alpha_{L,S,\Lambda}$ is the class we are looking for. 

One can make the crucial observation that $\alpha_{L,S,\Lambda}$ does not depend on either the choice of $L$, $S$ or $\Lambda \in \overline{\Lambda}$.  Indeed, let $a'$ be obtained via another choice $S'$ of $S$ in $L$. There is $z \in Z_{G,1}$ such that $S'^z=S$. We find therefore that  for all $\gamma \in \Gamma$ we have $(\gamma,e)^z(e,a'(\gamma)) \in \tilde{\Lambda}$. So $(\gamma, (\id - \gamma) \cdot z + a'(\gamma)) \in \Lambda$. But $(\gamma, a(\gamma)) \in \Lambda$. Thus, $a' - a + (\id - \gamma) \cdot z$ takes values in $\Lambda^2$.  Suppose now that $a'$ is obtained via another choice $L'$ of $L$ and $S' \subset L'$. Then by \cite[Prop. 4.18]{mac2023sadic} we know first that there is $u \in \Comm_G(\Gamma)$ in the unipotent radical such that $L'^u=L$ and $z \in Z_{G,1}$ such that $S'^{zu}=S$. Since $\Lambda^u$ is commensurable with $\Lambda$ and $u$ is in the unipotent radical we find again that  $$(\gamma, (\id - \gamma) \cdot z + a'(\gamma)) \in F + \Lambda^2 \cap Z_{G,1}$$ for some finite subset $F \subset Z_{G,1}$.  As a consequence of a similar line of reasoning,  the reverse construction also does not depend on the choice of $\Lambda$ in the class considered. 

Since these two constructions are inverse to each other, $\Phi$ is indeed a bijection.  We have proved (1).  It remains to prove (2) and (3).  So choose $\alpha, a, S$ and $\Lambda:=\Lambda_a$ as in the second paragraph.  Write $L=SZ_{G,1}$.

Part (3) is proved along similar lines: suppose that there is a $1$-co-boundary $b_{phy}$ of $\Gamma$ with coefficients in $Z_{G,1}$ such that 
$$(a-b_{phys})(\Lambda) \subset F + \Lambda^2 \cap Z_{G,1}$$
where $a$ is chosen as in the second paragraph.
Then $b_{phys}(\gamma) = (id - \gamma)\cdot z $ for some $z \in Z_{G,1}$. Then Proposition \ref{Proposition: Boundedness implies splitting} implies that $z^{-1}Sz \cap \Lambda^2$ is an approximate lattice in $z^{-1}Sz$. Conversely, suppose that there is a Levi subgroup $S'$ of $G$ such that $S' \cap \Lambda^2$ is an approximate lattice in $S'$.  Then $\alpha_{L, S',\Lambda}$ clearly vanishes where $L=S'Z_{G,1}$.

Let us now prove (2). Assume first that $\Lambda$ is laminar. Then $\Lambda^2 \cap L$ is laminar as well \cite[Lem. 3.3]{machado2019goodmodels}. So $a$ must be of the desired form by Proposition \ref{Proposition: Equivalence laminarity/non-trivial cohomology class}. Conversely, Proposition \ref{Proposition: Equivalence laminarity/non-trivial cohomology class} implies that if $a$ is of the desired form, $\Lambda^2 \cap L$ is laminar. But $\Lambda$ is commensurable with $\left(\Lambda^2 \cap L\right)\left( \Lambda^2 \cap \Rad(G) \right)$. So $\Lambda$ is laminar according to Lemma \ref{Lemma: Laminarity from laminarity in decompositions}.

\end{proof}

\section{Remarks and consequences}\label{Subsection: Remarks and consequences}

\emph{Generalisations to other groups.} The method described in this paper extends to other frameworks.  In Lie groups,  we find the same examples as described above, along with one more family related to central extensions of simple Lie groups. We find in particular non-laminar approximate lattices in central extensions of any simple Lie group with Hermitian symmetric space (some of which have higher-rank). This shows that Hrushovski's guess was right for Lie groups upon replacing $H_b^2(\Gamma_s;V)$ by $H_b^2(\Lambda_s;V)$. 
 
Over fields of positive characteristic,  most approximate lattices are commensurable with lattices, extending \cite{hrushovski2020beyond} from semi-simple groups.

Using ideas from \cite{MR1840942},  one may possibly extend the vanishing result (Theorem \ref{Theorem: Vanishing in higher-rank}) to all uniform approximate lattices in any locally compact groups.  Moreover,  approximate lattices in locally compact groups admit a weak form of Theorem \ref{Theorem: Radical is hereditary}.  Combining these two ideas, it seems likely that one can understand uniform approximate lattices of non-linear locally compact groups in terms of the bounded cohomology with trivial coefficients of their ambient group.  Such a result would provide the non-linear counterpart to our work and complete the classification of approximate lattices to all locally compact groups.

\emph{Arithmeticity.}Combined with a simple Krull--Schmidt argument, Theorem \ref{THEOREM: MAIN THEOREM} enables us to generalise the arithmeticity theorem for approximate lattices \cite[Thm 7.4]{hrushovski2020beyond} to perfect groups:

\begin{theorem}[Arithmeticity in perfect groups]\label{Theorem: Arithmeticity for perfect groups}
Let $\Lambda$ be an irreducible approximate lattice in a perfect $S$-adic linear group $G$ without compact factor.  Suppose that there is no rank one factor $p:G \rightarrow S$ of $G$ such that $p(\Lambda)$ is contained in a lattice.There is then a number field $K$, a finite set of places $S$ of $K$ and a linear $K$ group $\mathbf{G}$ such that we can find an isogeny $\phi:\mathbf{G}(\mathbb{A}_{S,K}) \rightarrow G$ such that $\phi(\mathbf{G}(\mathcal{O}_{K,S}))$ is commensurable with $\Lambda$. 
\end{theorem}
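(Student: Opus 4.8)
\textbf{Proof plan for Theorem \ref{Theorem: Arithmeticity for perfect groups}.}

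The plan is to bootstrap the arithmeticity statement from the structure theorem (Theorem \ref{THEOREM: MAIN THEOREM}) together with the arithmeticity theorem for semisimple groups \cite[Thm 7.4]{hrushovski2020beyond}. First I would reduce to the setup of Assumption \ref{Assumption: final proof}: since $\langle \Lambda \rangle$ is Zariski-dense in $G$ (after replacing $G$ by the Zariski-closure of $\langle \Lambda \rangle$, using that $G$ is perfect without compact factor) and $G = [G,G]$, the radical $\Rad(G)$ is unipotent, hence $Z_{G,1} = Z_G \times U$ is unipotent as well. Because $G$ is perfect with no rank-one factor on which $\Lambda$ projects into a lattice, the family $P$ of rank-one factors in Theorem \ref{THEOREM: MAIN THEOREM} is empty, so $\pi$ is the trivial map, $Z_{G,1} = Z$ (the full centre of $\Rad(G)$), and $\Gamma_1 = \langle \Lambda_s \rangle$. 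Applying the Vanishing Theorem (Theorem \ref{Theorem: Vanishing in higher-rank}) to $\Lambda_s$ — which is an irreducible approximate lattice in the semisimple quotient $G/\Rad(G)$ with no rank-one lattice factor — gives $\widetilde{QC}(\Gamma_1; V) = \{0\}$, and Margulis superrigidity plus $H^1$-vanishing \cite[IV.5.8]{MR1090825} (valid since $G/\Rad(G)$ is perfect) forces the physical parameter space to collapse too. Concretely, I would invoke part (3) of Theorem \ref{THEOREM: MAIN THEOREM}: the class $\alpha$ attached to $\Lambda$ must be trivial, so $\Lambda$ splits, i.e.\ there is a reductive Levi subgroup $S_0 \subset G$ with $\Lambda^2 \cap S_0$ an approximate lattice in $S_0$, and $\Lambda$ is commensurable with $(\Lambda^2 \cap S_0)(\Lambda^2 \cap \Rad(G))$.

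Next I would handle the two pieces separately. The semisimple piece $\Lambda^2 \cap S_0$ is an irreducible approximate lattice in the semisimple group $S_0$ (irreducibility descends from irreducibility of $\Lambda$), with no rank-one factor carrying a lattice projection, so \cite[Thm 7.4]{hrushovski2020beyond} yields a number field $K$, a finite place set $S$, a simply connected semisimple $K$-group $\mathbf{G}_{ss}$ and an isogeny $\mathbf{G}_{ss}(\mathbb{A}_{S,K}) \to S_0$ carrying $\mathbf{G}_{ss}(\mathcal{O}_{K,S})$ onto something commensurable with $\Lambda^2 \cap S_0$. The unipotent piece $\Lambda^2 \cap \Rad(G)$ is an approximate lattice in the unipotent group $\Rad(G)$, on which $S_0$ acts; by Meyer's theorem for $S$-adic unipotent groups (Proposition \ref{Proposition: Meyer's theorem in S-adic vector spaces} for the abelianization, iterated up the lower central series, or directly \cite[Prop. 1.1]{mac2023sadic}) together with Proposition \ref{Proposition: Arithmeticity from action on subgroups}, it too is cut out by a $\mathbb{Q}$-structure compatible with the $S_0$-action. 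The main obstacle — and this is where a Krull--Schmidt / uniqueness argument enters — is to glue these two $K$-structures into a single $K$-group $\mathbf{G}$ with $\mathbf{G}(\mathbb{A}_{S,K}) \to G$ an isogeny: one must check that the number field and place set produced by the semisimple arithmeticity theorem are \emph{forced} to coincide with those governing the unipotent radical, and that the $S_0$-module $\Rad(G)$, as a representation, is defined over the same $K$ up to the unique decomposition into isotypic/indecomposable summands. This is exactly the point where perfectness is used decisively: $G = [G, S_0]\,Z$-type relations pin down the arithmetic data of $U$ from that of $S_0$, because the action has no invariant vectors on the relevant quotients (Theorem \ref{Theorem: Radical is hereditary} guarantees $[G,\Rad(G)] \cap \Lambda^2$ is an approximate lattice, controlling the non-central part), and a Krull--Schmidt argument on the semisimple representation ensures the field of definition is unique.

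Finally I would assemble $\mathbf{G}$ as the semidirect product of $\mathbf{G}_{ss}$ with the unipotent $K$-group $\mathbf{U}$ reconstructed from the representation data, together with the appropriate central $K$-torus/vector-group contribution from $Z_G$; the isogeny $\phi : \mathbf{G}(\mathbb{A}_{S,K}) \to G$ is then the product of the two isogenies above, and $\phi(\mathbf{G}(\mathcal{O}_{K,S}))$ is commensurable with $(\Lambda^2 \cap S_0)(\Lambda^2 \cap \Rad(G))$, hence with $\Lambda$. I expect the semisimple and unipotent inputs to be essentially off-the-shelf at this point; the genuine work is the compatibility/uniqueness bookkeeping in the gluing step, and verifying that no positive-dimensional obstruction to splitting the $K$-group extension survives — which again follows because the relevant $H^1$ and $\widetilde{QC}$ groups vanish in the higher-rank perfect setting.
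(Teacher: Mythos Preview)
Your proposal is essentially correct and matches the approach the paper indicates: the paper does not give a detailed proof of Theorem \ref{Theorem: Arithmeticity for perfect groups}, merely stating that it follows from Theorem \ref{THEOREM: MAIN THEOREM} ``combined with a simple Krull--Schmidt argument'' together with \cite[Thm 7.4]{hrushovski2020beyond}, and your plan is a reasonable elaboration of exactly this route---collapse of the parametrisation to force splitting, semisimple arithmeticity from Hrushovski, arithmeticity of the radical piece, and a Krull--Schmidt gluing. One small correction: under the hypothesis that $P$ is empty, $\pi$ is the map to the trivial group, so $\Gamma_1$ is trivial and both $H^1(\Gamma_1;Z_{G,1})$ and $\widetilde{QC}(\Gamma_1;V)$ vanish for elementary reasons---you do not need to invoke Margulis' $H^1$-vanishing here, and your identification $Z_{G,1}=Z$ is not quite right (rather $Z_{G,1}$ is the part of $Z$ fixed by $\ker\pi$, which here is all of $G/\Rad(G)$), but this does not affect the argument since the parametrisation space collapses regardless.
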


It is worthwhile to compare the above results with Mostow's arithmeticity results \cite{MR289713} for lattices in Lie groups with radical.  While under a higher-rank condition approximate lattices are indeed arithmetic,  Theorem \ref{THEOREM: MAIN THEOREM} provides approximate lattices in rank one that cannot be deformed (in the sense of \cite{MR289713}) into arithmetic ones. 

\emph{Around cohomology}.  Our work shows how bounded cohomology for approximate subgroups captures many components of the structure of approximate subgroups - and even more so for approximate lattices.  This leads to a number of natural questions: what properties do approximate lattices associated with quasi-cocycles of more specific kind (say $\Aut$-invariant) have? What is the meaning of higher degree cohomology groups of approximate subgroups (and their vanishing)? How is the vanishing of these groups related to properties such as amenability? Suppose that all bounded cohomology groups associated with some $\Lambda$ vanish, is there a universal (independent of $\Lambda$) $m > 0$ such that $\Lambda^m$ has a good model? This questions are in particular motivated by the exotic examples found in \cite{mac2023definitions}.

\section{Appendix}
\begin{proof}
We will follow closely the proof of \cite[Lemma 4.1]{WitteMorris}, the notation we use here are the ones from \cite{WitteMorris} rather than the ones used up to now in this work. Since this result is used crucially in the proof of the main theorem of this paper, we nevertheless present a complete proof. We use the notation of \cite[Notation 3.5, p6]{WitteMorris}. In section \cite[Notation 3.5]{WitteMorris} simply replace $\mathbb{Q}$ by $K$ and $\mathbb{Z}$ by $\mathcal{O}_{K,v}$. For each simple root $\alpha$ of $\Phi^+$ define $\omega_{\alpha}$ as in \cite[Definition 3.8]{WitteMorris} - replacing every occurrence of $\mathbb{Q}$ and $\mathbb{Z}$ by $K$ and $\mathcal{O}_{K,v}$ respectively. In particular, we may choose $\omega_{\alpha}$ such that $\omega_{\alpha}(\Lambda) \subset \mathcal{O}_{K,v}$. 

Recall that $\mathbf{U}^+$ denotes the unipotent group spanned by the roots of $\Phi^+$ and $\mathbf{P}^-$ is the minimal parabolic $K$-group that is opposite to $\mathbf{U}^+$. We have $(\alpha_i)_{i \in \{1, \ldots, k\}}$ an enumeration of $\Phi^+$ (in the order chosen in \cite{WitteMorris}) and $\mathbf{U}_i^+$ the root subgroup associated with $\alpha_i$. We set $\overleftarrow{\mathbf{U}_i^+}:=\mathbf{U}_1^+\cdots \mathbf{U}_{i-1}^+$ and $\overrightarrow{\mathbf{U}_i^+}:=\mathbf{U}_{i+1}^+\cdots \mathbf{U}_k^+$ for all $i \in \{1, \ldots, k\}$. The choice of the enumeration makes the multiplication map $\mathbf{U}_1^+ \times \ldots \times \mathbf{U}_k^+ \rightarrow \mathbf{U}^+$ a $K$-isomorphism.  The multiplication map $\mathbf{U}^+ \times \mathbf{P}^- \rightarrow \mathbf{G}$ induces a $K$-regular isomorphism towards a $K$-open subset $\Omega$ of $\mathbf{G}$. Denote $u^+: \Omega \rightarrow \mathbf{U}^+$, $p^{-}: \Omega \rightarrow \mathbf{P}^-$ the regular maps arising from the inverse of the multiplication map of the previous sentence. We also denote $u_i^+: \Omega \rightarrow \mathbf{U}_i^+$, $\overleftarrow{u_i}^+: \Omega \rightarrow \overleftarrow{\mathbf{U}_i}^+$ and $\overrightarrow{u_i}^+: \Omega \rightarrow \overrightarrow{\mathbf{U}_i}^+$ the obvious obtained from $u^+$.

\begin{lemma}[Lemma 4.11,\cite{LubotzkyMozesRaghunathan}]
We have: 
\begin{enumerate}
\item $K[\Omega]=K[\mathbf{G}][\frac{1}{\prod_{\alpha \in \Phi} \omega_{\alpha}}]$; 
\item Fix an integer $m >0$, for all $\gamma \in \Lambda^m \cap \Omega$ and all $i$, we have $||p^-(\gamma)||_v \preceq ||\gamma||_v$ and $||u_i^+(\gamma)|| \preceq ||\gamma||$. 
\end{enumerate}
\end{lemma}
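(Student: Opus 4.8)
The plan is to transpose the proof of \cite[Lemma 4.11]{LubotzkyMozesRaghunathan} to the number field setting, the only genuinely new ingredient being an elementary arithmetic estimate based on the product formula; everything else carries over essentially verbatim.

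Part (1) is purely algebro-geometric. Each $\omega_{\alpha}$ is a highest-weight matrix coefficient in a fundamental representation of $\mathbf{G}$, hence the restriction to $\mathbf{G}$ of a polynomial in the matrix entries, and in particular a regular function on $\mathbf{G}$ defined over $K$. It is classical (the Bruhat big cell) that $\Omega = \mathbf{U}^{+}\mathbf{P}^{-}$ is precisely the non-vanishing locus of $\prod_{\alpha}\omega_{\alpha}$: the vanishing of a given $\omega_{\alpha}$ detects the failure of the corresponding triangular-type factorisation. Since $\Omega$ is thus the principal open subset $D\bigl(\prod_{\alpha}\omega_{\alpha}\bigr)$ of $\mathbf{G}$, the standard description of the coordinate ring of a principal open affine gives $K[\Omega]=K[\mathbf{G}][1/\prod_{\alpha}\omega_{\alpha}]$. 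I would simply invoke this, checking only that all objects involved are defined over $K$ (immediate from the construction of the $\omega_{\alpha}$), and note that replacing a product over simple roots by the product over $\Phi$ changes nothing, as the two functions have the same non-vanishing locus.

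For part (2) the point of departure is a consequence of (1): the matrix entries of the $K$-regular maps $p^{-}$ and $u_i^{+}$ on $\Omega$ lie in $K[\mathbf{G}][1/\prod_{\alpha}\omega_{\alpha}]$, so each such entry can be written as $P/(\prod_{\alpha}\omega_{\alpha})^{N}$ with $P$ the restriction to $\mathbf{G}$ of a polynomial in the matrix entries — here I use that $\mathbf{G}$, being absolutely almost simple and simply connected, has no nontrivial $K$-character, so $\det$ is constant on it and no $1/\det$ is needed — and $N$ an integer that may be taken uniform over the finitely many entries. It therefore suffices to bound $|P(\gamma)|_v$ from above and $\bigl|\prod_{\alpha}\omega_{\alpha}(\gamma)\bigr|_v$ from below, uniformly over $\gamma \in \Lambda^{m} \cap \Omega$. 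The upper bound $|P(\gamma)|_v \preceq ||\gamma||_v$ is immediate, $P$ being a fixed polynomial in the entries of $\gamma$. For the lower bound: since $\Lambda$ is commensurable with $\mathbf{G}(\mathcal{O}_{K,v})$ and $\mathcal{O}_{K,v}$ is an approximate ring stable under products (Lemma \ref{Lemma: Pisot approximate rings and polynomials}), the matrix entries of every $\gamma \in \Lambda^{m}$ lie in a fixed finite union of translates of $\mathcal{O}_{K,v}$; applying the fixed polynomials $\omega_{\alpha}$ to such $\gamma$ then yields a finite set $F \subset K$, independent of $\gamma$, with $\prod_{\alpha}\omega_{\alpha}(\gamma) \in F + \mathcal{O}_{K,v}$, and this element is nonzero because $\gamma \in \Omega$. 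Hence $\bigl|\prod_{\alpha}\omega_{\alpha}(\gamma)\bigr|_w \le C_w$ for every place $w \neq v$, with $C_w = 1$ for all but finitely many $w$ and every $C_w$ independent of $\gamma$; the product formula (with a suitable normalisation of the absolute values of $K$) then forces $\bigl|\prod_{\alpha}\omega_{\alpha}(\gamma)\bigr|_v \ge c$ for a constant $c>0$ independent of $\gamma$. Combining the two bounds gives $|P(\gamma)|_v\,\bigl|\prod_{\alpha}\omega_{\alpha}(\gamma)\bigr|_v^{-N} \preceq ||\gamma||_v$, whence $||p^{-}(\gamma)||_v \preceq ||\gamma||_v$ and $||u_i^{+}(\gamma)||_v \preceq ||\gamma||_v$.

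The main — and essentially only — point at which the argument departs in substance from \cite{LubotzkyMozesRaghunathan} is the passage from $\omega_{\alpha}(\Lambda) \subseteq \mathcal{O}_{K,v}$ to a uniform lower bound on $|\omega_{\alpha}(\gamma)|_v$ over $\gamma \in \Lambda^{m}$: for a lattice one would have $\omega_{\alpha}(\Lambda^{m}) \subseteq \mathcal{O}_{K,v}$ directly, whereas for an approximate subgroup one must first control denominators via the approximate-ring stability of $\mathcal{O}_{K,v}$ and only then apply the product formula exactly as in the classical case. I do not anticipate any further difficulty: part (1) is verbatim, and the remainder of part (2) is routine bookkeeping with $S$-integers and operator norms.
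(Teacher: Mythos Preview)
Your proposal is correct and follows essentially the same route as the paper: part (1) is referred back to \cite{LubotzkyMozesRaghunathan}, and for part (2) both you and the paper write the entries of $p^-$ and $u_i^+$ as polynomials divided by a power of $\prod_\alpha \omega_\alpha$, bound the numerator trivially, and bound the denominator below by using Lemma~\ref{Lemma: Pisot approximate rings and polynomials} to place $\omega_\alpha(\Lambda^m)$ in finitely many translates of $\mathcal{O}_{K,v}$, then invoking discreteness in $K_v$ (equivalently, the product formula). Your write-up is in fact slightly more explicit about this last step than the paper's terse ``Lemma~\ref{Lemma: Pisot approximate rings and polynomials} again''.
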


While $(1)$ is explicitly contained in \cite{LubotzkyMozesRaghunathan}, $(2)$ requires an explanation. So, according to $(1)$, the entries of $p^-(\gamma)$ are rational functions in the entries of $\gamma$ whose denominators are polynomials in the $\omega_{\alpha}$. Since $\omega_{\alpha}(\Lambda) \subset \mathcal{O}_{K,v}$ for all $\alpha$, we have $\omega_{\alpha}(\Lambda^m)$ covered by finitely many translates of $\mathcal{O}_{K,v}$ (Lemma \ref{Lemma: Pisot approximate rings and polynomials}). The $|\cdot|_v$-absolute value of the denominators are thus bounded below by some $c>0$ independent of $\gamma$ (Lemma \ref{Lemma: Pisot approximate rings and polynomials} again). Thus, $||p^-(\gamma)||_v \preceq ||\gamma ||_v$. The same argument applies to $u_i^+(\gamma)$. 

We are now in a position to show the crux of the proof:

\begin{lemma}[Lemma 4.1, \cite{WitteMorris}]\label{Lemma: Generalisation 4.1}
Let $\Lambda'$ be an approximate lattice commensurable with $\Lambda$. Let $i \in \{1,\ldots, k\}$ and let $F_i$ be a finite subset $F_{i+1}$ of $\mathbf{U}^+(K)$. There is a finite subset of $\mathbf{U}^+(K)$ such that if $\lambda \in \Lambda' \cap \Omega(K)$ and $\overleftarrow{u_i^+}(\lambda) \in F_i$, then there exists $x_i \in ^{\overleftarrow{u_i^+}(\lambda)}\mathbf{G}_i(K_v) \cap \Lambda'^{12}$ with 
\begin{enumerate}
\item $x_i\lambda \in \Omega(K)$ and $\overleftarrow{u_{i+1}^+}(x_i\lambda) \in F_{i+1}$;
\item $||x_i||_v \preceq ||\lambda||_v$. 
\end{enumerate}
\end{lemma}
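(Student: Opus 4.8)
The statement is the exact analogue of \cite[Lemma 4.1]{WitteMorris}, and the plan is to follow Witte-Morris's argument verbatim, inserting approximate-group bookkeeping where needed. The key input is the equation $u^+_j(x\lambda) = {}^{\,?}u^+_j(\lambda)$ for $j \le i$ when $x$ lies in the rank-one $K$-group $\mathbf{G}_i$ corresponding to the root $\alpha_i$: since $\mathbf{G}_i$ normalizes $\overrightarrow{\mathbf{U}_i^+}$ and its conjugate by $\overleftarrow{u_i^+}(\lambda)$ normalizes the corresponding conjugate, multiplying $\lambda$ on the left by an element $x_i \in {}^{\overleftarrow{u_i^+}(\lambda)}\mathbf{G}_i(K_v)$ leaves $\overleftarrow{u_i^+}$ unchanged and only modifies the $i$-th coordinate $u_i^+$. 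Concretely, writing $\lambda = \overleftarrow{u_i^+}(\lambda)\, u_i^+(\lambda)\, \overrightarrow{u_i^+}(\lambda)\, p^-(\lambda)$, we want $x_i$ chosen so that the $u_i^+$-component of $x_i\lambda$ lands back in a bounded set; the new $\overrightarrow{u_{i+1}^+}$-component is then controlled by $(2)$ of the preceding Lemma and the finitely many values already constrained, so it too lies in a finite set $F_{i+1}$ depending only on $F_i$.

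\textbf{Key steps.} First I would reduce to producing a single element $y_i$ of the rank-one group $\mathbf{G}_i(K_v)$ — rather than its conjugate — with $y_i \in \Lambda''^{N}$ for $\Lambda''$ commensurable with $\Lambda'$ and $N$ an absolute constant, such that $y_i$ transports $\overleftarrow{u_i^+}(\lambda)^{-1}\lambda$ into the right region; then conjugate by $\overleftarrow{u_i^+}(\lambda)$, which lies in the fixed finite set $F_i \subset \mathbf{U}^+(K)$ and therefore in $\Comm(\Lambda')$ after enlarging the generating set, so that $x_i := {}^{\overleftarrow{u_i^+}(\lambda)}y_i$ is again in a bounded power of $\Lambda'$ (here the factor $12$ in the statement is the cumulative cost of: intersecting with a conjugate, Lemma \ref{Lemma: Intersection of approximate subgroups}, and the commensuration by the finite set $F_i$). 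Second, the existence of $y_i$ with the norm bound $\|y_i\|_v \preceq \|\lambda\|_v$ is where $\mathbf{G}_i$ being rank one over $K$ enters: inside $\mathbf{G}_i(K_v)$ the relevant "move one coordinate into a bounded set" is exactly a Bruhat-type computation with the $\omega_\alpha$'s, and $(2)$ of the Lemma above (the polynomial bound on $u_i^+(\gamma)$ and $p^-(\gamma)$ for $\gamma \in \Lambda'^m \cap \Omega$, itself a consequence of Lemma \ref{Lemma: Pisot approximate rings and polynomials}) gives the norm estimate once one knows the group elements used are in $\mathbf{G}_i(\mathcal{O}_{K,v})$ up to bounded error. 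Third, one must check the finite set $F_{i+1}$ can be taken to depend only on $F_i$ and not on $\lambda$: this follows because $\overleftarrow{u_{i+1}^+}(x_i\lambda)$ is a $K$-regular function of the matrix entries of $x_i\lambda$, those entries lie in $\mathcal{O}_{K,v}$ up to a fixed finite translate, and they are bounded by the norm of $x_i\lambda$ which is itself bounded once $\overleftarrow{u_i^+}$ and $u_i^+$ are in bounded sets and $p^-(\lambda), \overrightarrow{u_i^+}(\lambda)$ are controlled.

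\textbf{Main obstacle.} The genuine difference from the lattice case is that $\mathbf{G}_i(K_v)\cap \Lambda'^{12}$ need not be a group, so I cannot freely invert or multiply inside it; every time Witte-Morris writes "$x_i \in \Gamma \cap \mathbf{G}_i$'' I must instead produce an element of a \emph{bounded power} of $\Lambda'$ lying in $\mathbf{G}_i(K_v)$, and track that the power stays bounded by an absolute constant. The mechanism for this is Corollary \ref{Corollary: Intersection with stabiliser} together with Lemma \ref{Lemma: Intersection of approximate subgroups}: $\mathbf{G}_i(K_v) \cap \Lambda'^2$ is (after passing to a commensurable approximate subgroup) an approximate lattice in $\mathbf{G}_i(K_v)$, and since $\mathbf{G}_i$ is rank one over $K$ with $\langle \Lambda \rangle$ dense (Assumption \ref{Assumption: Arithmeticity} gives $|\hat S| \ge 2$, so $\mathbf{G}_i(\mathcal{O}_{K,v})$ has dense, non-lattice image), one has enough elements of small $v$-norm in $\Lambda'^{12}\cap\mathbf{G}_i(K_v)$ to carry out the coordinate reduction — this is precisely the content already extracted in Lemma \ref{Lemma: Connectedness of sorts in root subgroups} and Lemma \ref{Lemma: Sum-product phenomenon for quick generation}. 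Once this substitution is made systematically, the rest of Witte-Morris's proof of \cite[Lemma 4.1]{WitteMorris} transcribes without further change, and I would simply indicate the correspondence step by step rather than reproduce every line.
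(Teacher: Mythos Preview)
Your plan misses the central mechanism of the paper's proof and substitutes tools that do not apply. The paper does \emph{not} reduce the construction of $x_i$ to ``having enough elements of small $v$-norm'' in $\Lambda'^{12}\cap\mathbf{G}_i(K_v)$ via Lemmas \ref{Lemma: Connectedness of sorts in root subgroups} and \ref{Lemma: Sum-product phenomenon for quick generation}; those lemmas concern the \emph{unipotent root subgroup} $\mathbf{U}_\alpha$, not the rank-one semisimple group $\mathbf{G}_i$, and cannot produce the element $x$ you need. The actual engine is Proposition \ref{Proposition: Generalisation of Borel's result}, the approximate analogue of Borel's finiteness of double cosets: applied to $\Lambda_i := \Lambda'^2 \cap {}^{\overleftarrow{u}^{-1}}\Lambda'^2 \cap \mathbf{G}_i(K_v)$ and the parabolic $\mathbf{P}_i^-$, it gives a finite set $F^0 \subset \mathbf{G}_i(K)$ (which one then moves into $\mathbf{U}_i^+(K)$ by Borel density) so that $\hat{u}=u_i^+(\lambda)$ decomposes as $\hat u = xfq$ with $x\in\Lambda_i^2$, $f\in F^0$, $q\in\mathbf{P}_i^-(K)$. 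One then sets $x_i={}^{\overleftarrow{u}}x^{-1}$ and $F_{i+1}=F_iF^0$; the identity $\overleftarrow{u_{i+1}^+}(x_i\lambda)=\overleftarrow{u}f$ is a direct algebraic computation, so finiteness of $F_{i+1}$ is immediate --- your argument for finiteness via ``entries bounded by the norm of $x_i\lambda$'' would only give a norm bound, not a finite set.

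The norm bound (2) is also more delicate than your sketch suggests. The paper first uses that $\widehat{U}_{\mathcal{O}_{K,v}}:=\Lambda_i^2\cap{}^{\overleftarrow{u}}\mathbf{U}_i^+(K_v)$ is a uniform approximate lattice in the conjugated root subgroup to premultiply $\lambda$ and reduce to $\|\hat u\|_v=O(1)$; then, writing the Langlands decomposition $q=ma$, uses co-compactness of $\mathbf{M}_i\mathbf{N}_i$ to reduce to $\|m\|_v=O(1)$; and finally bounds $\|x\|_v$ via $\omega_\alpha$-computations exploiting that $\omega_\alpha(\Lambda_i^4)$ is discrete in $K_v$ (Lemma \ref{Lemma: Pisot approximate rings and polynomials}) to get a uniform lower bound $|\omega_\alpha(a^{-1})|_v>1/O(1)$. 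The ``moreover'' part of Proposition \ref{Proposition: Generalisation of Borel's result}, giving a norm-controlled $p$, is what makes these reductions work. Without the double-coset input and these two reductions, your outline does not actually produce $x_i$ with the required norm control.
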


Recall that for an element $g \in G$ and a subset $X \subset G$, we denote by $^{g}X$ the subset $gXg^{-1}$. Following the proof of \cite[Lemma 4.1]{WitteMorris} we write
$$ u=u^+(\lambda), \overleftarrow{u}=\overleftarrow{u_i^+}(\lambda), \hat{u}=u_i^+(\lambda), \overrightarrow{u}=\overrightarrow{u_i^+}(\lambda), p=p^-(\lambda),$$
see \cite[Notation 3.5, p6]{WitteMorris}.

Let $\Lambda_i$ denote $\Lambda'^2 \cap ^{\overleftarrow{u}^{-1}}\Lambda'^2 \cap \mathbf{G}_i(K_v)$. Then $\Lambda_i \subset \mathbf{G}_i(K)$ is commensurable with $\mathbf{G}_i(\mathcal{O}_{K,v})$ and $^{\overleftarrow{u}}\Lambda_i \subset \Lambda'^2$. 

According to Proposition \ref{Proposition: Generalisation of Borel's result} there is a finite subset $F^0$ of $\mathbf{G}_i(K)$ such that $\mathbf{G}_i(K)= \Lambda_i F^0 \mathbf{P}_i^-(K)$. We will show that upon considering $\Lambda_i^2$ instead of $\Lambda_i$, we can take $F^0 \subset \mathbf{U}_i^+(K)$. We have that $\mathbf{U}_i^+\mathbf{P}_i^-$ is Zariski-open in $\mathbf{G}_i$. Therefore, by the Borel density theorem for approximate lattices \cite[A.11]{hrushovski2020beyond}, for any $f \in F^0$, $\Lambda_if \cap  \mathbf{U}_i^+(K)\mathbf{P}_i^-(K) \neq \emptyset$. Thus we may multiply $f$ on the left by an element of $\Lambda_i$ and on the right by an element of $\mathbf{P}_i^-(K)$ to obtain $f' \in \mathbf{U}_i^+(K)$. This proves our claim.

Hence, we have $\hat{u}=xfq$ with $x \in \Lambda_i^2, f\in F^0 \subset \mathbf{U}_i^+(K)$ and $q \in \mathbf{P}_i^-(K)$. Following the computations in \cite[pp.10-11]{WitteMorris}, if we set $x_i=^{\overleftarrow{u}}x^{-1}$ and $F_{i+1}=F_iF^0$, we have 
$$x_i\lambda \in \Omega(K) \text{ and } \overleftarrow{u_{i+1}^+}(x_i\lambda) = \overleftarrow{u}f \in F_iF^0=F_{i+1}.$$ So $(1)$ is proved.

We will now prove that we can ensure (2) as well. Write $\widehat{U}_{\mathcal{O}_{K,v}}:= \Lambda_i^2\cap ^{\overleftarrow{u}}\mathbf{U}_i^+(K_v)$. By \S \ref{Subsubsection: Pisot--Vijayaraghavan--Salem numbers of a number field}, $\widehat{U}_{\mathcal{O}_{K,v}}$ is a uniform approximate lattice in $^{\overleftarrow{u}}\mathbf{U}_i^+(K_v)$. Therefore, upon multiplying $\lambda$ by an element $u'$ of $\hat{U}_{\mathcal{O}_{K,v}}$ on the left of size $||u'||_v \preceq ||u_i^+(\lambda)||_v \preceq ||\lambda||_v$ we may assume $||u_i^+(\lambda)||_v = O(1)$. Moreover, $u'x \in \Lambda_i^4$ so we may assume from the start that $||\hat{u}||_v=O(1)$ and $x \in \Lambda_i^4$. 

Write the Langlands decomposition $\mathbf{P}_i^-=\mathbf{M}_i\mathbf{A}_i\mathbf{N}_i$ where $\mathbf{A}_i$ is the connected component of the intersection of the fixed $K$-split torus $\mathbf{T}$ with $\mathbf{G}_i$. Write $ q=ma$ with $m \in (\mathbf{M}_i\mathbf{N}_i)(K)$ and $a \in \mathbf{A}_i(K)$. By \S \ref{Subsubsection: Pisot--Vijayaraghavan--Salem numbers of a number field}, since $\mathbf{M}_i\mathbf{N}_i$ is $K$-anisotropic,  $^{f^{-1}}\Lambda_i^2 \cap (\mathbf{M}_i\mathbf{N}_i)(K_v)$ is a co-compact approximate lattice in $(\mathbf{M}_i\mathbf{N}_i)(K_v)$. Therefore, there is $m' \in \Lambda_i^2 \cap ^{f}(\mathbf{M}_i\mathbf{N}_i)(K_v)$ such that $||^{f^{-1}}m'm||_v =O(1)$. But $xm'^{-1} \in \Lambda_i^6$ and 
$$xfq=xfma = xm'^{-1}f(^{f^{-1}}m'm)a.$$
So upon modifying $x \in \Lambda_i^6$ we may assume that $||m||_v = O(1)$ to begin with. Choose now a simple $K$-root $\alpha$ that is not orthogonal to $\Phi_i$ (where $\Phi_i$ is the set of roots that are scalar multiples of $\alpha_i$). Then, following computations in \cite[p.11]{WitteMorris}, we find 
$$||x||_v \preceq \max(|\omega_{\alpha}(a)|_v,|\omega_{\alpha}(a^{-1})|_v)$$ where we have used that $\mathbf{A}_i$ is one-dimensional and $\alpha$ not trivial on $\mathbf{A}_i$. 
In addition, 
$$|\omega_{\alpha}(a^{-1})|_v = |\omega_{\alpha}(xf)|_v.$$
But $\phi: x \mapsto \omega_{\alpha}(xf)$ is a $K$-regular map. So $\phi(\Lambda_i^4)$ is discrete in $K_v$ by Lemma \ref{Lemma: Pisot approximate rings and polynomials}. Since moreover $xf=\hat{u}q^{-1} \in (\mathbf{U}^+\mathbf{P}^-)(K)=\Omega(K)$, we have 
$$|\omega_{\alpha}(a^{-1})|_v > \frac{1}{O(1)}.$$

Now, $|\omega_{\alpha}(a^{-1})|_v = \frac{1}{|\omega_{\alpha}(a)|_v}$. So 
$$ \max(|\omega_{\alpha}(a)|_v,|\omega_{\alpha}(a^{-1})|_v) = \max(O(1), |\omega_{\alpha}(a^{-1})|_v).$$

Following \cite[p.11]{WitteMorris} again we have, 
$$|\omega_{\alpha}(^{\overleftarrow{u}}x^{-1}\lambda)|_v= |\omega_{\alpha}(a)|_v\cdot |\omega_{\alpha}(p)|_v. $$

As above, we find $|\omega_{\alpha}(a)|_v\cdot |\omega_{\alpha}(p)|_v \geq \frac{1}{O(1)}$.

So $$|\omega_{\alpha}(a^{-1})|_v = \frac{1}{|\omega_{\alpha}(a)|_v} \preceq |\omega_{\alpha}(p)|_v \preceq ||p||_v \preceq ||\lambda||_v.$$
Thus, indeed,
$$||x||_v \preceq ||\lambda||_v.$$ This concludes the proof of the lemma.

It remains only to piece all the ingredients together. Suppose first that $\lambda \in \Lambda^2 \cap \Omega(K_v)$. Set $F_1 =\{e\}$. Then successive applications of Lemma \ref{Lemma: Generalisation 4.1} imply the existence of finite subsets $F_2 , \ldots, F_k \subset \mathbf{U}^+(K)$ and  $x_1 \in \Lambda^{13} \cap \mathbf{G}_1(K_v), \ldots, x_k \in \Lambda^{13^k} \cap \mathbf{G}_k(K_v)$ such that $||x_i||_v \preceq ||\lambda||_v$ for all $i \in \{1, \ldots, k\}$ and $u^+(x_k \cdots x_1\lambda) \in F_{k+1}$. Since $F_{k+1}$ is finite, there exists a further finite subset $F_{k+2} \subset \Lambda^{13^k}$ such that there is $x_{k+1} \in F_{k+2}$ with $u^+(x_{k+1}x_k \cdots x_1\lambda)=\{e\}$ i.e. $x_{k+1}x_k \cdots x_1\lambda \in \mathbf{P}^-(K) \cap \Lambda^m$ for some integer $m > 0$. 

Notice now that $\Omega$ is Zariski-open in $\mathbf{G}$ and $\Lambda$ is Zariski-dense (\cite[A.11]{hrushovski2020beyond}). So by the descending chain condition on Zariski-closed subsets, there is $F_0 \subset \Lambda$ such that $\Lambda \subset F_0\Omega(K)$. So $\Lambda \subset  F_0\left(\Lambda^2 \cap \Omega(K)\right)$. Which concludes the proof. 
\end{proof}


\begin{thebibliography}{LMR00b}

\bibitem[BBF16]{BestvinaBrombergFujiwara2016Bounded}
Mladen Bestvina, Ken Bromberg, and Koji Fujiwara.
\newblock Bounded cohomology with coefficients in uniformly convex banach
  spaces.
\newblock {\em Commentarii Mathematici Helvetici}, 91:203--218, 01 2016.

\bibitem[Bek98]{MR1415573}
M.~B. Bekka.
\newblock On uniqueness of invariant means.
\newblock {\em Proc. Amer. Math. Soc.}, 126(2):507--514, 1998.

\bibitem[BF20]{MR4036451}
Uri Bader and Alex Furman.
\newblock Super-rigidity and non-linearity for lattices in products.
\newblock {\em Compos. Math.}, 156(1):158--178, 2020.

\bibitem[BFS20]{zbMATH07180377}
Uri Bader, Alex Furman, and Roman Sauer.
\newblock Lattice envelopes.
\newblock {\em Duke Math. J.}, 169(2):213--278, 2020.

\bibitem[BG03]{zbMATH02072462}
Riccardo Benedetti and Jean-Marc Gambaudo.
\newblock On the dynamics of {{\(\mathbb G\)}}-solenoids. {Applications} to
  {Delone} sets.
\newblock {\em Ergodic Theory Dyn. Syst.}, 23(3):673--691, 2003.

\bibitem[BG13]{MR3136260}
Michael Baake and Uwe Grimm.
\newblock {\em Aperiodic order. {V}ol. 1}, volume 149 of {\em Encyclopedia of
  Mathematics and its Applications}.
\newblock Cambridge University Press, Cambridge, 2013.
\newblock A mathematical invitation, With a foreword by Roger Penrose.

\bibitem[BGS23]{baake2023dynamics}
Michael Baake, Franz Gähler, and Lorenzo Sadun.
\newblock Dynamics and topology of the hat family of tilings, 2023.

\bibitem[BGT12]{MR3090256}
Emmanuel Breuillard, Ben Green, and Terence Tao.
\newblock {The structure of approximate groups}.
\newblock {\em Publ. Math. Inst. Hautes \'{E}tudes Sci.}, 116:115--221, 2012.

\bibitem[BH18]{bjorklund2016approximate}
Michael Bj\"{o}rklund and Tobias Hartnick.
\newblock {Approximate lattices}.
\newblock {\em Duke Math. J.}, 167(15):2903--2964, 2018.

\bibitem[BH20]{bjorklund2020analytic}
Michael Bj\"{o}rklund and Tobias Hartnick.
\newblock Analytic properties of approximate lattices.
\newblock {\em Ann. Inst. Fourier (Grenoble)}, 70(5):1903--1950, 2020.

\bibitem[BHC62]{BorelHarish-Chandra}
Armand Borel and Harish-Chandra.
\newblock Arithmetic subgroups of algebraic groups.
\newblock {\em Ann. Math. (2)}, 75:485--535, 1962.

\bibitem[BHK21]{BjorklundHartnickKarasik}
Michael Björklund, Tobias Hartnick, and Yakov Karasik.
\newblock Intersection spaces and multiple transverse recurrence, 2021.

\bibitem[BM99]{BurgerMonod}
M.~Burger and N.~Monod.
\newblock Bounded cohomology of lattices in higher rank {Lie} groups.
\newblock {\em J. Eur. Math. Soc. (JEMS)}, 1(2):199--235, 1999.

\bibitem[BM00]{zbMATH01656531}
Marc Burger and Shahar Mozes.
\newblock Lattices in product of trees.
\newblock {\em Publ. Math., Inst. Hautes {\'E}tud. Sci.}, 92:151--194, 2000.

\bibitem[Bor19]{zbMATH07143875}
Armand Borel.
\newblock {\em Introduction to arithmetic groups. {Translated} from the
  {French} by {Lam} {Laurent} {Pham}. {Translation} edited and with a preface
  by {Dave} {Witte} {Morris}}, volume~73 of {\em Univ. Lect. Ser.}
\newblock Providence, RI: American Mathematical Society (AMS), 2019.

\bibitem[BQ13]{MR3092475}
Yves Benoist and Jean-Fran\c{c}ois Quint.
\newblock Stationary measures and invariant subsets of homogeneous spaces
  ({III}).
\newblock {\em Ann. of Math. (2)}, 178(3):1017--1059, 2013.

\bibitem[BS64]{BorelSerre}
Armand Borel and Jean-Pierre Serre.
\newblock Th{\'e}oremes de finitude en cohomologie galoisienne.
\newblock {\em Comment. Math. Helv.}, 39:111--164, 1964.

\bibitem[BS06]{zbMATH05013729}
Uri Bader and Yehuda Shalom.
\newblock Factor and normal subgroup theorems for lattices in products of
  groups.
\newblock {\em Invent. Math.}, 163(2):415--454, 2006.

\bibitem[BW92]{zbMATH00109492}
Jonathan Block and Shmuel Weinberger.
\newblock Aperiodic tilings, positive scalar curvature, and amenability of
  spaces.
\newblock {\em J. Am. Math. Soc.}, 5(4):907--918, 1992.

\bibitem[CHT20]{cordes2020foundations}
Matthew Cordes, Tobias Hartnick, and Vera Toni{\'c}.
\newblock Foundations of geometric approximate group theory.
\newblock {\em arXiv preprint arXiv:2012.15303}, 2020.

\bibitem[Clo03]{MR1953260}
Laurent Clozel.
\newblock D\'{e}monstration de la conjecture {$\tau$}.
\newblock {\em Invent. Math.}, 151(2):297--328, 2003.

\bibitem[dB81a]{deBruijn1981algebraicII}
Nicolaas~Govert de~Bruijn.
\newblock {Algebraic theory of Penrose's non-periodic tilings of the plane.
  II}.
\newblock In {\em Indagationes Mathematicae (Proceedings)}, volume~84, pages
  53--66. Elsevier, 1981.

\bibitem[dB81b]{deBruijn1981algebraicI}
Nicolaas~Govert de~Bruijn.
\newblock {Algebraic theory of Penrose’s non-periodic tilings of the plane}.
\newblock {\em Kon. Nederl. Akad. Wetensch. Proc. Ser. A}, 43(84):1--7, 1981.

\bibitem[DG80]{zbMATH03670601}
Michel Demazure and Peter Gabriel.
\newblock {\em Introduction to algebraic geometry and algebraic groups},
  volume~39 of {\em North-Holland Math. Stud.}
\newblock Elsevier, Amsterdam, 1980.

\bibitem[dlS19]{zbMATH07128510}
Mikael de~la Salle.
\newblock Strong property ({T}) for higher rank lattices.
\newblock {\em Acta Math.}, 223(1):151--193, 2019.

\bibitem[God64]{SB_1962-1964__8__201_0}
Roger Godement.
\newblock Domaines fondamentaux des groupes arithm\'etiques.
\newblock In {\em S\'eminaire Bourbaki : ann\'ees 1962/63 - 1963/64, expos\'es
  241-276}, number~8 in S\'eminaire Bourbaki. Soci\'et\'e math\'ematique de
  France, 1964.
\newblock talk:257.

\bibitem[GS05]{zbMATH02132027}
Chaim Goodman-Strauss.
\newblock A strongly aperiodic set of tiles in the hyperbolic plane.
\newblock {\em Invent. Math.}, 159(1):119--132, 2005.

\bibitem[Hel15]{MR3348442}
Harald~A. Helfgott.
\newblock Growth in groups: ideas and perspectives.
\newblock {\em Bull. Amer. Math. Soc. (N.S.)}, 52(3):357--413, 2015.

\bibitem[HKP22]{hrushovski2019amenability}
Ehud Hrushovski, Krzysztof Krupi{\'n}ski, and Anand Pillay.
\newblock Amenability, connected components, and definable actions, 2022.
\newblock Id/No 16.

\bibitem[Hoc81]{MR620024}
Gerhard~P. Hochschild.
\newblock {\em Basic theory of algebraic groups and {L}ie algebras}, volume~75
  of {\em Graduate Texts in Mathematics}.
\newblock Springer-Verlag, New York-Berlin, 1981.

\bibitem[Hru12]{MR2833482}
Ehud Hrushovski.
\newblock {Stable group theory and approximate subgroups}.
\newblock {\em J. Amer. Math. Soc.}, 25(1):189--243, 2012.

\bibitem[Hru20]{hrushovski2020beyond}
Ehud Hrushovski.
\newblock Beyond the lascar group.
\newblock {\em arXiv preprint arXiv:2011.12009}, 2020.

\bibitem[HW22]{https://doi.org/10.48550/arxiv.2204.01496}
Tobias Hartnick and Stefan Witzel.
\newblock Higher finiteness properties of arithmetic approximate lattices: The
  rank theorem for number fields, 2022.

\bibitem[KM99]{MR1719827}
D.~Y. Kleinbock and G.~A. Margulis.
\newblock Logarithm laws for flows on homogeneous spaces.
\newblock {\em Invent. Math.}, 138(3):451--494, 1999.

\bibitem[KP23]{KrupinskiPillay}
Krzysztof Krupiński and Anand Pillay.
\newblock Generalized locally compact models for approximate groups, 2023.

\bibitem[Kru23]{krupinski2023locally}
Krzysztof Krupi{\'n}ski.
\newblock Locally compact models for approximate rings, 2023.

\bibitem[LMR00a]{LubotzkyMozesRaghunathan}
Alexander Lubotzky, Shahar Mozes, and M.~S. Raghunathan.
\newblock The word and {Riemannian} metrics on lattices of semisimple groups.
\newblock {\em Publ. Math., Inst. Hautes {\'E}tud. Sci.}, 91:5--53, 2000.

\bibitem[LMR00b]{MR1828742}
Alexander Lubotzky, Shahar Mozes, and M.~S. Raghunathan.
\newblock The word and {R}iemannian metrics on lattices of semisimple groups.
\newblock {\em Inst. Hautes \'{E}tudes Sci. Publ. Math.}, (91):5--53 (2001),
  2000.

\bibitem[LO15]{NirOlevskii2015PoissonSummation}
Nir Lev and Alexander Olevskii.
\newblock Quasicrystals and poisson's summation formula.
\newblock {\em Inventiones mathematicae}, 200(2):585--606, 2015.

\bibitem[Mac20]{machado2020approximate}
Simon Machado.
\newblock {Approximate lattices and Meyer sets in nilpotent Lie groups}.
\newblock {\em Discrete Analysis}, 1:18 pp., 2020.

\bibitem[Mac22]{machado2019infinite}
Simon Machado.
\newblock Infinite approximate subgroups of soluble {L}ie groups.
\newblock {\em Math. Ann.}, 382(1-2):285--301, 2022.

\bibitem[Mac23a]{mac2023sadic}
Simon Machado.
\newblock Approximate lattices and {S}-adic linear groups, 2023.

\bibitem[Mac23b]{machado2020apphigherrank}
Simon Machado.
\newblock Approximate lattices in higher-rank semi-simple groups.
\newblock {\em Geom. Funct. Anal.}, 33(4):1101--1140, 2023.

\bibitem[Mac23c]{machado2019goodmodels}
Simon Machado.
\newblock Closed approximate subgroups: compactness, amenability and
  approximate lattices, 2023.

\bibitem[Mac23d]{mac2023definitions}
Simon Machado.
\newblock The definitions of approximate lattices, 2023.

\bibitem[Mar91]{MR1090825}
G.~A. Margulis.
\newblock {\em Discrete subgroups of semisimple {L}ie groups}, volume~17 of
  {\em Ergebnisse der Mathematik und ihrer Grenzgebiete (3) [Results in
  Mathematics and Related Areas (3)]}.
\newblock Springer-Verlag, Berlin, 1991.

\bibitem[Mey72]{meyer1972algebraic}
Yves Meyer.
\newblock {\em {Algebraic numbers and harmonic analysis}}, volume~2.
\newblock Elsevier, 1972.

\bibitem[Mil17]{MR3729270}
J.~S. Milne.
\newblock {\em Algebraic groups}, volume 170 of {\em Cambridge Studies in
  Advanced Mathematics}.
\newblock Cambridge University Press, Cambridge, 2017.
\newblock The theory of group schemes of finite type over a field.

\bibitem[MM98]{zbMATH01239670}
G.~A. Margulis and S.~Mozes.
\newblock Aperiodic tilings of the hyperbolic plane by convex polygons.
\newblock {\em Isr. J. Math.}, 107:319--325, 1998.

\bibitem[Mon01]{MR1840942}
Nicolas Monod.
\newblock {\em Continuous bounded cohomology of locally compact groups}, volume
  1758 of {\em Lecture Notes in Mathematics}.
\newblock Springer-Verlag, Berlin, 2001.

\bibitem[Moo97]{moody1997meyer}
Robert~V Moody.
\newblock {Meyer sets and their duals}.
\newblock {\em NATO ASI Series C Mathematical and Physical Sciences-Advanced
  Study Institute}, 489:403--442, 1997.

\bibitem[Mor20]{WitteMorris}
Dave~Witte Morris.
\newblock Quasi-isometric bounded generation by {{\(\mathbb{Q}\)}}-rank-one
  subgroups.
\newblock {\em SIGMA, Symmetry Integrability Geom. Methods Appl.}, 16:paper
  012, 17, 2020.

\bibitem[Mos71]{MR289713}
G.~D. Mostow.
\newblock Arithmetic subgroups of groups with radical.
\newblock {\em Ann. of Math. (2)}, 93:409--438, 1971.

\bibitem[Moz97]{MR1452434}
Shahar Mozes.
\newblock Aperiodic tilings.
\newblock {\em Invent. Math.}, 128(3):603--611, 1997.

\bibitem[MW15]{MR3345797}
Jean-Cyrille Massicot and Frank~O. Wagner.
\newblock Approximate subgroups.
\newblock {\em J. \'{E}c. polytech. Math.}, 2:55--64, 2015.

\bibitem[Neu99]{NeukirchAlgebraicNumberTheory}
J{\"u}rgen Neukirch.
\newblock {\em Algebraic number theory. {Transl}. from the {German} by
  {Norbert} {Schappacher}}, volume 322 of {\em Grundlehren Math. Wiss.}
\newblock Berlin: Springer, 1999.

\bibitem[Oza11]{zbMATH05936993}
Narutaka Ozawa.
\newblock Quasi-homomorphism rigidity with non-commutative targets.
\newblock {\em J. Reine Angew. Math.}, 655:89--104, 2011.

\bibitem[Rag72]{raghunathan1972discrete}
M.~S. Raghunathan.
\newblock Discrete subgroups of {L}ie groups.
\newblock pages ix+227, 1972.

\bibitem[Rag76]{PMIHES_1976__46__107_0}
Madabusi~S. Raghunathan.
\newblock On the congruence subgroup problem.
\newblock {\em Publications Math\'ematiques de l'IH\'ES}, 46:107--161, 1976.

\bibitem[Sch73]{schreiber1973approximations}
Jean-Pierre Schreiber.
\newblock {Approximations diophantiennes et problemes additifs dans les groupes
  ab{\'e}liens localement compacts}.
\newblock {\em Bull. Soc. Math. France}, 101:297--332, 1973.

\bibitem[Spr10]{springer2010linear}
Tonny~Albert Springer.
\newblock {\em {Linear algebraic groups}}.
\newblock Springer Science \& Business Media, 2010.

\end{thebibliography}
\end{document}